\newtheorem{theorem}{Theorem}[section]
\newtheorem{proposition}[theorem]{Proposition}
\newtheorem{lemma}[theorem]{Lemma}
\newtheorem*{hypotheseh}{Hypothesis H}
\newtheorem{fact}[theorem]{Fact}
\newtheorem{corollary}[theorem]{Corollary}
\theoremstyle{definition}
\newtheorem{remark}[theorem]{Remark}
\newtheorem{construction}[theorem]{Construction}
\renewcommand{\d}{\mathrm{d}}
\renewcommand{\phi}{\varphi}
\renewcommand{\epsilon}{\varepsilon}
\renewcommand{\div}{\operatorname{div}}
\newcommand{\Id}{\mathrm{Id}}
\newcommand{\im}{\mathrm{im}}
\newcommand{\tr}{\mathrm{tr}}
\newcommand{\Vect}{\mathrm{Span}}
\newcommand{\dom}{\mathrm{dom}}
\newcommand{\bb}{\mathbb}
\renewcommand{\t}{\text}
\newcommand{\Sp}{\operatorname{Sp}}
\renewcommand{\o}{\mathrm{int}}
\newcommand{\cotan}{\mathrm{cotan}}
\title{Reeb periodic orbits after a bypass attachment}
\author{Anne Vaugon}
\address{ Anne Vaugon, Université de Lyon, CNRS UMR 5669, ENS Lyon, UMPA 
46, allée d’Italie, 69364 Lyon Cedex 07, France}
\email{anne.vaugon@ens-lyon.fr}
\begin{document}
\begin{abstract}
On a $3$-dimensional contact manifold with boundary, a bypass attachment is an elementary change of the contact structure consisting in the attachment of a thickened half-disc with a prescribed contact structure along an arc on the boundary. 
We give a model bypass attachment in which we describe the periodic orbits of the Reeb vector field created by the bypass attachment in terms of Reeb chords of the attachment arc. As an application, we compute the contact homology of a product neighbourhood of a convex surface after a bypass attachment, and the contact homology of some contact structures on solid tori.
\end{abstract}

\maketitle

\section{Introduction}
We describe the effect on Reeb dynamics of an elementary change of the contact structure on a $3$-manifold with boundary known as a \emph{bypass attachment}. Our study is based on the explicit construction of a bypass.
We encode the dynamics of the associated Reeb vector field and give a symbolic representation of the new periodic orbits. As an application, we compute the contact homology of a product neighbourhood of a convex surface after a bypass attachment, and the contact homology of some contact structures on solid tori.

Honda \cite{Honda00} introduced bypass attachments to classify contact structures on solid tori, thickened tori, and lens spaces. Bypasses may be seen as basic building-blocks of contact structures. 
In particular, cobordisms are constructed out of bypasses as contact structures on a thickened surface are obtained from an invariant contact structure and a finite number of bypass attachments and removals (see \cite[Section~11.1]{HondaLN}). 

Describing new periodic orbits after a bypass attachment is the first step toward computing the \emph{contact homology} of the new contact manifold. Introduced in the vein of Floer homology by Eliashberg, Givental, and Hofer \cite{EGH00} in 2000, contact homology is an invariant of a contact structure on a closed manifold defined through a Reeb vector field. 
Colin, Ghiggini, Honda, and Hutchings~\cite{CGHH10} generalised it to an invariant of contact structures on manifolds with boundary called \emph{sutured contact homology}. The simplest associated complex is the $\mathbb Q$-vector space generated by Reeb periodic orbits and the differential ``counts'' pseudo-holomorphic cylinders in the symplectisation of the contact manifold.  Gromov \cite{Gromov85} introduced pseudo-holomorphic curves in symplectic geometry in 1985. Hofer \cite{Hofer93} generalised them to symplectisations in 1993. 
Our theorem is similar to a theorem of Bourgeois, Ekholm, and Eliashberg \cite{BEE09} describing the new Reeb periodic orbits after a surgery along a Legendrian sphere~$\Lambda$ in terms of Reeb chords of~$\Lambda$. In addition, the authors deduce exact triangles between contact homology, symplectic 
homology and 
Legendrian contact homology. Finding an analogous triangle would be a natural extension to this paper.

The computation of contact homology hinges on finding periodic orbits and solving elliptic partial differential equations and thus is usually out of reach. To our knowledge, Golovko's work \cite{Golovko10,Golovko11} contains the only explicit computations in the sutured case. 
Actual computations are of importance to clarify our intuition and understand connections between sutured Heegaard-Floer homology and sutured embedded contact homology. Sutured embedded contact homology is a variant of sutured contact homology introduced in~\cite{CGHH10} in the vein of Hutchings' work \cite{Hutchings02}. Taubes \cite{Taubes08} proved that it is an invariant of the manifold. In the closed case, Kutluhan, Lee, and Taubes \cite{KLT10} and, independently, Colin, Ghiggini, and Honda \cite{CGh11} announced an isomorphism between the two homologies. 
In addition, computations can be of use to understand the algebraic structures associated to sutured contact homology and obtain a comprehensive gluing theorem from the partial theorem in~\cite{CGHH10}.

\subsection*{Outline} This paper is derived from the PhD thesis of the author \cite{Vaugon11}. It is organised as follows. In Section \ref{section_main_results}, we present our main theorems. In Section \ref{section_contact_geometry}, we recall some usual definitions in contact geometry and contact homology that will be used in the sequel. In Section \ref{section_surface_epaissie}, we apply our main theorem to the simplest manifold with boundary: the product neighbourhood of a convex surface. 
In Section \ref{section_application}, we compute the contact homology of a product neighbourhood of a convex surface after a bypass attachment, and the contact homology of some contact structures on solid tori. The proof of our main theorem (Theorem~\ref{theoreme_principal}) is technical. 
We give a sketch of proof in Section \ref{section_sketch}  and a detailed proof in Section \ref{section_hyperbolic_proof}. Section \ref{section_Maslov} is devoted to the proof of Theorem \ref{theoreme_Maslov} which describe the Conley-Zehnder index of the new Reeb periodic orbits.

\section{Main results}\label{section_main_results}

\subsection{Bypasses}
Let us review some basic definitions (see Section \ref{section_contact_geometry} for more details). Let $M$ be a $3$-manifold. A $1$-form $\alpha$ on $M$ is called a \emph{contact form} if $\alpha\wedge\d \alpha$ is a volume form. A \emph{contact structure} $\xi$ is a plane field locally defined as the kernel of a contact form. To any contact form $\alpha$, we associate the vector field, called the \emph{Reeb vector field}, such that $\iota_{R_\alpha}\alpha=1$ and $\iota_{R_\alpha}\d\alpha=0$. A \emph{Reeb chord} of an arc $\gamma_0$ is a Reeb arc with endpoints on $\gamma_0$. A curve tangent to $\xi$ is called \emph{Legendrian}. 
In a contact manifold, ``pleasant'' surfaces are \emph{convex} surfaces (see Section~\ref{subsection_convex} for a precise definition). In a neighbourhood of a convex surface $S$, the contact structure is encoded by a smooth multi-curve $\Gamma$, the \emph{dividing set}, separating $S$ into positive and negative regions (Giroux \cite{Giroux91}). In what follows we specify the dividing set associated to a convex surface $S$ by the pair $(S,\Gamma)$. 
Convexity is a natural condition to impose to the boundary of a contact manifold. To deal with contact forms as opposed to contact structures, for instance to define sutured contact homology, one usually refines this condition as follows.
 A contact form $\alpha$ is \emph{adapted} to a convex surface $(S,\Gamma)$ if $\Gamma$ is the set of tangency points between $R_\alpha$ and $S$ and, along $\Gamma$, the vector field $R_\alpha$ points toward the sub-surface $S_+$ where $R_\alpha$ is positively transverse to $S$. 
On a manifold with convex boundary, the dividing set of the boundary is a \emph{suture} as defined by Gabai \cite{Gabai83}.

An \emph{attaching arc} of the convex boundary $(S,\Gamma)$ of $(M,\xi)$ is a Legendrian arc which intersects the dividing set $\Gamma$ in precisely three points, namely, its two endpoints and one interior point.
A \emph{bypass attachment} along an attachment arc $\gamma_0$ is the gluing of a half-disc $D$ with a prescribed continuation of $\xi$ along $\gamma_0$. We get a new manifold with boundary by thickening $(D,\xi)$. 

\subsection{Main theorem}

Let $I_b=\left[-\frac{3\pi}{4},\frac{11\pi}{4}\right]$. Let $(M,\alpha)$ be a contact manifold with convex boundary $(S,\Gamma)$ and $\gamma_0$ be an attachment arc on $S$. We assume that
\begin{itemize}
  \item[(C1)] there exists a neighbourhood $Z$ of $\gamma_0$ with coordinates $(x,y,z)\in I_b \times [-y_\text{max},0] \times I_\t{max}$ where $I_\t{max}=[-z_\text{max},z_\text{max}]$ such that
  \begin{itemize}
    \item[$\bullet$] $\alpha=\sin(x)\d y+\cos(x)\d z$;
    \item[$\bullet$] $\gamma_0=[0,2\pi]\times\{0\}\times\{0\}$;
    \item[$\bullet$]  $S_Z=I_b\times\{1\}\times I_\t{max}=S\cap Z$
  \end{itemize}
  \item[(C2)] $\alpha$ is adapted to  $S\setminus S_Z$.
\end{itemize}
Fix $K>0$. Let $\delta_K(\gamma_0)$ denote the image of $\gamma_0\setminus\Gamma$ on $S$ by the Reeb flow for times smaller than $K$. Additionally, we assume that
\begin{itemize}
  \item[(C3)] $\delta_K(\gamma_0)$ is transverse to $\gamma_0$.
\end{itemize}
Condition (C3) is generic and ensures that the number of Reeb chord of $\gamma_0$ with period smaller than $K$ is finite. We denote by $a_1,\dots a_N$ these chords. Let $l(a_{i_1}\dots a_{i_k})=T(a_{i_1})+\dots +T(a_{i_k})$ where $T(a_i)$ is the period of the Reeb chord $a_i$.

\begin{theorem}\label{theoreme_principal}Under conditions (C1), (C2) and (C3),
there exists a contact manifold $(M',S',\alpha')$ obtained from $(M,S,\alpha)$ after a bypass attachment along $\gamma_0$, such that 
\begin{itemize}
 \item $S'$ is convex;
 \item $\alpha'$ is adapted to $S'$ and arbitrarily close to $\alpha$ in $M$;
 \item Reeb periodic orbits of period smaller than $K$ intersecting the bypass correspond bijectively to words $\mathbf{a}$ on the letters $a_1,\dots, a_N$ such that $l(\mathbf{a})<K$ up to cyclic permutation.
 \end{itemize}
In addition, the periodic orbit $\gamma_{\mathbf{a}}$ associated to $\mathbf{a}=a_{i_1}\dots a_{i_k}$ intersects $S_Z$ in $2k$ points denoted by $p_1^-,p_1^+,\dots,p_k^-,p_k^+$ and is arbitrarily close to the chord $a_j$ between $p_j^-$~and~$p_j^+$.
\end{theorem}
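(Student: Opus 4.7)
The proof has two parts: (i) the explicit construction of the bypass attachment $(M', S', \alpha')$ from the model in (C1)–(C2), and (ii) the symbolic description of the new Reeb periodic orbits via a first-return map on a well-chosen transverse section. The coordinate neighbourhood $Z$ with $\alpha = \sin(x)\,dy + \cos(x)\,dz$ makes the Reeb vector field $R_\alpha = \sin(x)\partial_y + \cos(x)\partial_z$ straight-line, which I would use to carry out both steps essentially by quadrature.

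For step (i), I would define $M'$ by attaching to $S_Z$ a thin thickening of Honda's bypass half-disc, equipped with a contact form $\alpha_b$ built in the same $(x,y,z)$-coordinates by a controlled interpolation. The form $\alpha_b$ has to agree with $\alpha$ along the attaching locus, place the new dividing set on $S' = \partial M'$ in the configuration prescribed by Honda's bypass rule, and have a Reeb flow that can be integrated explicitly in the bypass region. Convexity of $S'$ and adaptation of $\alpha'$ follow from the construction; taking the thickening thin and the interpolation close to $\alpha$ outside the bypass gives a global form $\alpha'$ arbitrarily close to $\alpha$ on $M$.

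For step (ii), I would set up a Poincaré section on $S_Z$ by placing a small transverse disc $D_i^-$ near the starting endpoint and $D_i^+$ near the terminating endpoint of each chord $a_i$. A periodic orbit $\gamma$ of $R_{\alpha'}$ that meets the bypass is cut by $S_Z$ into arcs alternating between arcs in $M$ and arcs traversing the bypass. Using (C3) and the bound $l(\gamma) < K$, only finitely many chords are relevant, and for $\alpha'$ close enough to $\alpha$ any length-bounded $\alpha'$-Reeb arc in $M$ issuing from some $D_i^-$ is $C^0$-close to $a_i$ and terminates on the matching $D_i^+$. I would then compute the Reeb first-return map inside the bypass model from the ``exit'' side back to the ``entry'' side of $S_Z$, showing that it sends each $D_i^+$ into $\bigcup_j D_j^-$ with the combinatorics of the free monoid on $\{a_1,\dots,a_N\}$ and with transit times tending to $0$ as the bypass is taken thinner. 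Composing the two return maps and looking for periodic points gives the announced bijection with cyclic words $\mathbf{a}$ such that $l(\mathbf{a}) < K$; the geometric description of $\gamma_{\mathbf{a}}$ (the $2k$ intersection points with $S_Z$ and closeness to the chords $a_{i_j}$) is then a direct read-off.

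The main obstacle is step (i): engineering $\alpha_b$ so that its internal Reeb flow realises the desired combinatorics with hyperbolic (hence non-degenerate) fixed points for the return map. Honda's bypass is purely topological and a priori gives no control over Reeb dynamics, so the heart of the proof is to find coordinates, cut-off functions, and an interpolation scheme for $\alpha_b$ that simultaneously (a) produce the correct new dividing set, (b) remain a contact form throughout the interpolation, (c) leave the chords of $\gamma_0$ on the $M$-side essentially unchanged, and (d) yield a return map whose iterates correspond bijectively to cyclic words of total period at most $K$. Once such a model is in hand, the counting argument and the index computations (the latter postponed to Theorem~\ref{theoreme_Maslov}) are largely bookkeeping.
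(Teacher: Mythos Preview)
Your two-step architecture matches the paper's, and you have correctly located the hard work in step~(i). But two mechanisms that the paper relies on are absent from your outline, and without them the bijection in step~(ii) cannot be closed.

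First, Honda's bypass with the standard contact form is \emph{not} adapted to the new boundary, and in that non-adapted model the Reeb flow in the bypass has essentially trivial return: every orbit entering through $S_Z$ exits through $S_Z$ without coming back, so no new periodic orbits are created at all. What produces the interesting return map $\phi_B$ is the \emph{convexification} step: one glues a small bump along the non-adapted part of the dividing set, and inside this bump the Reeb vector field is nearly tangent to the dividing curve $\Gamma_D$. Reeb chords that contribute to $\phi_B\vert_{R_\lambda}$ enter the bump, travel along $\Gamma_D$, and exit on the other side. So the transit time is not ``tending to $0$ as the bypass is taken thinner''; it is controlled by $\int_{\Gamma_D}\beta$, which one arranges to be bounded by a prescribed $\tau$. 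Your picture of short bypass arcs connecting $D_i^+$ to $\bigcup_j D_j^-$ is therefore not what actually happens.

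Second, and more seriously, you need a reason why each cyclic word $\mathbf a=a_{i_1}\cdots a_{i_k}$ gives \emph{exactly one} periodic orbit. Closeness of $\alpha'$ to $\alpha$ alone does not give this: it tells you the $M$-arcs are near the chords, but says nothing about uniqueness of fixed points of the composed return map $F_{\mathbf a}=\psi_{i_k}\circ\phi_B\circ\cdots\circ\psi_{i_1}\circ\phi_B$. The paper's solution is to build $\phi_B$ as a \emph{generalised horseshoe}: the convexification is tuned so that $\phi_B$ sends vertical cones $\mathcal C(V,A)$ into horizontal cones $\mathcal C(H,\nu)$ with arbitrarily large expansion (and symmetrically for $\phi_B^{-1}$), while each $\psi_j$ sends horizontal cones into cones around the image of $\gamma_0$ with bounded distortion. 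The composition then satisfies the hypotheses of a Moser-type hyperbolic fixed-point theorem, which yields a unique fixed point for each admissible word. This cone-field/expansion structure is the content of the ``hyperbolic bypass'' construction and is not a consequence of thinness or of (C3); it has to be engineered into $\alpha_b$ via the convexification coordinates.

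Finally, a point you do not mention: one must also rule out periodic orbits that linger near the dividing set (the region $Q_\lambda$), where the decomposition into $\psi_j$'s breaks down. The paper handles this by a separate monotonicity argument on the $z$-coordinate of successive intersection points with $S_Z$.
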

Therefore, if the contact form $\alpha$ is non-degenerate on $M$, the Reeb periodic orbits of period smaller than $K$ on $M'$ are exactly the Reeb periodic orbits of $M$ of period smaller than $K$ and the orbits described in Theorem \ref{theoreme_principal}.
This theorem is proved in Sections \ref{section_sketch} and \ref{section_hyperbolic_proof}. We will see in the proof that the condition ``$\alpha'$ is adapted to $S'$'' is crucial to obtain this symbolic representation of the new periodic orbits. The following proposition ensures that conditions (C1) and (C2) are satisfied for any contact manifold after an isotopy.

\begin{proposition}\label{proposition_adapte}
 Let $(M,\xi)$ be a contact manifold with convex boundary $(S,\Gamma)$ and $\gamma_0$ be an attaching arc. There exists a contact structure $\xi'$ isotopic to $\xi$ and a contact form $\alpha$ of $\xi'$ satisfying conditions (C1) and (C2).
\end{proposition}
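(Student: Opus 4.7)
The plan is to combine two classical ingredients: a neighbourhood theorem for Legendrian arcs on a convex boundary, providing the normal form (C1) near $\gamma_0$, and Giroux's theorem on the existence of contact forms adapted to a convex surface, providing (C2) away from $\gamma_0$; the two are then reconciled by a conformal cutoff.

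First, I would put $\gamma_0$ in a preferred position. Since $\gamma_0$ is a Legendrian arc on the convex surface $S$ meeting $\Gamma$ transversely in exactly three points in the prescribed pattern, the Legendrian realisation principle together with a Darboux-style neighbourhood theorem for Legendrian arcs on a convex boundary produce, after an isotopy of $\xi$ to some $\xi'$, a neighbourhood $\tilde Z$ of $\gamma_0$ strictly larger than the target $Z$, with coordinates $(x,y,z)$ in which $\xi' = \ker \alpha_1$ for $\alpha_1 = \sin(x)\, dy + \cos(x)\, dz$, $\gamma_0 = [0, 2\pi] \times \{0\} \times \{0\}$, and $S \cap \tilde Z$ coincides with the flat slice $\{y = 1\}$. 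A direct computation of the Reeb field of $\alpha_1$ shows that $\alpha_1$ is already adapted to $S \cap \tilde Z$: its tangency locus with $\{y=1\}$ is $\{\sin(x) = 0\}$, exactly the dividing set imposed by the model.

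Next, I would invoke Giroux's theorem to produce a contact form $\alpha_0$ of $\xi'$ on all of $M$ that is adapted to $(S, \Gamma)$ in a collar neighbourhood of $S$. On the overlap $\tilde Z$ both forms define $\xi'$, so $\alpha_0 = f \alpha_1$ for some smooth positive function $f$. I would then pick a smooth cutoff $\chi \colon M \to [0,1]$ supported in $\tilde Z$ and equal to $1$ on $Z$, and set $\alpha = (\chi + (1-\chi) f) \alpha_1$ on $\tilde Z$ and $\alpha = \alpha_0$ elsewhere. This is a smooth positive rescaling of $\alpha_0$, hence a contact form for $\xi'$; by construction it equals $\alpha_1$ on $Z$, yielding (C1).

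The hard part, and essentially the only delicate point, is to verify (C2) on the transition region $S \cap (\tilde Z \setminus Z)$: multiplying a contact form by a non-constant positive function alters the Reeb vector field, so adaptation is not automatically preserved. I would handle this either by choosing $\chi$ to depend only on the normal coordinate $y$ inside a thin collar of $S$ in $\tilde Z$, so that $\chi + (1-\chi) f$ is locally constant in the $S$-direction and the tangency locus of the Reeb field, together with the signs of its transverse component along $\Gamma$, is unchanged; or, equivalently, by using the flexibility in Giroux's construction to choose $\alpha_0 = \alpha_1$ on a neighbourhood of $S \cap \tilde Z$ from the outset, which makes the interpolation trivial along $S$. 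In either case $\alpha$ is adapted on $S \setminus S_Z$, and the pair $(\xi', \alpha)$ satisfies the conclusion of the proposition.
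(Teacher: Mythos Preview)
Your overall strategy is sound, but there is a genuine gap at exactly the point you flag as delicate. The claim that $\alpha_1 = \sin(x)\,dy + \cos(x)\,dz$ is ``already adapted to $S \cap \tilde Z$'' fails in the sense the paper requires. The Reeb field $R_{\alpha_1} = \sin(x)\,\partial_y + \cos(x)\,\partial_z$ does have tangency locus $\{\sin x = 0\} = \Gamma \cap \tilde Z$ on $S$, which is what you check; but along each component $\{x = k\pi\}$ one finds $R_{\alpha_1} = (-1)^k\partial_z$, which is \emph{tangent to $\Gamma$} rather than transverse to it and pointing into $S_+$. The paper's definition of ``adapted'' explicitly demands the latter. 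Both of your fixes then collapse for the same reason: in option~(a), forcing $\chi$ to depend only on $y$ near $S$ makes $\chi$ constant on $S\cap\tilde Z$, so $\alpha = \alpha_1$ there and adaptation fails along $\Gamma \cap (\tilde Z \setminus Z)$; in option~(b), no globally adapted $\alpha_0$ can agree with $\alpha_1$ near $\Gamma \cap \tilde Z$, since $\alpha_1$ itself is not adapted there.

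The paper does not rely on a conformal interpolation. Its one-line justification points instead to the explicit perturbations of Section~4 and the proof of Proposition~5.1: starting from the Giroux normal form near $\Gamma$, one passes to $\sin(x)\,dy + \bigl(1 + k(x)\,l(y)\,m(z)\bigr)\cos(x)\,dz$, with the cutoff $m$ vanishing on $[-z_{\max},z_{\max}]$ (so that (C1) holds on $Z$) and equal to $1$ just outside. The factor $k(x)\,l(y)$, with $l$ strictly convex and minimal at $y=0$, gives the Reeb field a nonzero $\partial_x$-component along $\Gamma$ outside $Z$, supplying precisely the transversality to $\Gamma$ that ``adapted'' requires. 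Incorporating such a perturbation---rather than a conformal cutoff between two forms neither of which is adapted along $\Gamma\cap(\tilde Z\setminus Z)$---is the missing step in your argument.
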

This proposition derives from Giroux theory of convex surfaces (Sections \ref{section_contact_geometry} and~\ref{section_surface_epaissie}) and the explicit construction of Proposition \ref{proposition_perturbation}.
 
\subsection{Computations of contact homology} We now apply Theorem \ref{theoreme_principal} to compute some sutured contact homologies.
The sutured contact homology is an invariant associated to a contact structure with boundary $(M,\xi,\Gamma)$ where $\Gamma$ is the dividing set of the boundary.
Though commonly accepted, existence and invariance of contact homology remain unproven. In what follows this assumption will be called Hypothesis H (see Section \ref{subsection_sutured_CH} for more details). 

Let $S$ be a convex surface and $\Gamma=\bigcup_{i=0}^n \Gamma_i$ be a dividing set of $S$ without contractible components. Let $M=S\times[-1,1]$ be the product neighbourhood of $S$ with invariant contact structure\footnote{The multi-curve $\Gamma\times\{\pm1\}$ is a dividing set of the boundary.}. 

\begin{proposition}\label{homologie_surface_epaissie_sr_intro}
There exists a contact form $\alpha$ without contractible Reeb periodic orbits such that the cylindrical sutured contact homology of $(M,\alpha,\Gamma\times\{\pm1\})$ is the $\mathbb Q$-vector space generated by $n+1$ periodic orbits homotopic to $\Gamma_k\times\{0\}, k=0,\dots, n$ and by their multiples.
\end{proposition}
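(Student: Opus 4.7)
The plan is to exhibit a contact form $\alpha$ whose Reeb flow on $S \times [-1,1]$ is essentially one-dimensional, using Giroux's model for invariant neighbourhoods of convex surfaces together with a Morse--Bott perturbation, and then to compute the cylindrical sutured contact homology directly from the explicit list of orbits.

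I would begin with an invariant model $\alpha_0 = u\,\d t + \beta$ on $S\times[-1,1]$, where $u:S\to\bb R$ vanishes exactly on $\Gamma$ with $\pm u > 0$ on $S_\pm$, and $\beta$ is a $1$-form on $S$ whose restriction to every component $\Gamma_k$ is a positively oriented nowhere-vanishing $1$-form; such a pair exists by standard convex surface theory, and $\alpha_0$ can be arranged to be adapted to $\Gamma\times\{\pm 1\}$. A direct computation of the Reeb vector field $R_0$ shows that $R_0$ is tangent to $S\times\{t\}$ precisely on $\Gamma\times\{t\}$, that along each $\Gamma_k\times\{t\}$ it is a positive multiple of the $\Gamma_k$-direction, and that outside $\Gamma$ its $\partial_t$-component is nonzero of constant sign along each orbit. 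Hence the periodic orbits of $R_0$ are exactly the circles $\Gamma_k\times\{t\}$, $t\in[-1,1]$, and their iterates, forming Morse--Bott families foliating the annuli $\Gamma_k\times[-1,1]$.

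To isolate the orbits at $t=0$ I would break the $t$-invariance by setting $\alpha = u\,\d t + (1+\epsilon g(t))\beta$ with $\epsilon>0$ small, where $g:[-1,1]\to\bb R$ is a smooth function having a unique non-degenerate critical point at $t=0$ and taken small enough to preserve the contact condition and the boundary adaptation. A computation of the Reeb vector field of $\alpha$ (completely analogous to the one used for the bypass model (C1)) shows that it acquires a component in the direction normal to $\Gamma$ in $S$ that is proportional to $g'(t)$; a closed orbit lying on $\Gamma$ must therefore satisfy $g'(t)=0$, so combined with the persistence of monotonicity in $t$ away from $\Gamma$, the periodic orbits of $\alpha$ are exactly the $\Gamma_k\times\{0\}$, $k=0,\ldots,n$, and their iterates. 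Non-degeneracy and goodness of these orbits follow from the non-degeneracy of the critical point of $g$, and the absence of contractible Reeb periodic orbits is immediate because no $\Gamma_k$ is contractible.

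It remains to compute the cylindrical contact homology. By the previous step the chain complex is the free $\bb Q$-vector space generated by the listed orbits, and one must show that the differential, which counts $J$-holomorphic cylinders in the symplectisation, vanishes. The natural $S^1$-symmetry rotating each $\Gamma_k$ extends to an $S^1$-action on $\bb R \times M$ preserving a suitable cylindrical almost complex structure $J$, so non-trivial cylinders come in continuous families and their signed count is zero; equivalently, in the Morse--Bott picture \`a la Bourgeois the unique critical point of $g$ in each family immediately forces the differential to vanish. The main obstacle of the proof is precisely this last step: establishing transversality and the vanishing of the cylindrical differential in the presence of multiple covers of the simple orbits $\Gamma_k\times\{0\}$, for which one typically invokes either the full Morse--Bott machinery or the $S^1$-equivariance along each $\Gamma_k$.
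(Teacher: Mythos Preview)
Your construction of the contact form and the identification of the periodic orbits is essentially the same as the paper's: in local coordinates near each $\Gamma_k$ the paper writes $\alpha_p=\sin(x)\,\d y+(1+k(x)l(y))\cos(x)\,\d z$ with $l$ strictly convex and minimal at $y=0$, which is your $u\,\d t+(1+\epsilon g(t))\beta$ with a cutoff added.  One small point: you should specify that $g$ has a nondegenerate \emph{minimum} at $t=0$; with a maximum the orbits need not be positive hyperbolic, and goodness of the multiples is then not automatic.

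Where you diverge from the paper is in the argument for $\partial=0$.  You invoke $S^1$-equivariance or Morse--Bott machinery and correctly flag multiply-covered cylinders as the main obstacle.  The paper avoids this entirely with a parity argument: with the convex choice of $l$ one computes $\tr\big(\d\phi_T|_\xi\big)=2\cosh\big(\sqrt{T\,l''(0)}\big)>2$, so every $\Gamma_k\times\{0\}$ and all of its multiples are even hyperbolic.  Since the cylindrical differential changes the parity of the Conley--Zehnder index and there are no odd generators, $\partial=0$ is forced with nothing to count, and no transversality issue arises.  Your route would work in principle but is substantially heavier than needed; the parity observation is the clean way to finish.
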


\begin{theorem}\label{homologie_surface_epaissie_intro}
Let $\gamma_0$ be an attachment arc in $S$ intersecting three distinct components of $\Gamma$. Let $\Gamma_0$ be the component intersecting the interior of $\gamma_0$. We denote by $(M',\xi')$ the contact manifold obtained from $(M,\xi)$ after a bypass attachment along $\gamma_0\times\{1\}$ and by $\Gamma'$ a dividing set of $\partial M'$.
Then, under Hypothesis H, the cylindrical sutured contact homology of $(M',\xi',\Gamma')$ is the $\mathbb Q$-vector space generated by $n$ periodic orbits homotopic to $\Gamma_k\times\{0\}, k=1,\dots, n$ and by their multiples. 
\end{theorem}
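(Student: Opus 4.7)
The plan is to combine Proposition~\ref{homologie_surface_epaissie_sr_intro} and Proposition~\ref{proposition_adapte} with Theorem~\ref{theoreme_principal} to pin down the Reeb periodic orbits of the post-bypass contact form, and then to compute the cylindrical contact homology by splitting over free homotopy classes. I would first choose a contact form $\alpha$ on $M = S \times [-1,1]$ given by Proposition~\ref{homologie_surface_epaissie_sr_intro} whose Reeb periodic orbits are exactly the iterates of $\Gamma_k \times \{0\}$ for $k = 0, \ldots, n$, and apply Proposition~\ref{proposition_adapte} to isotope $\alpha$ in a small neighbourhood of $\gamma_0 \times \{1\}$ so that conditions (C1) and (C2) hold, while keeping the orbit structure on $M$ unchanged up to any prescribed period $K$. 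Imposing (C3) generically, Theorem~\ref{theoreme_principal} then produces $(M', \alpha')$ whose Reeb periodic orbits of period at most $K$ are the iterates of $\Gamma_k \times \{0\}$ together with the new orbits $\gamma_{\mathbf a}$ indexed by cyclic words in the Reeb chords $a_1, \ldots, a_N$ of $\gamma_0$.

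Next I would identify the free homotopy class of each new orbit. Because $\gamma_0 \times \{1\}$ meets the dividing set $\Gamma \times \{1\}$ interiorly only along $\Gamma_0 \times \{1\}$, and the Reeb field of $\alpha$ is tangent to $\Gamma \times \{0\}$ on $S \times \{0\}$, each Reeb chord of $\gamma_0$ must shadow $\Gamma_0 \times \{0\}$ for some positive integer number of turns before returning to $\gamma_0$. It follows that every new orbit $\gamma_{\mathbf a}$ lies in a free homotopy class of the form $[m\Gamma_0]$ with $m \geq 1$. The cylindrical contact homology splits as a direct sum over free homotopy classes in $M'$. In each class $[m\Gamma_k]$ with $k \in \{1,\ldots,n\}$ the complex contains only the iterate $\Gamma_k^m$, and a neck-stretching argument localised in the bypass region, together with Proposition~\ref{homologie_surface_epaissie_sr_intro} applied to $M$, confines the relevant pseudo-holomorphic cylinders to $M \times \mathbb R$, so the differential vanishes and each such iterate contributes a copy of $\mathbb Q$.

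The hard part is to show that the cylindrical contact homology in each class $[m\Gamma_0]$ vanishes. In this class the complex is generated by $\Gamma_0^m$ together with the new orbits of total winding $m$. I would analyse the moduli spaces of pseudo-holomorphic cylinders between these generators in $(M' \times \mathbb R, J)$ using the Conley--Zehnder index computation of Theorem~\ref{theoreme_Maslov} and the explicit form of $\alpha'$ inside the bypass to exhibit pairings of generators that force acyclicity; Hypothesis H is invoked here so that the resulting homology is a well-defined invariant of $(M', \xi', \Gamma')$. Once this cancellation is established, only the classes $[m\Gamma_k]$ for $k \in \{1,\ldots,n\}$ survive, yielding the claimed generators. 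The main obstacle lies in this last cancellation step, which is typical for explicit contact homology computations and requires controlling multiply-covered cylinders.
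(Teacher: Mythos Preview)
Your setup is correct: the periodic orbits of $\alpha'$ are the iterates of $\Gamma_k\times\{0\}$ together with the bypass orbits $\gamma_{\mathbf a}$, and Proposition~\ref{proposition_cordes} shows that the latter all lie in the classes $[\Gamma_0]^m$. For $k\neq 0$ your conclusion is right but the justification is simpler than you suggest: the cylindrical differential preserves free homotopy classes, and in the class $[\Gamma_k]^m$ with $k\neq 0$ there is a single (even) generator, so $\partial=0$ there automatically. No neck-stretching is needed, and indeed the paper never uses neck-stretching.

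The genuine gap is the acyclicity in the classes $[\Gamma_0]^m$, and the paper does \emph{not} resolve it by a direct analysis of moduli spaces or by pairings coming from Theorem~\ref{theoreme_Maslov}. Two indirect ingredients replace this. First, positivity of intersection with trivial holomorphic maps over Reeb chords (Lemma~\ref{proposition_cordes_holomorphes}, applied as in Lemma~\ref{lemme_courbes_holomorphes_surface}) confines every finite-energy holomorphic cylinder in the symplectisation of $M'$ to a standard neighbourhood $\mathcal U=\bigcup_i U_i\cup(I_b\times[-1,1]\times I_{\max})\cup\mathcal B$; this is a pointwise intersection constraint, not a degeneration argument. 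Second, $\mathcal U$ is contactomorphic to the model neighbourhood $\overline{\mathcal U}$ constructed inside a specific solid torus $(M_{4,\eta},\alpha_b)$ after one bypass (Section~\ref{section_solid_tori}). For that solid torus the full sutured contact homology is known independently by Golovko (Theorem~\ref{theoreme_Golovko}): it is generated by two even orbits in the $[S^1]^l$ classes with $l>0$. Those two orbits are exactly the generators of $E^l_+$, so the subcomplex $E^l_1$ spanned by $\delta_1$ and the bypass orbits (living in the $l<0$ classes) is forced to be acyclic. Transporting through the contactomorphism $\mathcal U\cong\overline{\mathcal U}$, the subcomplex generated by $\Gamma_0$ and the bypass orbits in $M'$ is acyclic as well.

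In short, the paper never computes a single differential between bypass orbits; it borrows the answer from Golovko's computation in a model and uses positivity of intersection to localise. Your proposed direct pairing argument would require exactly the control of multiply-covered cylinders that Hypothesis~H hides, and is what the paper deliberately avoids.
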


Thus, a bypass attachment removes $\Gamma_0$ and its multiples from the generators of the sutured contact homology. 

Contact structures with longitudinal dividing set on the boundary are characterised by the dividing set of any convex meridian disc \cite{Honda00}. Golovko \cite{Golovko11} computed the contact homology in the case where the dividing set of a meridian discs consists of segments parallel to the boundary. 
He also computed contact the homology of solid tori with non-longitudinal boundary dividing set \cite{Golovko10}. We extend his computations to contact structures such that (see Figure \ref{propriete_P_bis})
\begin{itemize}
  \item[(C4)] the boundary dividing set $\Gamma$ has $2n$ longitudinal components;
  \item[(C5)] if $(D,\Gamma=\bigcup_{i=0}^n \Gamma_i)$ is the dividing set of a convex meridian disc $D$ there exists a partition of $\partial D$ in two sub-intervals $I_1$ and $I_2$ such that
  \begin{itemize}
    \item[$\bullet$] $\partial I_1$ is contained in two bigons (called \emph{extremal bigons});
    \item[$\bullet$] if $I=\{i,\partial \Gamma_i\subset I_1 \text{ or } \partial \Gamma_i\subset I_2\}$ then any connected component of $D\setminus\left(\bigcup_{i\notin I} \Gamma_i\right)$ contains at most one component of $\Gamma$.
  \end{itemize}
\end{itemize}
\begin{figure}[here]
\includegraphics{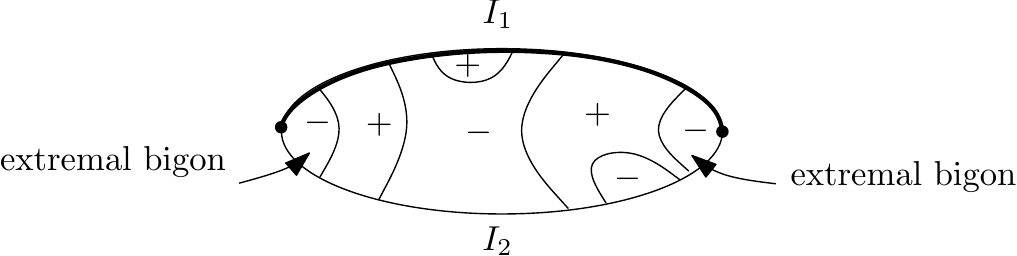}
\caption{A chord diagram satisfying condition (C5)}\label{propriete_P_bis}
\end{figure}

\begin{theorem}\label{homologie_intro_tore_plein}
Let $\xi$ be a contact structure on $M=D^2\times S^1$ satisfying conditions (C4) and (C5) above. Under Hypothesis H, the sutured contact homology of $(M,\xi,\Gamma)$ is the $\mathbb Q$-vector space generated by $n_+$ curves homotopic to $\{*\}\times S^1$, $n_-$ curves homotopic to $\{*\}\times(-S^1)$ and by their multiples where
\[n_\pm=\chi(S_\pm)+\#\{\mp\t{non-extremal bigons}\}-\#\{\pm\t{ bigons}\}.\] 
\end{theorem}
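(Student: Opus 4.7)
The strategy is to realize $(M,\xi)$ as the result of a finite sequence of bypass attachments on a simpler contact solid torus whose cylindrical sutured contact homology is already known, and then to iterate Theorem \ref{homologie_surface_epaissie_intro}. For the base case, I would handle the configurations in which only the two extremal bigons of (C5) appear and the remaining arcs of the meridian chord diagram are boundary-parallel; this is essentially the situation treated by Golovko \cite{Golovko11}, and the formula specializes correctly there, since the non-extremal bigon counts vanish and $\chi(S_\pm)$ just counts the $\pm$ regions between consecutive parallel arcs.

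For the inductive step, suppose the chord diagram on a convex meridian disc $D$ satisfies (C4)--(C5) and contains a non-extremal bigon $B$ with vertex on a longitudinal boundary component $\Gamma_0 \subset \partial M$. Using Honda's bypass machinery, I would locate a Legendrian attaching arc $\gamma_0 \subset \partial M$ whose interior point lies on $\Gamma_0$ and whose endpoints lie on the two longitudinal components of $\Gamma$ adjacent to $B$. After Proposition \ref{proposition_adapte} arranges conditions (C1)--(C3) for an adapted contact form, Theorem \ref{homologie_surface_epaissie_intro} applied in a collar $\partial M \times [-1,1]$ of $\partial M$ in $M$ produces a contact solid torus $(M', \xi', \Gamma')$ with one fewer meridian dividing arc (the one bounding $B$ is erased), still satisfying (C4)--(C5), and whose cylindrical sutured contact homology is obtained from that of $(M,\xi,\Gamma)$ by dropping the summand generated by multiples of the $\Gamma_0$-orbit. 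By induction on the number of non-extremal bigons, the computation reduces to the base case.

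The residual task is to match the formula $n_\pm = \chi(S_\pm) + \#\{\mp\text{non-extremal bigons}\} - \#\{\pm\text{bigons}\}$ under the inductive reduction. Erasing a non-extremal bigon $B$ and its bounding arc merges two like-signed regions of $D$, hence shifts $\chi(S_\pm)$ by $\pm 1$, and simultaneously drops the corresponding bigon counts, so that the net change in $n_+ + n_-$ is exactly $-1$; the sign distribution between $n_+$ and $n_-$ follows from the sign of $B$ and from whether the removed $\Gamma_0$-orbit is homotopic to $\{*\} \times S^1$ or $\{*\} \times (-S^1)$. The main obstacle is the combinatorial lemma asserting that, whenever a chord diagram satisfies (C4)--(C5) with at least one non-extremal bigon, there exists an attaching arc $\gamma_0$ as above whose bypass yields a reduced diagram still in the class (C4)--(C5); this hinges on Honda's bypass-on-boundary moves together with the structural hypothesis in (C5) that the components $\Gamma_i$ with endpoints entirely in $I_1$ or $I_2$ cut $D$ into pieces each containing at most one further dividing arc. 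A secondary verification is that the orbits produced by the bypass in Theorem \ref{homologie_surface_epaissie_intro} are indeed homotopic to the predicted $\{*\} \times (\pm S^1)$ classes, which follows from the explicit orbit description in Theorem \ref{theoreme_principal}.
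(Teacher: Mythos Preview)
Your inductive strategy has a genuine gap: Theorem~\ref{homologie_surface_epaissie_intro} computes the sutured contact homology of the \emph{specific} manifold $S\times[-1,1]$ after a bypass attachment, not the effect of a bypass attachment on an arbitrary contact manifold with convex boundary. Applying it ``in a collar $\partial M\times[-1,1]$'' does not yield a statement about $HC(M')$ for $M'=M\cup(\text{bypass})$, because there is no general gluing theorem for sutured contact homology that would let you assemble $HC(M')$ from $HC(M)$ and the homology of a collar with a bypass glued on; the introduction flags exactly this as an open problem. So the sentence ``whose cylindrical sutured contact homology is obtained from that of $(M,\xi,\Gamma)$ by dropping the summand generated by multiples of the $\Gamma_0$-orbit'' is precisely the unjustified step: it is the conclusion of Theorem~\ref{homologie_surface_epaissie_intro} transplanted from $S\times[-1,1]$ to a solid torus, and nothing in the paper supports that transplant.

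The paper's argument avoids any such induction. It realizes $(M,\xi)$ directly as the parallel solid torus $(M_{n,\eta},\xi_{\sslash})$ with \emph{all} bypasses attached at once (Propositions~\ref{proposition_alpha'} and~\ref{proposition_orbites_tore_rocade}), and then controls the differential by localizing holomorphic cylinders via positivity of intersection (Lemma~\ref{lemme_courbes_holomorphes_n_quelconque}): every curve with asymptotics in the bypass-generated summand $E^l_j$ stays in a standard neighbourhood $\mathcal U_j$, and each $\mathcal U_j$ is contactomorphic to a fixed model $\overline{\mathcal U}$. The acyclicity of $\partial$ on $E^l_j$ is then read off from a single model computation---the $n=4$ parallel torus with one bypass---where Golovko's result (Theorem~\ref{theoreme_Golovko}) forces the bypass-generated summand to vanish in homology. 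In fact Theorem~\ref{homologie_surface_epaissie_intro} is itself proved by the \emph{same} localization-to-$\overline{\mathcal U}$ trick (Lemma~\ref{lemme_courbes_holomorphes_surface}), so the logical dependency runs opposite to what your induction assumes.
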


\subsection{Two improvements to the main theorem}\label{section_two_improvements}
We now describe the Conley-Zehnder index $\mu(\gamma_{\mathbf{a}})$ of the periodic orbit $\gamma_{\mathbf{a}}$ from Theorem \ref{theoreme_principal}. This index gives the graduation in contact homology and is associated to a trivialisation of the normal bundle of the orbit.
We first construct a ‘‘nice'' trivialisation extending trivialisations along the Reeb chords. In the setting of Theorem \ref{theoreme_principal}, we denote by $a^-$ and $a^+$ the inward and outward endpoints of a Reeb chord $a$, by $[p,p']$ the segment between $p$ and $p'$ in the chart associated to $Z$ and, if $p$ and $p'$ are on $\gamma_{\mathbf{a}}$, by $[p,p']_{\mathbf{a}}$ the arc of $\gamma_{\mathbf{a}}$ between $p$ and $p'$.
For each $i=1,\dots, N$, choose a collar neighbourhood $S_i$ of $a_i\cup[a_i^+,a_i^-]$. We obtain a collar neighbourhood $S_\mathbf{a}$ of the periodic orbit $\gamma_{\mathbf{a}}$ corresponding to $\mathbf{a}=a_{i_1}\dots a_{i_k}$ by gluing together 
\begin{itemize}
  \item collar neighbourhoods of $[p_{i_j}^-,p_{i_j}^+]_{\mathbf{a}}\cup[p_{i_j}^-,p_{i_j}^+]$ given by a small perturbation of $S_{i_j}$,
  \item and an immersed disc in the bypass with boundary $\bigcup_j[p_{i_j}^+,p_{i_{j+1}}^-]_{\mathbf{a}}\cup[p_{i_j}^+,p_{i_{j}}^-]$, embedded near its boundary.
\end{itemize}
For all $i$, the annulus $S_i$ gives a symplectic trivialisation $(e_1,e_2)$ of $\xi$ along $a_i$. Let $(R_t)_{t\in[0,T(a_i)]}$ denote the path of symplectic matrices induced by the differential of the Reeb flow along $a_i$.
 For all $t\in[0,T(a_i)]$, we denote by $\theta_t$ the angle between $e_1$ and $R_t(e_1)$. Let $\tilde{\mu}(a_i)$ be the integer such that $\theta_{T(a_i)}\in (\pi\tilde{\mu}(a_i),\pi(\tilde{\mu}(a_i)+1)]$. Then $\tilde{\mu}(a_i)$ depends only on the homology class of $S_i$.

\begin{theorem}\label{theoreme_Maslov}
If $\mathbf{a}=a_{i_1}\dots a_{i_k}$ is a word such that $l(\mathbf{a})\leq K$, then
$\mu(\gamma_{\mathbf{a}})= \sum_{j=1}^k\tilde{\mu}(a_{i_j})$
in the trivialisation $S_\mathbf{a}$.
\end{theorem}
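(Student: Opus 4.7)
The plan is to compute the total rotation of the linearised Reeb flow along $\gamma_{\mathbf{a}}$ in the trivialisation $S_{\mathbf{a}}$ by decomposing the orbit into pieces and exploiting the additivity of the angle function along a concatenation of paths in $Sp(2,\mathbb{R})$. I would write $\gamma_{\mathbf{a}}$ as the concatenation of $k$ \emph{chord pieces} $\eta_j$, each arbitrarily close to $a_{i_j}$ by the last statement of Theorem \ref{theoreme_principal}, and $k$ \emph{bypass pieces} $\sigma_j$ joining $p_j^+$ to $p_{j+1}^-$ inside the bypass. By the construction of $S_{\mathbf{a}}$ recalled in Section \ref{section_two_improvements}, the trivialisation restricts on $\eta_j$ to a $C^\infty$-small perturbation of the annulus $S_{i_j}$ used to define $\tilde{\mu}(a_{i_j})$, and on $\sigma_j$ to the framing induced by the immersed disc in the bypass.

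On each chord piece $\eta_j$, the symplectic path given by the linearised Reeb flow, read in the perturbed trivialisation, is $C^0$-close to $(R_t)_{t\in[0,T(a_{i_j})]}$. Hence, provided the perturbation is sufficiently small, its total rotation angle $\theta_{\eta_j}$ stays in the open interval $(\pi\tilde{\mu}(a_{i_j}),\pi(\tilde{\mu}(a_{i_j})+1))$, avoiding the boundary of the interval that defines $\tilde{\mu}$. On each bypass piece $\sigma_j$ I would work in the explicit model of Section \ref{section_hyperbolic_proof}. The crossing time in the bypass can be made arbitrarily small, and the immersed disc providing the framing agrees at the endpoints with the chord framings $S_{i_j}$ by construction. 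A direct computation in the bypass model then shows that the linearised flow rotates the contact planes along $\sigma_j$ by an angle that can be made smaller than any preassigned positive constant.

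Summing the contributions, the total rotation angle along $\gamma_{\mathbf{a}}$ in the trivialisation $S_{\mathbf{a}}$ lies in the open interval $(\pi M,\pi(M+1))$ with $M=\sum_{j=1}^k\tilde{\mu}(a_{i_j})$. The orbit $\gamma_{\mathbf{a}}$ is nondegenerate, and by the hyperbolic analysis of Section \ref{section_hyperbolic_proof} its linearised return map does not cross the resonance set transversely during the construction. Applying the standard formula for the Conley-Zehnder index in dimension two to a path whose angle ends strictly between $\pi M$ and $\pi(M+1)$ therefore yields $\mu(\gamma_{\mathbf{a}})=M$, which is the announced equality.

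The main technical difficulty lies in the bypass step. The bounding disc is only immersed and the contact form on the bypass is not a product, so one must verify explicitly in the model of Section \ref{section_hyperbolic_proof} that the framing induced by the immersed disc patches smoothly with the chord framings $S_{i_j}$ at the points $p_j^\pm$, and that the linearised Reeb flow along $\sigma_j$ contributes no hidden half-turn. A secondary check, of more bookkeeping nature, is to make sure that the $C^\infty$-small perturbations used to glue the $S_{i_j}$ into $S_{\mathbf{a}}$ do not push any individual rotation angle across one of the integer multiples of $\pi$ that delimit the values of $\tilde{\mu}(a_{i_j})$.
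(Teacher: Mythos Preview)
Your overall strategy—decompose $\gamma_{\mathbf{a}}$ into chord pieces and bypass pieces and track the rotation angle—matches the paper's. But two steps do not go through as stated.

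First, the claim that the bypass contribution is ``smaller than any preassigned positive constant'' is not correct. The return time $8\tau$ in the bypass can indeed be made small, but the linearised Reeb flow there is \emph{not} close to the identity: by construction (Theorem~\ref{recollement_K_hyperbolique}) the bypass return map $\phi_B$ is strongly hyperbolic, with $\Vert\d\phi_B(p,v)\Vert>\frac{1}{\eta}\Vert v\Vert$ on the relevant cone and $\eta$ arbitrarily small. A short-time flow with very large derivative can move $R_t e_1$ through a definite angle; in fact $\phi_B$ takes the vertical cone to the horizontal cone, so on the level of directions it behaves like a quarter-turn, not like the identity. The paper does not claim the bypass angle is small; instead it builds a specific trivialisation $S_c$ (Lemma~\ref{lemme_trivialisation_gamma_a}) in which the angle $\theta(t)$ never crosses a multiple of $\pi$ during the bypass, and then uses the hyperbolicity to pin $\theta(t_{j+1})$ to within $\nu(\theta_0)$ of some integer multiple of $\pi$.

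Second, even if the bypass contributed nothing, your summation does not give the interval you state. Each chord piece contributes an angle in an interval of width $\pi$, so summing $k$ of them lands the total in $(\pi M,\pi(M+k))$, not $(\pi M,\pi(M+1))$. What collapses this ambiguity in the paper is precisely the hyperbolic contraction: after each composite $\phi_B\circ\psi_{i_j}$ the image of $e_1$ lies in the narrow cone $\mathcal C(H,\nu)$, so $\theta(t_{j+1})$ is forced back near a multiple of $\pi$, and a sign computation (Lemma~\ref{lemme_dF_a}) decides which multiple. That sign argument, together with the parity of the return map (Corollary~\ref{lemme_parite_maslov}) and Lemma~\ref{lemme_CZ_e_1}, is what pins down $\mu(\gamma_{\mathbf{a}})$ exactly. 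Your sketch flags the ``hidden half-turn'' issue but does not supply a mechanism to resolve it; that mechanism is the missing ingredient.
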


In addition, our explicit construction of bypasses allows us to control all the new periodic orbits after a bypass attachment but with less precision. This property is used in our actual computations of contact homology.
Let $(M,\alpha)$ be a contact manifold with convex boundary $(S,\Gamma)$ and $\gamma_0$ an attachment arc satisfying conditions (C1) and (C2). We assume that
\begin{itemize}
  \item[(C6)] there exists $\lambda_0>0$ such that for all $\epsilon>0$ and for any small enough perturbation of $\alpha$, the distance between the dividing set and the endpoints of the Reeb chords of $[0,2\pi]\times\{1\}\times I_\t{max}$ is either smaller that $\epsilon$ or greater than $\lambda_0$.
\end{itemize}

\begin{theorem}\label{proposition_rocade_allegee}Under conditions (C1), (C2) and (C6),
there exists a contact manifold $(M',S',\alpha')$ obtained from $(M,S,\alpha)$ after a bypass attachment along $\gamma_0$, such that 
\begin{itemize}
 \item $S'$ is convex;
 \item $\alpha'$ is adapted to $S'$ and arbitrarily close to $\alpha$ in $M$;
 \item if $\psi :[\pi+\lambda_0,2\pi-\lambda_0]\times I_\t{max}\to  [\lambda_0,\pi-\lambda_0]\times I_\t{max} $ is the partial function induced by the Reeb flow in $M$ and $\phi$ is the map induced by the Reeb flow in the bypass then every periodic orbit intersecting $S_Z$ intersects $S^-_Z$ on a periodic point of $\phi\circ \psi$.
\end{itemize}
\end{theorem}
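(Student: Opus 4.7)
The plan is to re-use the bypass construction of Theorem~\ref{theoreme_principal} essentially unchanged: one glues the same explicit model bypass to $(M,\alpha)$ along $\gamma_0$, and the convexity of $S'$, the adaptedness of $\alpha'$ to $S'$, and its $C^\infty$-proximity to $\alpha$ in $M$ follow exactly as there. Only the third bullet is new, and the strategy is to recognise the first-return map of the Reeb flow of $\alpha'$ on $S^-_Z$ as the composition $\phi\circ\psi$.

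In the chart $Z$ the Reeb vector field of $\alpha$ is $R_\alpha=\sin(x)\,\partial_y+\cos(x)\,\partial_z$, transverse to $S_Z$ away from $\{x\in\{0,\pi,2\pi\}\}$; it points out of $M$ where $x\in(0,\pi)$ and into $M$ where $x\in(\pi,2\pi)$, so in the $(x,z)$ coordinates along $S_Z$ the two regions $S^+_Z\cap Z$ and $S^-_Z\cap Z$ become $(0,\pi)\times I_\t{max}$ and $(\pi,2\pi)\times I_\t{max}$ respectively. Flowing forward inside $M$ then defines a partial first-return map $S^-_Z\to S^+_Z$ (the candidate for $\psi$) and flowing forward inside the bypass defines a partial return map $S^+_Z\to S^-_Z$ (the candidate for $\phi$); the matching domain of $\phi$ on $[\lambda_0,\pi-\lambda_0]\times I_\t{max}$ with image $[\pi+\lambda_0,2\pi-\lambda_0]\times I_\t{max}$ is read off from the explicit bypass model exactly as in the proof of Theorem~\ref{theoreme_principal}. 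Condition (C6) is precisely what ensures that $\psi$ is well-defined on the whole strip $[\pi+\lambda_0,2\pi-\lambda_0]\times I_\t{max}$ with image in $[\lambda_0,\pi-\lambda_0]\times I_\t{max}$, uniformly in the length of the chord and not merely up to a period bound $K$ as in Theorem~\ref{theoreme_principal}.

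Let $\gamma$ now be a Reeb periodic orbit of $\alpha'$ meeting $S_Z$. By the transversality picture above, $\gamma$ alternates crossings of $S^+_Z$ (exiting $M$ and entering the bypass) and of $S^-_Z$ (exiting the bypass and re-entering $M$); listing its successive intersections with $S^-_Z$ as $p_0,p_1,\dots,p_n=p_0$, each $p_{j+1}$ equals $\phi(\psi(p_j))$ by construction, so $p_0$ is a periodic point of $\phi\circ\psi$. The main subtlety, and the step I expect to require the most care, is to check that each $p_j$ genuinely lies in the domain $[\pi+\lambda_0,2\pi-\lambda_0]\times I_\t{max}$ rather than in the $\epsilon$-collar of $\Gamma$ where the return map need not be defined. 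Each $p_j$ is an endpoint of a Reeb chord of $[0,2\pi]\times\{1\}\times I_\t{max}$, so (C6) places it either within $\epsilon$ of $\Gamma$ or at distance at least $\lambda_0$; this dichotomy, together with the freedom to shrink the perturbation used in the bypass construction so that $\epsilon$ is smaller than the minimal distance from the finitely many periodic orbit intersections to $\Gamma$, rules out the first alternative and puts every $p_j$ in the safe region, completing the argument.
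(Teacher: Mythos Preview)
Your overall shape is right---construct the hyperbolic bypass as in Theorem~\ref{theoreme_principal}, and then argue that the first-return map on $S^-_Z$ is $\phi\circ\psi$. But the final step, ruling out intersection points $p_j$ in the $\epsilon$-collar of the dividing set, is circular as written. You propose to ``shrink the perturbation \dots\ so that $\epsilon$ is smaller than the minimal distance from the finitely many periodic orbit intersections to $\Gamma$''. There are two problems. First, the theorem carries no period bound, so there is no reason the set of periodic orbit intersections with $S_Z$ is finite, nor that its distance to $\Gamma$ is bounded away from zero. Second, and more fundamentally, the periodic orbits themselves depend on the bypass construction and hence on the perturbation; you cannot choose the size of the perturbation after looking at the periodic orbits it produces.

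The paper closes this gap by a dynamical argument rather than a compactness one. In the hyperbolic bypass (Theorem~\ref{recollement_K_hyperbolique}) both the return map $\psi_M$ in $M$ and the bypass map $\phi_B$ decompose as $\psi_0\cup\psi_1$ and $\phi_0\cup\phi_1$, where the pieces $\psi_0,\phi_0$ live in the thin strips $Q_{\lambda_0}$ around $x=k\pi$ and are strictly monotone in the $z$-coordinate (increasing for $k$ even, decreasing for $k$ odd). Proposition~\ref{proposition_p_gamma} then shows, by iterating this monotonicity, that any orbit meeting $Q_{\lambda_0}$ has strictly monotone $z$-coordinate along its successive visits and therefore cannot close up---regardless of period. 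Condition~(C6) is exactly what guarantees, for every sufficiently small perturbation, that the $\psi_M$-piece near the dividing set lands back in $Q_{\lambda_0}$ (so the decomposition $\psi_0\cup\psi_1$ is stable), allowing the monotonicity argument to run. This is the missing idea in your proposal: you need the $z$-monotonicity of $\phi_0$ and $\psi_0$ from the hyperbolic bypass construction, not a finiteness or minimal-distance argument.
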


If the hypotheses of Theorem \ref{theoreme_principal} and Theorem \ref{proposition_rocade_allegee} are simultaneously satisfied, the associated constructions coincide. In addition, the periodic orbit $\gamma_\mathbf{a}$ associated to $\mathbf{a}=a_{i_1}\dots a_{i_k}$ corresponds to the unique fixed point of $\phi\circ\psi_{i_k}\circ\dots\circ\phi\circ\psi_{i_1}$ where $\psi_{i_j}$ is the restriction of $\psi$ to the connected component of $\dom(\psi)$ containing $a_{i_j}^-$.

\section{Contact geometry}\label{section_contact_geometry}
\subsection{Contact geometry and convex surfaces }\label{subsection_convex}
A more detailed presentation can be found in \cite{Geiges08}.
 Let $(M,\xi=\ker(\alpha))$ be a contact manifold. A vector field whose flow preserves $\xi$ is said to be \emph{contact}.
A fundamental step in the classification of contact structures in dimension $3$ was the definition of tight and overtwisted contact structures given by Eliashberg \cite{Eliashberg89} in the line of Bennequin's work \cite{Bennequin83}. A contact structure $\xi$ is \emph{overtwisted} if there exists an embedded disc tangent to $\xi$ on its boundary. Otherwise $\xi$ is said to be \emph{tight}. 

Eliashberg's work \cite{Eliashberg89,Eliashberg92} initiated the study of surfaces in contact manifolds.
The \emph{characteristic foliation} $\mathscr F$ of a surface $S$ is the singular $1$-dimensional foliation of $S$ such that
\begin{itemize}
  \item $x$ is a singular point if $\xi_x=T_x S$;
  \item $\mathscr F_x=\xi_x\cap T_x S$ if $x$ is non-singular. 
\end{itemize}
If $\omega$ is a volume form on $S$ and $i:S\to M$ is the inclusion, $\mathscr F$ is defined by the vector field $X$ satisfying $\iota_X\omega=i^*\alpha$. The characteristic foliation determines the germ of $\xi$ near $S$ \cite[Proposition II.1.2]{Giroux91}.

The development of convexity by Giroux \cite{Giroux91} following Eliashberg and Gromov's definition \cite{EliashbergGromov91} represents a major progress in the study of contact geometry.
A surface $S$ is \emph{convex} if there exists a  contact vector field transverse to $S$. If $S$ has a boundary, we require it to be Legendrian. Closed convex surfaces are generic \cite[Proposition II.2.6]{Giroux91}. The convexity of a surface is equivalent to the existence of a \emph{dividing set} for the characteristic foliation (Giroux, \cite[ Proposition II.2.1]{Giroux91}). A multi-curve $\Gamma$ on $S$ is a \emph{dividing set} for a singular $1$-dimensional foliation  $\mathscr F$ of $S$ if there exist two sub-surfaces $S_\pm$ of $S$, a vector field $Y$ and a volume form $\omega$ on $S$ such that
\begin{itemize}
  \item $\partial S_\pm=\Gamma$;
  \item $\div_\omega Y>0$ on $S_+$ and $\div_\omega Y<0$ on $S_-$;
  \item $Y$ points toward $S_+$ along $\Gamma$.
\end{itemize}
The dividing set $\Gamma$ inherits the orientation of $\partial S_+$. All dividing sets of a given foliation are isotopic. If $X$ is a contact vector field transverse to $S$, the set of tangency points between $X$ and $\xi$ along $S$ is a dividing set. 
The dividing set $\Gamma$ encodes $\xi$ near $S$ as any foliation divided by $\Gamma$ can be realised as the characteristic foliation of a perturbed surface. This property is due to Giroux \cite[Proposition II.3.6]{Giroux91} and known as the \emph{realisation lemma}. In favourable situations, the Reeb vector field provides us with a dividing set.

\begin{lemma}\label{lemme_convexite_Reeb}
Let $S$ be a compact surface in $(M,\alpha)$. If $R_\alpha$ is tangent to $S$ along a smooth curve $\Gamma$ and if, along $\Gamma$, the characteristic foliation of $S$ points toward $S_+$, the sub-surface where $R_\alpha$ is positively transverse to $S$, then $S$ is convex and $\Gamma$ is a dividing set.
\end{lemma}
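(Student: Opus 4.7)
The plan is to verify directly that $\Gamma$ divides the characteristic foliation $\mathscr F$ of $S$; convexity of $S$ then follows from Giroux's criterion \cite[Proposition~II.2.1]{Giroux91} cited just above. The partition of $S$ required by the definition of a dividing set is furnished by the hypothesis: $S_\pm$ are the open regions where $R_\alpha$ is positively or negatively transverse, and since $\Gamma$ is the tangency locus of $R_\alpha$, we have $\partial S_\pm = \Gamma$. The pointing condition ``$Y$ points toward $S_+$ along $\Gamma$'' in the definition is precisely the stated hypothesis on the characteristic foliation.

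The main step is to exhibit a vector field directing $\mathscr F$ together with a volume form $\omega$ realising the divergence condition. Fix any positive area form $\omega$ on $S$ and take $Y = X$ to be the characteristic vector field defined by $\iota_X \omega = i^* \alpha$. Because $\omega$ is top-dimensional on the surface, Cartan's formula collapses to
\[
(\div_\omega X)\,\omega \;=\; d(\iota_X \omega) \;=\; i^* d\alpha,
\]
so it suffices to determine the sign of $i^* d\alpha$ on each region.

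At a point $p \in S$ I pick a positive basis $(e_1,e_2)$ of $T_pS$ and a normal $n$ so that $(e_1,e_2,n)$ is positively oriented in $M$, and write $R_\alpha(p) = h(p)\,n + V(p)$ with $V(p) \in T_pS$. Expanding the contact volume form on $(e_1,e_2,R_\alpha)$ in two ways---once using $\alpha(R_\alpha)=1$ together with $\iota_{R_\alpha}d\alpha = 0$, and once by splitting $R_\alpha$ along $n$ and $T_pS$---produces the identity $(i^* d\alpha)(e_1,e_2) = h(p)\,(\alpha\wedge d\alpha)(e_1,e_2,n)$. Since $\alpha \wedge d\alpha$ is a positive volume form on $M$ and $(e_1,e_2,n)$ is a positive frame, the cofactor is strictly positive, so $\mathrm{sgn}(i^* d\alpha) = \mathrm{sgn}(h)$, and by the definition of $S_\pm$ this is positive on $S_+$ and negative on $S_-$. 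The only real obstacle is the orientation bookkeeping needed to match the conventions for ``positively transverse'', for the normal $n$, and for the sign of $\omega$ so that $\div_\omega X$ comes out positive on $S_+$ rather than on $S_-$; once these signs are settled, all three conditions of a dividing set hold and the lemma follows.
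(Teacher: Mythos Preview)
Your proof is correct and follows exactly the same route as the paper: choose the characteristic vector field $X$ defined by $\iota_X\omega=i^*\alpha$ and verify the dividing-set axioms. The paper's proof is a single sentence that leaves the divergence computation implicit, whereas you spell out the identity $(\div_\omega X)\,\omega=i^*d\alpha$ and the sign argument via $(\alpha\wedge d\alpha)(e_1,e_2,R_\alpha)=h\,(\alpha\wedge d\alpha)(e_1,e_2,n)$; this added detail is sound and the orientation bookkeeping you flag is indeed the only thing left to pin down.
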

\begin{proof}
In the definition of dividing set, choose any volume form $\omega$ of $S$ and $Y$ such that $\iota_Y\omega=i^*\alpha$ where $i:S\to M$ is the inclusion.
\end{proof}

\subsection{Bypasses}\label{subsection_bypass} Let $S$ be a closed convex surface without boundary in a contact manifold. A \emph{bypass} for $S$ is an embedded half-disc $D$ in $M$ such that
\begin{itemize}
  \item $D$ is transverse to $S$;
  \item $D$ has a Legendrian boundary denoted by $\gamma_1\cup\gamma_2$ and $D\cap S=\gamma_1$;
  \item the singularities of the characteristic foliation of $D$ are (see Figure \ref{feuilletage_disque_rocade})
  \begin{itemize}
    \item a negative elliptic singularity in the interior of $\gamma_1$;
    \item two positive elliptic singularities at the endpoints of $\gamma_1$;
    \item positive singularities along $\gamma_2$ alternating between elliptic and hyperbolic singularities.
  \end{itemize}
\end{itemize}
The arc $\gamma_1$ is called the \emph{attaching arc} of the bypass.
\begin{figure}[here]
\begin{center}
 \includegraphics{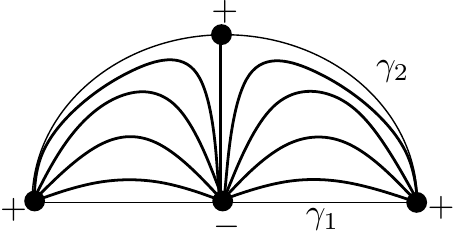}
\end{center}
 \caption{The characteristic foliation of a bypass}\label{feuilletage_disque_rocade}
\end{figure}
\begin{proposition}[Honda \cite{Honda00}]\label{lemme_decoupage_rocade}
Let $D$ be a bypass for $S$ with attaching arc $\gamma_1$. There exists an neighbourhood of $S\cup D$ diffeomorphic to $S\times [0,1]$ such that
\begin{itemize}
  \item $S\simeq S\times\{\epsilon\}$;
  \item the contact structure is invariant in $S\times [0,\epsilon]$;
  \item the surfaces $S\times\{0\}$ and $S\times\{1\}$ are convex with dividing sets $\Gamma$ and $\Gamma'$ where $\Gamma$ and $\Gamma'$ are identical except in a neighbourhood of $\gamma_1$ on which the arrangements are shown in Figure \ref{decoupage_rocade}.
\end{itemize}
\begin{figure}[here]
\begin{center}
 \includegraphics{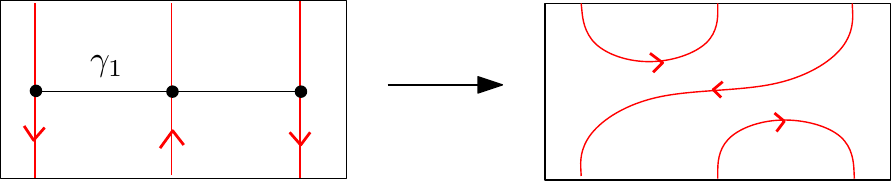}
\end{center}
 \caption{Dividing sets $\Gamma$ and $\Gamma'$}\label{decoupage_rocade}
\end{figure}
\end{proposition}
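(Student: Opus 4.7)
The plan is to produce a standard contact neighbourhood of $S\cup D$ and identify it with a product $S\times[0,1]$ whose two boundary convex surfaces differ precisely by the effect of crossing the bypass disc. First, since $S$ is convex, I would use the contact vector field $X$ transverse to $S$ to build an invariant collar $S\times[0,\epsilon']$ in which $\xi$ is translation-invariant along the $[0,\epsilon']$ factor; the original surface sits at $S\times\{\epsilon\}$ for some $0<\epsilon<\epsilon'$. In this invariant model, the dividing set $\Gamma$ is exactly the locus of tangency of $X$ with $\xi$, which makes tracking $\Gamma$ straightforward in the bottom slab.

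Next I would build a local model for a neighbourhood of the bypass disc itself. The prescribed characteristic foliation of $D$ determines the germ of $\xi$ near $D$ by \cite[Proposition II.1.2]{Giroux91}, and the sign pattern of its singularities forces $D$ to be convex with an explicit dividing set (a single properly embedded arc separating the negative elliptic singularity on $\gamma_1$ from the positive singularities on $\gamma_2$). A tubular neighbourhood of $D$ can therefore be realised by an explicit local contact model. Along $\gamma_1$ this neighbourhood and the invariant collar of $S$ are both governed by contact vector fields transverse to them; after a contact isotopy they can be glued compatibly to form a neighbourhood $N$ of $S\cup D$.

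The upper boundary of $N$ is only piecewise smooth, with a Legendrian corner along the curve where the bypass neighbourhood meets the collar of $S$. I would apply the standard edge-rounding construction for convex surfaces to smooth this corner into a convex surface $S''$, then parametrise $N\simeq S\times[0,1]$ so that the invariant collar becomes $S\times[0,\epsilon]$ and $S''=S\times\{1\}$. Outside a neighbourhood of $\gamma_1$ nothing changes, so the new dividing set $\Gamma'$ agrees with $\Gamma$ there. Inside the neighbourhood, $\Gamma'$ is determined by combining the three intersection points $\gamma_1\cap\Gamma$ with the dividing arc of $D$ via the edge-rounding rule, and a direct case-check yields the local rearrangement of Figure \ref{decoupage_rocade}.

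The main obstacle is the edge-rounding step. Smoothness of $S''$ is routine, but controlling the resulting dividing set precisely requires careful bookkeeping of how the characteristic foliations on two convex pieces meeting along a Legendrian corner combine after rounding. This is exactly the technical content of Honda's original argument in \cite{Honda00}, which I would follow to finish.
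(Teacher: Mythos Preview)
The paper does not give its own proof of this proposition: it is stated with attribution to Honda \cite{Honda00} and no argument is supplied. Your proposal is a reasonable outline of Honda's original proof (invariant collar from convexity, local model near the bypass disc determined by its characteristic foliation, gluing, then edge-rounding to read off the new dividing set), and you correctly identify the edge-rounding lemma as the technical core that must be quoted from \cite{Honda00}. So there is nothing to compare against in this paper; your sketch matches the cited source rather than anything the author does here.
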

Let $(M,\xi)$ be a contact manifold with convex boundary $S$. Let $\Gamma$ be a dividing set of $S$ and $\gamma_1$ be an attaching arc. A \emph{bypass attachment along $\gamma_1$} is a contact manifold $(M',\xi')$ with convex boundary $S'$ extending $(M,\xi)$ such that there exists a neighbourhood $S\times[0,1]$ of $S'$ satisfying
 \begin{itemize}
  \item $S'\simeq S\times\{1\}$;
  \item $S\times\{0\}$ is convex and is the image of $S$ by the flow of an inward contact vector field;
  \item there exists a contact retraction of $S\times[0,1]$ on an arbitrarily small neighbourhood of $ S\times\{0\}\cup D $ where $D$ is a bypass for $S\times\{0\}$ with attaching arc the image of $\gamma_1$ on $S\times\{0\}$.
 \end{itemize}
The differences between the dividing sets of $S$ and $S'$ are shown on Figure \ref{decoupage_rocade}. Honda  \cite{Honda00} constructed an explicit bypass attachment on a convex boundary satisfying condition (C1) (see Section \ref{subsubsection_contruction_explicite}). 

There exist two degenerate bypass attachments:  the trivial one that does not change the contact structure up to isotopy and the overtwisted one that creates an overtwisted contact structure (see Figure \ref{rocades_triviale_vrillee}).
\begin{figure}[here]
\includegraphics{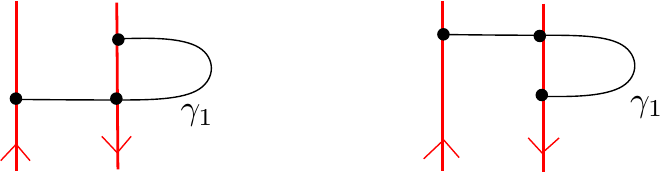}
\caption{Trivial (left) and overtwisted (right) bypasses }\label{rocades_triviale_vrillee}
\end{figure}

Giroux \cite{Giroux00} and Honda \cite{Honda00} independently classified contact structures on solid tori. Honda's proof hinges on bypasses. We follow Mathews' presentation \cite{Mathews09}. A \emph{chord diagram} is a finite set of disjoint properly embedded arcs in the disc $D^2$ up to isotopy relative to the boundary.
 
\begin{theorem}[Giroux \cite{Giroux00}, Honda \cite{Honda00}]\label{theoreme_classification_tore}
Let $F\subset S^1$ be a set with $2n$ elements and $\mathscr F$ be a singular foliation on $T^2$ divided by $\Gamma=F\times S^1$ and containing a meridian leaf which intersects $\Gamma$ in $2n$ points.
Tight contact structures on $D^2\times S^1$ with characteristic foliation $\mathscr F$ on the boundary up to isotopy relative to the boundary correspond bijectively to chord diagrams of $n$ chords with boundary in~$F$.
In addition, the associated chord diagram is the dividing set of any convex meridian disc intersecting $\Gamma$ in $2n$ points.
\end{theorem}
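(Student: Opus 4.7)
The plan is to reduce the classification to Eliashberg's uniqueness theorem for tight contact structures on the $3$-ball with prescribed boundary dividing set, by cutting $D^2\times S^1$ along a convex meridian disc. Given a tight $\xi$ inducing $\mathscr F$ on the boundary, I would first produce a convex meridian disc $D$ with Legendrian boundary: the hypothesis on $\mathscr F$ supplies a meridian leaf intersecting $\Gamma$ in $2n$ points, which we take as $\partial D$, and genericity of convex surfaces (Giroux \cite{Giroux91}) combined with the Legendrian Realisation Principle (Honda \cite{Honda00}) allows us to perturb $D$ relative to its boundary so as to be convex. Its dividing set $\Gamma_D$ has endpoints on the $2n$ boundary points $\partial D\cap\Gamma$, and tightness of $\xi$ rules out closed components of $\Gamma_D$: any closed component would bound a subdisc of $D$ whose characteristic foliation could be perturbed via Giroux's realisation lemma to contain a limit cycle bounding an overtwisted disc. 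Hence $\Gamma_D$ is a chord diagram on $n$ chords in $F$.

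For the inverse direction, suppose two tight contact structures $\xi,\xi'$ with the same boundary foliation yield the same chord diagram on convex meridian discs $D,D'$. Cutting $D^2\times S^1$ along these discs produces two contact $3$-balls whose convex boundary dividing sets are determined entirely by $\Gamma$ together with two copies of the chord diagram glued along $F$. By Eliashberg's theorem \cite{Eliashberg92}, a tight contact structure on the $3$-ball is determined, up to isotopy rel boundary, by its boundary dividing set, so the two resulting structures are isotopic rel boundary, and the isotopy re-glues to one between $\xi$ and $\xi'$. Surjectivity is obtained by an explicit construction: given a chord diagram, choose an $I$-invariant contact structure on $D^2\times I$ whose dividing set on $D^2\times\{0\}$ is the prescribed diagram, and identify $D^2\times\{0\}$ with $D^2\times\{1\}$ to obtain a structure on $D^2\times S^1$; the absence of closed components in the cutting diagram yields tightness via Giroux's criterion.

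The main obstacle is the well-definedness of the chord diagram, that is, its independence of the choice of convex meridian disc. Two convex meridian discs sharing the same Legendrian boundary need not be ambiently isotopic through convex discs; in general they differ by a sequence of bypass attachments and removals carried out inside $D^2\times S^1$. The technical heart of the proof, following Honda \cite{Honda00}, is to show that any such bypass must be trivial in the sense of Section \ref{subsection_bypass}, since a non-trivial bypass would either change the boundary dividing set (contradicting that $\Gamma$ is fixed) or force an overtwisted bypass and violate tightness. This rigidity is what ensures that the assignment $\xi\mapsto\Gamma_D$ is well-defined and injective, and it is the step for which Honda's detailed combinatorial analysis of bypass rotation classes is essential.
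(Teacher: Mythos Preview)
The paper does not prove this theorem: it is quoted as a result of Giroux \cite{Giroux00} and Honda \cite{Honda00} and used as a black box, so there is no in-paper proof to compare your proposal against.

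Your outline follows Honda's original strategy correctly in its broad architecture (produce a convex meridian disc, read off a chord diagram, cut to a ball, invoke Eliashberg's uniqueness), and you are right that well-definedness of the chord diagram is the crux. However, the justification you give for that step is not sound. A bypass attached to the meridian disc $D$ does not act on the boundary torus at all; it changes $\Gamma_D$, not $\Gamma$. So the dichotomy ``either changes the boundary dividing set or is overtwisted'' is false: there is a third possibility, namely a non-trivial, non-overtwisted bypass on $D$ that genuinely alters the chord diagram, and nothing you have said rules this out. Honda's actual argument is more delicate: one isotopes $D$ to $D'$ through a discretised sequence of bypass moves, observes that every such bypass lives in the $3$-ball obtained by cutting along $D$, and then uses Eliashberg's uniqueness on that ball to force each bypass to be trivial (a non-trivial bypass in a tight ball with connected boundary dividing set cannot exist, since the resulting dividing set on the doubled disc would acquire a contractible component or violate the known classification on $B^3$). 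You have named the right obstacle but not supplied the mechanism that resolves it.
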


By the realisation lemma, we can assume that the characteristic foliation of the boundary satisfies the hypothesis of Theorem \ref{theoreme_classification_tore}.

\begin{proposition}[Honda \cite{Honda00}]\label{proposition_rocade_tore}
Let $\xi$ be a contact structure on $D^2\times S^1$ such that the boundary dividing set $\Gamma$ has $2n$ longitudinal components. Let $D'$ be a convex meridian disc intersecting $\Gamma$ in $2n$ points. 
Fix an attaching arc $\gamma\subset\partial D'$. Then, the contact structure $\xi'$ on $D^2\times S^1$ obtained after a bypass attachment along $\gamma$ has a boundary dividing set with $2(n-1)$ longitudinal components. 
In addition, the chord diagram associated to $\xi'$ is obtained from the diagram associated to $\xi$ by gluing the endpoints of the two chords intersecting\footnote{This operation corresponds to an annihilation in \cite{Mathews09}.} $\gamma$ (see Figure \ref{rocade_tore_plein}).
\end{proposition}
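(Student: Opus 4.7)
I would organize the proof around constructing a convex meridian disc in $M'$ extending $D'$ and reading off its chord diagram. First, Proposition \ref{lemme_decoupage_rocade} provides a collar $S\times[0,1]$ of $\partial M'$ in $M'$ such that $S\times\{0\}$ is convex with dividing set $\Gamma$ and $\partial M'\simeq S\times\{1\}$ is convex with a dividing set $\Gamma'$ differing from $\Gamma$ only by the local reconnection of Figure \ref{decoupage_rocade}. Since $\gamma$ crosses three distinct longitudinal components of $\Gamma$ (two at its endpoints and one in its interior), the local move replaces these three parallel arcs by a single one, so $\Gamma'$ has $2(n-1)$ longitudinal components, proving the first assertion.

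For the chord-diagram assertion, I would construct a convex meridian disc $D''\subset M'$ extending $D'$. Since $M'$ is obtained from $M$ by gluing the collar $S\times[0,1]$ containing the bypass half-disc, I would extend $D'\subset M$ by attaching an annulus in this collar carrying $\partial D'\subset S\times\{0\}$ to a Legendrian meridian on $\partial M'\simeq S\times\{1\}$, thereby obtaining a properly embedded disc $D''\subset M'$. Using the realisation lemma, I would arrange $D''$ to be convex and to meet the bypass half-disc $D$ transversely in a single proper arc. Outside a neighbourhood of the bypass, $D''$ is a Legendrian pushoff of $D'$ and its dividing set coincides with the chord diagram of $D'$. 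Inside the bypass neighbourhood, the dividing set of $D''$ is controlled by the characteristic foliation of $D$ (Figure \ref{feuilletage_disque_rocade}); a direct local computation shows that the two chords of $D'$ intersecting $\gamma$ are joined together through the dividing arc of the bypass in the pattern of Figure \ref{rocade_tore_plein}, producing a chord diagram with $n-1$ chords.

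The main obstacle is this final local computation, which must simultaneously match the boundary reconnection of Figure \ref{decoupage_rocade} on $\partial D''$ and produce the claimed chord-diagram move in the interior of $D''$. The cleanest way to finish is to invoke Theorem \ref{theoreme_classification_tore}: since the chord diagram is a complete invariant of a tight contact structure on $D^2\times S^1$ with fixed boundary characteristic foliation, it suffices to verify the gluing pattern on one explicit model bypass attachment, after which uniqueness propagates the formula to the general case. One should also observe that the overtwisted and trivial bypass configurations from Figure \ref{rocades_triviale_vrillee} must be ruled out on the chord-diagram side; the former is excluded by the assumption that $\xi$ is tight and hence $\xi'$ is tight whenever the bypass is non-degenerate, while the latter gives an isotopic diagram and is consistent with the stated gluing rule.
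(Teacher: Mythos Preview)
The paper does not supply its own proof of this proposition: it is stated with attribution to Honda \cite{Honda00} and no argument is given. So there is nothing to compare against directly; the relevant question is whether your sketch stands on its own.

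Your first paragraph is correct and complete. Since $\gamma\subset\partial D'$ lies on a meridian curve and each longitudinal component of $\Gamma$ meets $\partial D'$ exactly once, the three intersection points of $\gamma$ with $\Gamma$ lie on three distinct longitudinal curves; the local move of Figure~\ref{decoupage_rocade} then merges these into one, yielding $2(n-1)$ components.

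Your strategy for the chord diagram---extend $D'$ across the bypass collar to a convex meridian disc $D''$ and read off its dividing set---is exactly Honda's approach and is the right one. However, your proposed shortcut via Theorem~\ref{theoreme_classification_tore} has two problems. First, it is circular: Honda's proof of that classification theorem proceeds precisely by peeling off bypasses along a meridian disc, so invoking it to avoid the local computation presupposes the result you are trying to establish. Second, the theorem only applies to tight contact structures, and you have not shown that $\xi'$ is tight; your claim that ``$\xi'$ is tight whenever the bypass is non-degenerate'' is false in general (a bypass attachment can create an overtwisted disc even starting from a tight structure---that is the content of Figure~\ref{rocades_triviale_vrillee}). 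The honest route is to carry out the local computation you flagged: take $D''=D'\cup(\partial D'\times[0,1])$ in the collar, perturb to convexity, and track the dividing set directly using the model of Proposition~\ref{lemme_decoupage_rocade}. This is a genuine but short exercise in convex-surface theory, and it is what Honda does.
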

\begin{figure}[here]
\includegraphics{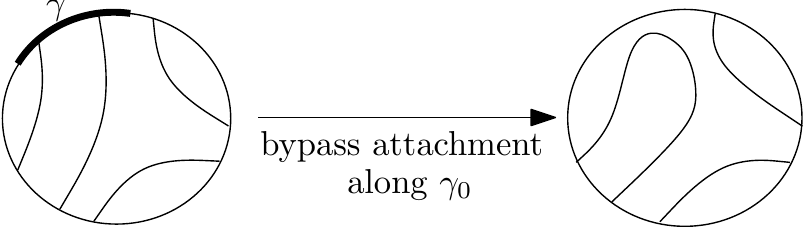}
\caption{Effect of a bypass attachment on solid torus}\label{rocade_tore_plein}
\end{figure}

\subsection{Sutured contact homology}
We consider the generalisation of contact homology to manifolds with boundary called \emph{sutured contact homology} and introduced by Colin, Ghiggini, Honda and Hutchings \cite{CGHH10}. Let $(M,\xi=\ker(\alpha))$ be a contact manifold.

\subsubsection{Holomorphic cylinders}
The differential of contact homology ‘‘counts`` pseudo-holomorphic curves in the symplectisation of the contact manifold.
One can refer to \cite{McDuffSalamon04} for more information.
The \emph{symplectisation} of $(M,\xi=\ker(\alpha))$ is the non-compact symplectic manifold $(\mathbb R\times M,\d(e^\tau \alpha))$ where $\tau$ is the $\mathbb R$-coordinate.
An \emph{almost complex structure} on a even-dimensional manifold $M$ is a map $J :TM\to TM$ preserving the fibres and such that $J^2=-\Id$. 
An almost complex structure $J$ on $\mathbb R\times M$ is \emph{adapted} to $\alpha$ if $J$ is $\tau$-invariant, $J\frac{\partial}{\partial \tau}=R_\alpha$, $J\xi=\xi$ and $\omega(\cdot, J\cdot)$ is a Riemannian metric.
A map $u:(M_1,J_1)\to (M_2,J_2)$ is \emph{pseudo-holomorphic} if $\d u\circ J_1=J_2\circ \d u$.  
Here we consider pseudo-holomorphic cylinders $u :(\mathbb R\times S^1,j)\to \mathbb R\times M$. The simplest non-constant pseudo-holomorphic maps are trivial cylinders: 
\begin{equation*}
\begin{array}{ccc}
 \bb R\times S^1&\longrightarrow & \mathbb R\times M\\
(s,t)&\longmapsto& (Ts,\gamma(Tt)).
\end{array}
\end{equation*}
where $\gamma$ is a $T$-periodic Reeb orbit. Note that there also exist trivial pseudo-holomorphic maps over any Reeb orbit.
For every non-constant map \[u:(\mathbb R\times S^1,j)\to\mathbb (R\times M,J)\] which is not a trivial cylinder, the points $(s,t)$ such that $\d u=0$ or $\frac{\partial}{\partial \tau}\in\im(\d u(s,t)) $ 
are isolated (see {\cite[Lemma 2.4.1]{McDuffSalamon04}}).

The map $u=(a,f):\mathbb R\times S^1\to \bb R\times M$ is \emph{positively asymptotic} to a $T$-periodic
orbit $\gamma$ at $+\infty$ if $\lim_{s\to +\infty} a(s,t)=+\infty$ and $\lim_{s\to+\infty} f(s,t)=\gamma\left(- Tt\right)$. It is \emph{negatively asymptotic} to $\gamma$ at $-\infty$ if $\lim_{s\to-\infty} a(s,t)=-\infty$ and $\lim_{s\to-\infty} f(s,t)=\gamma\left(+ Tt\right)$. 
It is a theorem of Hofer \cite[Theorem 31]{Hofer93} that holomorphic curves $u:(\mathbb R\times S^1,j)\to (\mathbb R\times M,J)$ with finite Hofer energy are asymptotic to a Reeb periodic orbit at $\pm \infty$ if the contact form $\alpha$ is non-degenerate.

\subsubsection{Conley-Zehnder index}\label{subsubsection_CZ} The Conley-Zehnder index gives the graduation in contact homology.
Consider $(M,\xi=\ker(\alpha))$ a contact manifold, $\gamma$ a $T$-periodic Reeb orbit and $p\in\gamma$. If $\phi_t$ denote the Reeb flow, the map $\d\phi_T(p) :(\xi_p,\d\alpha)\to(\xi_p,\d\alpha)$ is a symplectomorpism. 
A non-degenerate periodic orbit $\gamma$ is called \emph{even} if $\d\phi_T(p)$ has two real positive eigenvalues and \emph{odd} if $\d\phi_T(p)$ has two complex conjugate or two real negative eigenvalues. 
In addition, if $\d\phi_T(p)$ has real eigenvalues, the orbit is said \emph{hyperbolic}. If it has two complex conjugate eigenvalues, the orbit is called \emph{elliptic}. 
Let $\gamma_m$ be the $m$-th multiple of a simple orbit $\gamma_1$. Then $\gamma_m$ is said to be \emph{good} if $\gamma_1$ and $\gamma_m$ have the same parity, otherwise $\gamma_m$ is said to be \emph{bad}. 

The \emph{Conley-Zehnder index} was introduced in \cite{ConleyZehnder84} for paths of symplectic matrices. Our short presentation follows \cite{Laudenbach04}. Let $\Sp(2)$ denote the set of symplectic matrices in $\mathcal M_{2}(\mathbb R)$. 
The open set $\Sp^*=\{A\in \Sp(2), \det(A-I)\neq 0\}$ has two connected components and they are contractible. Any path $R:[0,1]\to \Sp(2)$ such that $R_0=I$ and $R_1\in \Sp^*(2)$ can be extended by a path $(R_t)_{t\in[1,2]}$ in $\Sp^*$ such that
\[R_2=W_+=\left(\begin{array}{cc}
 -1 &  0 \\
0 & -1
\end{array}\right) 
\text{ or }
R_2=W_-=\left(\begin{array}{cc}
 2 &  0 \\
0 & 1/2
\end{array}\right).\]
Using polar decomposition, we write $R_t=S_tO_t$ where $S_t$ is positive-definite and $O_t$ is a rotation of angle $\theta_t$. The \emph{Conley-Zehnder index} of $R$ is
$\mu\left(R\right)= \frac{\theta_2-\theta_0}{\pi}$.
It is an integer and does not depend on the choice of an extension of $R$.

As $\d\phi_t(p) :(\xi_p,\d\alpha)\to(\xi_{\phi_t(p)},\d\alpha)$ is a symplectomorpism, a trivialisation of $\xi$ along $\gamma$ provides us with a path of symplectic matrices. If $\gamma$ is non degenerate, its Conley-Zehnder index is well defined. It gives a relative (depending on a choice of trivialisation) grading of Reeb periodic orbits. Its parity matches with the above definition.

\subsubsection{Sutured contact homology}\label{subsection_sutured_CH}
We now assume that $(M,\xi)$ has a convex boundary $(S,\Gamma)$ and that $\alpha$ is a non-degenerate contact equation adapted to the boundary. 
We sketch the construction of cylindrical sutured contact homology chain complex $(C^\t{cyl}_*(M,\Gamma,\alpha),\partial)$ defined in \cite{CGHH10}. The chain complex $C^\t{cyl}_*(M,\Gamma,\alpha)$ is the $\bb Q$-vector space generated by good Reeb periodic orbits (here we consider simple periodic orbits and their good multiples). 
Choose an almost complex structure~$J$ adapted to the symplectisation. To define $\partial\gamma$, consider the set $\mathcal M_{[Z]}(J,\gamma,\gamma')$
of equivalence classes (modulo reparametrisation) of solutions of the Cauchy-Riemann equation with finite energy, positively asymptotic to $\gamma$, negatively asymptotic to $\gamma'$ and in the relative homotopy class $[Z]$. The $\mathbb R$-translation in $\mathbb R\times M$ induces a $\mathbb R$-action on $\mathcal M_{[Z]}(J,\gamma,\gamma')$.
Due to severe transversality issues for multiply-covered curves, there is no complete proof that $\overline{\mathcal M}_{[Z]}(J,\gamma,\gamma')=\mathcal M_{[Z]}(J,\gamma,\gamma')\raisebox{-0.5ex}{/} \raisebox{-1ex}{$\bb R$}$ admits a smooth structure. We use Hypothesis H to make this assumption.

\begin{hypotheseh}
There exists an abstract perturbation of the Cauchy-Riemann equation such that
$\overline{\mathcal M}_{[Z]}(J,\gamma,\gamma')$ is a union of branched labelled manifolds with corners and rational weights whose dimensions are given by $[Z]$ and the Conley-Zehnder indices of the asymptotic periodic orbits.
\end{hypotheseh}
There exists several approaches to the perturbation of moduli spaces due to Fukaya and Ono \cite{FukayaOno99}, Liu and Tian \cite{LiuTian98}, Hofer, Wysocki and Zehnder~\cite{Hofer08,HWZ11,HWZ} or Cieliebak and Oancea in the equivariant contact homology setting~\cite{BourgeoisOancea10, CieliebakOancea}. 
There also exist partial transversality results due to Dragnev~\cite{Dragnev04}.

The \emph{differential of a periodic orbit $\gamma$} is
\[\partial \gamma=\sum_{\gamma'} \frac{n_{\gamma,\gamma'}}{\kappa(\gamma')}\gamma'\]
where $\kappa(\gamma')$ is the multiplicity of $\gamma'$ and $n_{\gamma,\gamma'}$ denote the signed weighted counts of points in $0$-dimensional components of $\overline{\mathcal M}_{[Z]}(J,\gamma,\gamma')$ for all relative homology classes $[Z]$. In particular, the differential of an even (resp. odd) periodic orbit contains only odd (resp. even) periodic orbits.

Under Hypothesis H, it is reasonable to expect the following: if there exists an open set $U\subset \mathbb R\times M$ containing all the images of $J$-holomorphic curves positively asymptotic to $\gamma$, negatively asymptotic to $\gamma'$, then $U$ contains the images of all solutions of perturbed Cauchy-Riemann equations with the same asymptotics for all small enough abstract perturbations.  

Hypothesis H is the key ingredient to prove the existence and invariance of contact homology. The condition ``$\alpha$ adapted to the boundary'' implies that a family of holomorphic cylinders stays in a compact subset in the interior of $M$.

\begin{theorem}[Colin-Ghiggini-Honda-Hutchings]\label{theoreme_partial_2} Under Hypothesis H,
\begin{enumerate}
 \item $\partial^2=0$;
 \item the associated homology $HC^\t{cyl}_*(M,\xi,\Gamma)$ does not depend on the choice of the contact form, complex structure and abstract perturbation. 
 \end{enumerate}
\end{theorem}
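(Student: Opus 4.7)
The plan is to deduce both statements from the standard SFT machinery, with Hypothesis H supplying the necessary transversality and the ``adapted to the boundary'' condition supplying the compactness needed in the sutured setting. For part (1), I would examine the $1$-dimensional components of $\overline{\mathcal{M}}_{[Z]}(J,\gamma,\gamma'')$ where $\mu(\gamma)-\mu(\gamma'')=2$. By SFT compactness (adapted to symplectisations), every sequence of cylinders of index $2$ converges, after bubbling off, to a holomorphic building; since the almost complex structure is cylindrical and the domains are cylinders, the only possible configurations contributing to the boundary are two-level buildings consisting of an index-$1$ cylinder from $\gamma$ to some intermediate $\gamma'$ glued to an index-$1$ cylinder from $\gamma'$ to $\gamma''$. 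A gluing theorem, performed inside the abstract perturbation framework of Hypothesis H, produces a bijection between such broken configurations and the ends of the compactified $1$-manifold $\overline{\mathcal{M}}_{[Z]}(J,\gamma,\gamma'')/\mathbb{R}$. Because the signed weighted count of boundary points of a compact weighted $1$-manifold vanishes, summing over intermediate orbits (with the combinatorial factor $1/\kappa(\gamma')$) yields $\partial^2\gamma=0$, once the conventions on good/bad orbits are checked to cancel contributions from bad orbits.

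The crucial ingredient specific to the sutured case is the a priori confinement of holomorphic cylinders away from $\partial M$. Here I would apply a maximum principle argument: near $\partial M$, the condition that $\alpha$ is adapted to the convex boundary $(S,\Gamma)$ means that $R_\alpha$ is tangent to $S$ along $\Gamma$, positively transverse to $S_+$ and negatively transverse to $S_-$. Combined with $J\partial_\tau=R_\alpha$ and $J\xi=\xi$, one shows that a neighbourhood of $\mathbb{R}\times\partial M$ is foliated by pseudoconvex (or concave) hypersurfaces, so no non-constant $J$-holomorphic cylinder with the prescribed Reeb asymptotics can touch $\mathbb{R}\times\partial M$. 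This ensures the moduli spaces are contained in a compact subset of the interior and SFT compactness applies unchanged.

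For part (2), I would argue via continuation maps. Given two adapted contact forms $\alpha_0,\alpha_1$ for $\xi$ with adapted almost complex structures $J_0,J_1$, choose a monotone interpolation $(\alpha_s,J_s)$, cylindrical at $\pm\infty$, inducing an exact symplectic cobordism structure on $\mathbb{R}\times M$ with the same boundary confinement property. Counting index-$0$ $\tilde J$-holomorphic cylinders in this cobordism defines a chain map $\Phi:C^\t{cyl}_*(\alpha_0,J_0)\to C^\t{cyl}_*(\alpha_1,J_1)$; the chain-map identity $\partial\Phi-\Phi\partial=0$ comes from examining the boundary of the corresponding $1$-dimensional moduli spaces, where breaking can occur at either the positive or the negative end. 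A reverse interpolation gives $\Psi$ in the other direction, and a two-parameter family of cobordisms interpolating $\Psi\circ\Phi$ with the identity cobordism yields a chain homotopy $K$ with $\Psi\circ\Phi-\Id=\partial K+K\partial$. Independence of the abstract perturbation follows by the same cobordism argument applied to a homotopy of perturbations.

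The principal obstacle is the verification that Hypothesis H, together with the adapted boundary condition, produces moduli spaces whose compactifications are genuinely branched labelled manifolds with corners in the cobordism setting as well: one needs matching abstract perturbations on the symplectisation ends and the cobordism interior so that the gluing maps are orientation-preserving diffeomorphisms onto neighbourhoods of the boundary strata. Beyond this and the usual bad-orbit sign bookkeeping, the remaining steps are routine translations of the closed-manifold SFT arguments to the sutured setting, and I would defer to \cite{CGHH10} for the detailed verifications.
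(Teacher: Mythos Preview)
The paper does not prove this theorem at all: it is stated as a result of Colin--Ghiggini--Honda--Hutchings and attributed to \cite{CGHH10}, with no argument given in the present text. Your sketch is a reasonable outline of the standard SFT argument (compactness plus gluing for $\partial^2=0$, continuation maps via cobordisms for invariance, with the adapted-boundary condition supplying confinement of curves), and this is indeed the strategy pursued in \cite{CGHH10}; but there is nothing in the paper itself to compare it against.
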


If $\partial^2=0$ for some contact form $\alpha$, we denote $HC^\t{cyl}_*(M,\Gamma,\alpha,J)$ the associated homology.
\begin{theorem}[Golovko \cite{Golovko11}]\label{theoreme_Golovko}
Let $\xi$ be a contact structure on $D^2\times S^1$ such that the boundary dividing set has $2n$ longitudinal components and the dividing set of a meridian disc has $n$ components parallel to the boundary.
Then the sutured cylindrical contact homology is the $\mathbb Q$-vector space generated by $n-1$ orbits homotopic to $\{*\}\times S^1$ and by their multiples.
\end{theorem}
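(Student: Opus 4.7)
The plan is to derive this as a special case of the more general Theorem~\ref{homologie_intro_tore_plein}, by verifying that the configuration ``$2n$ longitudinal boundary components and $n$ meridian chords parallel to the boundary'' satisfies hypotheses (C4) and (C5). Condition (C4) is immediate from the assumption on the boundary dividing set.

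For (C5), I would model the parallel-chord diagram on a meridian disc $D$ with marked boundary points $p_1,\dots,p_{2n}$ by the $n$ nested chords $\Gamma_i$ joining $p_i$ to $p_{2n+1-i}$. These cut $D$ into $n+1$ disc regions $R_0,\dots,R_n$: the extremal regions $R_0$ (bounded by $\Gamma_1$ and the short arc from $p_{2n}$ to $p_1$) and $R_n$ (bounded by $\Gamma_n$ and the short arc from $p_n$ to $p_{n+1}$) are bigons, while $R_1,\dots,R_{n-1}$ are rectangles. Cutting $\partial D$ at a point of $\partial D\cap R_0$ and a point of $\partial D\cap R_n$ yields two arcs $I_1$ (containing $p_1,\dots,p_n$) and $I_2$ (containing $p_{n+1},\dots,p_{2n}$). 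Every chord has exactly one endpoint in $I_1$ and one in $I_2$, so the index set $I$ of (C5) is empty, $\bigcup_{i\notin I}\Gamma_i=\Gamma$, and each component of $D\setminus\Gamma$ is a single region containing no chord; the second clause of (C5) then holds vacuously.

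Applying Theorem~\ref{homologie_intro_tore_plein} reduces the statement to a combinatorial count. Each of the $n+1$ disc regions contributes $1$ to $\chi(S_+)+\chi(S_-)=n+1$, and both bigons are extremal, hence
\[
n_+ + n_- \;=\; \chi(S_+)+\chi(S_-)+0-2 \;=\; n-1.
\]
The parity of $n$ dictates whether the two extremal bigons carry equal or opposite signs, and hence how $n-1$ splits between $n_+$ and $n_-$; in every case the total number of simple generators is $n-1$, each homotopic (up to orientation) to $\{*\}\times S^1$, and together with their good multiples they recover the description of the cylindrical sutured contact homology.

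The substantive work is absorbed into the hypothesis of Theorem~\ref{homologie_intro_tore_plein}: constructing an adapted contact form (Proposition~\ref{proposition_adapte}), enumerating all Reeb periodic orbits via the bypass models of Theorems~\ref{theoreme_principal} and \ref{proposition_rocade_allegee}, and checking under Hypothesis H that the cylindrical differential vanishes — typically because distinct generators can be separated by tubular neighbourhoods forbidding any connecting holomorphic cylinder, or because of incompatible Conley--Zehnder parities controlled by Theorem~\ref{theoreme_Maslov}. Once this machinery is in place, the present statement is the pure combinatorial count above applied to the parallel chord diagram, with the main obstacle being this vanishing of the differential rather than the counting itself.
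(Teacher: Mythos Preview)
Your argument is circular. In this paper Theorem~\ref{theoreme_Golovko} is not proved: it is quoted from \cite{Golovko11} and used as an \emph{input} to the proof of Theorem~\ref{homologie_intro_tore_plein}. Concretely, in Section~\ref{section_application_cylindres} the paper takes $(M_{4,\eta},\alpha_b)$, attaches one bypass, observes that the resulting meridian chord diagram has three parallel components, and then invokes Golovko's theorem (the case $n=3$) to conclude that the contact homology in the homotopy classes $[S^1]^l$, $l<0$, vanishes. This is precisely what forces $\ker(\partial_{E^l_1})/\im(\partial_{E^l_1})=0$, and that acyclicity of $E^l_j$ is the key step in the proof of Theorem~\ref{homologie_intro_tore_plein}. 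So you cannot deduce Theorem~\ref{theoreme_Golovko} from Theorem~\ref{homologie_intro_tore_plein} without already assuming at least its $n=3$ instance.

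Your verification that the parallel chord diagram satisfies (C4)--(C5) and the Euler-characteristic count $n_+ + n_- = n-1$ are both correct, but they do not constitute a proof once the circularity is removed. If you want an argument internal to this paper, the relevant statement is Proposition~\ref{proposition_perturbation}: there the author builds an explicit contact form $\alpha_p$ on $M_{n,\eta}\simeq(D^2\times S^1,\xi_\sslash)$ whose only Reeb periodic orbits are $n-1$ even hyperbolic longitudes, so that $\partial=0$ on the chain level for parity reasons and the homology is generated by those $n-1$ orbits and their multiples. That is the paper's direct re-derivation of Golovko's computation, and it does not pass through Theorem~\ref{homologie_intro_tore_plein} at all.
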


\subsubsection{Positivity of intersection}
In dimension $4$, two distinct pseudo-holomorphic curves $C$ and $C'$ have a finite number of intersection points and that each of these points contributes positively to the algebraic intersection number $C\cdot C'$. This result is known as \emph{positivity of intersection} and was introduced by Gromov \cite{Gromov85} and McDuff \cite{McDuff94}.
In this text we will only consider the simplest form of positivity of intersection: let $M$ be a $4$-dimensional manifold, $C$ and $C'$ be two $J$-pseudo-holomorphic curves and $p\in M$ so that $C$ and $C'$ intersect transversely at $p$. 
Consider $v\in T_p C$ and $v'\in T_p C'$ two non-zero tangent vectors. Then $(v,Jv,v',Jv')$ is a direct basis of $T_p M$ ($J$ induces a natural orientation on $T_p M$). In the symplectisation of a contact manifold, positivity of intersection of a pseudo-holomorphic curve with a trivial holomorphic map results in the following lemma.

\begin{lemma}
Let $(M,\xi)$ be a contact manifold, $\alpha$ be a contact form and $J$ be an adapted almost complex structure. Consider $U$ an open subset of $\mathbb C$, $u=(a,f): U\to \mathbb R\times M$ a $J$-pseudo holomorphic curve and $p\in U$ such that $\d f_p$ is injective and transverse to $R(f(p))$.Then, $R(f(p))$ is positively transverse to $\d f_p$.
\end{lemma}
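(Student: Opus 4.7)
The plan is to compare $u$ with the trivial pseudo-holomorphic map built from the Reeb flow through $f(p)$ and then invoke the positivity of intersection lemma stated just above. Let $\phi_t$ denote the Reeb flow of $\alpha$ on $M$ and, for $\epsilon>0$ sufficiently small, define
\[ v:\bb R\times(-\epsilon,\epsilon)\to\bb R\times M,\qquad v(s,t)=\bigl(a(p)+s,\ \phi_t(f(p))\bigr). \]
Since $\partial_s v=\tfrac{\partial}{\partial\tau}$, $\partial_t v=R_\alpha(\phi_t(f(p)))$, and $J\tfrac{\partial}{\partial\tau}=R_\alpha$ by the adaptedness of $J$, the map $v$ will be $J$-pseudo-holomorphic; this is the ``trivial pseudo-holomorphic map over a Reeb orbit'' mentioned earlier in the text. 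Notice also that $v(0,0)=u(p)$.

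Next I would check that $u$ and $v$ intersect transversely at $u(p)$. The tangent plane of $v$ there is $\Vect\bigl(\tfrac{\partial}{\partial\tau},R_\alpha(f(p))\bigr)$. Any vector $w\in \d u_p(T_pU)\cap T_{(0,0)}v(\bb R^2)$ projects, under $\bb R\times M\to M$, into $\bb R\cdot R_\alpha(f(p))\cap \d f_p(T_pU)$; this intersection is trivial by the transversality assumption on $\d f_p$, so $w$ is a multiple of $\tfrac{\partial}{\partial\tau}$. Writing $w=d_1\,\d u_p(e_1)+d_2\,\d u_p(e_2)$ and using the injectivity of $\d f_p$ then forces $d_1=d_2=0$, so $w=0$.

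Applying the positivity of intersection lemma with tangent vectors $\d u_p(e_1)\in T_{u(p)}u(U)$ and $\tfrac{\partial}{\partial\tau}\in T_{u(p)}v(\bb R^2)$, and using $J\,\d u_p(e_1)=\d u_p(e_2)$ together with $J\tfrac{\partial}{\partial\tau}=R_\alpha$, I obtain that
\[ \bigl(\d u_p(e_1),\ \d u_p(e_2),\ \tfrac{\partial}{\partial\tau},\ R_\alpha(f(p))\bigr) \]
is a direct basis of $T_{u(p)}(\bb R\times M)$ for the symplectic orientation, which is represented by the volume form $\d\tau\wedge\alpha\wedge\d\alpha$.

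The remaining step, where the only (minor) subtlety lives, is the translation to $T_{f(p)}M$. Reordering the above basis as $(\tfrac{\partial}{\partial\tau},R_\alpha(f(p)),\d u_p(e_1),\d u_p(e_2))$ is the product of two transpositions and therefore preserves orientation. Evaluating $\d\tau\wedge\alpha\wedge\d\alpha$ on this reordered basis collapses, using $\alpha(\tfrac{\partial}{\partial\tau})=0$, $\alpha(R_\alpha)=1$, and the fact that $\d\alpha$ annihilates $\tfrac{\partial}{\partial\tau}$ while $\d\tau$ annihilates vectors tangent to $M$, to $\alpha\wedge\d\alpha\bigl(R_\alpha(f(p)),\d f_p(e_1),\d f_p(e_2)\bigr)$. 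Positivity of this number is exactly the statement that $(R_\alpha(f(p)),\d f_p(e_1),\d f_p(e_2))$ is a direct basis of $(T_{f(p)}M,\alpha\wedge\d\alpha)$, i.e.\ that $R_\alpha(f(p))$ is positively transverse to $\d f_p$.
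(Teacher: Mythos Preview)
Your proof is correct and is exactly the argument the paper has in mind: the lemma is stated immediately after the remark that ``positivity of intersection of a pseudo-holomorphic curve with a trivial holomorphic map results in the following lemma,'' and no further proof is given there. You have simply written out that sentence in full --- constructing the trivial cylinder over the Reeb orbit through $f(p)$, checking transverse intersection, applying the positivity statement, and unwinding the orientation --- so there is nothing to add.
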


The hypothesis ``$\d f_p$ injective and transverse to $R(f(p))$'' is generic. We will use positivity of intersection in the following situation to carry out explicit computations of sutured contact homology in Sections \ref{section_surface_epaissie} and \ref{section_application}. 
Let $(M,\xi=\ker(\alpha))$ be a contact manifold with convex boundary and $\alpha$ be a contact form. We assume there exist two sets of Reeb chords of $\partial M$, denoted by $X_+$ and $X_-$, with non-empty interior. Let $J$ be an almost complex structure adapted to $\alpha$.

\begin{lemma}\label{proposition_cordes_holomorphes}
Let $u :(\bb R\times S^1,j)\to (\bb R\times M,J)$ be a $J$-holomorphic cylinder asymptotic to $\gamma_+$ and $\gamma_-$. Assume that for any Reeb chord $c\in X_\pm$ there exists a path of properly embedded arcs in $M\setminus(\gamma_+\cup\gamma_-)$ connecting $c$ and a Reeb chord in $\o(X_\mp)$ with reversed orientation. Then $\im(u)$ is disjoint from $\o(X_+)\cup\o(X_-)$.
\end{lemma}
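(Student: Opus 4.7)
The plan is to exploit positivity of intersection for the lifted strips $\bb R\times c$ of Reeb chords, which are $J$-holomorphic in the symplectisation: the relation $J\partial_\tau=R_\alpha$ makes the tangent plane $\Vect(\partial_\tau,R_\alpha)$ of $\bb R\times c$ a $J$-invariant subspace at every point, so the parametrisation $(\sigma,t)\mapsto(\sigma,c(t))$ is $J$-holomorphic. The idea is to compute the algebraic intersection number $u\cdot(\bb R\times c)$ for a Reeb chord $c\in\o(X_+)$ in two ways: directly, where positivity of intersection forces the count to be non-negative; and along the homotopy supplied by the hypothesis, to a Reeb chord of $\o(X_-)$ with reversed orientation, where the sign flip produces a non-positive count. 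The combination pins the count at zero and therefore forces $\im(u)\cap c=\varnothing$.

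Because $\alpha$ is adapted to the boundary, the image of $u$ lies in $\bb R\times\o(M)$, hence is disjoint from $\bb R\times\partial c$; because $u$ is asymptotic to $\gamma_\pm$ at $\pm\infty$ and $c$ is bounded away from $\gamma_+\cup\gamma_-$, the image of $u$ stays uniformly far from $\bb R\times c$ outside a compact subcylinder of $\bb R\times S^1$. Thus $\im(u)\cap(\bb R\times c)$ is compact, and after a $C^\infty$-small perturbation of $c$ inside the open family $\o(X_+)$ it consists of finitely many transverse intersection points at each of which $\d f$ is transverse to $R_\alpha$. The positivity-of-intersection lemma above then yields $u\cdot(\bb R\times c)\geq 0$, with equality if and only if $\im(u)\cap c=\varnothing$.

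The hypothesis now furnishes a homotopy $(c_s)_{s\in[0,1]}$ of properly embedded arcs in $M\setminus(\gamma_+\cup\gamma_-)$ from $c_0=c$ to an arc $c_1$ which is a Reeb chord $c'\in\o(X_-)$ traversed in the reverse of its Reeb orientation. Lifting to a homotopy of strips $\bb R\times c_s$, whose boundaries lie on $\bb R\times\partial M$ and which stay at a common positive distance from $\bb R\times(\gamma_+\cup\gamma_-)$ by compactness of $[0,1]$, the asymptotic argument of the previous paragraph confines all intersections with $\im(u)$ to a common compact subcylinder, so $u\cdot(\bb R\times c_s)$ is independent of $s$. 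Hence $u\cdot(\bb R\times c)=u\cdot(\bb R\times c_1)=-\,u\cdot(\bb R\times c')$; but the previous paragraph applied at $c'\in\o(X_-)$ gives $u\cdot(\bb R\times c')\geq 0$, so both numbers vanish and $\im(u)$ avoids every chord of $\o(X_+)$. The symmetric argument treats $\o(X_-)$. The technically delicate step, which I would isolate as a short preliminary lemma, is the homotopy-invariance of the algebraic intersection in this non-compact setting: it rests on Hofer's exponential asymptotic estimates for $u$ near $\gamma_\pm$ together with the uniform positive distance of the family $(c_s)$ from these orbits.
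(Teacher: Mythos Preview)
Your proof is correct and follows essentially the same approach as the paper: both arguments exploit positivity of intersection for the $J$-holomorphic strips $\bb R\times c$, use the homotopy hypothesis to equate $u\cdot(\bb R\times c_+)$ with $-u\cdot(\bb R\times c_-)$, and conclude both vanish since each is non-negative. Your write-up is considerably more careful than the paper's three-line proof about the technical underpinnings (compactness of the intersection locus, homotopy invariance in the non-compact setting via asymptotic control), which is commendable but not a genuinely different route.
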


\begin{proof}
Generically a Reeb chord is transverse to $u$. Let $c_+$ be a Reeb chord in $X_+$ transverse to $u$. There exists $c_-$ in $X_-$ transverse to $u$ and connected to $-c_+$ by a path of properly embedded arcs in $M\setminus(\gamma_+\cup\gamma_-)$. By positivity of intersection, $c_\pm\cdot u\geq 0$. Yet $c_+\cdot u=-c_-\cdot u$ and $c_+$ does not intersect $\im(u)$.
\end{proof}

\section{Thickened convex surfaces}\label{section_surface_epaissie}

In this section we study the simplest example of contact manifold with boundary. We compute its sutured contact homology and apply our main theorem. 
Let $S$ be a convex surface and $\Gamma=\bigcup_{i=0}^n \Gamma_i$ be a dividing set of $S$. Assume $\Gamma$ has no contractible component. Let $M=S\times[-1,1]$ be the product neighbourhood of $S$ with invariant contact structure. 
This contact structure is tight \cite[Théoreme 4.5a]{Giroux01}. Let $\gamma_0$ be an attachment arc. The multi-curve $\Gamma\times\{\pm1\}$ is a dividing set of the boundary. Giroux \cite[Proposition 2.1]{Giroux91} proved that there exists a contact form $\alpha_0$ such that 
\begin{itemize}
 \item[(C7)] for all $n=1,\dots,n$, there exists a neighbourhood $U_i$ of $\Gamma_i$ with coordinates \[(x,y,z)\in [-x_\t{max},x_\t{max}]\times [-1,1]\times S^1\] such that
 \begin{itemize}
  \item[$\bullet$] $S\cap U_i\simeq [-x_\t{max},x_\t{max}]\times\{0\}\times S^1$;
  \item[$\bullet$] $\Gamma_i\simeq\{0\}\times\{0\}\times S^1$;
  \item[$\bullet$] $\alpha_0=f(x)\d y+\cos(x)\d z$ where $f :[-x_\t{max},x_\t{max}]\to\mathbb R$ is non-decreasing,  $f=\pm1$ near $\pm x_\t{max}$ and $f=\sin$ near $0$;
 \end{itemize}
 \item[(C8)]  $\alpha_0=\beta_\pm\pm\d y$ on $S_\pm\times[-1,1]\setminus U$ where $\pm\d\beta_\pm>0$ and $U=\bigcup_{i=0}^n U_i$.
\end{itemize}
The Reeb periodic orbits are exactly the curves $\Gamma_i\times\{t\}$ for all $t\in[-1,1]$ and $i=0,\dots, n$ (see Figure \ref{figure_surface_epaissie}).
\begin{figure}[here]
\includegraphics{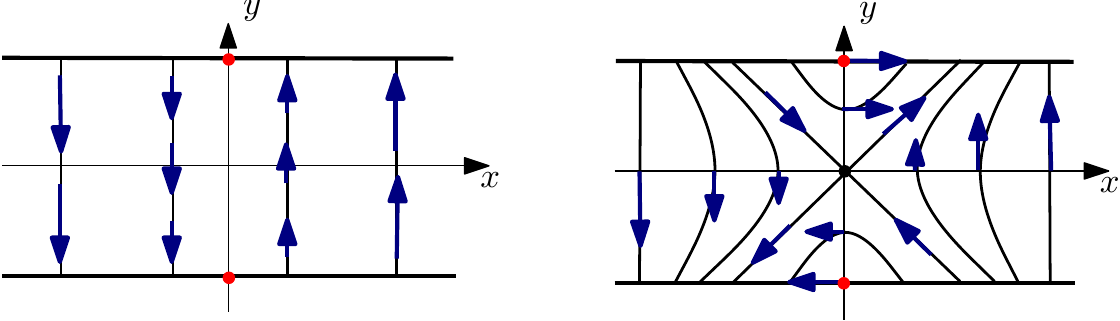}
\caption{The vector fields $R_{\alpha_0}$ and $R_{\alpha_p}$ projected on the $(x,y)$-plane}\label{figure_surface_epaissie}\label{figure_perturbation_surface_epaissie}
\end{figure}
The contact form is degenerate and is not adapted to the boundary. In $U_i$, near $x=0$, we perturb $\alpha_0$ into
\[\alpha_p=\sin(x)\d y+(1+k(x)l(y))\cos(x)\d z\]
where
\begin{itemize}
 \item[(C9)] $k$ is a cut-off function such that $k=1$ near $0$ and $k=0$ near $\pm x_\t{max}$;
 \item[(C10)] $l$ is a $\mathcal{C}^\infty$-small strictly convex function with minimum $0$ at $0$.
\end{itemize}
The associated Reeb vector field is 
\[R_{\alpha_p}=\frac{1}{p(x,y)}\left(\begin{array}{c}
 l'(y)k(x)\cos(x)  \\
p(x,y)\sin(x)-k'(x)l(y)\cos(x) \\
\cos(x)
\end{array}\right).\] 
\begin{proposition}\label{proposition_OP_alpha_p}
The contact form $\alpha_p$ is non-degenerate, adapted to the boundary. Its Reeb periodic orbits are the curves $\Gamma_i\times\{0\}$ for $i=0,\dots, n$. These orbits are even and hyperbolic.
\end{proposition}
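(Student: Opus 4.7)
The proof naturally splits into three tasks: checking that $\alpha_p$ is adapted to $\partial M$, identifying all Reeb periodic orbits of $\alpha_p$, and analysing the linearised Poincaré return map along each orbit $\Gamma_i\times\{0\}$.

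First, to verify that $\alpha_p$ is adapted to $\partial M = S\times\{\pm 1\}$, I would analyse the Reeb vector field region by region. Outside $\bigcup_i U_i$ we have $\alpha_p=\alpha_0=\beta_\pm\pm\d y$, so $R_{\alpha_p}=\pm\partial_y$ is transverse to $\partial M$ with the correct sign. Inside $U_i$ at $y=\pm 1$, the $y$-component of $R_{\alpha_p}$ equals $\sin(x)-k'(x)l(\pm 1)\cos(x)/p(x,\pm 1)$; since $k'(x)=0$ on the interval where $k\equiv 1$ (a neighbourhood of $x=0$), this reduces to $\sin(x)$ there and vanishes only at $x=0$. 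On the transition region where $k'\neq 0$, the $\mathcal{C}^1$-smallness of $l$ keeps the correction dominated by $\sin(x)$, which is bounded away from $0$ on that compact set, so no additional tangency appears. Thus the tangency locus with $\partial M$ is exactly $\Gamma\times\{\pm 1\}$. At $(0,\pm 1)$ the $x$-component of $R_{\alpha_p}$ is $l'(\pm 1)/(1+l(\pm 1))$, whose sign agrees with the sign of $l'(\pm 1)$; strict convexity of $l$ together with $l'(0)=0$ gives $l'(1)>0$ and $l'(-1)<0$, so $R_{\alpha_p}$ points from $\Gamma\times\{1\}$ into $\{x>0\}\times\{1\}$ and from $\Gamma\times\{-1\}$ into $\{x<0\}\times\{-1\}$, which are the $S_+$ sides of the two boundary components. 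Lemma~\ref{lemme_convexite_Reeb} then yields the adapted condition.

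For the identification of periodic orbits, I would argue that any closed Reeb orbit lies in a single $U_i$ and projects under $(x,y,z)\mapsto(x,y)$ to the saddle equilibrium $(0,0)$. Outside $U$, $R_{\alpha_p}=\pm\partial_y$, so the orbit is a vertical segment at frozen $S$-coordinate and ends at $\partial M$; no closed orbit lies in $M\setminus U$ and, by connectedness and disjointness of the $U_i$, a closed orbit is contained in a single $U_i$. The key observation is that
\[H(x,y)=\cos(x)\bigl(1+k(x)l(y)\bigr)\]
is a first integral of the $(x,y)$-projection of $R_{\alpha_p}$: a direct computation gives $\d H(R_{\alpha_p})=0$ regardless of the normalisation factor in $\dot z$. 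Since $H=\cos(x_\t{max})$ identically on the side boundary $\{|x|=x_\t{max}\}$ of $U_i$ while $H>\cos(x_\t{max})$ in its interior, an orbit that reaches the side boundary must remain on it (as $H$ is conserved); there $\dot x=0$ and $\dot y\neq 0$, so the orbit exits through $\partial M$ without closing. A closed orbit therefore avoids the side boundary, and since $\dot z>0$ on $\{|x|<x_\t{max}\}$, its $(x,y)$-projection is a smooth closed curve of the planar Hamiltonian-type flow. The critical points of $H$ solve $\cos(x)k(x)l'(y)=0$ and $-\sin(x)(1+k(x)l(y))+\cos(x)k'(x)l(y)=0$; strict convexity of $l$ and the shape of $k$ force $y=0$ then $x=0$, with Hessian $\mathrm{diag}(-1,l''(0))$, which is a saddle. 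A smooth closed level curve of $H$ in the interior of the rectangle would have to enclose a local extremum of $H$ by the maximum principle, contradicting the uniqueness and saddle nature of the critical point. Hence the only closed $(x,y)$-trajectory is the equilibrium, and the Reeb periodic orbits of $\alpha_p$ are exactly the $\gamma_i=\Gamma_i\times\{0\}$ and their multiples.

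Finally, to establish non-degeneracy, evenness, and hyperbolicity, I would linearise $R_{\alpha_p}$ at $(x,y)=(0,0)$. Using $l(0)=l'(0)=0$, $k(0)=1$, $k'(0)=0$, the transverse linearisation is
\[\begin{pmatrix}\dot x\\\dot y\end{pmatrix}=\begin{pmatrix}0 & l''(0)\\ 1 & 0\end{pmatrix}\begin{pmatrix}x\\y\end{pmatrix},\qquad \dot z=1.\]
The matrix has real eigenvalues $\pm\sqrt{l''(0)}$ (since $l''(0)>0$ by strict convexity), so the linearised Poincaré return map at the period $T=2\pi$ is the exponential of $2\pi$ times this matrix, with eigenvalues $e^{\pm 2\pi\sqrt{l''(0)}}$: real, positive, and distinct from $1$. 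This proves that $\gamma_i$ is non-degenerate, hyperbolic, and even, and the argument applies equally to the $m$-fold covers, whose return maps have eigenvalues $e^{\pm 2\pi m\sqrt{l''(0)}}$. The most delicate step of the proof is the non-existence argument above, where one must combine the behaviour outside $U$ (monotonicity of $y$ at frozen $S$-coordinate) with the Morse-theoretic consequences of the first integral $H$ inside each $U_i$ to rule out any closed orbit threading between the several regions of $M$.
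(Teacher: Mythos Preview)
Your proof is correct and follows the same overall architecture as the paper's: confine closed orbits to the $U_i$, use a first integral of the $(x,y)$-projection to reduce to equilibria, then linearise. Two points are worth noting.

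First, you supply the verification that $\alpha_p$ is adapted to $\partial M$, which the paper states but does not argue; your computation of the tangency locus and of the sign of $R_x$ at $(0,\pm 1)$ is correct.

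Second, your choice of first integral is sharper than the paper's. The paper asserts that in the region $k=1$ the projected Reeb field is collinear to the Hamiltonian vector field of $l(y)\cos(x)$; but a direct check shows the projection is collinear to $\bigl(l'(y)\cos x,\,(1+l(y))\sin x\bigr)$, which is the Hamiltonian vector field of your $H(x,y)=\cos(x)\bigl(1+k(x)l(y)\bigr)$ (with $k=1$), not of $l(y)\cos(x)$. The paper's statement is only approximately true for small $l$, whereas your $H$ is an exact first integral on all of $U_i$. This lets you run the saddle/no-closed-level-curve argument globally, without first invoking the $\mathcal C^\infty$-smallness of $l$ to trap orbits in $\{k=1\}$. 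The linearisation you give, with transverse eigenvalues $\pm\sqrt{l''(0)}$ and return-map eigenvalues $e^{\pm 2\pi m\sqrt{l''(0)}}$, agrees with the paper's trace computation (modulo an apparent typo there: $\sqrt{t\,l''(0)}$ should read $t\sqrt{l''(0)}$).
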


\begin{proof}

In $M\setminus U$, the Reeb vector field is $R=\pm\frac{\partial}{\partial y}$. Thus the Reeb periodic orbits are contained in $U$. In addition, if $l$ is small enough, they are contained in the neighbourhood where $k=1$. In this neighbourhood, the projection of the Reeb vector field in the plane $(x,y)$ is collinear to the Hamiltonian vector field of $(x,y)\mapsto l(y)\cos(x)$. 
There are no closed levels so the Reeb periodic orbits correspond to critical points of the Hamiltonian (see Figure~\ref{figure_perturbation_surface_epaissie}).
By linearising the Reeb flow $\phi_t$ along $\Gamma_i\times\{0\}$ we get \[\tr\left(\d\phi_t(0,0,0)_{\vert\xi}\right)=2\cosh\left(\sqrt{tl''(0)}\right)>2.\] Thus the Reeb periodic orbits are even and hyperbolic.
\end{proof}

\begin{proof}[Proof of Proposition \ref{homologie_surface_epaissie_sr_intro}]
Proposition \ref{homologie_surface_epaissie_sr_intro} is a corollary of Proposition \ref{proposition_OP_alpha_p}. Indeed $\partial=0$ in $C_*^\t{cyl}(M,\Gamma\times\{\pm1\},\alpha_p)$ as all the orbits are even.
\end{proof}

We now apply our main theorem to $M$. 
By Proposition \ref{proposition_adapte} and \cite[Proposition~2.1]{Giroux91} there exists an isotopic the contact structure with a contact form satisfying conditions (C1) and (C2) and coordinates in a neighbourhood of $\Gamma$, compatible with the coordinates in $Z$ and satisfying conditions (C7) and (C8). We perturb the contact form into 
\[\alpha_b=f(x)\d y+(1+k(x)l(y)m(z))\cos(x)\d z \] where $k$ and $l$ satisfy conditions (C9) and (C10) and
\begin{itemize}
 \item[(C11)] $m$ is a smooth cut-off function, $m=0$ on $[-z_\t{max},z_\t{max}]$ and $m=1$ outside a small neighbourhood of $[-z_\t{max},z_\t{max}]$.
\end{itemize}
Let $p(x,y,z)=1+k(x)l(y)m(z)$. The Reeb vector field is 
\begin{equation*}\label{R_F_2}
R_{\alpha_b}=\frac{1}{p-k'(x)l(y)m(z)\cos(x)\sin(x)}\left(\begin{array}{c}
 l'(y)k(x)m(z)\cos(x)  \\
p\sin(x)-k'(x)l(y)m(z)\cos(x) \\
\cos(x)
\end{array}\right) .
\end{equation*}

\begin{proposition}\label{proposition_OP_alpha_p_bis}
The contact form $\alpha_b$ is non-degenerate, adapted to the boundary. Its Reeb periodic orbits are the curves $\Gamma_i\times\{0\}$ for $i=0,\dots, n$. These orbits are even and hyperbolic.
\end{proposition}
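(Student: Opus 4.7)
The proof parallels Proposition~\ref{proposition_OP_alpha_p} with extra care for the $z$-dependence introduced by $m$. By condition~(C8), $\alpha_b$ equals $\beta_\pm \pm \d y$ outside $U$, so $R_{\alpha_b}=\pm\partial_y$ and no periodic orbit lies in $M\setminus U$. For sufficiently small $l$, all periodic orbits are confined to the sub-region where $k(x)=1$; there, setting $p=1+l(y)m(z)$, the Reeb formula collapses to $R_{\alpha_b}=p^{-1}\bigl(l'(y)m(z)\cos x,\,p\sin x,\,\cos x\bigr)$, and since $R^z=\cos x/p>0$ near $\Gamma$ we may parametrise the $(x,y)$-flow by $z$:
\[
\frac{\d x}{\d z}=l'(y)m(z),\qquad \frac{\d y}{\d z}=(1+l(y)m(z))\tan x.
\]
This is a $z$-dependent Hamiltonian system for $H(x,y,z)=(1+l(y)m(z))\cos x$ with symplectic form $\cos x\,\d x\wedge\d y$. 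The equilibrium $(x,y)=(0,0)$ is independent of $z$ and yields the periodic orbit $\Gamma_i\times\{0\}$, along which $R_{\alpha_b}=\partial_z$; adaptation to the boundary is verified by evaluating $R_{\alpha_b}$ at $(0,\pm 1,z)$, where $R^y$ vanishes exactly on $\Gamma_i\times\{\pm 1\}$ and $R^x$ points toward $S_+$ whenever $m(z)>0$.

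To rule out other periodic orbits in $U$, introduce the $z$-independent function $H_1(x,y)=(1+l(y))\cos x$. A direct computation gives $\d H_1/\d z=l'(y)\sin x\,(1-m(z))$, so $H_1$ is conserved throughout the $\{m=1\}$ arc of $S^1$. In $Z=\{m=0\}$ the dynamics reduce to $\d x/\d z=0$ and $\d y/\d z=\tan x$, so $x$ stays constant. For a closed orbit making one revolution in $z$, let $(x_0,y_-)$ and $(x_0,y_+)=(x_0,y_-+L_0\tan x_0)$ denote its positions at entry and exit of $Z$; closing up requires the Hamiltonian flow in the $\{m=1\}$ arc to carry $(x_0,y_+)$ back to $(x_0,y_-)$. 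Conservation of $H_1$ forces $l(y_+)=l(y_-)$, and strict convexity of $l$ with minimum at $0$ gives either $y_+=y_-$ (so $\tan x_0=0$, i.e.\ $x_0=0$) or $y_+$ and $y_-$ lie on opposite sides of $0$. In the latter case $(x_0,y_\pm)$ lie on different connected components of the level curve $\{H_1=(1+l(y_+))\cos x_0\}$ (one in the half-plane $y>0$, one in $y<0$), so the Hamiltonian flow cannot connect them; this case is impossible. With $x_0=0$ the orbit is confined to the $y$-axis, and periodicity in the $\{m=1\}$ arc (where $\d x/\d z=l'(y_-)$) forces $y_-=0$. Hence $\Gamma_i\times\{0\}$ is the unique periodic orbit in $U$; the short transition regions $\{0<m<1\}$ are handled by continuity in the small perturbation $l$.

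Linearising $R_{\alpha_b}$ along $\Gamma_i\times\{0\}$ yields $\dot x=l''(0)m(z)y$ and $\dot y=x$ on the transverse plane, i.e.\ Hill's equation $\ddot y=l''(0)m(z)y$ with non-negative, not identically zero, coefficient. Then $\tfrac{1}{2}(y^2)''=\dot y^2+l''(0)m(z)y^2\geq 0$, so $y(z)^2$ is convex and no nontrivial solution has more than one zero; the monodromy over one $z$-period is therefore hyperbolic with real positive eigenvalues $\lambda>1>\lambda^{-1}>0$ and trace $>2$. Hence $\Gamma_i\times\{0\}$ is an even hyperbolic periodic orbit and $\alpha_b$ is non-degenerate. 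The main difficulty lies in the uniqueness argument, specifically in treating the short transition regions of $m$ where $H_1$ is only quasi-conserved; one can alternatively invoke hyperbolic persistence at $(0,0)$ starting from the degenerate case $l\equiv 0$, whose return map has the whole line $\{x=0\}$ as fixed-point set, broken by the perturbation to the single hyperbolic fixed point $(0,0)$.
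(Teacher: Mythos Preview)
Your uniqueness argument via $H_1$ has a gap. You assert that if $y_+$ and $y_-$ lie on opposite sides of~$0$ then $(x_0,y_\pm)$ sit on different connected components of $\{H_1=c\}$, but this is only true when $c>1$. Since $c=(1+l(y_+))\cos x_0$ and $l$ is $\mathcal C^\infty$-small, one easily has $c<1$, and then the level curve crosses $y=0$ at $x=\pm\arccos c$ and is connected. What actually rules out this case is the \emph{direction} of the flow on the $\{m=1\}$ arc: with $x_0>0$ and $y_+>0$ one has $\d x/\d z=l'(y)>0$ and $\d y/\d z=(1+l(y))\tan x>0$, so both coordinates increase and the orbit can never return to $(x_0,y_-)$. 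You also restrict to orbits making a single $z$-revolution and wave away the transition arcs $\{0<m<1\}$; neither is innocuous, since your argument rests on exact conservation of $H_1$.

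The paper bypasses all of this with a short sign argument, uniform in~$z$: $R^y$ has the sign of $x$ and $R^x$ the weak sign of $y$, so an orbit meeting $\{xy\le 0\}$ is driven to $|y|=1$ in one time direction, while an orbit in $\{xy\ge 0\}$ must first enter $\{xy\le 0\}$. Hence every orbit other than $\Gamma_i\times\{0\}$ meets the boundary and cannot be periodic; no splitting by values of $m$, no conservation law, and no separate treatment of higher periods is needed. Your Hill's-equation argument for hyperbolicity (convexity of $y^2$ forcing at most one zero, hence real positive eigenvalues) is correct and in fact more detailed than the paper's trace inequality.
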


\begin{proof}
As in the proof of Proposition \ref{proposition_OP_alpha_p}, for $l$ small enough, the Reeb periodic orbits are contained in the neighbourhood where $k=1$. 
In this neighbourhood, any Reeb orbit intersecting the set $xy\leq 0$ meets the boundary or is equal to $\Gamma_i\times\{0\}$ (see Figure \ref{figure_perturbation_surface_epaissie}). 
In addition, any Reeb orbit intersecting the set $xy\geq 0$ meets the set $xy\leq 0$. 
Thus the Reeb periodic orbits are the curves $\Gamma_i\times\{0\}$. By linearising the Reeb flow $\phi_t$ along $\Gamma_i\times\{0\}$, we get \[\tr\left(\d\phi_t(0,0,0)_{\vert\xi}\right)>
\tr\left(\d\phi_0(0,0,0)_{\vert\xi}\right)=2.\qedhere\]
\end{proof}

We denote by $\Gamma_0$ the connected component of $\Gamma$ which intersects the interior of $\gamma_0$. If $\gamma_0$ intersects three distinct components of $\Gamma$, then $\gamma_0$ intersect $U_0$ along $[-x_\t{max},x_\t{max}]\times\{1\}\times\{0\}$. 
If $\gamma_0$ intersects only two distinct components of $\Gamma$, there exists $z_1\in S^1$ such that $\gamma_0$ intersects $U_0$ along $[-x_\t{max},x_\t{max}]\times\{1\}\times\{0\}$ and $[0,x_\t{max}]\times\{1\}\times\{z_1\}$ (reverse the orientation of $S$ if necessary). 
If $c$ is a Reeb chord of $\gamma_0$, we denote by $\overline{c}$ the union of $c$ and the arc of $\gamma_0$ joining $c^+$ and $c^-$. Recall that $c^+$ and $c^-$ are the endpoints of $c$.

\begin{proposition}\label{proposition_cordes}The contact form $\alpha_b$ satisfies condition (C3) for all $K>0$. In addition (see Figure \ref{figure_cordes}),
\begin{itemize}
  \item if $\gamma_0$ intersects three distinct components of $\Gamma$, the set of Reeb chords of $\gamma_0$ is $\{c_k, k\in\mathbb N^*\}$ and $\left[\overline{c_k}\right]=\left[\Gamma_0\right]^k$;
  \item if $\gamma_0$ corresponds to a trivial bypass attachment, the set of Reeb chords of $\gamma_0$ is $\{c_k,d_k, k\in \mathbb N^*\}$ where $\left[\overline{c_k}\right]=[\overline{d_k}]=\left[\Gamma_0\right]^k$ and the $z$-coordinates of $c_k^+$ and $d_k^+$ are respectively $0$ and $z_1$;
  \item if $\gamma_0$ corresponds to an overtwisted bypass attachment, the set of Reeb chords of $\gamma_0$ is $\{d_0,c_k,d_k, k\in \mathbb N^*\}$ where $c_k$ and $d_k$ are as above for $k\in\mathbb N^*$, $\overline{d_0}$ is contractible and the $z$-coordinate of $d_0^+$ is $z_1$.
\end{itemize}
In a trivialisation that does not intersect small translations of the Reeb chord along~$\frac{\partial}{\partial y}$, we have $\tilde{\mu}(c_k)=1$ and $\tilde{\mu}(d_k)=0$.
\end{proposition}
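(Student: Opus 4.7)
The plan is to characterise the Reeb chords of $\gamma_0$ by tracking trajectories of $R_{\alpha_b}$ that leave $\gamma_0$ at a point where $R$ enters $M$ and return to $\gamma_0$ where $R$ exits. Two regimes alternate along the $z$-cylinder inside $U_0$: inside the bypass slab $\{|z|\le z_\t{max}\}$, $m\equiv 0$ and the Reeb field is the linear shear $\sin(x)\partial_y+\cos(x)\partial_z$ preserving $x$; outside the slab, $m\equiv 1$ and the Reeb field coincides with $R_{\alpha_p}$ from Section~\ref{section_surface_epaissie}. Repeating the computation in the proof of Proposition~\ref{proposition_OP_alpha_p_bis}, the $(x,y)$-projection of $R_{\alpha_p}$ admits the first integral $G(x,y)=(1+l(y))\cos(x)$, whose unique critical point is a hyperbolic saddle at $(0,0)$ corresponding to $\Gamma_0\times\{0\}$. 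The level sets of $G$ above the saddle value are compact arcs in $\{y>0\}$ joining two points on $y=1$ and symmetric under $x\mapsto -x$.

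Trajectories starting at $(x_-,1,0)\in\gamma_0\cap U_0$ with $x_-<0$ cross the initial bypass slab as a straight segment to $(x_-,y_1,z_\t{max})$ with $y_1=1+z_\t{max}\tan(x_-)$, then alternate between $\alpha_p$-passages (where $(x,y)$ follows a level set of $G$ while $z$ increases at rate $\cos(x)/p$) and bypass crossings (where $x$ is frozen and $(y,z)$ evolves linearly). Combining the symmetry of $G$ in $x$ with the preservation of $x$ in each bypass slab, the first return to $y=1$ occurs at a point of the form $(-x_-,1,\Delta z(x_-))$ for some continuous function $\Delta z$ of $x_-\in(-x_*,0)$. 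I would prove that $\Delta z$ is strictly monotone, with $\Delta z(x_-)\to+\infty$ as $x_-\to 0^-$ (the trajectory spends arbitrarily long time near the saddle) and $\Delta z(x_-)\to z_\t{min}>0$ as $x_-\to -x_*$. Reeb chords of $\gamma_0$ are then in bijection with the preimages of the strand $z$-coordinates under $\Delta z$ modulo $2\pi$: in the ``three distinct components'' case only $z=0$ is available, giving the family $\{c_k\}_{k\in\mathbb N^*}$ from $\Delta z(x_-)=2\pi k$; in the ``two components'' case the extra strand at $z=z_1$ gives $\{d_k\}_{k\in\mathbb N^*}$ from $\Delta z(x_-)=z_1+2\pi k$, and the trivial versus overtwisted dichotomy amounts to $z_1<z_\t{min}$ (no extra chord) versus $z_1\ge z_\t{min}$ (extra chord $d_0$ with $\Delta z=z_1$). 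The homology classes $[\overline{c_k}]=[\overline{d_k}]=k[\Gamma_0]$ follow from counting wraps around the $z$-circle of $U_0$, while $\overline{d_0}$ is contractible because it bounds an embedded disc in $U_0$. Transversality (C3) is an immediate consequence of the strict monotonicity of $\Delta z$.

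For the Conley-Zehnder indices I would use the trivialisation $(e_1,e_2)=(\partial_x,\cos(x)\partial_y-\sin(x)\partial_z)$ of $\xi$, which is induced by a collar annulus of the chord obtained by small translation along $-\partial_y$ and therefore matches the trivialisation specified in the statement. The linearised Reeb flow is a shear fixing $e_1=\partial_x$ in each bypass slab, contributing no rotation, and is conjugate, through each $\alpha_p$-passage, to the time map of the hyperbolic Hamiltonian $G$ near the saddle. A direct computation of the accumulated angle places $\theta_{T(c_k)}\in(\pi,2\pi]$ for $c_k$, since the trajectory crosses the unstable direction of the saddle symmetrically, giving $\tilde\mu(c_k)=1$; for $d_k$ the asymmetric return induced by the $z_1$-offset shifts the angle by exactly one half-turn, placing $\theta_{T(d_k)}\in(0,\pi]$ and giving $\tilde\mu(d_k)=0$.

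The main obstacle is to establish the strict monotonicity of $\Delta z(x_-)$ and to control the matching of trajectories across the thin interpolating annulus where $m$ transitions between $0$ and $1$. This reduces to differentiating an integral over level sets of $G$ approaching the hyperbolic saddle, which can be handled by the standard linearisation of $G$ combined with the explicit form of the bypass shear.
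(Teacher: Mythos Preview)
Your overall strategy—parametrise Reeb trajectories by their starting $x$-coordinate on $\gamma_0$, study the return map to $y=1$, and show the return $z$-coordinate is monotone—is exactly what the paper does. But two concrete errors prevent your argument from going through as written.

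First, the symmetry claim is false. You assert that the first return occurs at $(-x_-,1,\Delta z(x_-))$ by combining the $x\mapsto -x$ symmetry of $G(x,y)=(1+l(y))\cos(x)$ with the freezing of $x$ in the bypass slab. But each passage through the slab $\{m=0\}$ changes the value of $G$: entering at $(x_0,y_0)$ and exiting at $(x_0,y_0+2z_{\max}\tan x_0)$ shifts $G$ by approximately $2z_{\max}\,l'(y_0)\sin(x_0)\neq 0$. The $(x,y)$-projection therefore does not stay on a single level set of $G$, and the reflection symmetry of the level sets does not determine the return $x$-coordinate. The only genuine symmetry of $\alpha_b$ here is $(x,y,z)\mapsto(-x,y,-z)$ with $\alpha_b\mapsto-\alpha_b$, which is a time-reversal and does not yield your claim. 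The paper never asserts that the return $x$-coordinate is $-x_-$; it works with the full return map $s\mapsto(x_S(s),z_S(s))$.

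Second, your asymptotics are reversed. You say $\Delta z(x_-)\to+\infty$ as $x_-\to 0^-$ because ``the trajectory spends arbitrarily long time near the saddle''. But the saddle is at $(0,0)$, and the level set of $G$ through $(x_-,1)$ has value $(1+l(1))\cos(x_-)\to 1+l(1)$, which is bounded away from the saddle value $G(0,0)=1$. It is as $x_-\to -s_{\max}:=-\arccos\bigl(1/(1+l(1))\bigr)$ that the trajectory approaches the separatrix and $z_S\to+\infty$; as $x_-\to 0^-$ one has $z_S\to z_0$, a finite limit. This also affects your trivial/overtwisted dichotomy.

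The paper sidesteps both issues by a direct derivative estimate rather than a symmetry or first-integral argument. It tracks the image $(x_z(s),y_z(s))$ of $\gamma_0$ in each plane $z=\text{const}$ and proves that $x'_z(s)\ge 1$ and $y'_z(s)\ge z_0/\cos^2(x_0)$ for all $z\ge z_0$, by renormalising $R_{\alpha_b}$ and using the convexity of $l$. From this one reads off directly that $z'_S(s_0)=-y'_z(s_0)/\tan(x_z(s_0))<0$ at each chord endpoint, giving the transversality~(C3) and the uniqueness of intersections with the lines $z=2k\pi$ and $z=2k\pi+z_1$. The same estimate $x'_z(s)\ge 1$ is then reused for the index: it guarantees that $R_t e_1$ never crosses the ray $\mathbb R_+ v$ spanned by the projection of $\partial/\partial y$ to $\xi$, which pins down $\theta(T(a))$ modulo $2\pi$; comparing the sign of $v$ at the two endpoints ($-e_2$ for $c_k$, $+e_2$ for $d_k$) then fixes the half-turn and gives $\tilde\mu(c_k)=1$, $\tilde\mu(d_k)=0$. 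Your angle argument via ``crossing the unstable direction symmetrically'' and a ``half-turn shift from the $z_1$-offset'' does not isolate this mechanism.
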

\begin{figure}[here]
\begin{center}
 \includegraphics{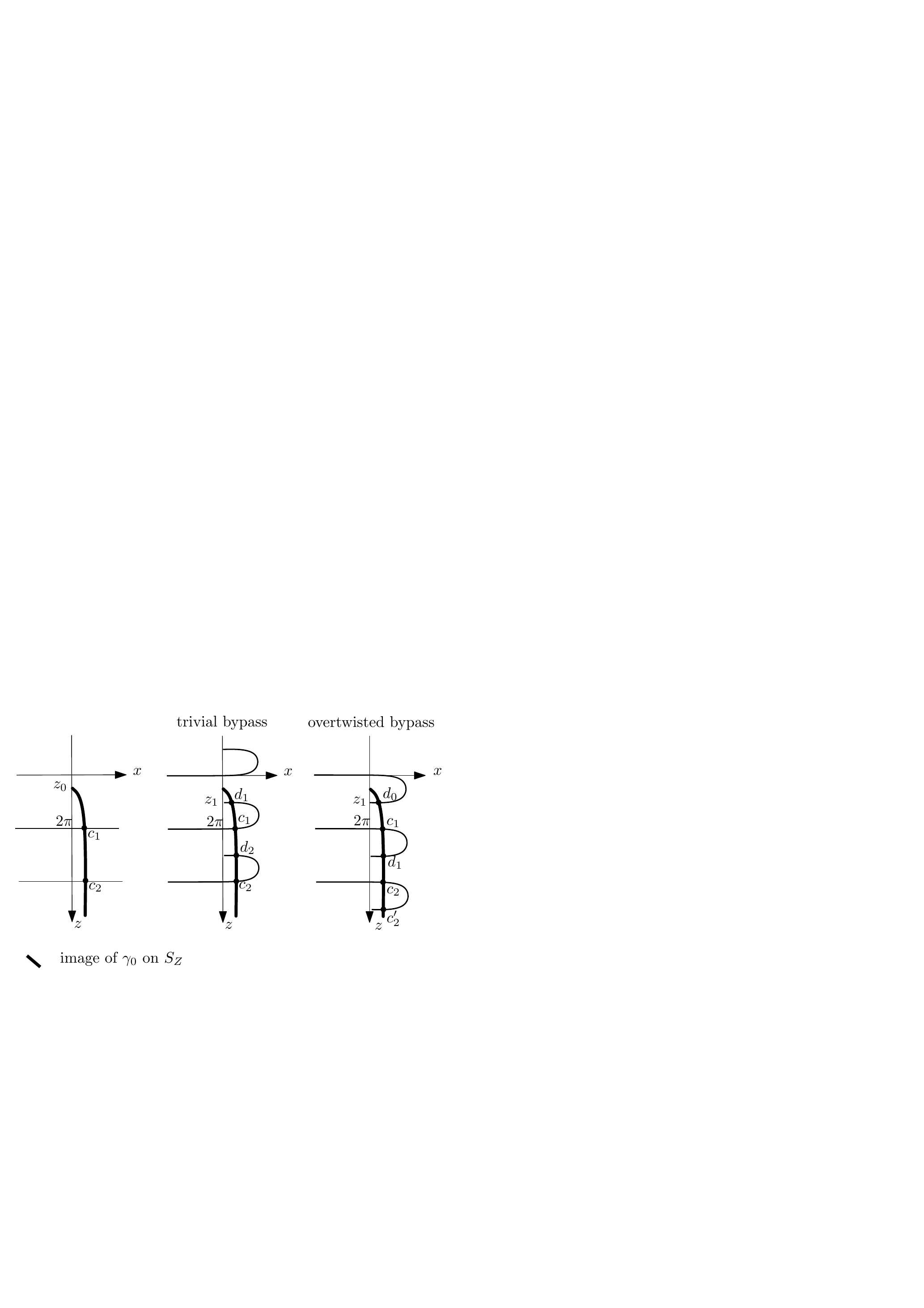}
\end{center}
 \caption{Reeb chords}\label{figure_cordes}
\end{figure}
\begin{corollary}\label{corollaire_surface_epaissie}
We assume that $\gamma_0$ intersects three distinct components of $\Gamma$. Let $L>0$. There exists a contact manifold $(M',\alpha')$ obtained from $(M,\alpha_b)$after a bypass attachment along $\gamma_0$ such that for all $l<L$ the set of Reeb periodic orbits homotopic to $\left[\Gamma_0\right]^l$ is 
\[
  \raisebox{1ex}{$\bigl\{\kappa=(k_1,\dots,k_m)\in(\mathbb N^*) ^l,l=k_1+\dots+k_m\bigr\}$}
  /
  \raisebox{-0.5ex}{$\{\t{cyclic permutation}\}.$}\]
In addition, $\mu(\gamma_\kappa)=m$ if $\gamma_\kappa$ is the orbit associated to $\kappa=(k_1,\dots,k_m)$.
\end{corollary}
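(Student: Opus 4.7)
The plan is to invoke Theorem \ref{theoreme_principal} for $(M,\alpha_b)$ with the attachment arc $\gamma_0$, then translate the resulting symbolic description of the new orbits into compositions of $l$, and finally read off the Conley--Zehnder index from Theorem \ref{theoreme_Maslov}. First I would verify the hypotheses: conditions (C1) and (C2) are built into the definition of $\alpha_b$ together with the choice of coordinates around $\Gamma$ supplied by Proposition \ref{proposition_adapte}, and Proposition \ref{proposition_cordes} gives (C3) for every $K>0$. I would fix $K$ large enough to capture every orbit in the homotopy classes of interest; concretely, since a word $\mathbf{a}=c_{k_1}\cdots c_{k_m}$ with $k_1+\cdots+k_m=l<L$ has period bounded by $L\cdot\max_{k<L}T(c_k)$, any $K$ exceeding this bound is adequate.

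Next I would apply Theorem \ref{theoreme_principal} to produce $(M',\alpha')$ in which the new periodic orbits of period $<K$ are in bijection with cyclic words on the Reeb chords of $\gamma_0$. By Proposition \ref{proposition_cordes}, these chords are exactly $\{c_k\}_{k\in\mathbb{N}^*}$, so the orbits are parameterized by cyclic words $\mathbf{a}=c_{k_1}\cdots c_{k_m}$, equivalently by tuples $\kappa=(k_1,\ldots,k_m)\in(\mathbb{N}^*)^m$ up to cyclic permutation.

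The main technical point is the homology class computation, which selects those orbits in the class $[\Gamma_0]^l$. Theorem \ref{theoreme_principal} asserts that $\gamma_\mathbf{a}$ is arbitrarily close to each chord $c_{k_j}$ between its entry and exit points on $S_Z$, with the remaining arcs running through the bypass. Since the bypass is a thickening of a contractible half-disc, the arcs through the bypass are homotopic, rel endpoints, to arcs of $\gamma_0$; hence $\gamma_\mathbf{a}$ is freely homotopic in $M'$ to the concatenation $\overline{c_{k_1}}\cdot\overline{c_{k_2}}\cdots\overline{c_{k_m}}$. Proposition \ref{proposition_cordes} gives $[\overline{c_{k_j}}]=[\Gamma_0]^{k_j}$, and since all the $\overline{c_{k_j}}$ lie in the annular neighbourhood $U_0$ of $\Gamma_0$ their homology classes add, yielding $[\gamma_\mathbf{a}]=[\Gamma_0]^{k_1+\cdots+k_m}$. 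Therefore the orbits in class $[\Gamma_0]^l$ correspond exactly to compositions $\kappa$ of $l$, up to cyclic permutation, as claimed.

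Finally, for the index, I would choose each collar neighbourhood $S_i$ of $\overline{c_i}$ to be disjoint from the small $\partial/\partial y$ translates of $c_i$; this is the trivialisation in which Proposition \ref{proposition_cordes} computes $\tilde{\mu}(c_k)=1$. Gluing these collars with an immersed disc in the bypass produces the trivialisation $S_\mathbf{a}$ of Theorem \ref{theoreme_Maslov}, so
\[
\mu(\gamma_\kappa)=\sum_{j=1}^m \tilde{\mu}(c_{k_j})=m.
\]
The only delicate step is the homotopy argument for the homology class, which uses that the bypass is contractible in $M'$ and that the Reeb chords close up in $U_0$ so their classes sum as powers of $[\Gamma_0]$; everything else is a direct application of the quoted theorems.
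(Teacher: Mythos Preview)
Your argument correctly identifies the orbits of period $< K$ lying in the homotopy class $[\Gamma_0]^l$, but it does not rule out the existence of \emph{other} periodic orbits in this class with period $\geq K$. Theorem~\ref{theoreme_principal} only describes orbits of period below the chosen threshold $K$; the construction of $(M',\alpha')$ may a priori have longer orbits in the same homotopy class, and nothing you have said excludes them. The statement of the corollary concerns \emph{all} Reeb periodic orbits homotopic to $[\Gamma_0]^l$, not just the short ones, so this is a genuine gap.

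The paper closes this gap by also invoking Theorem~\ref{proposition_rocade_allegee} (after checking condition~(C6), which holds here because for any small perturbation the chord endpoints near the dividing set stay arbitrarily close to it). That result controls \emph{every} periodic orbit intersecting $S_Z$ via the full return map $\phi\circ\psi$, with no period restriction. One decomposes $\psi$ into pieces $\Psi_k$ according to the class $[\overline a]=[\Gamma_0]^k$ of the contributing chord; any orbit in class $[\Gamma_0]^l$ must then arise as a fixed point of some $\phi\circ\Psi_{i_k}\circ\cdots\circ\phi\circ\Psi_{i_1}$ with $i_1+\cdots+i_k=l$. Since $l<L$ and $K$ was chosen so that $i_1+\cdots+i_k\leq L$ implies $\sum_j T(c_{i_j})<K$, this bounds the period a posteriori and shows the orbit was already among those produced by Theorem~\ref{theoreme_principal}. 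Your homotopy-class computation and the index calculation via Theorem~\ref{theoreme_Maslov} are fine; what is missing is precisely this appeal to Theorem~\ref{proposition_rocade_allegee} to pass from a homotopy constraint to a period bound.
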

\begin{corollary}
An overtwisted bypass attachment creates a contractible Reeb periodic orbit.
\end{corollary}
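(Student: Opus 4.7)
The plan is to apply Theorem \ref{theoreme_principal} to the specific Reeb chord $d_0$ furnished by the overtwisted case of Proposition \ref{proposition_cordes} and observe that the resulting periodic orbit inherits the contractibility of $\overline{d_0}$.

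First I would invoke Proposition \ref{proposition_cordes} in the overtwisted situation: $\gamma_0$ admits a Reeb chord $d_0$ for which the loop $\overline{d_0}$ (the concatenation of $d_0$ with the sub-arc of $\gamma_0$ joining $d_0^+$ to $d_0^-$) is contractible in $M$. I would then apply Theorem \ref{theoreme_principal} with any threshold $K>T(d_0)$ to the one-letter word $\mathbf{a}=d_0$. This produces a contact manifold $(M',\alpha')$ obtained from $(M,\alpha_b)$ by a bypass attachment along $\gamma_0$, together with a Reeb periodic orbit $\gamma_{d_0}$ which meets $S_Z$ in two points $p^-,p^+$, is arbitrarily close to $d_0$ between them, and closes up through an arc $\beta_B$ contained in the bypass running from $p^+$ back to $p^-$.

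To conclude I would identify the free homotopy class of $\gamma_{d_0}$ in $M'$. The thickened bypass (recalled in Section \ref{subsection_bypass}) is a thickening of a half-disc and is therefore diffeomorphic to a $3$-ball, hence simply connected. Consequently $\beta_B$ is homotopic rel endpoints to the sub-arc of $\gamma_0$ joining $p^+$ to $p^-$, and $\gamma_{d_0}$ is freely homotopic in $M'$ to a small perturbation of $\overline{d_0}$. Since $\overline{d_0}$ is contractible in $M\subset M'$, so is $\gamma_{d_0}$, yielding the desired contractible periodic orbit. The only substantive point is the last homotopy, and it is immediate from the explicit bypass construction, where the new piece glued to $M$ is a contractible neighbourhood of the half-disc bounded by $\gamma_0$ in the bypass.
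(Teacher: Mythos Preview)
Your proposal is correct and follows exactly the approach implied by the paper, which states this corollary without proof as an immediate consequence of Proposition~\ref{proposition_cordes} (the existence of the contractible chord $d_0$) and Theorem~\ref{theoreme_principal}. Your justification of the homotopy step via the simple connectivity of the thickened half-disc bypass makes explicit what the paper leaves tacit.
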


\begin{proof}[Proof of Proposition \ref{proposition_cordes}]
For $l$ small enough the Reeb chords are contained in $U_0$ and more precisely in the neighbourhood where $k=1$. 
Let $z_0=\sup\{z,m([0,z])=0\}$. 
We lift the $S^1$-coordinate in $U_0$ to $\mathbb R$ and denote by $\delta=(x_S, z_S)$ the image of $\gamma_0$ on $[-x_\t{max},x_\t{max}]\times\{1\}\times\mathbb R$ (see Figure \ref{figure_cordes}). 
There exists $s_\t{max}<x_\t{max}$ such that $\dom(\delta)=(-s_\t{max},0)$. In addition $\lim_{s\to -s_\t{max}} z_S(s)=+\infty$ and $\lim_{s\to 0} z_S(s)=z_0$.

Let $(x_{z_1},y_{z_1})$ denote the image of $\gamma_0$ by the Reeb flow in the plane $z=z_1$. For all $z\geq z_0$ and for all small enough $\eta>0$, $\frac{\d}{\d z}\big(x_z(s+\eta)-x_z(s)\big)$ and $\frac{\d}{\d z}\big(y_z(s+\eta)-y_z(s)\big)$ are non-negative. It holds that
\begin{equation}\label{equation_z'_x'}
x'_{z}(s)\geq1  \hskip1cm y'_{z}(s)\geq\frac{1}{\cos^2(x_0)}z_0
\end{equation} 
(renormalise $R_{\alpha_b}$ and use condition (C10)). Thus, if $(s_0, 1,0)$ and $(x, 1,z)$ are the endpoints of the lift of a Reeb chord, we have
\[z_S'(s_0)=-\frac{1}{\tan(x_z(s_0))}y'_z(s_0)<0\] 
and $\delta$ intersects the segments $[-x_\t{max},x_\t{max}]\times\{1\}\times\{2k\pi\}$ exactly for $k\in \mathbb N^*$. In addition, there is only one intersection point. 
This point is the endpoint of a Reeb chord. If the bypass is trivial or overtwisted, let $z_1$ denote to the smallest positive lift of $z_1\in S^1$. Then $\delta$ also intersects $[0,x_\t{max}]\times\{1\}\times\{2k\pi+z_1\}$ if and only if $k\in\mathbb N^*$ and there is exactly one intersection point. This concludes the description of the Reeb chords when $\gamma_0$ intersects three distinct components of $\Gamma$.

We now compute $\tilde{\mu}(a)$ for some Reeb chord $a$. In the coordinates $(R,e_1,e_2)$ given by the symplectic trivialisation of $\xi$ along $a$, let $v$ denote the projection of $\frac{\partial}{\partial y}$ on $(e_1,e_2)$. 
Then $v\neq e_1$ and $v$ is positively collinear to $e_2$ at $t=0$. Let $R_t$ denote the symplectic matrix induced by the differential of the Reeb flow on $\xi_{a(t)}$. The vector $\d R_t\cdot e_1$ does not cross $\mathbb R_+ v$ as $x'_{z}(s)\geq1$. Write $\d R_t\cdot e_1=r(t)e^{i\theta(t)}$. 
If $a=c_k$, then $v$ is positively collinear to $-e_2$ at $t=T(a)$. The tangent vector to the image of $\gamma_0$ on $\xi_{a^+}$ at the endpoint of $a$ is
\[\left(\begin{array}{l}
 x'_{z_0}(s_0) \\
y'_{z_0}(s_0)\cos^2(x_0)\\
-y'_{z_0}(s_0)\cos(x_0)\sin(x_0)
\end{array}\right)\]
where the endpoints of $a$ are $(s_0,1,0)$ and $(x_0,1,z_0)$.
Using (\ref{equation_z'_x'}), we get \[\theta(T(a))\in (\pi,\frac{3\pi}{2})+2\pi\mathbb Z.\] Thus $\theta(T(c_k))\in[\pi,2\pi]$ and $\tilde{\mu}(c_k)=1$.
If $a=d_k$, then $v$ is positively collinear to $e_2$ at $t=T(a)$. In addition, it holds that \[\theta(T(a))\in (0,\frac{\pi}{2})+2\pi\mathbb Z.\] Thus, we obtain $\theta(T(c_k))\in[0,\pi]$ and $\tilde{\mu}(d_k)=0$.
\end{proof}

\begin{proof}[Proof of Corollary \ref{corollaire_surface_epaissie}]
There exists $\nu>0$ such that for any small perturbation of $\alpha$, the Reeb chords of $[-\nu,\nu]\times\{1\}\times I_\t{max} $ are arbitrarily close to $\{0\}\times\{1\}\times I_\t{max}$. 
We apply Theorem \ref{theoreme_principal} and Proposition \ref{proposition_rocade_allegee} for $\lambda =\nu$ and $K$ such that $i_1+\dots+i_k\leq L$ implies $\Sigma_{j=1}^k T(c_{i_j})<K$. 

The set of Reeb periodic orbits homotopic to $[\Gamma_0]^l$ in Corollary \ref{corollaire_surface_epaissie} corresponds to Reeb periodic orbits with period smaller that $K$ described in Theorem \ref{theoreme_principal}. It remains to prove that there are no other Reeb periodic orbits homotopic to $[\Gamma_0]^l$. If $\gamma$ is such a Reeb periodic orbit then $\gamma$ is associated to a periodic point of $\phi\circ\psi$ (Proposition \ref{proposition_rocade_allegee}). 
We decompose $\psi$ into $(\Psi_k)_{k\in\mathbb N^*}$ so that, if $a$ is a Reeb chord that contributes to $\Psi_k$, then $[\overline a]=[\Gamma_0]^k$. For $k\leq L$, we have $\Psi_k=\psi_k$ (see Proposition \ref{proposition_rocade_allegee}). Therefore $\gamma$ corresponds to a fixed point of $\phi\circ\Psi_{i_k}\circ\dots\circ\phi\circ\Psi_{i_1}$ and $i_1+\dots+i_k=l$ and we obtain $T(\gamma)<K$.
\end{proof}

\section{Applications to sutured contact homology}\label{section_application}

In this section we apply Theorem \ref{theoreme_principal} and Proposition \ref{proposition_rocade_allegee} to prove Theorem \ref{homologie_surface_epaissie_intro} and Theorem \ref{homologie_intro_tore_plein}. To compute the contact homology of a convex surface after a bypass attachment and of some contact structures on solid tori, we construct suitable contact structures on the associated manifolds. 
On a thickened convex surface (Theorem~\ref{homologie_surface_epaissie_intro}) we start from the contact form described in Section \ref{section_surface_epaissie}. Theorem \ref{homologie_intro_tore_plein} gives the contact homology of a contact structure $\xi$ on $D^2\times S^1$ such that
\begin{itemize}
  \item[(C4)] the boundary dividing set $\Gamma$ has $2n$ longitudinal components;
  \item[(C5)] for a convex meridian disc dividing set $(D,\Gamma=\bigcup_{i=0}^n \Gamma_i)$ there exists a partition of $\partial D$ in two sub-intervals $I_1$ and $I_2$ such that
  \begin{itemize}
    \item $\partial I_1$ is contained in two bigons (called \emph{extremal bigons});
    \item if $I=\{i,\partial \Gamma_i\subset I_1 \text{ or } \partial \Gamma_i\subset I_2\}$ then $D\setminus\left(\bigcup_{i\notin I} \Gamma_i\right)$ contains at most one component of $\Gamma$.
  \end{itemize}
\end{itemize}
All these contact structures are obtained from the contact structure $(D^2\times S^1,\xi_{\sslash})$ with parallel dividing set on convex meridian discs after a finite number of bypass attachments. 
In Section \ref{section_solid_tori} we compute the  sutured contact homology of $(D^2\times S^1,\xi_{\sslash})$ and apply Theorem \ref{theoreme_principal} and Proposition \ref{proposition_rocade_allegee} to obtain the Reeb periodic orbits after a finite number of bypass attachments. To compute the contact homology (and prove Theorem \ref{homologie_intro_tore_plein}), it remains to control the differential. 
By Corollary \ref{corollaire_surface_epaissie}, a bypass attachment creates many homotopic periodic orbits. This complicates the direct study of the differential. 
We get round this difficulty in Section \ref{section_application_cylindres} by proving that all the holomorphic cylinders are contained in a standard neighbourhood. We deduce the differential in this neighbourhood by use of computations of contact homology in simple situations on 
a solid torus.

\subsection{Contact forms on solid tori}\label{section_solid_tori}

For $n\in\mathbb N^*$ and $0<\eta<\frac{\pi}{4}$, let
\[D_{n,\eta}=\left\{(x,y), x\in\left[-\pi+\eta-h(y),n\pi-\eta+h(y)\right],y\in[-1,1]\right\} \] 
where $h:[-1,1]\to\mathbb R$ is a strictly concave function with maximum $h(0)<\eta$, vertical and zero at $\pm1$ (see Figure \ref{Gamma'}). On $M_{n,\eta}=D_{n,\eta}\times S^1$, we consider the contact form
\[\alpha=f(x)\d y+\cos(x)\d z+g(x,y)\d x \]
where
\begin{itemize}
  \item $f$ is $2\pi$-periodic, $f=(-1)^{k+1}$ in a neighbourhood of $\left[k\pi+\frac{\pi}{4},k\pi+\frac{3\pi}{4}\right]$ and $f=\sin$ near $k\pi$, $k=-1,\dots,n$;
  \item $g$ does not depend on $y$ for $y\geq \frac{1}{2}$;
  \item $g=0$ for $y\leq 0$ and outside a neighbourhood of $x=-\frac{3\pi}{4}$ where $f=-1$;
  \item the leaves of the characteristic foliation of $\partial M_{n,\eta}$ are closed.
\end{itemize}
Then $(M_{n,\eta,},\ker(\alpha))$ satisfies property (C4) and a dividing set $\Gamma_{\sslash}$ of the boundary is given by the curves $x=-\pi+\eta-h(0)$, $x=k\pi+\frac{\pi}{2}, k=0,\dots, n-1$ and $x=n\pi-\eta+h(0)$.
Let $D$ be a disc in $M_{n,\eta}$ transverse to $\frac{\partial}{\partial z}$ with Legendrian boundary. As $\frac{\partial}{\partial z}$ is a contact vector field, $D$ is convex. A dividing set is given by condition $\frac{\partial}{\partial z}\in\xi$ and is thus composed of the curves $x=k\pi+\frac{\pi}{2}, k=-1,\dots, n-1$. Therefore $(M,\ker(\alpha))$ is diffeomorphic to $(D^2\times S^1,\xi_{\sslash})$ (Theorem \ref{theoreme_classification_tore}). 

\begin{proposition}\label{proposition_perturbation}
There exists a contact form $\alpha_p$ on $M_{n,\eta}$ without contractible Reeb periodic orbits such that $(M_{n,\eta},\ker(\alpha_p))$ is diffeomorphic to $(D^2\times S^1,\xi_{\sslash})$  and the sutured contact homology of $(M_{n,\eta},\alpha_p,\Gamma_{\sslash})$ is the $\mathbb Q$-vector space generated by $n_+=\left\lceil \frac{n-1}{2}\right\rceil$ curves homotopic to $\{*\}\times S^1$, $n_-=n-1-n_+$ curves homotopic to $\{*\}\times(-S^1)$ and by their multiples. 
\end{proposition}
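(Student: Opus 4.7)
The plan is to mimic Section \ref{section_surface_epaissie}: perturb the degenerate contact form $\alpha$ into a non-degenerate one $\alpha_p$ whose closed Reeb orbits lie on finitely many explicit circles of the form $\{x=k\pi,\,y=0\}\times S^1$, and then read off the homology from the resulting even, hyperbolic generators.

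I would start by computing $R_\alpha$ explicitly. On the open set $\{g=0\}$, the same calculation as in the proofs of Propositions \ref{proposition_OP_alpha_p} and \ref{proposition_OP_alpha_p_bis} gives $R_\alpha=\sin(x)\partial_y+\cos(x)\partial_z$ near each $x=k\pi$ and $R_\alpha=\pm\partial_y$ on the regions where $f=\pm 1$. Any closed Reeb orbit is therefore trapped in a plane $\{x=k\pi\}$ on which $R_\alpha=(-1)^k\partial_z$, giving a $1$-parameter family of circles $\{x=k\pi,\,y=y_0\}\times S^1$. The role of the $g(x,y)\,dx$ term, supported in the region $y>0$, $x$ near $-\frac{3\pi}{4}$, is to deflect the Reeb flow transversally to these planes in such a way that one of the naive orbit families in $M_{n,\eta}$ is destroyed, leaving exactly $n-1$ surviving families $\{x=k_i\pi\}$ (the eliminated family being the one matching the chord-diagram cancellation implicit in conditions (C4)--(C5)).

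The perturbation $\alpha_p$ is then obtained exactly as in Proposition \ref{proposition_OP_alpha_p_bis}: on a neighbourhood of each surviving plane $\{x=k_i\pi\}$, add a term $\chi(x)\,l(y)\cos(x)\,dz$ where $\chi$ is an $x$-cut-off supported near $k_i\pi$ and $l:[-1,1]\to\mathbb{R}$ is $\mathcal{C}^\infty$-small, strictly convex, with unique minimum $0$ at $y=0$. A linearisation argument identical to the one used in the proof of Proposition \ref{proposition_OP_alpha_p} (via the Hamiltonian $l(y)\cos(x)$ in the $(x,y)$-projection) shows that $\alpha_p$ is non-degenerate, adapted to $\Gamma_{\sslash}$, that its only closed Reeb orbits are the circles $\Gamma_i=\{x=k_i\pi,\,y=0\}\times S^1$, and that each $\Gamma_i$ is hyperbolic and even with $\tr(\d\phi_{T(\Gamma_i)}|_{\xi})>2$. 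Gray's theorem, together with the $\mathcal{C}^\infty$-smallness of the perturbation, preserves the identification $(M_{n,\eta},\ker\alpha_p)\cong(D^2\times S^1,\xi_{\sslash})$.

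To finish, each $\Gamma_i$ wraps once along the $S^1$-factor and is therefore non-contractible in $M_{n,\eta}$; its homotopy class is $[\{*\}\times S^1]$ if $k_i$ is even and $[\{*\}\times(-S^1)]$ if $k_i$ is odd. A direct parity count of the $n-1$ surviving indices $k_i$ produces exactly $n_+=\lceil(n-1)/2\rceil$ generators in the first class and $n_-=n-1-n_+$ in the second. Since every generator of $C^{\t{cyl}}_*(M_{n,\eta},\Gamma_{\sslash},\alpha_p)$ is even, the cylindrical differential must vanish identically, so $HC^{\t{cyl}}_*=C^{\t{cyl}}_*$ gives the asserted description. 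The main obstacle lies in the careful analysis of the $g(x,y)\,dx$ term: one must check both that $g$ eliminates exactly one of the $n$ naive orbit families, in the parity class prescribed by the formulas for $n_\pm$, and that it does not create any new closed orbit in the region $y>0$ where it is non-zero, so that the global post-perturbation dynamics is entirely captured by the $\Gamma_i$.
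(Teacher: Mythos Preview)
Your overall strategy—perturb as in Section~\ref{section_surface_epaissie}, obtain finitely many even hyperbolic orbits, conclude $\partial=0$—matches the paper's proof. The error is in your account of the orbit count.

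You claim that the term $g(x,y)\,\d x$ destroys one of the orbit families at $x=k\pi$, and you identify verifying this as the main obstacle. This is wrong. The support of $g$ lies near $x=-\tfrac{3\pi}{4}$, in a region where $f$ is constant; it is disjoint from every plane $\{x=k\pi\}$, and since $R_x=0$ wherever $g=0$, Reeb orbits cannot travel between the support of $g$ and any of those planes. The paper states the actual purpose of $g$ explicitly: it is chosen so that ``the leaves of the characteristic foliation of $\partial M_{n,\eta}$ are closed''—a boundary condition, not a dynamical one. Your appeal to conditions (C4)--(C5) and ``chord-diagram cancellation'' is likewise misplaced; those conditions concern the general solid-torus structures treated in Theorem~\ref{homologie_intro_tore_plein}, not the model $\xi_\sslash$.

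The paper's own proof eliminates nothing: it perturbs $\alpha$ near each $x=k\pi$ for $k=0,\dots,n-1$, cites Proposition~\ref{proposition_OP_alpha_p} to get that the resulting orbits $\{k\pi\}\times\{0\}\times S^1$ are non-degenerate, even and hyperbolic, and concludes $\partial=0$ because all generators have the same parity. If you were led to your $g$-term hypothesis by trying to reconcile the $n-1$ generators in the statement with the construction, note that the meridian-disc dividing set described just before the proposition has $n+1$ arcs, so Theorem~\ref{theoreme_Golovko} actually predicts $n$ generators, consistent with the $n$ orbits the perturbation produces; in any case the count is not adjusted by $g$.
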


\begin{proof}
As in Section \ref{section_surface_epaissie}, we perturb $\alpha$ into
\[\alpha_p=\sin(x)\d y+(1+k(x)l(y))\cos(x)\d z\]
in a neighbourhood of $x=k\pi, k=0,\dots n-1$ such that $x\mapsto k(x-k\pi)$ satisfies condition (C9) and $l$ satisfies condition (C10). 
By Proposition \ref{proposition_OP_alpha_p}, the contact form~$\alpha_p$ is non-degenerate, adapted to the boundary. Its Reeb periodic orbits are the curves $\{k\pi\}\times\{0\}~ S^1$ for $k=0,\dots, n-1$. 
These orbits are even and hyperbolic. Therefore $\partial=0$.
\end{proof}

Let $\xi$ be a contact structure on $M=D^2\times S^1$ satisfying conditions (C4) and (C5) for some $n_0\in\mathbb N^*$. There exist $n,m>0$, a family of integers $\{k_j\}$ and $\{\epsilon_j\}\in\{-1,1\}$ for $j=1,\dots, m $ such that
\begin{itemize}
  \item $1\leq k_j\leq n-2$ and $k_{j+1}-k_j\geq 3$;
  \item $(M,\xi)$ is diffeomorphic to the contact manifold obtained from $(M_{n,\eta},\ker(\alpha))$ after bypass attachments along the arcs 
  \[\gamma_j=[(k_j-1)\pi,(k_j+1)\pi]\times\{-\epsilon_j\}\times\left\{\frac{2\pi k_j}{n}\right\}\] 
  for $j=1,\dots, m$ (see Figure \ref{Gamma'}).
\end{itemize}
\begin{figure}[here]
\begin{center}
 \includegraphics{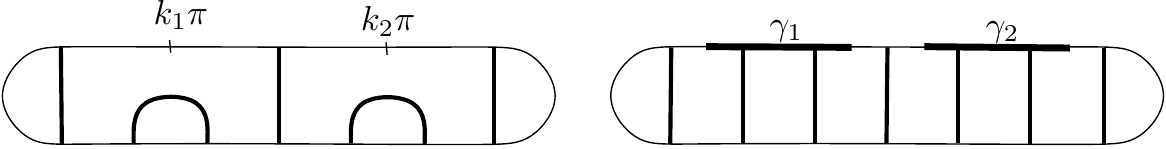}
\end{center}
 \caption{Attaching arcs $\gamma_j$}\label{Gamma'}
\end{figure}
Let $\delta_k=\{k\pi\}\times\{0\}\times S^1$. We describe a contact structure on $M_{n,\eta}$ adapted to the bypass attachments along the curves $\gamma_j$.

\begin{proposition}\label{proposition_alpha'}
Let $L>0$ and $0<x_\t{max}\ll1$. There exists an arbitrarily small perturbation $\alpha_b$ of $\alpha$ in $M_{n,\eta}$ such that
\begin{itemize}
  \item $\gamma_j$ satisfies conditions (C1) and (C3) for all $K>0$ and for all $j=1,\dots,m$;
  \item $\alpha_b$ is adapted to $S\setminus \bigcup S_{Z_j}$;
  \item $\alpha_b=\alpha$ outside $U_k=[-x_\t{max}+k\pi,x_\t{max}+k\pi]\times[-1,1]\times S^1$ for $k=0,\dots, n-1$;
  \item the Reeb periodic orbits are the curves $\delta_k$ for $k=0,\dots, n-1$, these orbits are even and hyperbolic;
  \item the set of Reeb chords of $\gamma_j$ is $\{c_{l,j}, l\in\mathbb N^*\}$ and $\left[\overline{c_{l,j}}\right]=\left[\{*\}\times  S^1\right]^{(-1)^{k_j}l}$. 
\end{itemize}
\end{proposition}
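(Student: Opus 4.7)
The argument is local on each of the neighbourhoods $U_k$: since $k_{j+1}-k_j \geq 3$, the $U_k$ are pairwise disjoint and each $\gamma_j$ lies inside $U_{k_j}$. My plan is to modify $\alpha$ inside each $U_k$ only, leaving it unchanged on the complement, and to adapt the analyses of Propositions~\ref{proposition_OP_alpha_p_bis} and~\ref{proposition_cordes} to the present multi-orbit setting.

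\textbf{Explicit perturbation and normal form.} Inside each $U_k$, writing $\tilde{x} = x - k\pi$, I would set
\[\alpha_b = f(x)\,\d y + \bigl(1 + k_c(\tilde{x})\,\ell(y)\,m_k(z)\bigr)\cos(x)\,\d z + g(x,y)\,\d x,\]
where $k_c$ and $\ell$ satisfy conditions (C9)--(C10) and $m_k$ is a cut-off satisfying (C11), chosen to vanish on a neighbourhood $I_\t{max}$ of each $z$-coordinate $2\pi k_j/n$ with $k_j=k$ and to equal $1$ elsewhere. Since $k_c$ is supported near $\tilde x=0$, the perturbation is supported in $U_k$; in particular $\alpha_b=\alpha$ outside $\bigcup U_k$, so $\alpha_b$ remains adapted on $S\setminus\bigcup S_{Z_j}$. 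A preliminary application of Giroux's realisation lemma (cf.\ Proposition~\ref{proposition_adapte}) aligns $\alpha$ on a neighbourhood of each $\gamma_j$ with the required model form; then $m_{k_j}\equiv 0$ on $Z_j$ together with the shift $(\tilde x,\, y+\epsilon_j,\, z-2\pi k_j/n)$ (and a possible sign flip on $y$ dictated by $\epsilon_j$) yields condition (C1) on $Z_j$.

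\textbf{Dynamics, Reeb chords, and main obstacle.} For $\ell, m_k$ sufficiently small in $C^\infty$, the Reeb dynamics on each $U_k$ reproduces that of Proposition~\ref{proposition_OP_alpha_p_bis}: where $m_k\equiv 1$, the projection of $R_{\alpha_b}$ onto the $(x,y)$-plane is collinear to the Hamiltonian vector field of $\ell(y)\cos(\tilde x)$, whose only critical point in $U_k$ is $\delta_k$; where $m_k\equiv 0$, the flow exits $U_k$ along the $z$-direction. Hence the only periodic orbits in $\bigcup U_k$ are the $\delta_k$, and the linearisation of Proposition~\ref{proposition_OP_alpha_p} gives $\tr(\d\phi_t|_\xi)>2$, so they are even and hyperbolic; outside $\bigcup U_k$, $R_{\alpha_b}=R_\alpha$ carries no closed orbit. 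For each $j$, on the region where $m_{k_j}=0$ the analysis of Proposition~\ref{proposition_cordes} transfers verbatim: lifting the $z$-coordinate to $\mathbb{R}$ and applying the inequalities (\ref{equation_z'_x'}), the image of $\gamma_j$ under the Reeb flow intersects each horizontal line $\{z=2\pi k_j/n + 2\pi l\}$ in exactly one point for $l\geq 1$ and none for $l\leq 0$, yielding the family $\{c_{l,j}\}_{l\in\mathbb{N}^*}$; the class $[\overline{c_{l,j}}] = [\{*\}\times S^1]^{(-1)^{k_j}l}$ records the sign $\cos(k_j\pi)=(-1)^{k_j}$ of the $\partial_z$-component of $R_{\alpha_b}$ along $\delta_{k_j}$, and (C3) for all $K>0$ follows from the strict convexity in (C10). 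The main obstacle is to choose the three cut-offs $(k_c,\ell,m_k)$ simultaneously and coherently: $\ell$ must be small enough for the Hamiltonian picture to isolate $\delta_k$, the zero locus of $m_k$ must be wide enough to contain each $Z_j$ with $k_j=k$, and the whole perturbation must stay compatible with the boundary adaptation away from $\bigcup S_{Z_j}$. Each of these is an open condition, so a consistent choice exists, but verifying them simultaneously for all orbits $\delta_k$ and all attachment arcs $\gamma_j$ is the technical heart of the proof.
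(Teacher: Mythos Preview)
Your proposal is correct and follows essentially the same route as the paper: you define the perturbed form
\[\alpha_b = f(x)\,\d y + \bigl(1+k(x-k\pi)\,\ell(y)\,m(z)\bigr)\cos(x)\,\d z\]
on each $U_k$ and reduce the verification of the five bullet points to Propositions~\ref{proposition_OP_alpha_p_bis} and~\ref{proposition_cordes}, exactly as the paper does in two lines. Two minor remarks: (i) the appeal to Giroux's realisation lemma for (C1) is unnecessary here, since the explicit coordinates on $M_{n,\eta}$ already put $\alpha$ in model form near each $\gamma_j$ (note $g\equiv 0$ on $U_k$, and the shift $x\mapsto x-(k_j-1)\pi$ together with the sign change governed by $\epsilon_j$ does the job); (ii) what you call the ``main obstacle''---the simultaneous choice of the cut-offs---is not treated as an obstacle in the paper at all: the conditions (C9)--(C11) are independent open conditions on $k_c$, $\ell$, $m$ respectively, so one simply fixes them once and for all and the reductions to the earlier propositions go through uniformly in $j$ and $k$.
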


\begin{proof}[Proof of Proposition \ref{proposition_alpha'}]
 In $U_k$, consider the contact form 
\[\alpha_b=f(x)\d y+(1+k(x)l(y)m(z))\cos(x)\d z \]
such that $x\mapsto k(x-k\pi)$ satisfies condition (C9), $l$ satisfies condition (C10) and $m=0$ in $Z=\bigcup Z_j$ and $m=1$ outside a neighbourhood of $Z$. By Propositions \ref{proposition_OP_alpha_p_bis} and \ref{proposition_cordes}, we obtain the desired conditions.
\end{proof}

Let $\sigma_+=\{k_j, k_j \t{ even} \}$ and $\sigma_-=\{k_j, k_j \t{ odd} \}$. As in Corollary \ref{corollaire_surface_epaissie} we apply Theorem \ref{theoreme_principal} and Proposition \ref{proposition_rocade_allegee} to deduce the Reeb periodic orbits after the bypass attachments along the curves $(\gamma_j)_{j=1,\dots,m}$.

\begin{proposition}\label{proposition_orbites_tore_rocade}
Fix $L>0$. Let $(M',\alpha')$ be the contact manifold obtained from $(M_{n,\eta},\alpha_b)$ after bypass attachments along the arcs $(\gamma_j)_{j=1,\dots, m}$ for $K$ large enough. 
\begin{itemize}
  \item For all $1\leq l\leq L$, the Reeb periodic orbits homotopic to $\left[S^1\right]^l$ are the curves $\delta_{2k}^l$ for $1\leq 2k\leq n-1$ and the periodic orbits $\gamma_\mathbf{a}$ associated to $\mathbf{a}=c_{i_1,j}\dots c_{i_k,j}$ with $i_1+\dots+i_k=l$ and $k_j\in\sigma_+$.
  \item For all $-L\leq l\leq -1$, the Reeb periodic orbits homotopic to $\left[S^1\right]^l$ are the curves $\delta_{2k+1}^l$ for $1\leq 2k+1\leq n-1$ and the periodic orbits $\gamma_\mathbf{a}$ associated to $\mathbf{a}=c_{i_1,j}\dots c_{i_k,j}$ with $i_1+\dots+i_k=-l$ and $k_j\in\sigma_-$. 
\end{itemize}
\end{proposition}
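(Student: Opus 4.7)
The plan is to combine Proposition \ref{proposition_alpha'} with Theorem \ref{theoreme_principal} and Proposition \ref{proposition_rocade_allegee} applied to each of the bypass attachments, in the spirit of the proof of Corollary \ref{corollaire_surface_epaissie}. First I would verify the hypotheses: Proposition \ref{proposition_alpha'} gives conditions (C1) and (C3) along each $\gamma_j$ for every $K>0$, and condition (C6) is read off from the explicit form of $\alpha_b$ in $U_{k_j}$, since outside the neighbourhood where $m=0$ the dynamics coincides with that of the unperturbed $\alpha$, so the endpoints of the Reeb chords of $\gamma_j$ accumulate on the dividing set only at a uniform positive distance $\lambda_0$, stable under further small perturbation. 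Because $k_{j+1}-k_j\geq 3$, the neighbourhoods $U_{k_j}$ are pairwise disjoint, and the orbits generated near one bypass stay confined to the corresponding $U_{k_j}$, so the bypass attachments may be performed independently without cross-interaction.

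Next I would classify the Reeb periodic orbits of $\alpha'$ into two families. Orbits that do not enter any bypass come from the dynamics of $\alpha_b$ itself and, by Proposition \ref{proposition_alpha'}, are exactly the multiples $\delta_k^l$ for $k=0,\dots,n-1$. Orbits intersecting some $S_{Z_j}$ are described, for period at most $K$, by Theorem \ref{theoreme_principal} as words $\mathbf{a}=c_{i_1,j}\dots c_{i_k,j}$ in the chord alphabet of $\gamma_j$ up to cyclic permutation.

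Then I would compute homology classes. From the explicit form of $\alpha_b$, the Reeb vector field at $(k\pi,0,z)$ equals $(-1)^k\partial_z$, so $[\delta_k^l]=[S^1]^{(-1)^k l}$, while Proposition \ref{proposition_alpha'} yields $[\overline{c_{i,j}}]=[S^1]^{(-1)^{k_j}i}$, hence $[\gamma_{\mathbf{a}}]=[S^1]^{(-1)^{k_j}(i_1+\dots+i_k)}$. Matching with $[S^1]^l$: for $l\geq 1$ the old orbits are the $\delta_{2k}^l$ with $1\leq 2k\leq n-1$ and the new orbits are the $\gamma_{\mathbf{a}}$ with $k_j\in\sigma_+$ and $i_1+\dots+i_k=l$; for $l\leq-1$ the parities flip, giving $\delta_{2k+1}^l$ with $1\leq 2k+1 \leq n-1$ and the words with $k_j\in\sigma_-$ and $i_1+\dots+i_k=-l$.

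The main obstacle is excluding orbits of period larger than $K$ in these homology classes. Each chord $c_{i,j}$ has period at least a positive constant times $i$ (it winds $i$ times in $z$), so any word $\mathbf{a}$ with total winding bounded by $L$ has $l(\mathbf{a})\leq CL$ for a uniform constant $C$, and choosing $K\geq CL$ makes Theorem \ref{theoreme_principal} cover every relevant orbit. Additional orbits of larger period crossing a bypass must, by Proposition \ref{proposition_rocade_allegee}, be periodic points of $\phi\circ\psi_j$; decomposing $\psi_j$ by winding number as in the proof of Corollary \ref{corollaire_surface_epaissie}, such an orbit has a symbolic coding which remains a word in the alphabet $\{c_{i,j}\}$ of total winding $|l|$, and is therefore already listed.
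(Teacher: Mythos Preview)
Your proposal is correct and follows essentially the same route as the paper, which simply says ``As in Corollary \ref{corollaire_surface_epaissie} we apply Theorem \ref{theoreme_principal} and Proposition \ref{proposition_rocade_allegee}''; you have spelled out precisely that argument, including the independence of the $m$ attachments via $k_{j+1}-k_j\geq 3$ and the decomposition of $\psi_j$ by winding number to rule out long-period orbits. One small slip: in the last paragraph you write that each chord $c_{i,j}$ has period \emph{at least} a constant times $i$, but the inequality you then draw, $l(\mathbf{a})\leq CL$, requires the opposite bound $T(c_{i,j})\leq C\,i$ (which also holds, since the chord winds $i$ times with $|R_z|$ bounded below); with that correction your choice of $K$ is justified exactly as in the paper's proof of Corollary \ref{corollaire_surface_epaissie}.
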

We denote by $\mathcal B_j$ the bypass attached to the attaching arc $\gamma_j$.

\subsection{Holomorphic cylinders}\label{section_application_cylindres}
It remains to control the holomorphic cylinders in the symplectisation of the contact manifold $(M',\alpha')$ given in Proposition \ref{proposition_orbites_tore_rocade}.
Let $E^l_j$ be the $\mathbb Q$-vector space generated by the periodic orbit $\delta_{k_j}$ and by the periodic orbits obtained after the bypass attachment along $\gamma_j$ homotopic to $\left[S^1\right]^l$. 
Let $E^l_+$ and $E^l_-$ be the $\mathbb Q$-vector spaces generated by the periodic orbits $\delta_{2k}^l$ for $2k\notin \sigma_+$ and $\delta_{2k+1}^l$ for $2k+1\notin \sigma_-$.
The complex of contact homology is written
\begin{align*}
C_*^{[S^1]^l}(M',\alpha')=\bigoplus_{k_j\in\sigma_+} E^l_j \oplus E^l_+ &\t{ if } l>0,\\
C_*^{[S^1]^l}(M',\alpha')=\bigoplus_{k_j\in\sigma_-} E^l_j \oplus E^l_-&\t{ if } l<0.
\end{align*}
Let $I_b^j =\left[k_j\pi-\frac{7\pi}{4},k_j\pi+\frac{7\pi}{4}\right]$. Consider $I^j_\t{max}$ such that \[S_{Z_j}=[-x_\t{max}+k_j\pi,x_\t{max}+k_j\pi]\times\{1\}\times I^j_\t{max}.\]

\begin{lemma}\label{lemme_courbes_holomorphes_n_quelconque}
For any adapted almost complex structure on the symplectisation of $(M',\alpha')$
\begin{itemize}
  \item $\partial_{\vert E^l_\pm}=0$;
  \item for all $j=1,\dots,m$, we have $\partial(E^l_j)\subset E^l_j$ and any holomorphic cylinder with finite energy and asymptotics in $E_j$ is contained in $\mathcal U_j\cup \mathcal B_j$ (see Figure~\ref{ensemble_U_bis}) where
  \[\mathcal U_j=U_{k_j-1}\cup U_{k_j}\cup U_{k_j+1}\cup\big(I_b^j \times[-1,1]\times I^j_\t{max}\big). \]
\end{itemize}
\begin{figure}[here]
\begin{center}
 \includegraphics{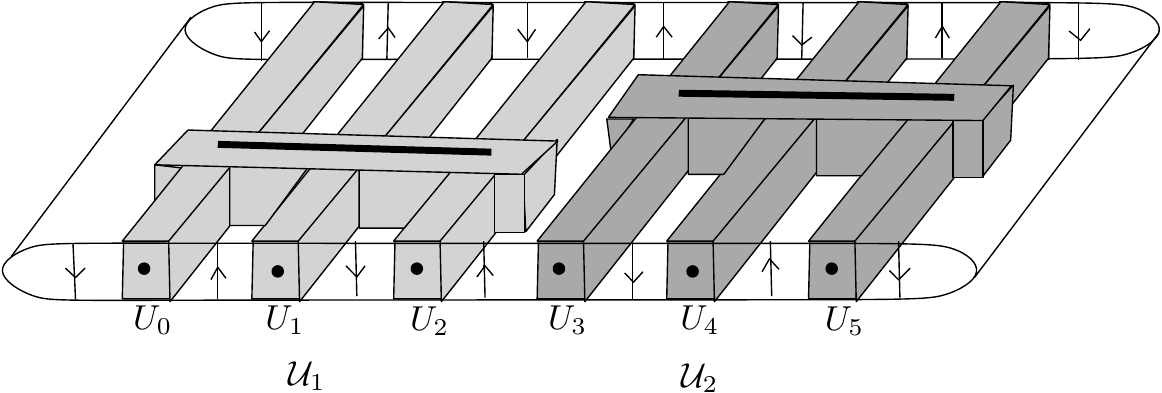}
\end{center}
 \caption{Neighbourhoods $\mathcal U_j$}\label{ensemble_U_bis}
\end{figure}
\end{lemma}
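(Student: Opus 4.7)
The plan is to confine the holomorphic cylinders by iteratively applying Lemma \ref{proposition_cordes_holomorphes}, using the abundant Reeb chords of the unperturbed part of $M'$. Outside the neighbourhoods $U_k$ and the bypass regions $\mathcal{B}_j$, the contact form $\alpha'$ agrees with the model $\alpha = f(x)\,\d y + \cos(x)\,\d z + g(x,y)\,\d x$, and on each flat band $B_k = \{x \in [k\pi + \tfrac{\pi}{4}, k\pi + \tfrac{3\pi}{4}]\} \times [-1,1] \times S^1$ the Reeb vector field reduces to $(-1)^{k+1}\partial_y$. Thus every arc $\{x_0\} \times [-1,1] \times \{z_0\}$ with $(x_0, z_0) \in B_k$ is a Reeb chord, and these chords have Reeb orientation alternating band by band. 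A chord in $B_k$ can be isotoped, through properly embedded arcs with endpoints sliding on $\partial M'$, horizontally through the tilting region near $x = k\pi$ and once around the $S^1$-factor in $z$, to land on a chord of $B_{k-1}$ with reversed Reeb orientation; this supplies the reversibility hypothesis of Lemma \ref{proposition_cordes_holomorphes}.

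First I would prove the containment $\partial(E^l_j) \subset E^l_j$ together with the confinement of the cylinders to $\mathcal{U}_j \cup \mathcal{B}_j$. The complement of $\mathcal{U}_j \cup \mathcal{B}_j$ in $M'$ is foliated by the vertical Reeb chords described above, specifically those in bands $B_k$ whose $(x_0,z_0)$-coordinates place them outside the columns $U_{k_j - 1} \cup U_{k_j} \cup U_{k_j + 1}$ and outside the slab $I_b^j \times [-1,1] \times I^j_\t{max}$. Choose paired families $X_+, X_-$ of such chords in consecutive bands with opposite Reeb orientations, and for each pair $(c_+, c_-)$ route the arc-homotopy from $c_-$ to $-c_+$ entirely within this complement; since the asymptotic orbits in $E^l_j$ lie inside $\mathcal{U}_j \cup \mathcal{B}_j$, they are automatically avoided. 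Lemma \ref{proposition_cordes_holomorphes} then forces the cylinder $u$ to miss $X_+ \cup X_-$, and iterating over all such chord pairs confines $u$ to $\mathcal{U}_j \cup \mathcal{B}_j$. Since the orbits of homotopy class $[S^1]^l$ in this region are exactly the generators of $E^l_j$, the differential preserves $E^l_j$.

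For $\partial_{\vert E^l_\pm} = 0$, the same procedure applied to $\gamma = \delta_k^l$ with $k \notin \sigma_\pm$ confines the cylinder to a small neighbourhood of $U_k$. Inside this neighbourhood, every periodic orbit is a multiple of $\delta_k$, which is even hyperbolic by Proposition \ref{proposition_OP_alpha_p_bis}; since the cylindrical contact homology differential changes parity, $\partial \gamma$ must be supported on odd orbits, but none exist in the confinement region, so $\partial \gamma = 0$.

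The main obstacle is the construction of the reversibility homotopies while avoiding the asymptotic orbits: one must choose the arc-homotopies so that their sweep-out surfaces miss both $\gamma_+$ and $\gamma_-$. The $S^1$-symmetry in the $z$-coordinate together with the gap condition $k_{j+1} - k_j \geq 3$ provide enough room to do this, but verifying it for every chord pair amounts to a careful case analysis.
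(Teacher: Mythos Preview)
Your approach is essentially the paper's: use the vertical Reeb chords in the flat bands together with Lemma~\ref{proposition_cordes_holomorphes} to confine cylinders, the only organisational difference being that the paper first shows every cylinder lands in the disjoint union $\mathcal U=\bigcup_l U_l\cup\bigcup_j(\mathcal U_j\cup\mathcal B_j)$ and then reads both conclusions off the component decomposition, whereas you argue component by component. One caution: your clause ``the asymptotic orbits in $E^l_j$ lie inside $\mathcal U_j\cup\mathcal B_j$, so they are automatically avoided'' is circular for the containment $\partial(E^l_j)\subset E^l_j$, since $\gamma_-$ is not yet known to lie in $E^l_j$; the fix is exactly what your final paragraph flags, namely to route the arc-homotopies in $M'\setminus(\gamma_+\cup\gamma_-)$ for whatever $\gamma_-$ actually is, which works because every periodic orbit sits in some component of $\mathcal U$ and the homotopy constraint together with $k_{j+1}-k_j\ge 3$ always leaves a band with a reversed chord available.
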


\begin{proof}
Let 
\begin{align*}\mathcal U&=\bigcup_{j=1\dots n} U_{k_j}\cup \bigcup_{j\in\sigma_\pm}\big(I_b^j \times[-1,1]\times I^j_\t{max}\big)\cup \mathcal B_j,\\
\mathcal W&=\left(\left[-\pi+\eta,n\pi-\eta\right]\times[-1,1]\times S^1\right)\setminus \mathcal U.
\end{align*}
The connected components of $\mathcal U$ are the sets $U_l$ for $l\not\in \sigma_\pm\cup(\sigma_\pm-1)\cup(\sigma_\pm+1)$ and $\mathcal U_j$ for $j=1,\dots,m$. In $\mathcal W$, all Reeb orbits are Reeb chords joining the planes $y=1$ and $y=-1$.
Let $\gamma$ be a Reeb periodic orbit (see Proposition \ref{proposition_orbites_tore_rocade}). 
If $\gamma=\delta_j^l$ let $ U_\gamma=U_j$. 
If $\gamma$ is derived from the bypass attachment along $\gamma_j$, let \[U_\gamma =U_j\cup \big(I_b^j \times\{-\epsilon_j\}\times I^j_\t{max}\big)\cup \mathcal B_j.\]
In both cases $\gamma\subset U_\gamma$.
Let $\gamma_+$ and $\gamma_-$ be two homotopic Reeb periodic orbits and $X=\mathcal W \setminus\left(U_{\gamma_+}\cup U_{\gamma_-}\right)$. 
As $\gamma_+$ and $\gamma_-$ are homotopic and $k_{j+1}-k_j\geq 3$,  for any Reeb chord $c$ in $X$ there exists a path of properly embedded arcs in $X $ connecting $c$ and a Reeb chord in $\o(X)$ with reversed orientation. Thus, by positivity of intersection (Proposition \ref{proposition_cordes_holomorphes}), all holomorphic cylinders are contained in $\mathcal U$.
\end{proof}

Let $(M,\xi_0=\ker(\alpha))$ be the contact structure obtained from $(M_{4,\eta},\alpha_b)$ after a bypass attachment along $\gamma_1=[0,2\pi]\times\{1\}\times\left\{\pi\right\}$ (Theorem \ref{theoreme_principal}). Let $\Gamma$ an adapted dividing set of the boundary.
The dividing set of a convex meridian disc contains exactly three connected components which are parallel to the boundary (Proposition \ref{proposition_rocade_tore}). 
By Golovko's result (Theorem \ref{theoreme_Golovko}), the contact homology of $(M,\alpha,\Gamma)$ is generated by two periodic orbits homotopic to $S^1$ and by their multiples. 
We now compare this result with our construction and obtain useful properties of $\partial_{E^l_1}$.
There are two periodic orbits of $R_{\alpha'}$ homotopic to $S^1$ (Proposition \ref{proposition_orbites_tore_rocade}).
If $l>0$, we have $C_*^{[S^1]^l}(M',\alpha')= E^l_+$ and $\partial_{E^l_+}=0$ (Lemma \ref{lemme_courbes_holomorphes_n_quelconque}).
If $l<0$, we have $C_*^{[S^1]^l}(M',\alpha')= E^l_1$. 
Thus $\ker(\partial_{E^l_1})/\im(\partial_{E^l_1})=0$. Let $\overline{\mathcal U}$ denote the set $\mathcal U_1 $ given by Lemma \ref{lemme_courbes_holomorphes_n_quelconque}.

\begin{proof}[Proof of Theorem \ref{homologie_intro_tore_plein}]
Let $\xi$ be a contact structure on $M=D^2\times S^1$ satisfying conditions (C4) and (C5). We choose the contact form given by Proposition \ref{proposition_alpha'}.
The neighbourhoods $\mathcal U_j$ described in Lemma \ref{lemme_courbes_holomorphes_n_quelconque} are contactomorphic to $\overline{\mathcal U}$. Thus  $\ker(\partial_{E^l_j})/\im(\partial_{E^l_j})=0$. Therefore $HC^{[S^1]^{\pm l}}(M,\xi_0,\Gamma)=E^l_\pm$ for $l>0$. As 
\[\dim(E^l_\pm)+ \#\{\sigma_\pm\} + \#\{\pm \t{ extremal bigon}\}=\chi(S_\pm)+ \#\{\sigma_\mp\}, \]
we obtain the desired dimension.
\end{proof}

We now turn to the case of a thickened convex surface. Let $S$ be a convex surface and $\Gamma=\bigcup_{i=0}^n \Gamma_i$ be a dividing set of $S$ without contractible components. Let $\gamma_0$ be an attachment arc in $S$ intersecting three distinct components of $\Gamma$. Let $M=S\times[-1,1]$ be the product neighbourhood of $S$ and $\xi$ be the associated invariant contact structure. 
Choose a contact form $\alpha_b$ of $\xi$ given in Section \ref{section_surface_epaissie}. 
We denote by $(M',\xi')$ the contact manifold obtained from $(M,\xi)$ after a bypass attachment along $\gamma_0\times\{1\}$ and by $\Gamma'$ a dividing set of $\partial M'$.
Fix $K>0$ and apply Theorem \ref{theoreme_principal} to obtain a contact form $\alpha'$ of $\xi'$. Let $\mathcal B$ denote the attached bypass. 
The proof of the following lemma is similar to the proof of Lemma \ref{lemme_courbes_holomorphes_n_quelconque}.

\begin{lemma}\label{lemme_courbes_holomorphes_surface}
For any adapted almost complex structure, the $J$-holomorphic curves in the symplectisation of $(M',\alpha')$ are contained in (see Figure \ref{ensemble_U_ter})
\[\mathcal U=\bigcup_{i=0}^n U_i\cup (I_b\times[-1,1]\times I_\t{max})\cup \mathcal B. \]
\begin{figure}[here]
\begin{center}
 \includegraphics{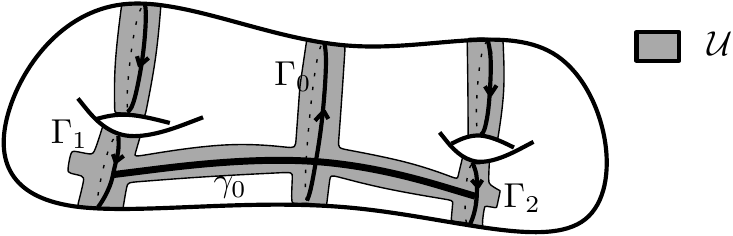}
\end{center}
 \caption{The neighbourhood $\mathcal U$ projected on $S$}\label{ensemble_U_ter}
\end{figure}
In addition, $\mathcal U $ is contactomorphic to $\overline{\mathcal U} $.
\end{lemma}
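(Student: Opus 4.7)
The plan is to mirror the proof of Lemma \ref{lemme_courbes_holomorphes_n_quelconque}: trap $J$-holomorphic curves inside $\mathcal{U}$ by positivity of intersection with Reeb chords living in the complement, then establish the contactomorphism $\mathcal{U} \simeq \overline{\mathcal{U}}$ by matching explicit local models. The Reeb periodic orbits of $(M',\alpha')$ are, by Proposition \ref{proposition_OP_alpha_p_bis} and Corollary \ref{corollaire_surface_epaissie}, the curves $\Gamma_i \times \{0\}$ for $i \neq 0$ together with the orbits $\gamma_\kappa$ created near $\Gamma_0$ and in the bypass $\mathcal{B}$; all of these already lie in $\mathcal{U}$, so any $J$-holomorphic cylinder $u$ in the symplectisation has asymptotics inside $\mathcal{U}$.

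First I will analyse the Reeb flow on the complement $\mathcal{W} = (S\times[-1,1])\setminus\mathcal{U}$. Condition (C8) gives $\alpha_b = \beta_\pm \pm dy$ outside the neighbourhoods $U_i$, so on $\mathcal{W}$ the Reeb vector field satisfies $R_{\alpha'} = \pm\,\partial/\partial y$, and every Reeb arc of $\mathcal{W}$ is a vertical segment joining the two convex boundary components of $S \times [-1,1]$, oriented upward on $S_+$ and downward on $S_-$. Given two homotopic asymptotic orbits $\gamma_+, \gamma_-$ of $u$, and a Reeb chord $c \subset \mathcal{W}\cap(S_+\times[-1,1])$ generic enough to be transverse to $u$, I will produce a path of properly embedded arcs in $\mathcal{W}\setminus(\gamma_+\cup\gamma_-)$ connecting $c$ to a chord $c' \subset \mathcal{W}\cap(S_-\times[-1,1])$ carrying the opposite orientation: because $\Gamma$ has no contractible components and all asymptotic orbits sit inside $\mathcal{U}$, the complement $\mathcal{W}\setminus(\gamma_+\cup\gamma_-)$ remains connected and traverses from $S_+$ to $S_-$ across $\Gamma$. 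Applying Lemma \ref{proposition_cordes_holomorphes} with $X_\pm$ the sets of chords in $\mathcal{W}\cap(S_\pm\times[-1,1])$ then forces $\im(u)\cap\mathcal{W} = \emptyset$, i.e.\ $\im(u)\subset\mathcal{U}$.

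For the contactomorphism, I would assemble it piece by piece on the connected component of $\mathcal{U}$ containing the bypass. Each of the three $U_i$'s whose dividing curves are crossed by $\gamma_0$ carries the model form $f(x)\,dy + \cos(x)\,dz$ of condition (C7); the strip $I_b\times[-1,1]\times I_\text{max}$ carries the model form $\sin(x)\,dy + \cos(x)\,dz$ of (C1); and $\mathcal{B}$ is the model bypass produced by Theorem \ref{theoreme_principal}. These pieces are identical to those making up $\overline{\mathcal{U}} = \mathcal{U}_1$ in the solid-torus construction of Section \ref{section_solid_tori}, so a piecewise gluing along the common overlaps yields the desired contactomorphism. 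The main obstacle is the chord-pairing step: I need to check carefully that the global topology of $S$, together with the absence of contractible dividing curves, really permits the desired sliding of Reeb chords avoiding the finitely many asymptotic orbits, so as to match the hypotheses of Lemma \ref{proposition_cordes_holomorphes}.
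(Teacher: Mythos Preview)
Your overall strategy is exactly the one the paper intends; the paper itself only says that the proof is ``similar to the proof of Lemma~\ref{lemme_courbes_holomorphes_n_quelconque}''. There is, however, one genuine slip in your chord-pairing step. You assert that ``the complement $\mathcal W\setminus(\gamma_+\cup\gamma_-)$ remains connected and traverses from $S_+$ to $S_-$ across $\Gamma$''. This cannot hold: since $\Gamma\subset\bigcup_i U_i\subset\mathcal U$, the set $\mathcal W$ never meets $\Gamma$, and each connected component of $\mathcal W$ lies entirely in $S_+\times[-1,1]$ or in $S_-\times[-1,1]$. One cannot slide a positive chord to a negative one while remaining in $\mathcal W$.

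The fix is already built into Lemma~\ref{proposition_cordes_holomorphes}: the isotopy of properly embedded arcs is required to take place in $M'\setminus(\gamma_+\cup\gamma_-)$, not in $\mathcal W$. Since $\gamma_+$ and $\gamma_-$ are two embedded circles in the three-manifold $M'$, their complement is connected, and a vertical arc $\{p\}\times[-1,1]$ with $p\in S_+$ can be pushed across some $U_i$ at a height $y\neq 0$ (thus avoiding the orbit $\Gamma_i\times\{0\}$) until it becomes a vertical arc over a point of $S_-$, i.e.\ a Reeb chord of $X_-$ with the reversed orientation. This is precisely the mechanism used in Lemma~\ref{lemme_courbes_holomorphes_n_quelconque} (where the wording ``in $X$'' in that proof should likewise be read as ``in $M'\setminus(\gamma_+\cup\gamma_-)$'' in accordance with Lemma~\ref{proposition_cordes_holomorphes}). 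Once you make this correction, your argument goes through; your identification of this step as the main obstacle was well placed. The contactomorphism part is fine: the connected component of $\mathcal U$ containing the bypass is assembled from the same standard pieces as $\overline{\mathcal U}$, and the remaining $U_i$ carry only the even orbits $\Gamma_i\times\{0\}$ with trivial differential.
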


\begin{proof}[Proof of Theorem \ref{homologie_surface_epaissie_intro}]
Theorem \ref{homologie_surface_epaissie_intro} is a corollary of Lemma \ref{lemme_courbes_holomorphes_surface}. The proof is similar to the proof of Theorem \ref{homologie_intro_tore_plein}.
\end{proof}

\section{Sketch of proof of the bypass attachment theorem}\label{section_sketch}

We now sketch the proof of our main theorem (Theorem \ref{theoreme_principal}). A complete proof is given in Section \ref{section_hyperbolic_proof}.
Fix $K>0$. Let $(M,\xi=\ker(\alpha))$ be a contact manifold with convex boundary $(S,\Gamma)$ and $\gamma_0$ be an attaching arc satisfying condition (C1), (C2) and (C3). To describe the Reeb periodic orbits after a bypass attachment, we study the maps $\phi_B$ and $\psi_M$ induced on $S_Z$ by the Reeb flow in the bypass and in $M$. 
Their domains and ranges consist of rectangles and these maps contract or expand the associated fibres. The maximal invariant set of the composite function is hyperbolic and this function is similar to a ``generalised horseshoe'' (see \cite{KatokHasselblatt95,Moser73}). The Reeb periodic orbits correspond to the periodic points and are given by the symbolic dynamics.

In Section \ref{subsection_notation} we present our notations. We describe the Reeb dynamics in $M$ in Section \ref{subsection_dynamics_M} and in the bypass in Section \ref{subsection_dynamics_B}. In Section \ref{subsection_sketch_proof}, we prove that these dynamic properties indeed give the symbolic description of Reeb periodic orbits. Finally, in Section \ref{subsection_hyperbolic_bypasses}, we sketch the construction of a \emph{hyperbolic bypass}: a bypass with Reeb dynamics described in Section \ref{subsection_dynamics_B}.

\subsection{Notations}\label{subsection_notation}
We say that a (partial) function $\phi:X\to Y$ is \emph{decomposed} into $(\phi_i)_{i\in I}$ if $\dom(\phi_i)$ is a union of connected components of $\dom(\phi)$, $(\dom(\phi_i))_{i\in I}$ is a partition of $\dom(\phi)$ and $\phi_i=\phi_{\vert\dom(\phi_i)}$.

In coordinates $(x,y,z)$, let $S_{y_0}$ denote the plane $y=y_0$, $X^{\leq y_0}=X\cap\{(x,y,z),y\leq y_0\}$, $X^{\geq y_0}=X\cap\{(x,y,z),y\geq y_0\}$ and $X^{[y_0,y_1]}=X^{\leq y_1}\cap X^{\geq y_0}$. For all $0<\lambda<\frac{\pi}{8}$, we consider the following subsets of $S_Z$ (see Figure~\ref{R_epsilon})
\begin{align*}
R_\lambda&=\left(\bigcup_{k=-1}^4\left[\frac{k\pi}{2}+\lambda,\frac{(k+1)\pi}{2}-\lambda\right]\right)\times I_\text{max}, \\
Q_\lambda&=\left(\bigcup_{k=0}^2\left[k\pi-\frac{\lambda}{2},k\pi+\frac{\lambda}{2}\right]\right)\times I_\text{max},\\
X&=\left(\left[0,\frac{\pi}{4}\right]\times\left[-z_\text{max},0)\right)
 \cup\left(\left[\frac{\pi}{4},\frac{3\pi}{4}\right]\times I_\text{max}\right)
 \cup\left(\left[\frac{3\pi}{4},\pi\right]\times(0,z_\text{max}\right]\right),\\
Y&=\left(\left[\pi,\frac{5\pi}{4}\right]\times\left[-z_\text{max},0)\right)
 \cup\left(\left[\frac{5\pi}{4},\frac{7\pi}{4}\right]\times I_\text{max}\right)
 \cup\left(\left[\frac{7\pi}{4},2\pi\right]\times(0,z_\text{max}\right]\right).
\end{align*}
\begin{figure}[here]
\begin{center}
 \includegraphics{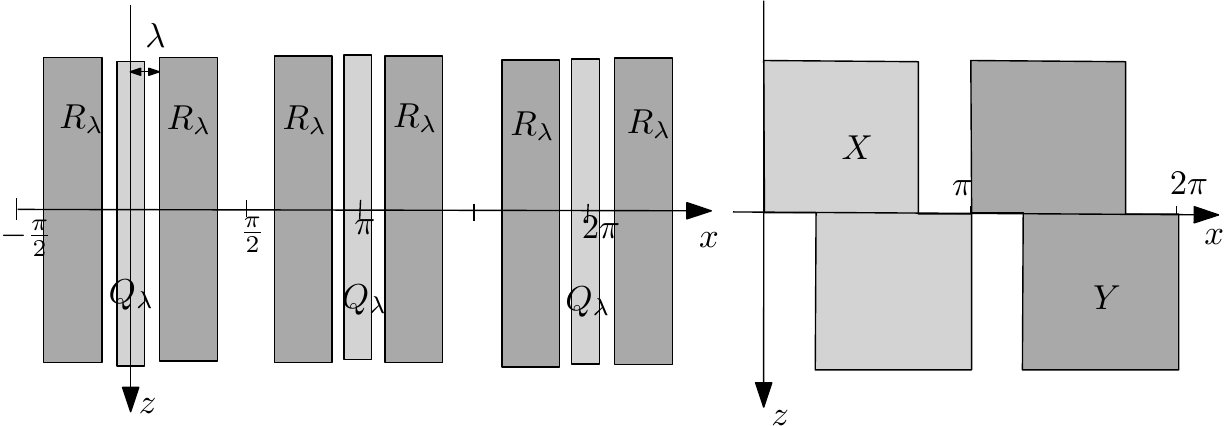}
\end{center}
 \caption{The subsets $R_\lambda$, $Q_\lambda$, $X$ and $Y$}\label{R_epsilon}
\end{figure}
For positive $z_\t{prod}$ and $y_\t{std}$, let $ I_\t{prod}=[-z_\t{prod},z_\t{prod}]$, $S_R=\left[\frac{\pi}{2},\pi\right]\times\{y_\t{std}\}\times I_\t{prod}$ and $S_{\epsilon,k}=\left[\frac{k\pi}{2}-\epsilon,\frac{k\pi}{2}+\epsilon\right]\times I_\text{max}$.
A \emph{rectangle} is a closed set diffeomorphic to $[0,1]\times[0,1]$. This set inherits horizontal and vertical fibres from $[0,1]\times[0,1]$. 

In $(\mathbb R^2,\langle\cdot,\cdot\rangle)$, let $D$ be a straight line and $\nu>0$. The \emph{$\nu$-cone centred at $D$} is the set 
\[\mathcal C(D,\nu)=\left\{w\in\mathbb R^2,\vert\langle w,v\rangle\vert<\nu\vert\langle w,u\rangle\vert \right\}\]
where $u$ is tangent to $D$ and $(u,v)$ is an orthonormal basis. We denote by $H$ and $V$ the horizontal and vertical axes. Let $U$ and $V$ be two open sets in $\mathbb R^2$ and $f:U\to V$ be a diffeomorphism. The image of a cone field $\mathcal C$ on $U$ is the cone field $f_* \mathcal C$ on $V$ defined by 
$\left(f_* \mathcal C\right)_p=\d f_{f^{-1}(p)}\left(\mathcal C_{f^{-1}(p)}\right)$. If $\mathcal C$ and $\mathcal C'$ are two cones fields on $U$ we write $\mathcal C\subset \mathcal C'$ if $\mathcal C_p\subset \mathcal C'_p$  for all $p\in U$. If $z\to\gamma(z)$ is a smooth curve in $\mathbb R^2$, let $\mathcal C_{x,z}(\gamma,\epsilon)=C(\gamma'(z),\epsilon)$ and $\mathcal C_{x,z}(\gamma^\perp,\epsilon)=C(\gamma'(z)^\perp,\epsilon)$.

\subsection{Reeb dynamics in $M$}\label{subsection_dynamics_M}
We now study the Reeb dynamics is the manifold $M$ with boundary.
To attach an adapted bypass we will perturb the contact form $\alpha$. We want to control the map $\psi_M$ induced by the Reeb flow in $M$ for times smaller than $K$ and for the contact form $\alpha$ and perturbations of $\alpha$.
The Reeb chords of $S_Z$ that contribute to $\psi_M$ for the contact form $\alpha$ are close to the Reeb chords of $\gamma_0$. Nevertheless, as $\Gamma\cap Z$ is contained in Reeb orbits, this decomposition is not stable by perturbation and some Reeb chords may appear near  the dividing set.

Let $\lambda>0$, the pair $(S_Z,\lambda)$ is said to be \emph{$K$-hyperbolic} if $\psi_M$ can be decomposed into $\left(\psi_j \right)_{j=0,\dots, N}$ and (see Figure \ref{R_0_et_Q_0}):
 \begin{enumerate}
  \item $\dom(\psi_0)\subset Q_{\lambda}$ and $\im(\psi_0)\subset Q_{\lambda}$;
  \item if $x\in\left[k\pi-\frac{\lambda}{2},k\pi+\frac{\lambda}{2}\right]$ and $(x,z)\in \dom(\psi_0)$ then 
  \begin{itemize}
    \item $(\psi_0)_x(x,z)\in\left[k\pi-\frac{\lambda}{2},k\pi+\frac{\lambda}{2}\right]$;
    \item $(\psi_0)_z(x,z)<z$ if $k$ is odd;
    \item $(\psi_0)_z(x,z)>z$ if $k$ is even;
  \end{itemize}
  \item for all $j\in \llbracket1, N\rrbracket$, $\dom(\psi_j)$ and $\im(\psi_j)$ are rectangles in $R_{\lambda}$ with horizontal fibres and the $\psi_j$ reverse the fibres.
 \end{enumerate}
Note that one can have $\dom(\psi_0)=\emptyset$ or $N=0$.
\begin{figure}[here]
\begin{center}
 \includegraphics{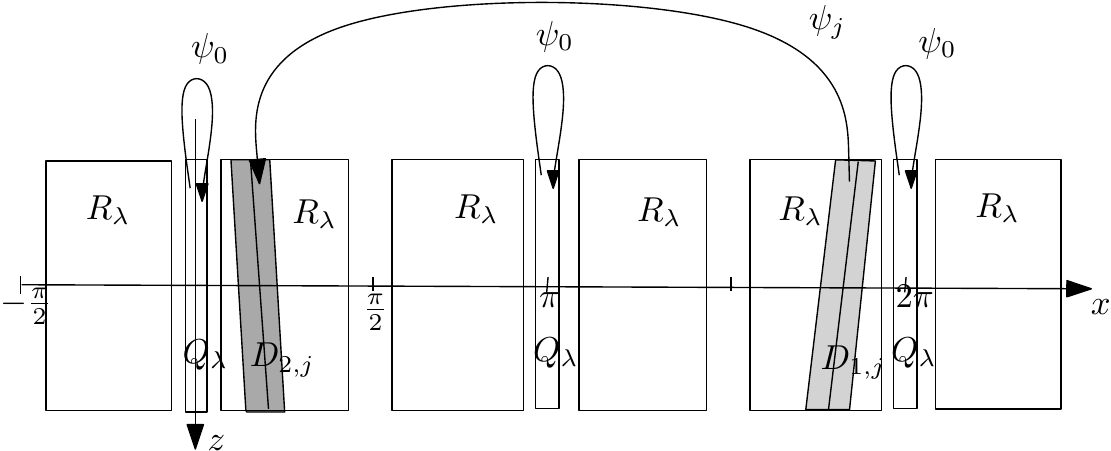}
\end{center}
 \caption{A $K$-hyperbolic surface}\label{R_0_et_Q_0}
\end{figure}
Let  $\mu$, $\nu$ and $\tau$ be real positive numbers.
A $K$-hyperbolic surface is \emph{dominated} by $\omega=(\mu,\nu,\tau)$ if for $j=1,\dots, N$ there exist segments $D_{1,j}\subset \dom(\psi_j)$ and $D_{2,j}\subset \im(\psi_j)$ with boundary on $z=\pm z_\t{max}$ such that (see Figure \ref{R_0_et_Q_0}):
 \begin{enumerate}
  \item the tangents of the vertical fibres of $\dom(\psi_j)$ and $\im(\psi_j)$ are respectively in $\mathcal C(D_{1,j},\nu)$ and $\mathcal C(D_{2,j},\nu)$;
  \item $\left(\psi_j\right)_*\mathcal C(H,\nu)\subset \mathcal C(D_{2,j},\mu)$ and $\left(\psi_j^{-1}\right)_*\mathcal C(H,\nu)\subset \mathcal C(D_{1,j},\mu)$;
  \item  the return time of $\psi_j$ is contained in $(T(a_j)-\tau,T(a_j)+\tau)$.
 \end{enumerate}
Let $\epsilon>0$. A $K$-hyperbolic ($\omega$-dominated) surface $(S_\gamma,\lambda)$ is \emph{$\epsilon$-stable} if for all $\epsilon$-perturbation of $\alpha$ preserving $\gamma_0$, $(S_\gamma,\lambda)$ remains $K$-hyperbolic (and $\omega$-dominated).

\begin{proposition}\label{proposition_retour_variete} Let $\tau>0$ and $\mu>0$. There exists a contact form arbitrarily close to $\alpha$, $z_\t{max}$ small and some real positive numbers $\nu$, $\lambda$ and $\epsilon$ such that
\begin{itemize}
  \item[$\bullet$] $\alpha$ satisfies conditions (C1), (C2) and (C3);
  \item[$\bullet$] $(S_Z,\lambda)$ is $K$-hyperbolic $(\mu,\nu,\tau)$-dominated and $\epsilon$-stable.
\end{itemize}
\end{proposition}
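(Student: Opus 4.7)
The plan is to build $\alpha'$ in three stages, starting from a contact form satisfying (C1) and (C2) as given by Proposition~\ref{proposition_adapte}. The first step is standard transversality: among adapted contact forms, $\delta_K(\gamma_0)$ meets $\gamma_0$ transversely generically, so a $C^\infty$-small perturbation supported in the interior of $M$ produces condition (C3) without disturbing (C1) or (C2), giving a finite list $a_1,\dots,a_N$ of Reeb chords of period at most $K$ in the interior of $\gamma_0\setminus\Gamma$. Each chord $a_j$ has a rectangular neighbourhood $U_j\subset S_Z\setminus\Gamma$ on which the Reeb flow defines a smooth return map $\psi_j\colon \dom(\psi_j)\to \im(\psi_j)$. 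Transversality means the differential $d\psi_j$ carries the tangent to $\gamma_0$ off the horizontal; choosing $D_{1,j}$ tangent to $\psi_j^{-1}(\gamma_0\cap \im(\psi_j))$ and $D_{2,j}$ tangent to $\psi_j(\gamma_0\cap \dom(\psi_j))$, the cone conditions of $(\mu,\nu,\tau)$-domination reduce, after shrinking $U_j$ inside $R_\lambda$ and choosing $\nu$ small, to $C^1$-closeness of $\psi_j$ to its linearisation. The return-time bound $(T(a_j)-\tau,T(a_j)+\tau)$ is then immediate by continuity.

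The non-routine content is the construction of $\psi_0$. For the unperturbed model (C1) the Reeb orbits based at points of $\{x=k\pi\}\cap S_Z$ stay on $S$ and drift along $\Gamma$, so $\psi_0$ is empty and $(S_Z,\lambda)$ is not $K$-hyperbolic. I would fix this by modifying $\alpha$ in a tubular neighbourhood of each component $\Gamma_i$ of $\Gamma$ off $Z$, following the local model of Section~\ref{section_surface_epaissie}: replace $\alpha$ by $\sin(x)dy+(1+k(x)l(y))\cos(x)dz$ in adapted coordinates along $\Gamma_i$, with $l$ a $C^\infty$-small strictly convex function vanishing at $0$, exactly as in the proof of Proposition~\ref{proposition_OP_alpha_p}. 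The computation there shows this produces an isolated even hyperbolic Reeb periodic orbit $\tilde\Gamma_i$ in $\o(M)$ that is $C^0$-close to $\Gamma_i$, with transverse two-dimensional stable and unstable cylinders. The support can be kept disjoint from $Z$, preserving (C1), (C2) and (C3).

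Now orbits in $Q_\lambda$ that avoid the stable manifold of the $\tilde\Gamma_i$ exhibit the classical $\lambda$-lemma behaviour: they approach $\tilde\Gamma_i$ along $W^s(\tilde\Gamma_i)$, travel along $\tilde\Gamma_i$ for roughly one period, and exit along $W^u(\tilde\Gamma_i)$ to hit $S_Z$ again near $\Gamma$. The resulting map $\psi_0$ is thus defined on $Q_\lambda$ minus a thin neighbourhood of $W^s(\tilde\Gamma_i)\cap S_Z$, sends each vertical strip $[k\pi-\lambda/2,k\pi+\lambda/2]\times I_\t{max}$ into itself up to terms of order $\lambda^2$, and is strictly monotone in $z$, with sign determined by the direction of the Reeb flow along $\Gamma_i$ at the crossing (hence by the parity of $k$). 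Taking $\lambda$ and $z_\t{max}$ small enough then guarantees that $\dom(\psi_0)\subset Q_\lambda$ is disjoint from every $\dom(\psi_j)\subset R_\lambda$, so $\psi_M$ decomposes as required by the $K$-hyperbolic condition.

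Finally, $\epsilon$-stability is the conjunction of three $C^1$-open conditions: structural stability of the hyperbolic orbits $\tilde\Gamma_i$, transversality of $\delta_K(\gamma_0)$ with $\gamma_0$ near each $a_j$, and the cone and return-time estimates around the $a_j$. Given fixed quantitative choices of $\lambda,\nu,\mu,\tau$, all three persist for perturbations of $\alpha'$ of size at most some explicit $\epsilon$. The step I expect to be the main obstacle is controlling which new Reeb chords can appear under an $\epsilon$-perturbation: a priori a short chord might migrate from $Q_\lambda$ to the strip $R_\lambda\setminus(Q_\lambda\cup\bigcup_j \dom(\psi_j))$, breaking the decomposition. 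Handling this forces the order of choice of constants---$\lambda$ first, absorbing all the dynamics that accumulate on the $\tilde\Gamma_i$, then $\epsilon$ small enough that uniform hyperbolic estimates rule out escape from $Q_\lambda$---and makes a quantitative argument essential rather than just an openness statement.
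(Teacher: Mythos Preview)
Your treatment of the chords $a_1,\dots,a_N$ and the associated $\psi_j$ (choice of $D_{1,j},D_{2,j}$ as tangents to the forward and backward images of $\gamma_0$, then cone and return-time estimates by continuity after shrinking $z_{\max}$) matches the paper exactly.

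The problem is your handling of $\psi_0$. You write that for the unperturbed model ``$\psi_0$ is empty and $(S_Z,\lambda)$ is not $K$-hyperbolic.'' This is a misreading of the definition: the paper explicitly notes that $\dom(\psi_0)=\emptyset$ is allowed, and its proof simply \emph{chooses} $\dom(\psi_0)=\emptyset$ for the base form. There is nothing to fix, so your modification of $\alpha$ along $\Gamma_i$ outside $Z$ to manufacture hyperbolic orbits $\tilde\Gamma_i$, together with the $\lambda$-lemma analysis of their stable and unstable manifolds, is unnecessary machinery. Your argument may well be made to work, but it introduces a perturbation you then have to control rather than exploiting that the base form already does the job.

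For $\epsilon$-stability the paper's argument is also much lighter than yours. After an $\epsilon$-perturbation, the rectangles $\dom(\psi_j),\im(\psi_j)\subset R_\lambda$ persist by ordinary continuity of the return map away from $\Gamma$. The only place new contributions to $\psi_M$ can arise is near $\Gamma$, i.e.\ in $Q_\lambda$, since that is where the unperturbed flow is tangent to $S_Z$. There the paper observes directly that in $Q_\lambda\times[-y_{\max},0]$ one has $|R_z|$ close to $1$ and $R_x$ small; this immediately gives that $\psi_0$ preserves each strip $[k\pi-\tfrac{\lambda}{2},k\pi+\tfrac{\lambda}{2}]$ and moves $z$ monotonically with sign $(-1)^k$. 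No hyperbolic normal form, no $\lambda$-lemma. Your last paragraph correctly identifies the genuine issue---ruling out stray returns landing in $R_\lambda\setminus(Q_\lambda\cup\bigcup_j\dom(\psi_j))$---but this is handled by the same continuity argument: for the base form there are no such returns, and the condition is $C^1$-open once $z_{\max}$ and $\lambda$ are fixed.
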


\begin{proof}
After a small perturbation of $\alpha$, we can assume that the images of $\gamma_0\setminus\Gamma$ on $S_Z$ by the Reeb flow and the opposite of the Reeb flow for times smaller that $K$ are transverse to $\gamma_0$. The intersection points correspond to the endpoints of the Reeb chords $a_1,\dots, a_N$. For 
$z_\t{max}$ small enough, the domain and range of $\psi_M$ are contained in a small neighbourhood of the endpoints of the Reeb chords. We choose $\dom(\psi_0)=\emptyset$. Let $D_{1,j}$ and $D_{2,j}$ be the tangent to the image of $\gamma_0\setminus\Gamma$ on $S_Z$ at the endpoints of the Reeb chords $a_1,\dots, a_N$ (see Figure \ref{segments_D}). We obtain the vertical fibres of $\dom(\psi_M)$ and $\im(\psi_M)$ as the inverse images and images of the horizontal segments in $\im(\psi_M)$ and  $\dom(\psi_M)$.
\begin{figure}[here]
\begin{center}
 \includegraphics{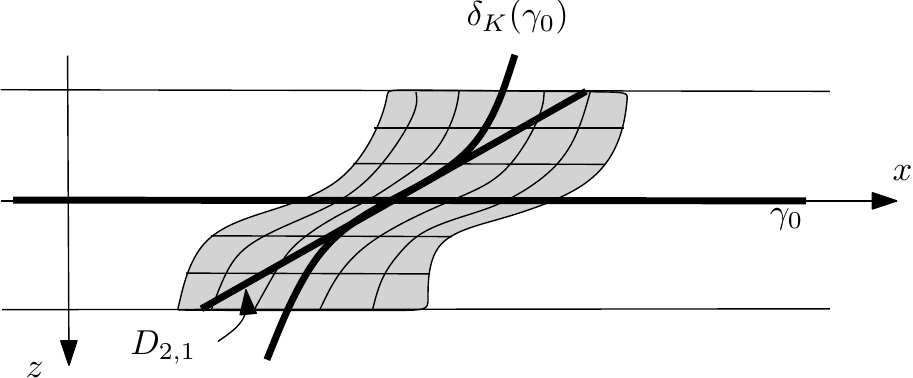}
\end{center}
 \caption{The segments $D_{2,1}$ and the rectangle structure of $\im(\psi_1)$}\label{segments_D}
\end{figure}
For small enough perturbations of~$\alpha$, the structure of $\psi_M$ is preserved outside $Q_\lambda$. In $Q_\lambda\times[-y_\t{max},0]$, the component $\vert R_z\vert$ is close to~$1$ and $\psi_0$ satisfies the desired conditions.
\end{proof}

\subsection{Reeb dynamics in the bypass}\label{subsection_dynamics_B}

We now describe the desired dynamics in the bypass in terms of horizontal rectangles.
A \emph{hyperbolic bypass} in $Z$ is a triple $(\mathcal B,\alpha_B, \lambda)$ where $(\mathcal B,\alpha_B)$ is a contact manifold in ~$Z$ and  $\lambda$ is a real positive number such that 
\begin{enumerate} 
\item $\mathcal B^{\leq 0}=Z^{\leq 0}$ and $\alpha_B$ is adapted to the boundary in $Z^{\geq 0}$;
\item the map $\phi_B$ induced on $S_Z$ by the Reeb flow  in $\mathcal B$ can be decomposed into maps $\phi_0$ and $\phi_1$ such that
\begin{itemize}
  \item $\dom(\phi_0)\subset Q_\lambda$ and $\im(\phi_0)\subset Q_\lambda$;
  \item $\dom(\phi_1)\subset X$ and $\im(\phi_1)\subset Y$;
  \item if $x\in\left[k\pi-\frac{\lambda}{2},k\pi+\frac{\lambda}{2}\right]$ and $(x,z)\in \dom(\phi_0)$ then 
  \begin{itemize}
    \item $(\phi_0)_x(x,z)\in\left[k\pi-\frac{\lambda}{2},k\pi+\frac{\lambda}{2}\right]$;
    \item $(\phi_0)_z(x,z)<z$ if $k$ is odd;
    \item $(\phi_0)_z(x,z)>z$ if $k$ is even;
  \end{itemize}
 \end{itemize}
\item the restriction of $\phi_1$ to $R_\lambda$ can be decomposed into
$(\phi_{i,j})_{i,j\in\{0,1\}}$ (see Figure \ref{T_i_j}) where $\dom(\phi_{i,j}) $ and $\im(\phi_{i,j})$ are rectangle as large as $R_\lambda$ with vertical fibres and the $\phi_{i,j}$ reverse the fibres.
\end{enumerate}
\begin{figure}[here]
\begin{center}
 \includegraphics{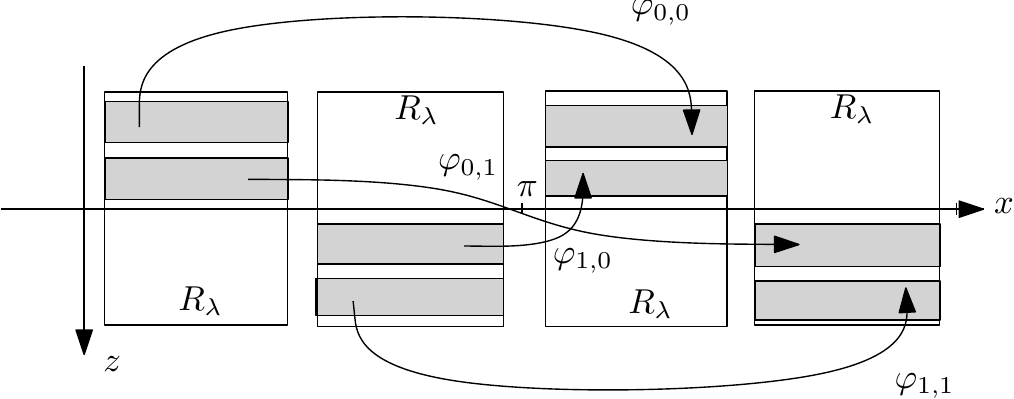}
\end{center}
 \caption{The rectangles $\dom(\phi_{i,j}) $ and $\im(\phi_{i,j})$}\label{T_i_j}
\end{figure}
As in the previous section we want to control the return time and obtain some cone-preservation properties. A hyperbolic bypass $(\mathcal B, \alpha_B,\lambda)$ is \emph{dominated} by $\omega_B=(\nu,\tau, A, \eta )$ if
\begin{enumerate}
 \item $\left(\phi_{i,j}\right)_*\mathcal C(V,A)\subset \mathcal C(H,\nu)$ and $\left(\phi_{i,j}^{-1}\right)_*\mathcal C(V,A)\subset \mathcal C(H,\nu)$;
 \item $\Vert\d\phi_{i,j}(p,v)\Vert>\frac{1}{\eta}\Vert v\Vert$ for all $p\in\dom(\phi_{i,j}) $ and $v\in \mathcal C_p(V,A)$;
 \item $\Vert\d\phi_{i,j}^{-1}(p,v)\Vert>\frac{1}{\eta}\Vert v\Vert$ for all $p\in\im(\phi_{i,j}) $ and $v\in \mathcal C_p(V,A)$;
 \item the return time in $R_\lambda$ is bounded by $8\tau$.
\end{enumerate}
The following theorem is the main ingredient in the proof of Theorem \ref{theoreme_principal}.

\begin{theorem}\label{recollement_K_hyperbolique}
Fix $K>0$. Let $(M,\xi=\ker(\alpha))$ be a contact manifold with convex boundary $(S,\Gamma)$ and $\gamma_0$ be an attaching arc satisfying conditions (C1), (C2) and (C3).
For all real positive numbers $\nu$, $\tau$, $A$, $\eta$, $\epsilon$ and $\lambda$ and for $z_\t{max}$ small enough there exists a hyperbolic bypass $(V_B,\alpha_B, \lambda)$ obtained from $(V,\xi_B)$ after a bypass attachment along $\gamma_0$, dominated by $(\nu,\tau,A,\eta)$ and such that $\alpha_B$ is $\epsilon$-close to $\alpha$.
\end{theorem}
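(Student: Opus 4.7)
The plan is to begin from an explicit model bypass rather than from an abstract gluing. Honda gave a concrete contact model for a bypass attachment along an arc satisfying (C1), and that model is already recalled earlier in the paper; it produces a contact structure $\xi_B$ on a neighbourhood $V$ of $Z\cup D$ realising the attachment topologically, with $\mathcal B^{\leq 0}=Z^{\leq 0}$ and the right boundary behaviour on $Z^{\geq 0}$. That gives for free the contact-geometric content of the statement, so the real task is dynamical: choose a contact form $\alpha_B$ defining $\xi_B$ whose Reeb flow has the prescribed horseshoe-type return map.

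The next step is to locate the four branches of $\phi_1$ and the branch $\phi_0$. The bypass half-disc carries a characteristic foliation with one negative elliptic point in the interior of $\gamma_1$ and two positive elliptic points at its endpoints, with alternating singularities on $\gamma_2$. Trajectories of $R_{\alpha_B}$ that cross the bypass enter through one of two arcs in $X$, get wound around in the thickening of $D$, and exit through two corresponding arcs in $Y$; this gives the qualitative fiber-reversing behaviour that is required for the four rectangles $\phi_{i,j}$. The branch $\phi_0$ comes from orbits that remain in a neighbourhood of $\Gamma\cap Z$ where $R_\alpha$ is nearly tangent to $S$; the two components of $\dom(\phi_0)$ sit over the arcs of $\Gamma\cap\gamma_0$, and the sign of the $z$-drift is forced by the convention that $R_\alpha$ points from $S_-$ to $S_+$ along $\Gamma$, which accounts for the odd/even dichotomy in the definition of a hyperbolic bypass.

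With that qualitative picture in place, I would then quantify the construction. The standing freedom is to rescale the $z$-coordinate (take $z_{\text{max}}$ small) and to reparametrise the bypass by a contact isotopy that accelerates the Reeb flow through the thickened disc. Both operations deform the return map of $\phi_1$, in a good fibered chart, into something arbitrarily close to a linear hyperbolic matrix of the form $\operatorname{diag}(c,c^{-1})$ composed with a coordinate swap, with $c$ of our choosing. Choosing $c$ large enough, and the acceleration strong enough, gives simultaneously the cone inclusions $(\phi_{i,j})_*\mathcal C(V,A)\subset\mathcal C(H,\nu)$, the expansion bound $\|\d\phi_{i,j}v\|>\|v\|/\eta$, and the time bound $8\tau$ on return times in $R_\lambda$. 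To keep $\alpha_B$ within $\epsilon$ of $\alpha$ one carries out the acceleration and tuning by a $\mathcal C^0$-small interpolation supported in $Z^{\geq 0}$; the flow on $Z^{\leq 0}$ and on $M$ is untouched, which is consistent with condition $(1)$ in the definition of a hyperbolic bypass.

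The main obstacle is combining every requirement into a single contact form. Controlling $\phi_1$ alone is a straightforward explicit construction on the bypass disc, and controlling $\phi_0$ alone is an easy consequence of the convexity of $S$; the delicate point is the transition zone where a trajectory crossing the bypass almost grazes the dividing set. This is exactly the locus where Reeb chords can appear or disappear under perturbation, so one must show that for $z_{\text{max}}$ small the contributions to $\phi_1$ coming from chords that pass near $\Gamma$ either merge into the $\phi_0$ component or fit cleanly into one of the rectangles $\dom(\phi_{i,j})$, with the cone field estimates surviving the singular limit. Carrying this out, together with verifying that the fiber-reversing rectangles of $\phi_1$ genuinely span all of $R_\lambda$ (items (3) in the definitions), is what the detailed argument of Section~\ref{section_hyperbolic_proof} must accomplish.
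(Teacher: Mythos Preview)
Your outline misidentifies the source of the return map $\phi_1$, and this is a genuine gap rather than a matter of detail. In Honda's explicit model the Reeb vector field near $S_Z$ is tangent to the planes $x=\text{const}$ with slope $\tan(x)$, so every Reeb orbit entering through $R_\lambda$ simply exits the bypass through the boundary; there is \emph{no} return map on $R_\lambda$ at all (see Figure~\ref{figure_Reeb_non_adapte} and the surrounding discussion). The trajectories do not ``get wound around in the thickening of $D$'' and emerge in $Y$ as you describe: the only chords of $S_Z$ that survive lie in a neighbourhood of $x=\frac{\pi}{2}$ and $x=\frac{3\pi}{2}$, which is disjoint from $R_\lambda$. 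Consequently there is nothing to ``accelerate by a contact isotopy'' and no pre-existing hyperbolic matrix to perturb; your mechanism for obtaining the cone and expansion estimates has no object to act on.

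What actually produces $\phi_1$ is the \emph{convexification} step. Honda's model is not adapted to the boundary along the concave part of $\Gamma_{\text{smooth}}$, and one must glue in a small bump there (following \cite{CGHH10}) to make $\alpha_B$ adapted. Inside this bump the Reeb field is nearly tangent to the dividing set, and it is precisely the passage through the bump that sends orbits from $X\cap R_\lambda$ around to $Y\cap R_\lambda$: they enter the bump, follow the dividing set, travel along $\Gamma_D\times\{0\}$ in the upper part of the bypass, and exit symmetrically (Figures~\ref{retour_adapte}--\ref{retour_rectangles}). The hyperbolic estimates are then obtained not by rescaling time but by a careful choice of the contact form on the bump: one first makes a pre-convex perturbation to fix the target curves $\delta_i,\delta^j$ on $S_R$, then builds the convexification in adapted $(t,u,v)$ coordinates, and finally perturbs the pair $(f,g)$ so that the induced map $\Phi$ between $\Sigma_-$ and $\Sigma_+$ satisfies the required cone and dilation bounds (Proposition~\ref{lemme_perturbation_2}). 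Your proposal correctly flags the transition near $\Gamma$ as delicate, but treats it as a boundary effect on an already-existing $\phi_1$; in fact that region is where the whole of $\phi_1$ is manufactured, and the quantitative control comes from the freedom in choosing the convexification data rather than from shrinking $z_{\max}$ or reparametrising the original model.
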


If $(S_Z,\lambda)$ is $K$-hyperbolic, $\epsilon$-stable and $(\mu,\nu,\tau)$-dominated, we call the bypass attachment described in Theorem \ref{recollement_K_hyperbolique} a \emph{K-hyperbolic bypass attachment}.

\subsection{Reeb periodic orbits after a bypass attachment}\label{subsection_sketch_proof}

Before turning to the proof of Theorem \ref{recollement_K_hyperbolique}, we prove that Theorem \ref{recollement_K_hyperbolique} and Proposition \ref{proposition_retour_variete} imply Theorem \ref{theoreme_principal} and Proposition~\ref{proposition_rocade_allegee}. We prove this result in two steps:
\begin{enumerate}
  \item we obtain a symbolic representation of the Reeb flow in restriction to $R_\lambda$;
  \item we prove that all new Reeb periodic orbits intersect $R_\lambda$. 
\end{enumerate}
Fix $K>0$. Let $(M,\xi=\ker(\alpha))$ be a contact manifold with convex boundary $(S,\Gamma)$ and $\gamma_0$ be an attaching arc satisfying conditions (C1), (C2) and (C3). 
We assume $K\neq l({\mathbf{a}})$ for all words $\mathbf{a}$ on the letters $a_1,\dots, a_N$. Thus, there exists $\epsilon_K$ such that $\vert K-l({\mathbf{a}})\vert>\epsilon_K$ for all words $\mathbf{a}$. Let $l_0$ be such that $l(a_i)>l_0$ for all $i=1,\dots, N$. 
There exists $\epsilon_0$ such that these estimations remain satisfied for all $2\epsilon_0$-perturbations of $\alpha$. Without loss of generality $l_0<1$ and $\frac{\epsilon_K}{K}<1$.
Let $\tau<\frac{l_0}{18K}\epsilon_K$. We apply Proposition \ref{proposition_retour_variete} to obtain a $K$-hyperbolic surface, $(\mu,\nu,\tau)$-dominated and $\epsilon$-stable. Without loss of generality $\epsilon<\epsilon_0$, $\nu\leq\mu$ and $\mathcal C(D_{i,j},\mu)\cap \mathcal C(H,\mu)=\{0\}$ for $i=1,2$ and $j=1,\dots, N$. Choose $A>0$ such that $\mathcal C(D_{i,j},\mu)\subset \mathcal C(V,A)$ and $\mathcal C(V,A)\cap \mathcal C(H,\mu)=\{0\}$ for $i=1,2$ and $j=1,\dots, N$ and for all contact forms $\epsilon$-close to $\alpha$. 
Choose $M>0$ such that $\Vert \d\psi_j\Vert<M$ and $\Vert \d\psi_j^{-1}\Vert<M$ for $j=1,\dots, N$ and for all $\epsilon$-perturbations of $\alpha$. Let $\eta<\frac{1}{3M}$. Apply Theorem \ref{recollement_K_hyperbolique}, to obtain a hyperbolic bypass $(M_B,\alpha_B,\lambda)$ dominated by $(\nu,\tau,A,\eta)$ and such that $\alpha_B$ is $\epsilon$-close to $\alpha$.

To obtain a symbolic representation of the new Reeb periodic orbits, we apply a fixed point theorem in hyperbolic situations. The following proposition derives from \cite[Theorem 3.2]{Moser73}.
\begin{proposition}\label{proposition_point_fixe_hyperbolique}
Let $R$ and $R'$ be two rectangles in $[0,1]\times[0,1]$ such that the vertical boundaries of $R$ are contained in $\{0,1\}\times[0,1]$ and the horizontal boundaries of $R'$ are contained in $[0,1]\times\{0,1\}$. Let $F:R\to R'$ be a diffeomorphism such that, for some $A>0$, $\nu>0$ and $a>2$
\begin{itemize}
  \item $F_*\mathcal C(V,A)\subset \mathcal C(V,A)$ and $F^{-1}_*\mathcal C(H,\nu)\subset \mathcal C(H,\nu)$;
  \item $\Vert\d F^{-1}(p,v)\Vert\geq a\Vert v\Vert$ for all $p\in R' $ and $v\in \mathcal C_p(H,\nu)$;
  \item $\Vert\d F(p,v)\Vert\geq a\Vert v\Vert$ for all $p\in R $ and $v\in \mathcal C_p(V,A)$.
\end{itemize}
Then $F$ has a unique fixed point.
\end{proposition}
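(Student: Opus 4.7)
The approach is the standard two–dimensional hyperbolic fixed‑point argument, following \cite[Theorem~3.2]{Moser73}. I plan to construct an invariant \emph{unstable} curve $\sigma^{*}\subset R'$ (a vertical Lipschitz graph) and an invariant \emph{stable} curve $h^{*}\subset R$ (a horizontal Lipschitz graph) via Banach fixed-point on appropriate graph-transform maps, and then show that $\sigma^{*}\cap h^{*}$ consists of a single point which is the unique fixed point of $F$.

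For the unstable curve, let $\mathcal{V}$ be the complete metric space (in $C^{0}$) of Lipschitz graphs $\sigma_{g}=\{(g(y),y):y\in[0,1]\}\subset R'$ with $|g'|\leq A$, so the tangent lies in $\mathcal{C}(V,A)$. Given $\sigma\in\mathcal{V}$, the sub-arc $\sigma\cap R$ has its endpoints on the horizontal boundary of $R$, since $R$ is a full-width horizontal strip. The forward cone condition $F_{*}\mathcal{C}(V,A)\subset\mathcal{C}(V,A)$, applied to the vertical segments $\{0,1\}\times[0,1]\subset\partial R$, forces $F$ to send the vertical boundary of $R$ to the vertical boundary of $R'$ (the horizontal segments $[0,1]\times\{0,1\}\subset\partial R'$ are ruled out since their tangents $(1,0)$ do not lie in $\mathcal{C}(V,A)$); by complementarity $F$ sends the horizontal boundary of $R$ to the horizontal boundary of $R'$. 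Hence the endpoints of $F(\sigma\cap R)$ lie on $[0,1]\times\{0,1\}$, and since $F(\sigma\cap R)$ is tangent to $\mathcal{C}(V,A)$ it is a full-height Lipschitz graph in $R'$. This defines $T_{u}(\sigma)\in\mathcal{V}$. The expansion $\|\d F(v)\|\geq a\|v\|$ on $\mathcal{C}(V,A)$ then yields, via standard graph-transform computations, that $T_{u}$ is an $a^{-1}$-contraction; Banach's theorem provides a unique fixed graph $\sigma^{*}$.

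The symmetric construction with $F^{-1}:R'\to R$ on the space $\mathcal{H}$ of Lipschitz graphs $\{(x,h(x))\}\subset R$ with $|h'|\leq\nu$ gives a unique invariant horizontal curve $h^{*}\in\mathcal{H}$, fixed by a stable graph-transform $T_{s}$. The cones $\mathcal{C}(V,A)$ and $\mathcal{C}(H,\nu)$ are transverse (implicit in the surrounding hypotheses, ensured for instance by $A\nu<1$), so $\sigma^{*}$ and $h^{*}$ meet transversely in a single point $p^{*}\in R\cap R'$. For uniqueness of fixed points, if $p$ satisfies $F(p)=p$ then for any $h_{0}\in\mathcal{H}$ containing $p$ each iterate $T_{s}^{n}(h_{0})$ still passes through $F^{-n}(p)=p$; passing to the $C^{0}$-limit $h^{*}$ shows $p\in h^{*}$, and symmetrically $p\in\sigma^{*}$, hence $p=p^{*}$. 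Finally, $F(p^{*})=p^{*}$ follows formally from the invariance relations $F(\sigma^{*}\cap R)=\sigma^{*}$ and $F^{-1}(h^{*}\cap R')=h^{*}$, which preserve the intersection point.

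The main technical obstacle is the well-definedness of the graph transform $T_{u}$, namely checking that $F(\sigma\cap R)$ is genuinely a full-height graph in $R'$. This hinges on the boundary bookkeeping above, which in turn uses crucially that $R$ is full-width (so $\sigma\cap R$ has endpoints on the horizontal boundary of $R$) and that the horizontal boundary of $R'$ lies on $[0,1]\times\{0,1\}$ (so $F$ of the horizontal boundary of $R$ provides the top and bottom endpoints of $T_{u}(\sigma)$). Once these geometric facts are set up, the $a^{-1}$-contraction estimate and the cone-transversality conclusion are routine.
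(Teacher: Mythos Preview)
The paper does not give a proof of this proposition at all: it simply records that the statement ``derives from \cite[Theorem~3.2]{Moser73}'' and moves on. Your proposal is precisely the standard graph-transform argument behind Moser's theorem, so you are supplying the content that the paper outsources to the reference; in that sense your approach and the paper's are the same, only you have written it out.

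The sketch is essentially correct. The boundary bookkeeping you flag is indeed the only nonroutine point, and your argument for it is right: the cone condition $F_*\mathcal C(V,A)\subset\mathcal C(V,A)$ forces $F$ to send vertical boundary to vertical boundary, hence horizontal to horizontal, so $F(\sigma\cap R)$ is a full-height $\mathcal C(V,A)$-graph in $R'$. One small point worth making explicit is that $\sigma\cap R$ is a single connected arc with endpoints on the horizontal boundary of $R$: this follows because $\sigma$ is a monotone graph over $y\in[0,1]$ and $R$, being full-width with vertical boundaries on $\{0,1\}\times[0,1]$, is bounded above and below by two disjoint horizontal-boundary arcs that $\sigma$ must cross exactly once each. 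Once that is in hand, the contraction estimate and the transversality $\sigma^*\pitchfork h^*$ (which needs $A\nu<1$, implicit in the setup) are routine, as you say.
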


\begin{proposition}\label{proposition_point_fixe_F_a}
Let ${\mathbf{a}}=a_{i_1}\dots a_{i_k}$ and $F_{\mathbf{a}}=\psi_{i_k}\circ\phi_B\dots\psi_{i_1}\circ\phi_B$ in restriction to $R_\lambda$.
The map $F_{\mathbf{a}}$ has a unique fixed point. The period $T(\gamma_{\mathbf{a}})$ of the associated Reeb periodic orbit $\gamma_{\mathbf{a}}$ satisfies $T(\gamma_{\mathbf{a}})\in[l({\mathbf{a}})-9k\tau,l({\mathbf{a}})+9k\tau]$.
\end{proposition}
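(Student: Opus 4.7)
The plan is to apply the hyperbolic fixed-point theorem (Proposition~\ref{proposition_point_fixe_hyperbolique}) to $F_\mathbf{a}$ on a suitably constructed rectangle, and then sum the individual transit times to obtain the period estimate.

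First I would identify, by induction on the word length $k$, a rectangle $R\subset R_\lambda$ and a rectangle $R'\subset R_\lambda$ such that $F_\mathbf{a}$ restricts to a diffeomorphism from $R$ onto $R'$, with $R$ spanning the full $z$-range of a connected component of $R_\lambda$ and $R'$ spanning the full $x$-range of a (possibly different) component; after rescaling those components to $[0,1]\times[0,1]$, $R$ and $R'$ satisfy the boundary hypotheses of Proposition~\ref{proposition_point_fixe_hyperbolique}. At each inductive step, the next block $\psi_{i_j}\circ\phi_B$ forces me to single out the unique branch $\phi_{\alpha,\beta}$ of $\phi_B|_{R_\lambda}$ whose image meets $\dom(\psi_{i_j})$; since $\phi_{\alpha,\beta}$ has image as large as $R_\lambda$ and reverses fibres, the preimage of $\dom(\psi_{i_j})\cap\im(\phi_{\alpha,\beta})$ is a vertical strip of full $z$-height, which $\psi_{i_j}$ then sends to a horizontal strip inside $\im(\psi_{i_j})$ of full $x$-width.

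Next I would check the two cone-invariance conditions and the expansion estimate. The $\omega_B$-domination yields $(\phi_B)_*\mathcal{C}(V,A)\subset\mathcal{C}(H,\nu)$, and the $\omega$-domination of $(S_Z,\lambda)$ together with the choice of $A$ (so that $\mathcal{C}(D_{2,j},\mu)\subset\mathcal{C}(V,A)$) yields $(\psi_j)_*\mathcal{C}(H,\nu)\subset\mathcal{C}(V,A)$; hence each block $\psi_{i_j}\circ\phi_B$ preserves $\mathcal{C}(V,A)$, and iterating gives $(F_\mathbf{a})_*\mathcal{C}(V,A)\subset\mathcal{C}(V,A)$. The analogous computation shows $(F_\mathbf{a}^{-1})_*\mathcal{C}(H,\nu)\subset\mathcal{C}(H,\nu)$. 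For the expansion, a vector $v\in\mathcal{C}(V,A)$ is stretched by $d\phi_B$ by at least $\eta^{-1}$, while $d\psi_{i_j}$ shrinks lengths by at most the factor $M$, so each block multiplies $\|v\|$ by at least $(M\eta)^{-1}>3$ from the choice $\eta<1/(3M)$. After $k$ blocks, $\|dF_\mathbf{a}\cdot v\|\geq 3^k\|v\|\geq 3\|v\|$, and the symmetric estimate on $\mathcal{C}(H,\nu)$ for $F_\mathbf{a}^{-1}$ uses the analogous $\omega_B$-expansion of $\phi_B^{-1}$. With $a=3>2$, Proposition~\ref{proposition_point_fixe_hyperbolique} delivers a unique fixed point.

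Finally, the period $T(\gamma_\mathbf{a})$ is the sum of the $k$ return times through $M$, each in $(T(a_{i_j})-\tau,T(a_{i_j})+\tau)$ by the $\omega$-domination, and the $k$ return times through the bypass, each bounded by $8\tau$ by the $\omega_B$-domination, so
\[
T(\gamma_\mathbf{a})\in[l(\mathbf{a})-k\tau,\,l(\mathbf{a})+9k\tau]\subset[l(\mathbf{a})-9k\tau,\,l(\mathbf{a})+9k\tau].
\]
The main obstacle I expect is the first step: tracking, through the alternation of fibre-reversing maps $\phi_B$ and $\psi_{i_j}$, which component of $R_\lambda$ each successive domain sits in and which branch of $\phi_B$ is engaged, and confirming that the nested intersection at the end of the induction is genuinely a vertical strip of full $z$-height rather than empty or degenerate.
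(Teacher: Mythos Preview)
Your outline is the paper's argument: induction on the word length to produce rectangles on which Proposition~\ref{proposition_point_fixe_hyperbolique} applies, cone preservation and expansion read off directly from the $\omega$- and $\omega_B$-domination together with the choices $\mathcal C(D_{2,j},\mu)\subset\mathcal C(V,A)$ and $\eta<\frac{1}{3M}$, and the period estimate by summing the return-time bounds.

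One correction to the inductive step: the branch of $\phi_B|_{R_\lambda}$ whose image meets $\dom(\psi_{i_j})$ is not unique. Two of the four $\phi_{\alpha,\beta}$ land in the relevant component of $R_\lambda\cap Y$ (one coming from each component of $R_\lambda\cap X$), so the induction actually yields a decomposition of $F_{\mathbf{a}}$ into two pieces $F^{\mathbf{a}}_0$ and $F^{\mathbf{a}}_1$ whose domains sit in two distinct components of $R_\lambda$ while both images lie in $\im(\psi_{i_k})$. Exactly one of the two satisfies $\dom(F^{\mathbf{a}}_i)\cap\im(F^{\mathbf{a}}_i)\neq\emptyset$, and Proposition~\ref{proposition_point_fixe_hyperbolique} is applied to that piece. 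This is precisely the bookkeeping you flagged as the main obstacle; once you carry both branches through the induction the rest of your argument goes through unchanged.
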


\begin{proof}
By induction on $k$, the map $F_{\mathbf{a}}$ can be decomposed into $F^{\mathbf{a}}_{0}$ and $F^{\mathbf{a}}_{1}$ such that (see Figure \ref{Fi_a}) : 
\begin{itemize}
  \item $\im(F^{\mathbf{a}}_{i})$ are rectangles as high as $R_\lambda$ with horizontal fibres;
  \item $\dom(F^{\mathbf{a}}_{i})$ are rectangles with vertical fibres contained in two different components of $R_\lambda$ and as large as the associated component;
  \item ${F_{\mathbf{a}}}_*\mathcal C(V,A)\subset \mathcal C(D_{2,i_k},\mu)$ and ${F_{\mathbf{a}}^{-1}}_*\mathcal C(H,\nu)\subset \mathcal C(H,\nu)$;
  \item $\Vert\d F_{\mathbf{a}}^{-1}(p,v)\Vert\geq\frac{1}{(\eta M)^k}\Vert v\Vert$ for all $p\in \im(F_{\mathbf{a}}) $ and $v\in \mathcal C_p(H,\nu)$;
  \item $\Vert\d F_{\mathbf{a}}(p,v)\Vert\geq\frac{1}{(\eta M)^k}\Vert v\Vert$ for all $p\in \dom(F_{\mathbf{a}})$ and $v\in \mathcal C_p(V,A)$.
\end{itemize}
\begin{figure}[here]
\begin{center}
 \includegraphics{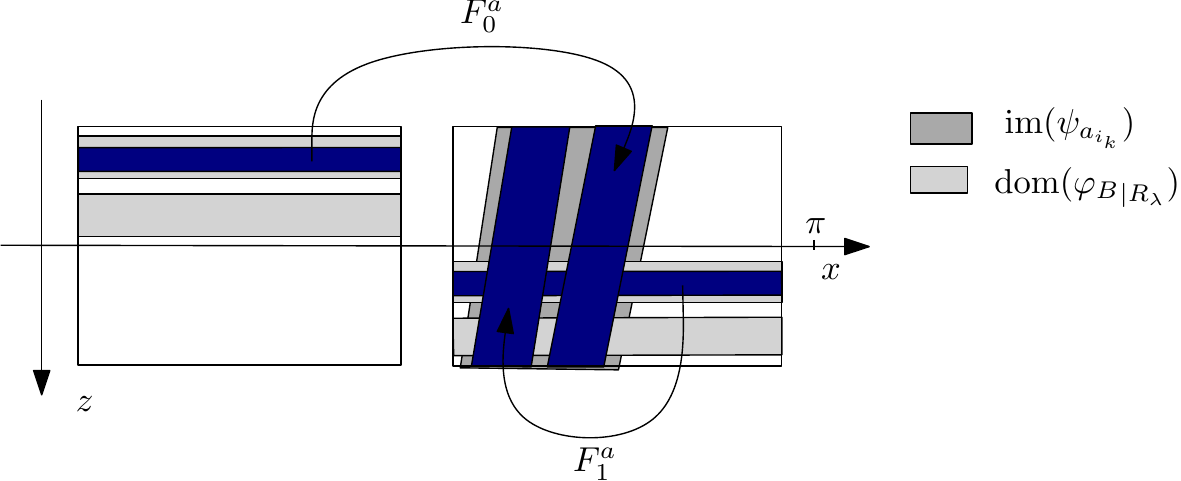}
\end{center}
 \caption{The maps $F^{\mathbf{a}}_{0}$ and $F^{\mathbf{a}}_{1}$}\label{Fi_a}
\end{figure} 
To obtain a unique fixed point, we apply Proposition \ref{proposition_point_fixe_hyperbolique} to the component of $F_{\mathbf{a}}$ such that $\dom(F^{\mathbf{a}}_{i})\cap \im(F^{\mathbf{a}}_{i})\neq \emptyset$. The estimates on the period of the associated Reeb periodic orbit derives from the estimates on the return time.
\end{proof}
We now turn to the second step of the proof.

\begin{proposition}\label{proposition_p_gamma}
Let $\gamma$ be a Reeb periodic orbit intersecting $S_Z$ in $p_\gamma$ and such that $T(\gamma)<K$. Then $p_\gamma\notin Q_\lambda$.
\end{proposition}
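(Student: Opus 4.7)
I would prove this by contradiction: assume $p_\gamma \in Q_\lambda$, lying in the component with $x \in [k\pi - \lambda/2, k\pi + \lambda/2]$ for some $k \in \{0,1,2\}$. The strategy is to follow the orbit by its successive crossings with $S_Z$ and show that the $z$-coordinate is strictly monotonic along this sequence, which is incompatible with periodicity. Label the crossings $p_0 = p_\gamma, p_1, \ldots, p_n = p_\gamma$ in the cyclic order prescribed by the Reeb flow on $\gamma$. Because $T(\gamma) < K$, the time between $p_i$ and $p_{i+1}$ is less than $K$, so $p_i$ lies in the domain of either $\psi_M$ (if the arc of $\gamma$ from $p_i$ to $p_{i+1}$ runs through $M$) or $\phi_B$ (if it runs through the attached bypass).

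Next I would show inductively that every $p_i$ lies in the same component $[k\pi - \lambda/2, k\pi + \lambda/2] \times I_\t{max}$ of $Q_\lambda$, and that $z(p_{i+1}) - z(p_i)$ has a constant sign. Because $R_\lambda \cap Q_\lambda = \emptyset$ and $\dom(\psi_j) \subset R_\lambda$ for $j \geq 1$, a point of $Q_\lambda$ in $\dom(\psi_M)$ must lie in $\dom(\psi_0)$; condition (2) of $K$-hyperbolicity then places $p_{i+1}$ in the same component of $Q_\lambda$, with $z(p_{i+1}) > z(p_i)$ if $k$ is even and $z(p_{i+1}) < z(p_i)$ if $k$ is odd. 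The same assertion for $\phi_B$, obtained from condition (2) of the hyperbolic-bypass definition applied to $\phi_0$, completes the induction. Iterating around the cycle then forces $z(p_n) \neq z(p_0)$, contradicting $p_n = p_\gamma = p_0$.

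The delicate point is the $\phi_B$ half of the inductive step: since $\dom(\phi_1) \subset X$ and $Q_\lambda \cap X$ is nonempty around $x = 0$ and $x = \pi$, it is not immediate from the abstract definition of a hyperbolic bypass that a point of $Q_\lambda \cap \dom(\phi_B)$ belongs to $\dom(\phi_0)$. Justifying this requires invoking the precise structure of the connected-component partition $\dom(\phi_B) = \dom(\phi_0) \sqcup \dom(\phi_1)$ supplied by the construction of Theorem \ref{recollement_K_hyperbolique}: one needs the component containing such a point to lie entirely in $Q_\lambda$, so that $\phi_0$ (and not $\phi_1$) is the relevant return map. Once that structural fact is in hand, what remains is the short $z$-monotonicity chase sketched above.
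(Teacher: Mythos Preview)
Your overall strategy---assume $p_\gamma\in Q_\lambda$, trap the successive $S_Z$-crossings in a fixed strip $X_k=[k\pi-\frac{\lambda}{2},k\pi+\frac{\lambda}{2}]$, and derive strict monotonicity of the $z$-coordinate---is exactly the paper's. You also correctly isolate the one real difficulty: while $R_\lambda\cap Q_\lambda=\emptyset$ forces the $\psi_M$-step to be $\psi_0$, the analogous claim for $\phi_B$ is not automatic because $Q_\lambda\cap X\neq\emptyset$.

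Where your argument breaks down is the resolution of that difficulty. Appealing to ``the precise structure of the connected-component partition $\dom(\phi_B)=\dom(\phi_0)\sqcup\dom(\phi_1)$ supplied by the construction'' does not work: the definition of a hyperbolic bypass only asserts $\dom(\phi_0)\subset Q_\lambda$ and $\dom(\phi_1)\subset X$, and nothing in Theorem~\ref{recollement_K_hyperbolique} or its proof rules out a component of $\dom(\phi_B)$ sitting inside $Q_\lambda\cap X$. So from the abstract hypotheses you cannot conclude that a point of $Q_\lambda\cap\dom(\phi_B)$ lies in $\dom(\phi_0)$. Concretely, for $k=1$ with $z_0>0$ the point may lie in $[\pi-\frac{\lambda}{2},\pi]\times(0,z_\t{max}]\subset X$, and your forward step through $\phi_B$ could land in $Y$, leaving $Q_\lambda$ and killing the induction.

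The paper resolves this not by extracting extra information from the construction but by exploiting the explicit shape of $X$ and $Y$: near $x=k\pi$, the set $X$ meets $Q_\lambda$ only on one sign of $z$ and $Y$ only on the opposite sign. The paper then chooses, case by case, whether to iterate \emph{forward} (using $\dom(\phi_1)\subset X$) or \emph{backward} (using $\im(\phi_1)\subset Y$), the choice being dictated by the parity of $k$ and the sign of $z_0$ so that the relevant overlap is empty. For instance, when $k$ is odd and $z_0\ge 0$ one iterates backward: the point lies in $\im(\phi_B)$, and since $Y$ near $x=\pi$ requires $z<0$, it cannot lie in $\im(\phi_1)$, hence lies in $\im(\phi_0)$ and the preimage stays in $X_k$ with larger $z$. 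The four cases (parity of $k$, sign of $z_0$) each pick one direction, and in every case the $z$-coordinate is strictly monotone along the infinite sequence, contradicting periodicity. Your forward-only scheme cannot cover all four cases; adding the backward iteration and the $X$/$Y$ sign observation is the missing ingredient.
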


\begin{proof}
We control the Reeb orbits intersecting $Q_\lambda$ and prove that they are not periodic. Let $X_k=\left[k\pi-\frac{\lambda}{2},k\pi+\frac{\lambda}{2}\right]$ and $p_0=q_0\in\im(\psi_0)$. As long as these expressions are well-defined, let $p_{2l+1}={\psi_M}^{-1}(p_{2l})$, $p_{2l+2}={\phi_B}^{-1}(p_{2l+1})$, $q_{2l+1}={\phi_B}(p_{2l})$ and $q_{2l+2}={\psi_M}(q_{2l+1})$. Write $p_l=(x_l,y_l)$ and $q_l=(x'_l,y'_l)$. There exists $k$ such that $x_0\in X_k$. The following implications hold.
\begin{itemize}
 \item If $k$ is odd and $z_0\geq0$, then $x_l\in X_k$ and $z_{2l}<z_{2l+1}<z_{2l+2}$.
 \item If $k$ is even and  $z_0\leq0$, then  $x_l\in X_k$ and $z_{2l}>z_{2l+1}>z_{2l+2}$. 
 \item If $k$ is even and $z_0\geq0$, then $x'_l\in X_k$ and $z'_{2l}<z'_{2l+1}<z'_{2l+2}$.
 \item If $k$ is odd and $z_0\leq0$, then $x'_l\in X_k$ and $z'_{2l}>z'_{2l+1}>z'_{2l+2}$.
\end{itemize}
We give a detailed proof in the case $k$ odd and $z_0\geq0$. The proof of the other cases is similar. We prove the result by induction. If $x_{2l}\in X_k$, $z_{2l}\geq0$ and $p_{2l+1}$ is well-defined then $p_{2l}=\psi_M(p_{2l+1})$ and $p_{2l}\in\bigcup_j \im(\psi_i)$. As $p_{2l}\in Q_{\lambda}$, we have $p_{2l}\in \im(\psi_0)$. Therefore, $p_{2l}=\psi_0(p_{2l+1})$ and $p_{2l+1}\in \dom(\psi_0)$. Thus, we obtain $x_{2l+1}\in X_k$ and $z_{2l+1}>z_{2l}$.
If $x_{2l+1}\in X_k$, $z_{2l+1}\geq0$ and $p_{2l+2}$ is well-defined, then $p_{2l+1}=\phi_B(p_{2l+2})$ and $p_{2l+1}\in Y\cup \im(\phi_0)$. As $k$ is odd, we obtain $z_{2l+1}\geq0$ and $p_{2l+1}\in Q_{\lambda}$ and therefore $p_{2l+1}\in \im(\phi_0)$. Thus, we have $p_{2l+1}=\phi_0(p_{2l+2})$ and $p_{2l+2}\in \dom(\phi_0)$. Consequently, $x_{2l+2}\in X_k$ and $z_{2l+2}>z_{2l+1}$.

Let $\gamma$ be a Reeb periodic orbits intersecting $S_Z$ in $p_\gamma\in Q_\lambda$ and such that $T(\gamma)<K$. If $p_\gamma\in S_-$, then $\phi_B(p_\gamma)\in S_+\cap Q_\lambda$. Thus, without loss of generality, we can assume  $p_\gamma\in S_+$. Therefore $p_\gamma\in\dom(\phi_B)\cap\im(\psi_0)$ and $x_\gamma\in X_k$. If $k$ is odd and $z_\gamma\geq0$, then $p_l$ is well defined for all $l\in \mathbb N$, $z_l$ in increasing and $\gamma$ is not periodic. This leads to a contradiction. The proof of the other cases is similar.
\end{proof}

\begin{proof}[Proof of Theorem \ref{theoreme_principal}]
Let ${\mathbf{a}}=a_{i_1}\dots a_{i_k}$ be a word such that $l(\mathbf{a})<K$. By definition of $l_0$, we have $k\leq\frac{K}{l_0}$. Thus $T(\gamma_{\mathbf{a}})\in\left[l({\mathbf{a}})-\frac{\epsilon_K}{2},l({\mathbf{a}})+\frac{\epsilon_K}{2}\right]$
and $T(\gamma_{\mathbf{a}})<K$ (Proposition \ref{proposition_point_fixe_F_a}). 

Conversely, let $\gamma$ be a Reeb periodic orbit intersecting $S_Z$ and such that $T(\gamma)<K$. Let $p_1,\dots, p_k$ denote its successive intersection points with $S_+$ and $q_1,\dots, q_k$ its successive intersection points with $S_-$. By Proposition \ref{proposition_p_gamma}, for all $j=1\dots k$ there exists $i_j$ such that $q_j\in \dom(\psi_{i_j})$ and $p_{j+1}=\psi_{i_j}(q_j)$. Let ${\mathbf{a}}=a_{i_1}\dots a_{i_k}$. Then $p_0$ is the fixed point of $F_{\mathbf{a}}$. 
 Then $l({\mathbf{a}})<K+9k\tau$ (Proposition \ref{proposition_point_fixe_F_a}). Thus $k\leq \frac{K+9k\tau}{l_0}$ and $k\leq \frac{2K}{l_0}$. Therefore $l({\mathbf{a}})<K+\epsilon_K$ and $l({\mathbf{a}})<K$.
\end{proof}

\begin{proof}[Proof of Proposition \ref{proposition_rocade_allegee}]
There exists $\epsilon$ such that for any $\epsilon$-perturbation of $\alpha$, the map $\psi_M$ can be decomposed into $\psi_0$ and $\psi_1$ such that $\psi_0$ has properties similar to those described in the definition of $K$-hyperbolic surface, $\dom(\psi_1)\subset R_{\lambda_0}$ and $\im(\psi_1)\subset R_{\lambda_0}$. Apply Theorem \ref{recollement_K_hyperbolique} for $\lambda=\lambda_0$ and any $\nu$, $\tau$, $A$ and $\eta$. As in Proposition \ref{proposition_p_gamma}, if $\gamma$ is a Reeb periodic orbits intersecting $S_Z$ in $p_\gamma$, then $p_\gamma\notin Q_\lambda$. Thus any Reeb periodic orbit intersects $S_-$ at a periodic point of $\phi_B\circ\psi$.
\end{proof}

\subsection{Hyperbolic bypasses}\label{subsection_hyperbolic_bypasses}

We now give an overview of the proof of Theorem \ref{recollement_K_hyperbolique}. It is the main and last step in the proofs of Theorem \ref{theoreme_principal} and Proposition \ref{proposition_rocade_allegee}. The complete proof is technical and is the subject of Section \ref{section_hyperbolic_proof}.

Honda's construction (see Sections \ref{subsection_bypass} and \ref{subsubsection_contruction_explicite}) provides us with a bypass attachment $(M',\alpha')$ along the attaching arc $\gamma_0$ but $\alpha'$ is not adapted to the boundary. This attachment, if properly performed, does not create any Reeb periodic orbit. Indeed, near $S_Z$, the Reeb vector field is tangent to the planes $x=\t{cst}$ and its slope is $\tan(x)$. Thus, all Reeb orbits intersecting $S_Z$ outside a neighbourhood of $x=\frac{\pi}{2}$ or $x=\frac{3\pi}{2}$ go out of the bypass (see Figure \ref{figure_Reeb_non_adapte}).
\begin{figure}[here]
\begin{center}
 \includegraphics{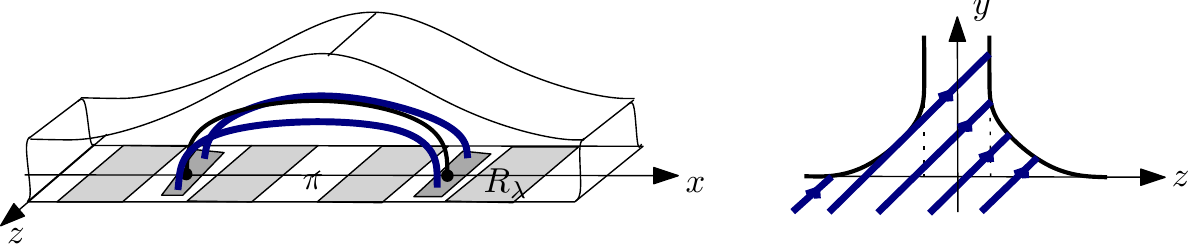}
\end{center}
 \caption{A non-convex bypass attachment}\label{figure_Reeb_non_adapte}
\end{figure} 
Therefore the domain and range of the map induced on $S_Z$ by the Reeb flow in the bypass are contained in neighbourhoods of $x=\frac{\pi}{2}$ and $x=\frac{3\pi}{2}$. By definition of a $K$-hyperbolic surface, there is no new Reeb periodic orbit.

To obtain a contact form adapted to the boundary, we use the \emph{convexification} process described in \cite{CGHH10}. It consists in gluing a small ``bump'' with prescribed contact form along the non-adapted part of the dividing set (see Figure \ref{figure_bosse_convexification}). 
\begin{figure}[here]
\begin{center}
 \includegraphics{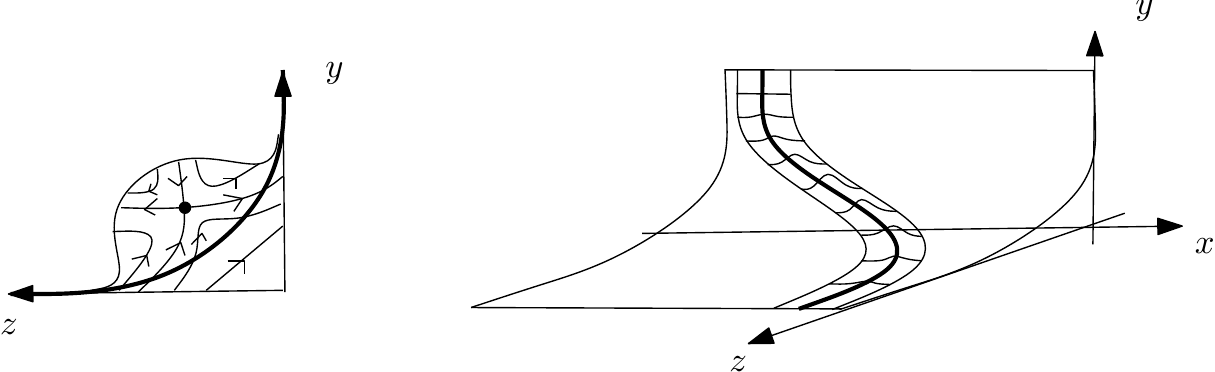}
\end{center}
 \caption{The convexification bump}\label{figure_bosse_convexification}
\end{figure} 
Inside this bump the Reeb vector field is nearly tangent to the dividing set. The restriction of $\phi_B$ to $R_\lambda$ is now non-empty.
\begin{figure}[here]
\begin{center}
 \includegraphics{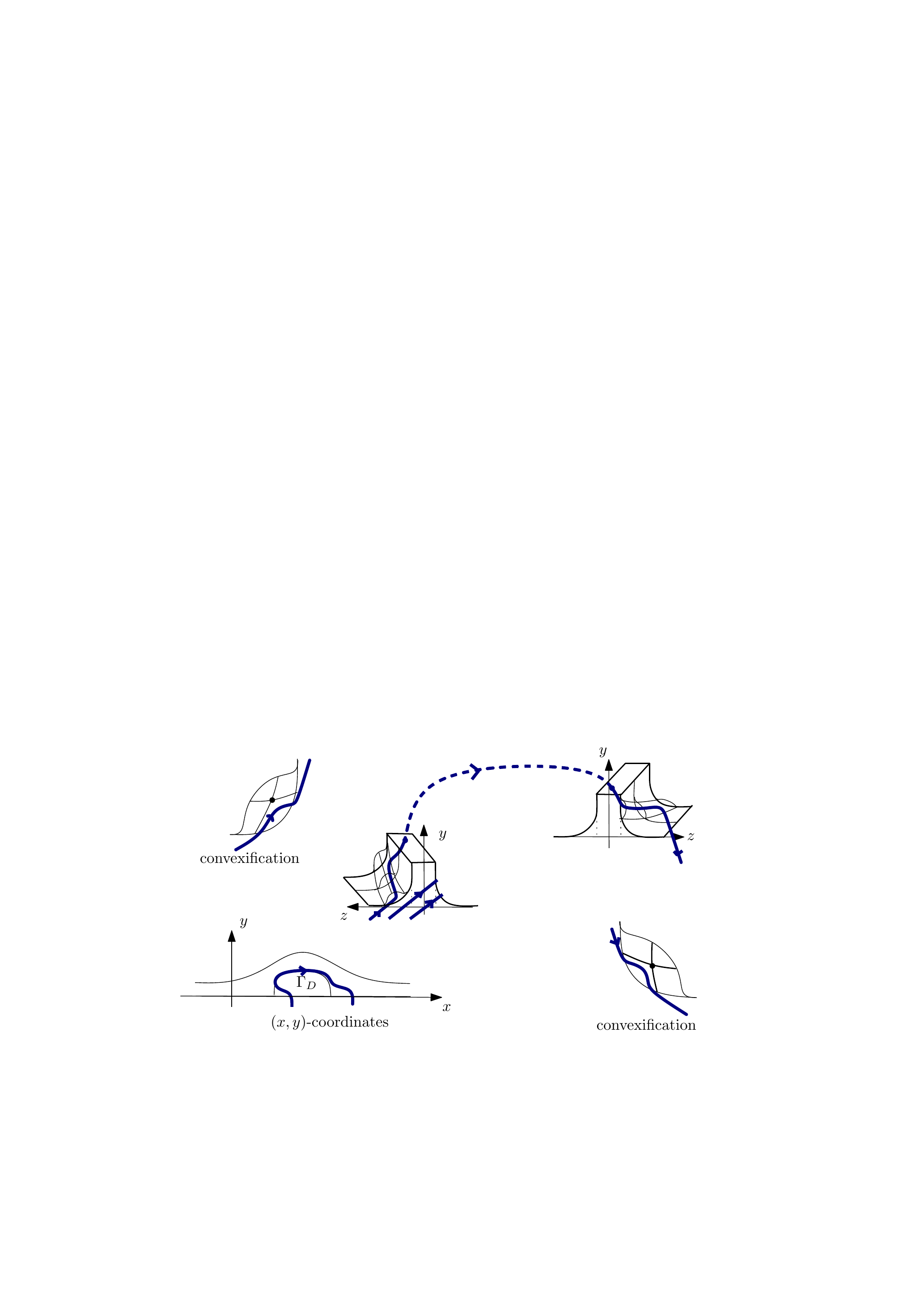}
\end{center}
 \caption{Reeb chords of $R_\lambda$}\label{retour_adapte}
\end{figure} 
The path of the associated Reeb chords is the following
\begin{itemize}
  \item[$\bullet$] they enter the convexification bump and follow the dividing set (see Figure~\ref{retour_adapte} left);
  \item[$\bullet$] then, they reach the area in $M'$ where the Reeb vector field is nearly tangent to the dividing set $\Gamma_D\times\{0\}$ and travel along $\Gamma_D\times\{0\}$ (see Figure \ref{retour_adapte} centre);
  \item[$\bullet$] finally, they go out of the bypass in a similar way and intersect $R_\lambda$ again (see Figure \ref{retour_adapte} right). 
\end{itemize}
To understand the Reeb dynamics and obtain cone-preserving properties, we describe the image of vertical curves on intermediate surfaces (see Figure \ref{retour_courbe_verticale}).
\begin{figure}[here]
\begin{center}
 \includegraphics{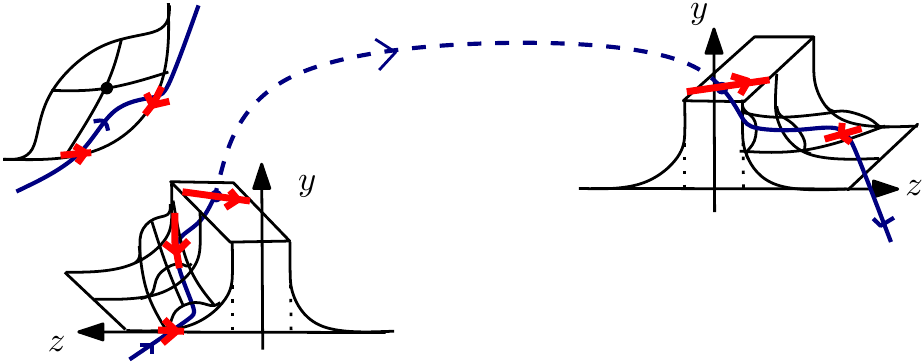}
\end{center}
 \caption{Images of vertical curves in the bypass}\label{retour_courbe_verticale}
\end{figure} 
A vertical curve is stretched into the convexification bump, then transported (and slightly stretched) in the upper part of the bypass. After a last visit to the convexification area, the curve becomes nearly horizontal. The effect on the level of rectangles is shown on Figure \ref{retour_rectangles}.
\begin{figure}[here]
\begin{center}
 \includegraphics{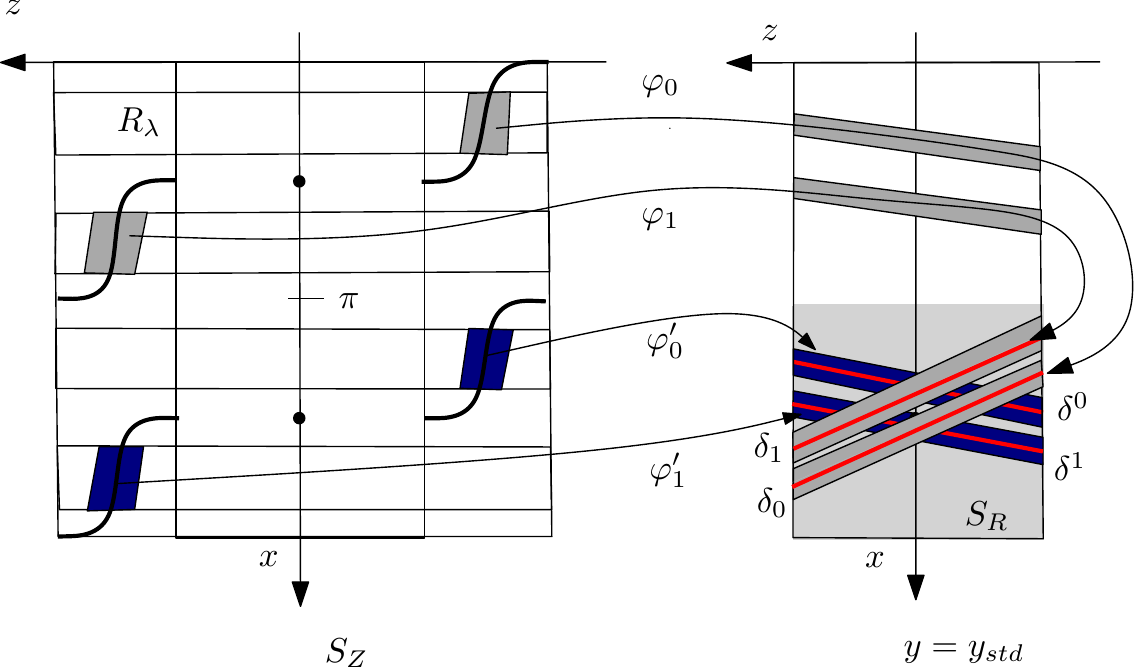}
\end{center}
 \caption{Rectangles on intermediate surfaces}\label{retour_rectangles}
\end{figure}

We now translate these intuitive pictures into more explicit conditions on the Reeb flow in the bypass. Section \ref{section_hyperbolic_proof} is devoted to the construction of a bypass satisfying these conditions. Let $(M,\xi=\ker(\alpha))$ be a contact manifold with convex boundary $(S,\Gamma)$ and $\gamma_0$ be an attaching arc satisfying conditions (C1), (C2) and (C3). We divide the bypass into two regions, the region $y\leq y_\t{std}$ where the contact form is standard and the convexification bump is added and the region $y\geq y_\t{std}$ where the contact structure corresponds to a thickened half overtwisted disc.
We use the notations from Section \ref{subsubsection_contruction_explicite}. 
Fix real positive numbers $K$, $\nu$, $\tau$, $A$, $\eta$, $\epsilon$ and $\lambda<\frac{\pi}{8}$. Let $(\mathcal B,\alpha_B)$ be a bypass such that the boundary $S_B$ is convex, $\alpha_B$ is adapted to $S_B$, $\alpha_B$ is arbitrarily close to $\alpha$ and the Reeb flow satisfies the following properties.
\\\emph{Reeb dynamics restricted to $R_\lambda$.} (To reduce the number of compositions we consider the map induced by the Reeb flow between $S_Z$ and $S_R$.)
\begin{itemize}
  \item[(B1)] There exist real positive numbers $\delta_1$, $\epsilon_R$ and $A_R$, two graphs in $z$ denoted $\delta_0$ and $\delta_1$ and a decomposition $(\phi_0,\phi_1)$ of the map induced between $S_Z$ and $S_R$ for positive time such that
  \begin{itemize}
    \item[$\bullet$] $\dom(\phi_i)$ are rectangles with vertical fibres and basis $[\frac{\pi}{2}+\lambda,\pi-\lambda]$ and $[\pi+\lambda,\frac{3\pi}{2}-\lambda]$ (see Figure \ref{retour_rectangles});
    \item[$\bullet$] $\im(\phi_i)$ are rectangles in an $\epsilon_R$-neighbourhood of $\delta_i$ with horizontal fibres and basis $I_\t{prod}$ (see Figure \ref{retour_rectangles});
    \item[$\bullet$] $\phi_i$ preserves the fibres (see Figure \ref{retour_courbe_verticale});
    \item[$\bullet$] $\left(\phi_i\right)_*\mathcal C(V,A)\subset \mathcal C(\delta_i,\epsilon_R)$ and $\left(\phi_i^{-1}\right)_*\mathcal C(\delta_i^\perp,A_R)\subset \mathcal C(H,\nu)$;
    \item[$\bullet$] $\Vert\d \phi_i^{-1}(p,v)\Vert>\frac{1}{\sqrt\eta}\Vert v\Vert$ for all $p\in\im(\phi_i) $ and $v\in \mathcal C_p(\delta_i^\perp,A_R)$;
    \item[$\bullet$] $\Vert\d \phi_i(p,v)\Vert>\frac{1}{\sqrt\eta}\Vert v\Vert$ for all $p\in\dom(\phi_i) $ and $v\in \mathcal C_p(V,A)$;
    \item the return time is bounded by $4\tau$.
  \end{itemize}
  \item[(B2)] The map induced between $S_R$ and $S_Z$ for positive times can be decomposed into $\phi'_0$ and $\phi'_1$ and there exist two graphs $\delta^0$ and $\delta^1$ satisfying similar properties.
  \item[(B3)]  
  $\mathcal C(\delta_i,\epsilon_R)\subset\mathcal C({\delta^j}^\perp,A_R)$, $\mathcal C(\delta^i,\epsilon_R)\subset\mathcal C(\delta_j^\perp,A_R)$ and $\delta$ and $\delta'$ intersect transversely in one point if $d_{\mathcal C^1}(\delta,\delta_i)<\epsilon_R$ and $d_{\mathcal C^1}(\delta',\delta^j)<\epsilon_R$.
  \end{itemize}
\emph{Reeb dynamics in $\mathcal B^{\geq y_\t{std}}$.}
\begin{itemize}
  \item[(B4)] The domain of the map induced by the Reeb flow on $S_{y_\t{std}}$ is contained in $\left[\frac{\pi}{2}-\frac{\lambda}{2},\frac{\pi}{2}+\frac{\lambda}{2}\right]\times I_\t{prod}$ and its range in $\left[\frac{3\pi}{2}-\frac{\lambda}{2},\frac{3\pi}{2}+\frac{\lambda}{2}\right]\times I_\t{prod}$.
\end{itemize}
\emph{Reeb dynamics in $\mathcal B^{[0,y_\t{std}]}$.}
\begin{itemize}
  \item[(B5)] There is no return map on $S_{y_\t{std}}$.
  \item[(B6)] The return map on $S_Z$ can be decomposed into $\theta_0$, $\theta_1$ and $\theta_2$ such that
  \begin{itemize}
    \item[$\bullet$] $\dom(\theta_k)\subset S_{\frac{\lambda}{2},2k}$ and $\im(\theta_k)\subset S_{\frac{\lambda}{2},2k}$;
    \item[$\bullet$] $(\theta_k)_z(x,z)<z$ if $k$ is odd;
    \item[$\bullet$] $(\theta_k)_z(x,z)>z$ if $k$ is even.
  \end{itemize}
  The Reeb chords which contribute to $\theta_k$ do not intersect a neighbourhood of $S_{y_\t{std}}$.
  \item[(B7)] The map induced between $S_Z$ and $S_{y_\t{std}}$ for positive times can be decomposed into two maps with domains in $X$ and $X+2\pi$ and ranges in $S_{\frac{\lambda}{2},1}$ and $S_{\frac{\lambda}{2},5}$.
   \item[(B8)] The map induced between $S_{y_\t{std}}$ and $S_Z$ for positive times can be decomposed into two maps with domains in $S_{\frac{\lambda}{4},3}$ and $S_{\frac{\lambda}{4},-1}$ and ranges in $Y$ and $Y-2\pi$.
\end{itemize}

\begin{proposition}
The bypass $(\mathcal B,\alpha_B,\lambda)$ is hyperbolic and $(\nu, \tau,A, \eta)$-dominated.
\end{proposition}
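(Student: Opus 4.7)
The plan is a direct verification of each clause in the definitions of hyperbolic bypass and of $(\nu,\tau,A,\eta)$-dominated, assembling pieces from (B1)--(B8). I would decompose $\phi_B=\phi_0\sqcup\phi_1$, where $\phi_0:=\theta_0\sqcup\theta_1\sqcup\theta_2$ gathers the short trajectories from (B6) that never approach $S_{y_\t{std}}$, while $\phi_1$ gathers the trajectories that cross $\mathcal B^{\geq y_\t{std}}$. Since $S_{\lambda/2,2k}\subset Q_\lambda$, the conditions $\dom(\phi_0),\im(\phi_0)\subset Q_\lambda$ and the monotonicity of the $z$-coordinate in each component of $Q_\lambda$ are read off from (B6). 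From (B7) and (B8), any long chord leaves $S_Z$ through a set in $X$ and returns through a set in $Y$, giving $\dom(\phi_1)\subset X$ and $\im(\phi_1)\subset Y$.

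For the rectangle structure of $\phi_1|_{R_\lambda}$, I would write $\phi_{i,j}:=\phi'_j\circ\phi_i$ where $\phi_i:S_Z\to S_R$ is given by (B1) and $\phi'_j:S_R\to S_Z$ by (B2); this collapses the whole trip through $\mathcal B^{\geq y_\t{std}}$ into a single composition. On $R_\lambda$, $\dom(\phi_i)$ is a vertical rectangle of full height $I_\t{max}$ with basis an entire component of $R_\lambda$, and $\phi_i(\dom(\phi_i))$ is an $\epsilon_R$-tube around the graph $\delta_i\subset S_R$ with horizontal fibres covering all of $I_\t{prod}$. The transversality of $\delta_i$ with $\delta^j$ supplied by (B3) and the symmetric tube description of $\dom(\phi'_j)$ around $\delta^j$ imply that $\im(\phi_i)\cap\dom(\phi'_j)$ is a connected sub-rectangle; pulling back through the fibre-preserving diffeomorphism $\phi_i$ realises $\dom(\phi_{i,j})$ as a vertical sub-rectangle of $\dom(\phi_i)$ of full height $I_\t{max}$, and analogously for $\im(\phi_{i,j})$. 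Fibre reversal is inherited from composing the two fibre-preserving bijections $\phi_i$ and $\phi'_j$.

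The domination conditions come from chaining the estimates in (B1)--(B3). For cone preservation, a vector $v\in\mathcal C_p(V,A)$ on $S_Z$ is sent by $(\phi_i)_*$ into $\mathcal C(\delta_i,\epsilon_R)$ by (B1), which sits inside $\mathcal C({\delta^j}^\perp,A_R)$ by the first inclusion of (B3); the $\phi'_j$-analogue of the second bullet of (B1) then sends this into $\mathcal C(H,\nu)$, establishing $(\phi_{i,j})_*\mathcal C(V,A)\subset\mathcal C(H,\nu)$. The symmetric statement for $(\phi_{i,j}^{-1})_*$ uses the second inclusion of (B3). Multiplying the two $1/\sqrt{\eta}$-expansions supplied by (B1) and (B2) gives $\|d\phi_{i,j}(v)\|>\frac{1}{\eta}\|v\|$ on $\mathcal C(V,A)$, both in the domain and in the image; the return-time bound $8\tau$ in $R_\lambda$ is the sum of two $4\tau$ bounds.

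The main obstacle will be the cone bookkeeping across the composition: I must verify that the image cone of $\phi_i$, sitting close to $\delta_i$, is precisely the source cone for which the estimates of (B2) control $\phi'_j$, which is the content of (B3), and that the transversality of the graphs both yields the rectangle structure of $\im(\phi_i)\cap\dom(\phi'_j)$ and routes the cones through without loss. Once this matching is laid out carefully, the remaining clauses reduce to reading off the corresponding hypothesis in (B1)--(B8).
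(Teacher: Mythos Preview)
Your proposal is correct and takes essentially the same approach as the paper: split $\phi_B$ according to whether chords reach $S_{y_\t{std}}$, read $\phi_0$ off from (B6), get $\dom(\phi_1)\subset X$ and $\im(\phi_1)\subset Y$ from (B7)--(B8), and factor $\phi_1|_{R_\lambda}$ through $S_R$ as the composite of the (B1) and (B2) maps, with (B3) supplying the rectangle and cone compatibility. The one point the paper makes explicit that you leave implicit is the role of (B4) and (B5): (B5) rules out extra returns on $S_{y_\t{std}}$ inside $\mathcal B^{[0,y_\t{std}]}$, so that $\phi_1$ really is the clean composite (B7)--(B4)--(B8), and together with (B4) this is what justifies that $\phi_1|_{R_\lambda}$ equals $\phi'_j\circ\phi_i$ rather than involving additional crossings.
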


\begin{proof}
The map $\phi_B$ can be decomposed into $\phi_0$ and $\phi_1$ where the Reeb chords which contribute to $\phi_0$ do not intersect $S_{y_\t{std}}$ and the Reeb chords which contribute to $\phi_1$ intersect $S_{y_\t{std}}$. The properties of $\phi_0$ derive from condition (B6).
By condition (B5), $\phi_1$ is the the composite of the maps described in conditions (B7), (B4) and (B8). Thus $\dom(\phi_1)\subset X$ and $\im(\phi_1)\subset Y$. By conditions (B4) and (B5), the restriction of $\phi_1$ to $R_\lambda$ is the composite of the maps described in conditions (B1) and (B2). Conditions (B1) and (B3) ensures that the composite map is a map between rectangles. The hyperbolic properties derive from conditions (B1), (B2) and (B3).
\end{proof}

\section{Hyperbolic bypasses}\label{section_hyperbolic_proof}
In this section we construct a bypass satisfying conditions (B1) to (B8) and thus end the proofs of Theorem \ref{theoreme_principal} and Proposition \ref{proposition_rocade_allegee}. Fix some positive numbers  $\lambda<\frac{\pi}{8}$, $y_\text{std} $, $\tau$ and  $z_\text{prod}<z_\text{max}$. In what follows, we have $z_\text{max}\ll 1$.

The construction of a hyperbolic bypass is technical. We start from an explicit contact form on a bypass inspired from Honda \cite{Honda00} (Section \ref{subsubsection_contruction_explicite}). In Section \ref{subsection_preliminary_lemmas}, we present some preliminary lemmas ensuring a precise control of the Reeb flow. Section \ref{subsection_precconvex} presents a preparatory perturbation of the contact structure in the bypass called a \emph{pre-convex bypass}. This pre-convex bypass determines the curves $\delta_i$ and $\delta^j$ (condition (B1)). The actual construction begins in Section \ref{subsection_convexification_coordinates} with the description of adapted coordinates. In Section \ref{subsection_convexification}, we present the convexification contact form in the coordinates described in Section \ref{subsection_convexification_coordinates}. In Section \ref{subsection_conditions} we prove that our construction satisfies the desired conditions.

\subsection{Explicit constructions of bypasses}\label{subsubsection_contruction_explicite}

In this section we present an explicit construction of a bypass attachment. This construction is due to Honda \cite{Honda00} and is the first step of our explicit construction in the proof of Theorem \ref{theoreme_principal}. We construct a contact structure on the product of a smoothed half overtwisted disc and smoothen the product.

Let $(M,\alpha)$ be a contact manifold with convex boundary $(S,\Gamma)$ and $\gamma_0$ be an attachment arc satisfying condition (C1). In coordinates $(x,y)\in I_b\times \mathbb R_+$, let $U_{y}=\left[-\frac{\pi}{6},\frac{13\pi}{6}\right]\times [y,+\infty)$ and $\gamma_1=I_b\times\{0\}$. 
Consider closed set $\mathcal A$ diffeomorphic to a square (see Figure \ref{feuilletage_rocade}) such that $\mathcal A= I_b\times[0,y_0]$ outside $U_0$ and \[\partial\mathcal A=\gamma_1\cup \left(\left\{-\frac{3\pi}{4}\right\}\times[0,y_0]\right)\cup\left(\left\{\frac{11\pi}{4}\right\}\times[0,y_0]\right)\cup\gamma_2.\] Choose a $1$-form $\beta$ on $\mathcal A$ such that
\begin{enumerate}
\item there exists $y_\beta>2y_\text{std}$ such that $\beta=\sin(x)\d y$ in $\mathcal A\setminus U_{2 y_\text{std}}$ and $\beta$ is a positive multiple of $\sin(x)\d y$ for $y\leq y_\beta$;
 \item in $U_{y_\beta}\cap\mathcal A$ the singularities of $\beta$ are
  \begin{itemize}
    \item a half-elliptic negative singularity in $(\pi,y_\beta)$;
    \item two half-hyperbolic singularities in $(0,y_\beta)$ and $(2\pi,y_\beta)$;
    \item two positive elliptic singularities on $\partial \mathcal A$ for $x=0$ and $x=2\pi$;
    \item positive singularities on $\partial \mathcal A\cap U_{y_\beta}$;
  \end{itemize}
  \item\label{condition_decoupage} there exists a smooth proper multi-curve $\Gamma_A$ dividing $\mathcal A$ into two sub-surfaces $\mathcal A_\pm$ such that $\pm\d\beta>0$ on $\mathcal A_\pm$, $\partial A_+$ is oriented as $\Gamma_A$ and $\beta_{\Gamma_A}>0$;
  \item if $\Gamma_D$ is the component of $\Gamma_A$ joining $\left(\frac{\pi}{2},0\right)$ and $\left(\frac{3\pi}{2},0\right)$, there exist coordinates $(r,\theta)\in\left[\frac{3\pi}{2}-\epsilon,\frac{3\pi}{2}+\epsilon\right]\times[0,\theta_\t{max}]=\mathcal U$ near $\Gamma_D$ such that $\Gamma_D\simeq \left\{\frac{3\pi}{2}\right\}\times[0,\theta_\t{max}]$ and $\beta=\sin(r)\d \theta$.
\end{enumerate} 
 \begin{figure}[here]
\begin{center}
 \includegraphics{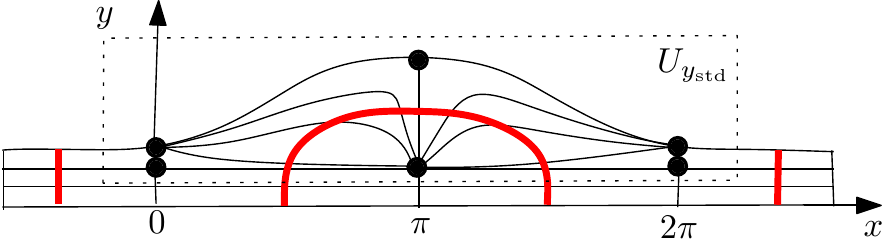}
\end{center}
 \caption{A smoothed half overtwisted disc}\label{feuilletage_rocade}
\end{figure}

\begin{remark}\label{remark_tau} One can assume that $\left\vert\int_{\Gamma_D}\beta\right\vert<\tau$ by replacing $\beta$ by $(1-b(x)c(y))\beta$ where $b$ and $c$ are suitable cut-off functions.
\end{remark}

We now follow \cite{Giroux91} to construct an invariant contact structure on $\mathcal A \times \mathbb R$. Let $\alpha=\beta+f(x,y)\d z$ where
\begin{itemize}
 \item $f(x,y)=\cos(x)$ for $y\leq \frac{3}{2}y_\t{std}$ or $x\notin \left[-\frac{\pi}{3},\frac{7\pi}{3} \right]$;
 \item in $\mathcal U$, the function $f$ depends only on $r$ and is decreasing, in addition $f(r,\theta)=\cos(r)$ near $r=\frac{3\pi}{2}$;
 \item in $U_{y_\beta}\setminus \mathcal U$, $f=\pm1$ if $\pm\d\beta>0$;
 \item\label{cond_interpolation_1} elsewhere $f(\cdot,y)$ has the same variations as $\cos$ and
 \begin{itemize}
  \item if $y\geq 2y_\t{std}$, the function $f$ does not depend on $y$ and interpolates between $\cos$ and~$1$
  \item if $y< 2y_\t{std}$, the function $f$ interpolates between $\cos$ and $f(\cdot,2y_\t{std})$.
 \end{itemize}
 \end{itemize}
We now smooth $\mathcal A\times I_\text{prod}$ to glue it on $S_Z$. There are three types of corners: the convex corners $\gamma_2\times\{\pm z_\text{prod}\}$, the concave corners $\gamma_1\times\{\pm z_\text{prod}\}$ and the corners associated to $x=-\frac{3\pi}{4}$ and $x=\frac{11\pi}{4}$.
We smooth the convex corners using a function \[l_\text{sup} :I_b\times I_\text{prod}\to\mathbb R_+^*\] independent of $x$ for $x\notin\left[-\frac{\pi}{6},\frac{13\pi}{6}\right]$ and such that $l_\text{sup}(\cdot,z) $ is strictly concave with maximum $\gamma_2(x)$ at $z=0$ (see Figure \ref{lissages}). 
Similarly we smooth the concave corners using an even function 
\[l_\text{inf} : [-z_\text{max},-z_\text{prod}]\cup[z_\text{prod},z_\text{max}]\to\mathbb [0,y_\t{smooth}]\] 
decreasing and strictly convex on $[z_\text{prod},z_0]$ and zero on $[z_0,z_\text{max}]$. In addition, we assume $y_\t{smooth}<y_\text{std}<\inf (l_\text{sup})$ and $\im(l_\text{sup})\cap (\mathcal U\times I_\t{prod})=\emptyset$. 
\begin{figure}[here]
\begin{center}
 \includegraphics{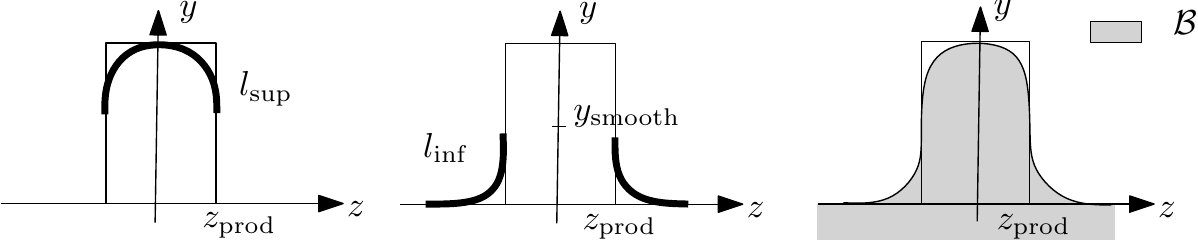}
\end{center}
 \caption{Bypass smoothing }\label{lissages}
\end{figure}
To smooth the remaining corners, we first perturb the previous smoothing for $z<0$ and $x$ close to $-\frac{\pi}{2}$ so that the perturbed boundary is the graph of a function
\[h :\left[-\frac{\pi}{2}-\epsilon_s,-\frac{\pi}{2}+\epsilon_s\right]\times(0,y_0)\to(-z_\t{max},0)\] 
such that $\frac{\partial h}{\partial y}\geq 0$ on $\dom(h)$ and $\frac{\partial h}{\partial y}(x,y)>\eta>0$ for all
$\left\vert x- \frac{\pi}{2}\right\vert\leq \frac{\epsilon_s}{2} $. 
There exists $\epsilon <\epsilon_s$ such that 
$\cotan\left(- \frac{\pi}{2}-\epsilon\right)<\eta$.
We smooth the boundary of the bypass for $x\in\left[-\frac{\pi}{2}-\epsilon,-\frac{\pi}{2}-\frac{\epsilon}{2}\right]$ so that the new boundary is the graph of
\[k :\left[-\frac{\pi}{2}-\epsilon,-\frac{\pi}{2}-\frac{\epsilon}{2}\right]\times[-z_\t{max},z_\t{max}]\to[0,y_0]\] and
\begin{itemize}
  \item $k=0$ for $x$ close to $-\frac{\pi}{2}-\epsilon $;
  \item $0\leq \frac{\partial k}{\partial z}<\frac{1}{\eta}$ for all $z<0$;
  \item $\frac{\partial k}{\partial z}\leq 0$ for all $z\geq 0$.
\end{itemize}
We smooth the boundary for $x$ close to $\frac{5\pi}{2}$ with a similar construction and denote by $\mathcal B$ the smoothed product. Let $M'=M\cup \mathcal B$. Then $M'$ is a smooth manifold with boundary $S'$.

\begin{proposition}\label{Gamma_smooth}
After an arbitrarily small perturbation of the contact form near $\gamma_2\times\{0\}$, the boundary $S'$ is convex. A dividing set, denoted by $\Gamma_\t{smooth}$, is given by the tangency points between the Reeb vector field and $S'$ (see Figure \ref{tangence_Reeb_rocade}).
\end{proposition}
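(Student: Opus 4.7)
The natural approach is to invoke Lemma~\ref{lemme_convexite_Reeb}: it suffices to exhibit a smooth multi-curve on $S'$ along which $R_\alpha$ is tangent to $S'$ and along which the characteristic foliation points toward the sub-surface of positive transversality. The plan is to locate the tangency set piece by piece on the decomposition of $S'$ into the old boundary $S \setminus (\gamma_1 \times I_{\text{prod}})$, the bulk of $\partial \mathcal B$ (the top $\gamma_2 \times I_{\text{prod}}$, the two ``end caps'' at $x = -3\pi/4$ and $x = 11\pi/4$), and the four smoothed strips introduced via $l_{\text{sup}}$, $l_{\text{inf}}$, $h$ and $k$. On each piece the Reeb vector field of $\alpha = \beta + f\,dz$ can be computed directly, and the tangency equation is a single scalar condition whose zero set I expect to be a smooth curve away from finitely many exceptional points.

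On the unchanged part of $S$, condition (C2) already provides a dividing set, namely the portion of $\Gamma$ that survives the attachment. On the bulk of $\partial \mathcal B$, property~(3) of $\beta$ and the behaviour of $f$ force tangency of $R_\alpha$ to occur along the natural continuation of $\Gamma_A \cap \gamma_2$ and its analogues on the end caps; these glue into $\Gamma$ along the uncut boundary. On the convex-corner smoothing the boundary is the graph $y = l_{\text{sup}}(x,z)$ and the tangency equation reads $\sin(x) = \cos(x)\,\partial_z l_{\text{sup}}(x,z)$. Because $l_{\text{sup}}(\cdot,z)$ is strictly concave in $z$ with maximum at $z=0$, the partial $\partial_z l_{\text{sup}}$ is a strictly decreasing function of $z$ changing sign at $0$; the implicit function theorem (the $x$-derivative $\cos(x) + \sin(x)\,\partial_z l_{\text{sup}}$ is non-zero at the candidate points $x = k\pi$, $z=0$) then produces a smooth tangency arc through each such point. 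An analogous analysis on the $l_{\text{inf}}$-smoothing and on the $h$, $k$ corner smoothings produces smooth arcs that are candidates to patch together into $\Gamma_{\text{smooth}}$.

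The main obstacle is precisely the junction at $\gamma_2 \times \{0\}$: the tangency arc coming from the bulk boundary $\gamma_2 \times I_{\text{prod}}$ and the tangency arc produced on the convex smoothing $y = l_{\text{sup}}(x,z)$ meet there, and in general the two tangent directions at the meeting point need not match, so the union is only $C^0$. This is where the statement permits a local perturbation. The plan is to perturb $\alpha$ in a $C^\infty$-small way supported in a tubular neighbourhood of $\gamma_2 \times \{0\}$ so as to rotate the incoming tangent directions into agreement; concretely one perturbs $\beta$ within the class of $1$-forms still divided by $\Gamma_A$ (keeping property~(3)) and correspondingly adjusts $f$, which changes $\partial_z l_{\text{sup}}$ and the bulk tangency condition by prescribable small amounts. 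Since the condition on the two tangent vectors is a codimension-one equality between finitely many real parameters, a generic such perturbation achieves smoothness, and the perturbation can be made arbitrarily small in $C^\infty$.

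Finally, along each smooth arc obtained above, one checks that $R_\alpha$ points toward the region where it is positively transverse to $S'$: on the bulk boundary this follows from the signs of $\pm d\beta > 0$ on $\mathcal A_\pm$ built into $\beta$; on the smoothings, the sign of $\sin(x)$ (and of $\cos(x)$ for $l_{\text{inf}}$) on either side of the tangency determines the orientation, and the construction of $l_{\text{sup}}$, $l_{\text{inf}}$, $h$, $k$ was chosen precisely so that these signs match the global decomposition $S'_\pm$ inherited from $S_\pm$ and $\mathcal A_\pm$. Applying Lemma~\ref{lemme_convexite_Reeb} to the (smoothed) multi-curve $\Gamma_{\text{smooth}}$ then yields convexity of $S'$ and identifies $\Gamma_{\text{smooth}}$ as a dividing set, as required.
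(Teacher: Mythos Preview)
Your overall framework is right: invoke Lemma~\ref{lemme_convexite_Reeb} after identifying the tangency set and checking that the characteristic foliation points toward $S'_+$ along it. But you have misdiagnosed where the argument breaks down and what the perturbation is for.

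The tangency locus is already a smooth curve. The surface $S'$ is smooth by construction, and near $\gamma_2\times\{0\}$ it is a single smooth piece (the graph of $l_{\text{sup}}$, whose ridge at $z=0$ is exactly $\gamma_2$), not a junction of two pieces; so there is no $C^0$-matching problem of the kind you describe. A direct application of the implicit function theorem to the scalar tangency equation gives a smooth $\Gamma$ globally.

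The genuine obstruction is the \emph{second} hypothesis of Lemma~\ref{lemme_convexite_Reeb}: along $U_0\cap(\gamma_2\times\{0\})$ the characteristic foliation of $S'$ fails to be positively transverse to $\Gamma$. Your final paragraph asserts this transversality ``follows from the signs of $\pm d\beta>0$ on $\mathcal A_\pm$,'' but that is precisely what does not hold on this arc. The paper fixes this by the perturbation $\alpha\mapsto\alpha+\epsilon(x,y)\,dx$ with $\epsilon\le 0$ small and supported near $U_0\cap(\gamma_2\times\{0\})$; the point of this particular choice is that it leaves the Reeb vector field---and hence the tangency curve $\Gamma$---unchanged, while rotating the characteristic foliation enough to restore positive transversality. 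Your proposed perturbation (modifying $\beta$ and $f$) is aimed at the wrong target and would in general move $\Gamma$ itself, so you would have to redo the smoothness analysis afterwards.
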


\begin{figure}[here]
\begin{center}
 \includegraphics{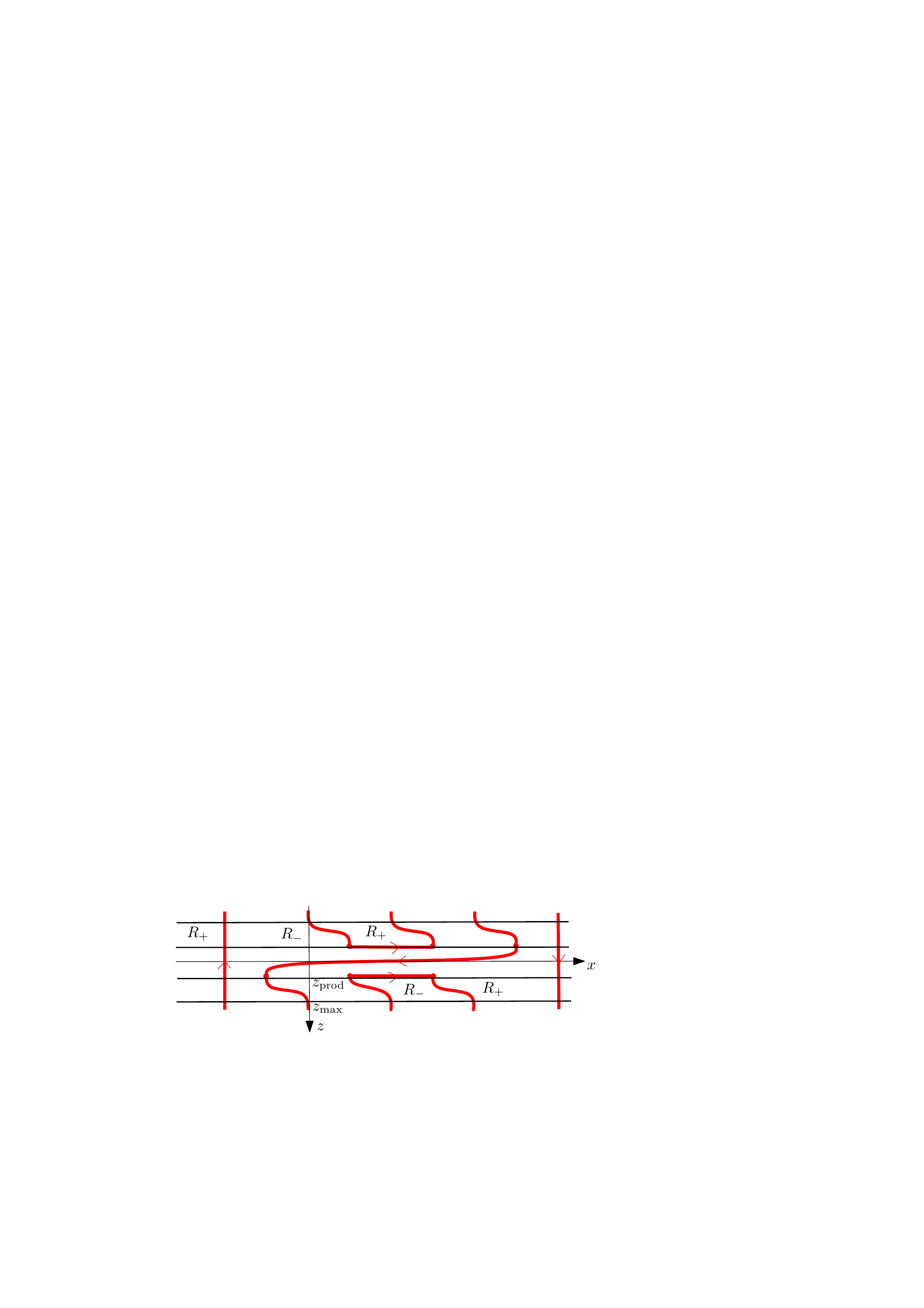}
\end{center}
 \caption{The dividing set $\Gamma_\t{smooth}$}\label{tangence_Reeb_rocade}
\end{figure}

\begin{remark}
The dividing set $\Gamma_\t{smooth}$ is disjoint from the graphs of $h$ and $k$. Indeed, the Reeb vector field is tangent to the graphs of $h$ or $k$ if and only if $\frac{\partial h}{\partial y}(x,y)=\cotan(x)$ or $\frac{\partial k}{\partial z}(x,z)=\tan(x)$.
\end{remark}

\begin{proof}
By use of the implicit function theorem, the set of tangency points between the Reeb vector field and $S'$ is a smooth curve $\Gamma$. In addition the characteristic foliation of $S'$ is positively transverse to $\Gamma$ except along $U_0\cap(\gamma_2\times\{0\})$. 
In a neighbourhood of $U_0\cap(\gamma_2\times\{0\})$, we consider the contact form $\alpha+\epsilon(x,y)\d x$ where $\epsilon$ is a non-positive, small and smooth function that does not depend on $y$ in a neighbourhood of $\left\{-\frac{\pi}{6},\frac{13\pi}{6}\right\}\times\{y_0\}$. 
This perturbation does not change $\Gamma$ and the characteristic foliation of $S'$ is now positively transverse to $\Gamma$ everywhere. We apply Lemma~\ref{lemme_convexite_Reeb} to obtain the convexity of $S'$.
\end{proof}

\begin{proposition}[Honda \cite{Honda00}]
$M'$ is obtained from $M$ after a bypass attachment along $\gamma_1$.
\end{proposition}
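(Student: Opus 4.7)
The plan is to verify the three bullets in the definition of bypass attachment from Section~\ref{subsection_bypass} directly against the explicit construction. Concretely, I would exhibit (a) an inward-pushed convex surface $S_0 \simeq S \times \{0\} \subset M \subset M'$, (b) a bypass half-disc $D$ with attaching arc the image of $\gamma_1$ inside $S_0$, and (c) a contact retraction of the collar $S \times [0,1] \subset M'$ onto an arbitrarily small neighbourhood of $S_0 \cup D$.

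For (a), note that inside the chart $Z$ the vector field $-\partial_y$ is contact, since $\mathcal{L}_{\partial_y}\alpha = 0$ for $\alpha = \sin(x)\d y + \cos(x)\d z$. Pushing $S$ slightly inward along $-\partial_y$ inside $Z$ (and using the identity off $Z$) therefore produces a convex surface $S_0 \subset M$ with the same dividing set as $S$. For (b), I would take $D = \mathcal{A} \times \{0\} \subset \mathcal{B}$; since $\d z$ vanishes on $TD$, the characteristic foliation of $D$ is governed by $\alpha|_D = \beta$, and conditions (1)--(4) on $\beta$ (together with the prescription on $f$ near $\Gamma_D$) were designed precisely so that the singular set of this foliation matches the bypass model of Figure~\ref{feuilletage_disque_rocade}: two positive elliptic points at the endpoints of the attaching arc, a negative elliptic point in its interior (arising from the half-elliptic singularity of $\beta$ on $\gamma_2$ via integration along $\Gamma_D$), and alternating positive elliptic/hyperbolic singularities along $\gamma_2$. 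Legendrianity of $\partial D$ then follows from the normal form of $\beta$ near the boundary of $\mathcal{A}$.

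For (c), observe that on the product region $\mathcal{A} \times I_\text{prod}$ the form $\alpha = \beta + f(x,y)\d z$ is $z$-invariant, so $\partial_z$ is a contact vector field whose flow retracts this region onto $D$. Gluing this with the identity on $M$ via an appropriate cut-off yields the desired contact retraction. \emph{The main obstacle} is to carry out this gluing across the smoothing zones introduced by $l_\text{sup}$, $l_\text{inf}$, $h$ and $k$, where the product structure breaks down: one must deform $\partial_z$ into a contact vector field that is tangent to the smoothed boundary $S'$ and still compresses the region toward $D$. This is a contact-Hamiltonian adjustment made possible by the fact (Proposition~\ref{Gamma_smooth}) that $\Gamma_\text{smooth}$ differs from the dividing set of the unsmoothed product by an arbitrarily small perturbation.
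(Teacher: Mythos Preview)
The paper itself does not prove this proposition; it is stated with attribution to Honda and no argument is given, so there is no proof in the paper to compare against. That said, your direct-verification plan has a substantive gap in step (b).

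The disc $D=\mathcal A\times\{0\}$ does \emph{not} carry the bypass characteristic foliation of Figure~\ref{feuilletage_disque_rocade}. On the region $y\le y_\beta$, condition~(1) on $\beta$ forces $\beta$ to be a positive multiple of $\sin(x)\,\d y$; the characteristic foliation of $D$ (directed by $X$ with $\iota_X(\d x\wedge\d y)=\beta$) is therefore the horizontal foliation by the lines $y=\mathrm{const}$, and its singular locus is the union of the three vertical segments $x=0$, $x=\pi$, $x=2\pi$. These are one-dimensional families of degenerate zeros, not the isolated elliptic and hyperbolic points that the bypass definition requires. In particular there is no isolated negative elliptic point on the attaching arc, and the endpoints $x=-\tfrac{3\pi}{4}$, $x=\tfrac{11\pi}{4}$ of $\gamma_1$ are regular for $\beta$ (since $\sin$ does not vanish there), not elliptic singularities. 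The phrase ``arising from the half-elliptic singularity of $\beta$ on $\gamma_2$ via integration along $\Gamma_D$'' does not correspond to anything in the construction. Note also that $D\cap S=I_b\times\{0\}\times\{0\}$ is strictly longer than the attaching arc $\gamma_0=[0,2\pi]\times\{0\}\times\{0\}$, so $D$ does not even meet $S$ in the correct set.

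To repair the argument you would either have to perturb $D$ (for instance, take it to be a graph $z=\varepsilon g(x,y)$ so that the pullback of $\alpha$ acquires a $\d x$-component and the degenerate lines break into genuine Morse singularities, and then check these are of bypass type), or---closer to the spirit of the citation---abandon the explicit disc: verify from Proposition~\ref{Gamma_smooth} that the dividing set of $S'$ differs from that of $S$ exactly as in Figure~\ref{decoupage_rocade}, and then invoke Giroux's realisation lemma together with the uniqueness (up to isotopy) of the tight contact structure on $S\times[0,1]$ with those prescribed boundary dividing sets. That softer route is what the reference to \cite{Honda00} is actually carrying.
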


The dividing set is pictured on Figure \ref{tangence_Reeb_rocade}. Figure \ref{situation_concave} shows the associated Reeb vector field. In particular, $\alpha$ is not adapted to $S'$.

\begin{figure}[here]
\begin{center}
 \includegraphics{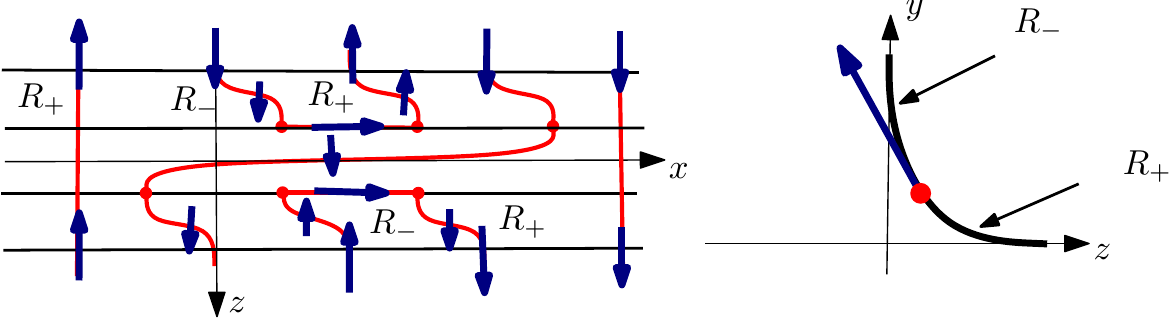}
\end{center}
 \caption{The Reeb vector field along $\Gamma_\t{smooth}$}\label{situation_concave}
\end{figure}

\subsection{Preliminary lemmas}\label{subsection_preliminary_lemmas}
This section can be skipped on first reading. We present bounds on $z_\t{max}$ and on the perturbations of the contact form ensuring a precise control of the Reeb flow.  Let $\Omega=I_b\times [0,y_\text{std}]\times I_\text{max}$ and $\alpha=\sin(x)\d y+\cos(x)\d z$.

\begin{lemma}\label{lemme_lambda}
For small enough perturbations of $\alpha$, the $x$-coordinate of any Reeb orbit in $\Omega$ covers an interval of length at most $\frac{\lambda}{8}$.
\end{lemma}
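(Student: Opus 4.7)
\emph{Strategy.} A direct computation shows that for the unperturbed form $\alpha=\sin(x)\,\d y+\cos(x)\,\d z$ the Reeb vector field is $R_0=\sin(x)\,\partial_y+\cos(x)\,\partial_z$, so $x$ is a first integral of the unperturbed flow. By the smooth dependence of the Reeb vector field on the contact form, a $C^1$-perturbation $\alpha'=\alpha+\eta$ with $\|\eta\|_{C^1}\le\epsilon$ yields a Reeb vector field $R'$ satisfying, on the compact set $\Omega$,
\[|R'_x|\le C\epsilon,\qquad |R'_y-\sin(x)|\le C\epsilon,\qquad |R'_z-\cos(x)|\le C\epsilon,\]
for a constant $C$ depending only on $\Omega$. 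Along any orbit segment the $x$-variation is therefore bounded by $C\epsilon\cdot T$, where $T$ is the duration of the segment in $\Omega$, and the task reduces to bounding $T$ on orbits contained in $\Omega$.

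\emph{Bootstrap by extracting a minimal subarc.} Argue by contradiction: suppose some Reeb orbit in $\Omega$ has $x$-variation strictly greater than $\lambda/8$. By continuity, extract a subarc $\gamma_1$ along which $x$ varies by exactly $\lambda/8$, so the image $J=x(\gamma_1)$ is an interval of length $\lambda/8<\pi/64$. For any $x_0\in J$, one of $|\sin(x_0)|$, $|\cos(x_0)|$ is at least $1/\sqrt{2}$; since $\sin$ and $\cos$ are $1$-Lipschitz and $|J|<\pi/64<\tfrac{1}{\sqrt 2}-\tfrac12$, the same function stays $\ge 1/2$ in absolute value on all of $J$ and therefore has constant sign.

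\emph{Contradiction.} If $|\sin(x)|\ge 1/2$ on $J$, then $R'_y$ has constant sign and $|R'_y|\ge 1/2-C\epsilon>1/4$ on $\gamma_1$ once $\epsilon$ is small enough, so $y$ is monotone at rate $\ge 1/4$ along $\gamma_1$; since the $y$-range on $\gamma_1$ is contained in $[0,y_{\t{std}}]$, the duration satisfies $T\le 4y_{\t{std}}$. The symmetric argument in the case $|\cos(x)|\ge 1/2$ uses $z\in[-z_{\t{max}},z_{\t{max}}]$ and yields $T\le 8z_{\t{max}}$. Choosing $\epsilon$ small enough that $C\epsilon\cdot\max(4y_{\t{std}},8z_{\t{max}})<\lambda/8$ produces
\[\lambda/8=|\Delta x\text{ along }\gamma_1|\le C\epsilon\cdot T<\lambda/8,\]
which is the desired contradiction. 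The one delicate point is the apparent circularity of the argument: bounding $T$ requires knowing $x$ stays in a short interval, but the conclusion of the lemma is precisely that it does. This is resolved by extracting the \emph{minimal} subarc of prescribed $x$-excursion $\lambda/8$: on $\gamma_1$ the short interval $J$ is automatic by construction, and the duration estimate then closes the loop.
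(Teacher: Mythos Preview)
Your proof is correct and follows the same two-step logic as the paper: the unperturbed Reeb field has vanishing $x$-component, and the time any orbit spends in $\Omega$ is uniformly bounded, so $|\Delta x|\le C\epsilon\cdot T$ can be made smaller than $\lambda/8$. The paper's proof is a one-line assertion of these two facts; your version is more explicit in that you justify the time bound via the bootstrap on a minimal subarc of prescribed $x$-excursion, thereby closing the apparent circularity that the paper leaves implicit.
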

\begin{proof}
The $x$-coordinate of $R_\alpha$ is zero and the amount of time spent in $\Omega$ by a Reeb orbit is uniformly bounded.
\end{proof}

\begin{lemma}\label{epsilon_stab_retour_S_S_y}
For $z_\t{max}$ small enough and for any small enough perturbation of $\alpha$
\begin{itemize}
  \item condition (B6) is satisfied;
  \item for all $0\leq y\leq \frac{3}{4}y_\t{std}$, the map induced between $S_y$ and $S_{y_\t{std}}$ for positive times can be decomposed into two maps with domains in $S_{\frac{\lambda}{4},1}$ and $S_{\frac{\lambda}{4},5}$ and ranges in $S_{\frac{\lambda}{2},1}$ and $S_{\frac{\lambda}{2},5}$;
  \item the map induced between $S_{y_\t{std}}$ and $S_Z$ for positive times can be decomposed into two maps with domains in $S_{\frac{\lambda}{4},3}$ and $S_{\frac{\lambda}{4},-1}$ and ranges in $S_{\frac{\lambda}{2},3}$ and $S_{\frac{\lambda}{2},-1}$;
  \item the amount of time spent in $\Omega$ by a Reeb orbit is bounded by $2(y_\t{std}+z_\t{max})$.
\end{itemize}
\end{lemma}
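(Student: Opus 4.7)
The plan is to reduce everything to a perturbative comparison with the standard Reeb vector field $R_0=\sin(x)\partial_y+\cos(x)\partial_z$ in $\Omega$, whose $(y,z)$-projection has unit speed and whose $x$-coordinate is constant along orbits. For any small enough perturbation of $\alpha$, the Reeb field is $\mathcal{C}^0$-close to $R_0$, so its $(y,z)$-speed stays above $1/2$, and by Lemma~\ref{lemme_lambda} the $x$-coordinate of any orbit arc in $\Omega$ spans an interval of length at most $\lambda/8$. For the time bound, I would note that such an arc projects to a nearly straight segment in $[0,y_\text{std}]\times[-z_\text{max},z_\text{max}]$ of slope close to $\cot(x_0)$; its length is at most $y_\text{std}+2z_\text{max}$, and the lower bound on the speed yields the required bound $2(y_\text{std}+z_\text{max})$.

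Next I would handle the maps $S_y\to S_{y_\text{std}}$ (for $y\leq\frac{3}{4}y_\text{std}$) and $S_{y_\text{std}}\to S_Z$. A chord realising such a map must traverse a $y$-distance at least $y_\text{std}/4$ without exiting through $z=\pm z_\text{max}$. Along a straight-line approximation with initial $x$-value $x_0$, the $z$-increment is close to $(y_\text{std}-y)\cot(x_0)$, so staying inside $\Omega$ forces $|\cot(x_0)|\leq 8z_\text{max}/y_\text{std}$. Choosing $z_\text{max}$ small enough that $8z_\text{max}/y_\text{std}<\tan(\lambda/4)$ restricts $x_0$ to a $\lambda/4$-neighborhood of one of the points $-\pi/2,\pi/2,3\pi/2,5\pi/2$. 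The sign of $\sin(x_0)$ then selects the correct pair: $\pi/2$ and $5\pi/2$ for upward motion (domains in $S_{\lambda/4,1}\cup S_{\lambda/4,5}$) and $-\pi/2$ and $3\pi/2$ for downward motion (domains in $S_{\lambda/4,-1}\cup S_{\lambda/4,3}$). The $\lambda/8$ drift in $x$ plus the small $z$-excursion then lands the images in the $S_{\lambda/2,k}$-neighborhoods as claimed.

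For condition~(B6) I would argue that an orbit starting on $S_Z$, entering $\Omega$ and returning to $S_Z$ must have $\dot y$ change sign, hence $\sin(x)$ must change sign along the orbit (using $\mathcal{C}^0$-closeness to $R_0$). The $\lambda/8$ bound on the variation of $x$ then forces $|x_0-k\pi|\leq\lambda/8$ for some $k\in\{0,1,2\}$, so $\dom(\theta_k)\subset S_{\lambda/2,2k}$ and symmetrically for the image after another $\lambda/8$ drift. Since $\dot z$ is close to $\cos(x)$ and $x$ stays within $\lambda/8+\lambda/8$ of $k\pi$, the sign of $\dot z$ is $(-1)^k$ throughout, giving the stated monotonicity of the $z$-coordinate. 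The $y$-excursion of such a chord is bounded by $|\sin(x)|$ times the time bound, hence by $\sin(\lambda/4)\cdot 2(y_\text{std}+z_\text{max})$, which is far less than $y_\text{std}$ for small $\lambda$ and $z_\text{max}$, so the chord stays in a neighborhood of $S_Z$ disjoint from any neighborhood of $S_{y_\text{std}}$.

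The main subtlety will lie in the order of quantifiers: one first fixes $z_\text{max}$ small enough in terms of $\lambda$ and $y_\text{std}$ so that the geometric inequality $8z_\text{max}/y_\text{std}<\tan(\lambda/4)$ holds, then shrinks the perturbation threshold so that Lemma~\ref{lemme_lambda} applies inside the now-fixed slab $\Omega$ with the tolerance $\lambda/8$, and so that the Reeb speed, the slopes $\cot(x)$ and $\tan(x)$, and the sign arguments above all go through with uniform margins. No step is genuinely hard; the technicality is just to keep the several small parameters consistent.
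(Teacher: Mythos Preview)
Your proposal is correct and follows essentially the same perturbative approach as the paper: control the $x$-drift via Lemma~\ref{lemme_lambda}, then use that the $(y,z)$-projection of the Reeb flow is close to the straight-line flow $(\sin x_0,\cos x_0)$ to pin down where chords can start and end and to bound the time spent in $\Omega$. One minor slip: from ``length $\leq y_\text{std}+2z_\text{max}$'' and ``speed $\geq 1/2$'' you get time $\leq 2y_\text{std}+4z_\text{max}$, not $2(y_\text{std}+z_\text{max})$; the paper instead observes that by the $\lambda/8$ drift bound the orbit lies entirely in a region where $|R_y|$ is bounded below or entirely in one where $|R_z|$ is, giving $\max(2y_\text{std},4z_\text{max})$, but the exact constant is irrelevant for the applications.
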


\begin{proof}
For $z_\t{max}$ small enough, the domain and range of the map from $S_y$ to $S_{y_\t{std}}$ are contained in $S_{\frac{\lambda}{4},1}$ and $S_{\frac{\lambda}{4},5}$ for all $0\leq y\leq \frac{3}{4} y_\t{std}$. Similarly, the domain and range of the map from $S_{y_\t{std}}$ to $S_y$ are contained in $S_{\frac{\lambda}{4},3}$ and $S_{\frac{\lambda}{4},-1}$. Thus the conditions on the map between $S_y$ and $S_{y_\t{std}}$ are satisfied for any small perturbation of $\alpha$.

We now prove condition (B6) and the return time condition. For any small perturbation of $\alpha$
\begin{itemize}
  \item $\vert R_y\vert\geq\frac{1}{2}$ outside $\left(S_{\frac{\lambda}{8},0}\cup S_{\frac{\lambda}{8},2}\cup S_{\frac{\lambda}{8},4}\right)\times[0,y_\t{std}]$;
  \item $\vert R_z\vert\geq\frac{1}{2}$ into $\left(S_{\frac{\lambda}{2},0}\cup S_{\frac{\lambda}{2},2}\cup S_{\frac{\lambda}{2},4}\right)\times[0,y_\t{std}]$;
  \item the hypotheses of Lemma \ref{lemme_lambda} are satisfied.
\end{itemize}
Thus any Reeb orbit intersecting $S_Z$ outside $S_{\frac{\lambda}{4},0}\cup S_{\frac{\lambda}{4},2}\cup S_{\frac{\lambda}{4},4}$ is not a Reeb chord of $S_Z$ (Lemma \ref{lemme_lambda}). Any Reeb chord of $S_Z$ stays in $S_{\frac{\lambda}{2},2k}$ for some $k$. Along any Reeb orbit, $\vert R_y\vert\geq\frac{1}{2}$ or $\vert R_z\vert\geq\frac{1}{2}$ (Lemma \ref{lemme_lambda}) and we obtain the desired bound on the amount of time spent in $\Omega$.

Finally, the period of any Reeb chord of $S_Z$ is bounded by $2z_\t{max}$. Thus, for small enough perturbations, the $y$-coordinate covers an interval of length at most $2z_\t{max}$ and  $2z_\t{max}<y_\t{std}$ for $z_\t{max}$ small enough. Thus the Reeb chords which contribute to $\theta_k$ do not intersect a neighbourhood of $S_{y_\t{std}}$ and condition (B6) is satisfied.
\end{proof}

\begin{remark}\label{pas_de_retour_sur_S_y_std}
If the contact form is not perturbed near $x=k\pi$ there is no Reeb chord on $S_Z$ or $S_{y_\t{std}}$. More precisely, let  $y\in[0,y_\t{std})$ and $\alpha'$ be a small perturbation of $\alpha$ such that $\alpha'=\alpha$ in $\left(S_{\frac{\pi}{4},0}\cup S_{\frac{\pi}{4},2}\cup S_{\frac{\pi}{4},4}\right)\times[y,y_\t{std}]$. Then there is no Reeb chord of $S_{y_\t{std}}$ contained in $I_b\times[y,y_\t{std}]\times I_\t{max}$. 
\end{remark}

We smooth the corners of $\Omega$ as described in Section \ref{subsubsection_contruction_explicite}. The dividing set is given by the smooth curve $\Gamma_\t{smooth}$ (see Proposition \ref{Gamma_smooth}). We still denote by $\Gamma_\t{smooth}$ its restriction to $\Omega$. The convexification process will take place in a neighbourhood of $\Gamma_\t{smooth}$ and will radically change the contact form. To control the new Reeb orbits, we first control the Reeb chord of a neighbourhood of $\Gamma_\t{smooth}$.

\begin{lemma}\label{lemme_cordes_Reeb}
There exists an arbitrarily small neighbourhood $V_\t{smooth}$ of $\Gamma_\t{smooth}$ such that, for any small perturbation of $\alpha$, the Reeb chords joining two distinct connected components of $V_\t{smooth}$ are contained in $S_{\frac{\lambda}{4},2k}\times [0,y_\t{std}]$. In addition, the orientation of these Reeb chords is given by the sign of the $z$-component of $R_\alpha$.
\end{lemma}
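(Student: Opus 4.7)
The plan is to combine the $x$-rigidity of the Reeb flow in $\Omega$ (Lemma~\ref{lemme_lambda}) with a geometric inspection of how the components of $\Gamma_\t{smooth}$ are arranged on $\partial\Omega$.

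First, I would analyse $\Gamma_\t{smooth}$ using Proposition~\ref{Gamma_smooth} and Figure~\ref{tangence_Reeb_rocade}. Each component of $\Gamma_\t{smooth}$ occupies a well-defined $x$-range in $\partial\Omega$, and the explicit smoothing construction of Section~\ref{subsubsection_contruction_explicite} shows that the $x$-projections of two distinct components overlap only in arbitrarily small neighbourhoods of $x=0$, $x=\pi$, $x=2\pi$; outside these three windows any two distinct components are separated in the $x$-coordinate by a positive constant. I would then take $V_\t{smooth}$ to be a tubular neighbourhood of $\Gamma_\t{smooth}$ thin enough that the $x$-projections of distinct connected components of $V_\t{smooth}$ meet only inside $\bigcup_{k=0}^{2}\left[k\pi-\lambda/8,\,k\pi+\lambda/8\right]$.

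Second, by Lemma~\ref{lemme_lambda} applied to any sufficiently small perturbation of $\alpha$, any Reeb orbit contained in $\Omega$ has its $x$-coordinate varying over an interval of length at most $\lambda/8$. Therefore the two endpoints of a Reeb chord of $V_\t{smooth}$ lie in a common $x$-slice of width $\lambda/8$. If these endpoints belong to two distinct components of $V_\t{smooth}$, this common slice must meet the overlap region described above, and the entire chord is then confined to $S_{\lambda/4,2k}\times[0,y_\t{std}]$ for some $k\in\{0,1,2\}$, as claimed.

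For the orientation assertion, on each $S_{\lambda/4,2k}\times[0,y_\t{std}]$ the $z$-component of the unperturbed Reeb vector field equals $\cos(x)$, which has constant sign $(-1)^k$ with a margin bounded below by $\cos(\lambda/4)>0$; this sign is preserved under any sufficiently small perturbation of $\alpha$. Consequently the Reeb chord, traversed in its flow direction, moves monotonically in $z$ with the sign dictated by $(-1)^k$, which is exactly the claimed orientation. The main obstacle is the first step: verifying directly from the smoothing construction of Section~\ref{subsubsection_contruction_explicite} (through the functions $h$, $k$, $l_\t{sup}$, $l_\t{inf}$) that the new components of $\Gamma_\t{smooth}$ arising from the corner smoothings, together with the remnants of the original $\Gamma$ inherited from $S$, genuinely cluster in the $x$-coordinate only near $x=k\pi$ for $k$ even. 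Once that geometric fact is in place, the application of Lemma~\ref{lemme_lambda} and the sign analysis of $\cos(x)$ are both immediate.
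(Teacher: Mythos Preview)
Your strategy is right in spirit---combine a rigidity statement for the Reeb flow with the layout of the components of $\Gamma_\t{smooth}$---but the key geometric claim in your first step is not correct, and this is a genuine gap, not just a tedious verification.

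You assert that the $x$-projections of distinct components of $\Gamma_\t{smooth}$ overlap only near $x=0,\pi,2\pi$. In fact each component of $\Gamma_\t{smooth}\cap\Omega$ has $x$-range of width roughly $\pi/2$, not $\pi$: it consists of a concave-corner tangency arc (where $\tan x=l_\t{inf}'(z)$, so $x$ sweeps an interval of the form $[k\pi/2,(k+1)\pi/2]$) together with a vertical segment $\{x=(2k+1)\pi/2\}\times[y_\t{smooth},y_\t{std}]\times\{\pm z_\t{prod}\}$ coming from the side wall $z=\pm z_\t{prod}$. Consequently, adjacent components overlap in $x$ at \emph{every} multiple of $\pi/2$, including the odd ones $x=\pi/2,\,3\pi/2,\dots$. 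The paper records exactly this: each component lies in some $\bigl[\tfrac{k\pi}{2}-\tfrac{\epsilon}{2},\tfrac{(k+1)\pi}{2}+\tfrac{\epsilon}{2}\bigr]\times[0,y_\t{std}]\times I_\t{max}$. Your argument, which uses only the $x$-rigidity of Lemma~\ref{lemme_lambda}, therefore cannot exclude Reeb chords living in $S_{\lambda/4,2k+1}\times[0,y_\t{std}]$.

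The missing ingredient is a second rigidity statement, this time in $z$. Near an odd multiple $x=(2k+1)\pi/2$ the two overlapping components sit on opposite sides in $z$ (one near $z=z_\t{prod}$, the other near $z=-z_\t{prod}$), while $R_z=\cos(x)$ is close to $0$ there, so any Reeb arc in $S_{\epsilon,2k+1}\times[0,y_\t{std}]$ keeps $z$ essentially constant and cannot travel from one component to the other. This is precisely what the paper establishes first (the orbit starting in $S_{\epsilon,2k+1}\times[0,y_\t{std}]\times[z_\t{prod}-\epsilon,z_\t{max}]$ stays in $z\ge z_\t{prod}/2$), and then uses to discard the odd case. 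Once that is in place, only the overlaps at $x=k\pi$ survive, and your second and third steps go through as written.
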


\begin{proof}
There exists $\epsilon<\frac{\lambda}{2}$ such that for any small perturbation of $\alpha$, any Reeb orbit intersecting
$S_{\epsilon,2k+1} \times[0,y_\t{std}]\times[z_\t{prod}-\epsilon,z_\t{max}]$ remains in
$S_{\epsilon,2k+1} \times[0,y_\t{std}]\times\left[\frac{z_\t{prod}}{2},z_\t{max}\right]$ for $k=-1,\dots, 2$. In addition, we ask that the $x$-coordinate of any Reeb orbit in $\Omega$ covers an interval of length at most $\frac{\epsilon}{2}$.

Choose some neighbourhood $V_\t{smooth}$ of $\Gamma_\t{smooth}$ with radius smaller that $\frac{\epsilon}{2}$. Any connected component of $V_\t{smooth}$ is contained in 
\[\left[\frac{k\pi}{2}-\frac{\epsilon}{2}, \frac{(k+1)\pi}{2}+\frac{\epsilon}{2}\right]\times[0,y_\t{std}]\times I_\t{max}\] for $k=-1,\dots, 5$. 
Consider the connected component contained in \[\left[l\pi-\frac{\pi}{2}-\frac{\epsilon}{2}, l\pi+\frac{\epsilon}{2}\right]\times[0,y_\t{std}]\times I_\t{max}.\] Any Reeb chord connecting this component to another is contained in $S_{\epsilon,2l-1}\times [0,y_\t{std}]$ or in $S_{\epsilon,2l}\times [0,y_\t{std}]$. By definition of $\epsilon$, there is no Reeb orbit in $S_{\epsilon,2l-1}\times [0,y_\t{std}]$.
\end{proof}

\begin{lemma}\label{lemme_B4}
Let $(\mathcal A,\beta)$ be a bypass foliation. For $z_\t{max}$ small enough, for any smoothing as described in Section \ref{subsubsection_contruction_explicite} and for any small perturbation $\alpha'$ of $\alpha$ such that $\alpha'=\alpha$ in $\left(S_{\frac{\pi}{4},0}\cup S_{\frac{\pi}{4},2}\cup S_{\frac{\pi}{4},4}\right)\times[y_\t{std}, 2y_\t{std}]$ the condition (B4) is satisfied. In addition, the return time is bounded by $\theta_\t{max}$.
\end{lemma}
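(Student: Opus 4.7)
The plan is to first analyse the unperturbed Reeb flow in $\mathcal{B}^{\geq y_\t{std}}$ explicitly, using the coordinate structure from Section \ref{subsubsection_contruction_explicite}, and then argue that the conclusions are stable under the allowed perturbations. Recall that on $\mathcal{A}\times I_\t{prod}$ the contact form is $\alpha = \beta + f(x,y)\,\d z$ with $\beta$ a positive multiple of $\sin(x)\,\d y$ for $y\leq y_\beta$ and $\alpha=\sin(r)\,\d\theta+\cos(r)\,\d z$ in the neighbourhood $\mathcal U$ of $\Gamma_D$. Writing $\beta=a(x,y)\,\d y$ with $a$ having the sign of $\sin(x)$, a direct computation gives
\[R_\alpha = \frac{1}{a\,f_x-f\,a_x}\bigl(-f_y,\ f_x,\ -a_x\bigr),\]
so that $R_x$ vanishes identically for $y\leq \frac{3}{2}y_\t{std}$ and for $y\geq 2y_\t{std}$ (where $f$ is independent of $y$), while $R_y$ and $R_z$ behave like $\sin(x)$ and $\cos(x)$ up to a positive factor. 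In particular $R_x$ is uniformly small when the transition function is chosen with small $f_y$.

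First I would treat the entry in $S_{y_\t{std}}$. An orbit which leaves $S_{y_\t{std}}$ upwards (i.e.\ $R_y>0$) and later returns must, in the band $y\in[y_\t{std},2y_\t{std}]$, satisfy $|\d z/\d y|$ essentially equal to $|\cot(x)|$, so the $z$-drift during the ascent is of order $y_\t{std}\cdot|\cot(x)|$. For $z_\t{max}$ small enough, requiring the orbit to stay in $|z|\leq z_\t{max}$ and end in $I_\t{prod}$ at return forces $|\cot(x)|$ to be small on both ends. Combined with Lemma \ref{lemme_lambda}, which ensures that $x$ varies by at most $\lambda/8$ along the full orbit in $\Omega$, this confines the entry $x$ to $[\pi/2-\lambda/2,\pi/2+\lambda/2]$ and, since $R_y<0$ on descent (so $\sin(x)<0$), the exit $x$ to $[3\pi/2-\lambda/2,3\pi/2+\lambda/2]$, yielding (B4). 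Once the orbit has entered the upper region, the form of $\alpha$ in $\mathcal U$ gives $R_\alpha=\sin(r)\partial_\theta+\cos(r)\partial_z$, and $\Gamma_D=\{r=3\pi/2\}$ is an integral curve on which $R_\alpha=-\partial_\theta$; consistency between entry near $x=\pi/2$ and exit near $x=3\pi/2$ is exactly the consistency of following $\Gamma_D$ from one endpoint to the other.

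Next I would bound the return time. The ascent and descent phases take time at most $C\,y_\t{std}$ (since $R_y$ is bounded below by a positive constant when $x$ is $\lambda$-close to $\pi/2$ or $3\pi/2$), which can be made arbitrarily small by shrinking $z_\t{max}$ and $y_\t{std}$. The dominant contribution is the passage near $\Gamma_D$: parametrising by $\theta$, the flow covers at most $\theta_\t{max}$ in $\theta$ since the incoming and outgoing points sit above $\theta=0$ and $\theta=\theta_\t{max}$ respectively. Thus for $z_\t{max}$ small the total time is bounded by $\theta_\t{max}$.

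Finally I would handle stability under the allowed perturbations. The hypothesis $\alpha'=\alpha$ on $(S_{\pi/4,0}\cup S_{\pi/4,2}\cup S_{\pi/4,4})\times[y_\t{std},2y_\t{std}]$ means that the Reeb vector field is exactly the unperturbed one in the regions near $x\in\{0,\pi,2\pi\}$ where the bypass singularities live and where the flow behaviour is most delicate. In the complement, the perturbed Reeb field is $\mathcal C^0$-close to the unperturbed one. Since the conditions in (B4) and the return-time estimate are strict inequalities about an open set of trajectories, compactness and continuity of the flow with respect to the contact form give stability under sufficiently small perturbations of this form. The main obstacle is controlling the $z$-drift during the long passage near $\Gamma_D$: this is handled by the specific form $\alpha=\sin(r)\,\d\theta+\cos(r)\,\d z$, which forces $\d z/\d\theta=-\cot(r)$, and orbits that enter the bypass with $z\in I_\t{prod}$ and $r$ close to $3\pi/2$ indeed remain in $I_\t{max}$ and exit with $z\in I_\t{prod}$ provided $z_\t{max}-z_\t{prod}$ is large enough compared to $\theta_\t{max}\cdot\sup|\cot(r)|$ along the traversed neighbourhood.
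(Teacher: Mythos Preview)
Your approach matches the paper's: both exploit that $\alpha=\sin(x)\,\d y+\cos(x)\,\d z$ on the slab $y\in[y_\t{std},\tfrac{3}{2}y_\t{std}]$, so the $z$-drift $\cot(x)\,\Delta y$ forces entry and exit $x$-coordinates near $\tfrac{\pi}{2}$ and $\tfrac{3\pi}{2}$ once $z_\t{max}$ is small, and both identify $\Gamma_D\times\{0\}$ as the unique unperturbed chord, yielding the $\theta_\t{max}$ bound by continuity. Two minor slips worth cleaning up: your appeal to Lemma~\ref{lemme_lambda} is misplaced, since that lemma concerns $\Omega=I_b\times[0,y_\t{std}]\times I_\t{max}$ rather than $\mathcal B^{\geq y_\t{std}}$ (and is unnecessary, because $R_x=0$ identically in the unperturbed slab); and $y_\t{std}$ is fixed in the statement, not a parameter you may shrink---the paper simply observes that the whole chord $\Gamma_D\times\{0\}$ already has period at most $\theta_\t{max}$.
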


\begin{proof}
For $y\in\left[y_\t{std},\frac{3}{2}y_\t{std}\right]$, we have $\alpha=\sin(x)\d y+\cos(x)\d z$. Thus the domain and range of the return map can be made as close to $x=k\pi$ as desired for $z_\t{max}$ small enough. In addition, the only $R_\alpha$-chord is $\Gamma_D\times\{0\}$ and the return time is bounded by~$\theta_\t{max}$.
\end{proof}

\subsection{Pre-convex bypasses}\label{subsection_precconvex}
Let $(\mathcal B,\alpha)$ be a bypass as defined in Section \ref{subsubsection_contruction_explicite}. In what follows we will always assume the the smoothing map $l_\t{inf}$ is invariant by mirror symmetry along the plane $z-z_\t{prod}-y=0$ for $z\geq z_\t{prod}$. We assume $y_\text{smooth}<\frac{y_\text{std}}{2}$.
Recall that the Reeb vector field is tangent to the dividing set $\Gamma_\t{smooth}$ of $\partial \mathcal B=S'$ when $S'$ is vertical and points toward $S'_-$ in the concave part of $S'$
(Proposition \ref{Gamma_smooth}). To apply the convexification process we first ``eliminate'' the tangency points between $R_\alpha$ and $\Gamma_\t{smooth}$ by perturbing $\alpha$ to obtain a \emph{pre-convex bypass}. We use the symmetries of the bypass to extend local constructions. In particular, if $(x,y,z)\in\left[\frac{\pi}{4},\frac{3\pi}{4}\right]\times[0,y_\t{std}]\times[z_\t{prod},z_\t{prod}+y_\t{std}]$, let
\begin{equation}\label{eq_sigma}
\sigma(x,y,z)=\left(-x+\frac{3\pi}{2}, z-z_\text{prod},y+z_\text{prod}\right)
\end{equation}
be the rotation of angle $\pi$ with axis $x=\frac{3\pi}{4}$, $y= z-z_\text{prod}$. For  $(x,y,z)\in\left[-\frac{3\pi}{4},\frac{\pi}{4}\right]\times[0,y_\t{std}]\times[0,z_\t{max}]$, let 
\begin{equation}\label{eq_tau}
\tau(x,y,z)=\left(-x,y,-z\right)
\end{equation}
be the rotation of angle $\pi$ with axis $x=z=0$.
Let $\Gamma_A^+=\Gamma_A\times\{z_\text{prod}\}$, $\Gamma_A^-=\Gamma_A\times\{-z_\text{prod}\}$ and $\Gamma_A^\pm=\Gamma_A^-\cup\Gamma_A^+$. We use similar notations for $\Gamma_D$. 

A \emph{pre-convex} perturbation $(k_\t{sup},k_\t{inf})$ of a bypass $(\mathcal B,\alpha)$ is composed of two smooth maps $\mathcal B\to \mathbb R^*_+$ such that (see Figure \ref{V_+_et_V_-}):
\begin{itemize}
  \item there exist a neighbourhood $V_+$ of the restriction of $\left(\Gamma_A^\pm\right)$ to $y\geq \frac{y_\text{std}}{2}$ and a neighbourhood $V_-$ of the restriction of $\left(\Gamma_A^\pm\right)$ to $y\leq \frac{y_\text{std}}{2}$ such that $k_\t{sup}=1$ outside  $V_+$ and $k_\t{inf}=1$ outside $V_-$;
  \item $\frac{\partial k_\t{sup}}{\partial z}>0$ near $\Gamma^+_A$ and $\frac{\partial k_\t{sup}}{\partial z}<0$ near $\Gamma^-_A$;
  \item $k_\t{sup}$ does not depend on $x$ near $\Gamma_A\setminus\Gamma_D$ and on $r$ near $\Gamma_D$;
  \item $k_\t{inf}(x,y,z)=(1-f_\text{inf}(y)\rho z)$ near $\Gamma_D^+\cap\{(x,y,z), x\in \left[\frac{\pi}{4},\frac{3\pi}{4}\right]\}$ where $\rho>0$, $f_\text{inf} : [0,y_\text{std}]\to\mathbb R_+$, $f_\text{inf}=0$ near $0$ and for $y\geq \frac{y_\text{std}}{2}$, $f_\text{inf}$ is increasing on $[0,y_\rho^-]$, $f_\text{inf}=1$ on $[y_\rho^-,y_\rho^+]$ and  is decreasing on $[y_\rho^+,\frac{y_\text{std}}{2}]$;
  \item $k_\t{sup}$ is $\tau$-invariant and ``$\pi$-periodic'' for $y\in[0,y_\text{std}]$ and $k_\t{inf}$ is $\tau$-invariant and ``$\pi$-periodic''.
\end{itemize}
\begin{figure}[here]
\begin{center}
 \includegraphics{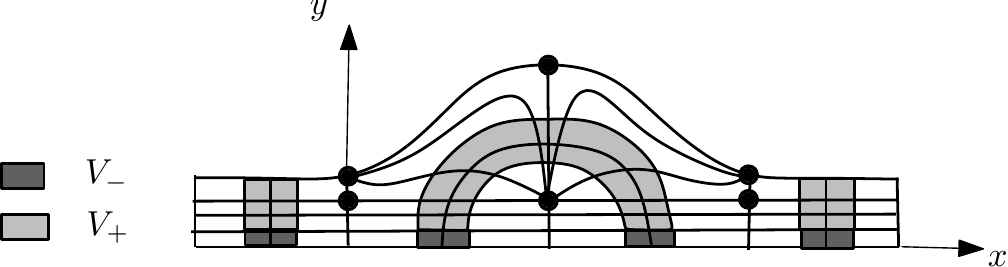}
\end{center}
 \caption{The neighbourhoods $V_+$ and $V_-$}\label{V_+_et_V_-}
\end{figure} 

In what follows, we assume $y_\rho^+<\frac{2y_\text{std}}{5}$, $y_\rho^->y_\text{smooth}$, and that the radius of $V_-$ is smaller than $\frac{\lambda}{2}$.
Let $k'_\t{sup}=k_\t{sup}\circ \sigma$. We extend $k'_\t{sup}$ in a neighbourhood of $\mathcal B$ to obtain a $\pi$-periodic, $\tau$-invariant function. We define $k'_\t{inf}$ similarly. Let
\[\alpha_\text{prec}=(k_\t{sup}k_\t{inf})\sin(x)\d y+(k'_\t{sup}k'_\t{inf})\cos(x)\d z.\]
For $k_\t{sup}$ and $k_\t{inf}$ close to $1$, $\alpha_\text{prec}$ is a contact form. Let $\eta(k_\t{sup})$ and $\eta(k_\t{inf})$ be the radius of the neighbourhoods of $\Gamma_A^\pm$ where $k_\t{sup}$ and $k_\t{inf}$ do not depends on $x$ or $r$.

\begin{proposition}\label{decoupage_preconvexe}
Fix $\eta>0$. Let $(\mathcal B,\alpha)$ be a bypass.
There exist real positive numbers $\epsilon_\t{inf}$ and $\epsilon_\t{sup}$ such that for any pre-convex perturbation $(k_\t{sup},k_\t{inf})$ satisfying $\eta(k_\t{sup})\geq\eta$, $\eta(k_\t{sup})\geq\eta$, $\Vert k_\t{sup}-1\Vert<\epsilon_\t{sup}$ and $\Vert k_\t{inf}-1\Vert<\epsilon_\t{inf}$ the following holds
\begin{itemize}
  \item $S'$ is convex with dividing set $\Gamma_\text{prec}$, the set of tangency points between $R_\text{prec}$ and $S'$;
  \item $R_\text{prec}$ points toward $S_+$ for $y>\frac{y_\t{std}}{2}$ and toward $S_-$ for $y<\frac{y_\t{std}}{2}$.
\end{itemize}
\end{proposition}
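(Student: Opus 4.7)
\emph{Strategy.} By Lemma~\ref{lemme_convexite_Reeb}, it is enough to produce a smooth curve $\Gamma_{\text{prec}}\subset S'$ along which $R_{\text{prec}}$ is tangent to $S'$ and such that, at each point of $\Gamma_{\text{prec}}$, the characteristic foliation points toward the subregion of $S'$ on which $R_{\text{prec}}$ is positively transverse. I would first write $\alpha_{\text{prec}}=A\,\d y+B\,\d z$ with $A=k_{\text{sup}}k_{\text{inf}}\sin(x)$, $B=k'_{\text{sup}}k'_{\text{inf}}\cos(x)$, and solve $\alpha_{\text{prec}}(R)=1$, $\iota_R\d\alpha_{\text{prec}}=0$ by hand. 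A direct computation gives
\[
R_{\text{prec}}=\frac{1}{A\partial_x B-B\partial_x A}\bigl[-(\partial_y B-\partial_z A)\partial_x+(\partial_x B)\partial_y-(\partial_x A)\partial_z\bigr],
\]
with $A\partial_x B-B\partial_x A\to -1$ when $k_{\text{sup}},k_{\text{inf}}\to 1$, so the contact condition (and the formula) are stable under $\mathcal{C}^1$-small perturbations. For $k_{\text{sup}}=k_{\text{inf}}=1$ one recovers $R_\alpha=\sin(x)\partial_y+\cos(x)\partial_z$ and the tangency curve $\Gamma_{\text{smooth}}$ of Proposition~\ref{Gamma_smooth}. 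Since the tangency equation $R_\alpha\in TS'$ cuts $\Gamma_{\text{smooth}}$ out of $S'$ transversally (transverse to the characteristic foliation by Proposition~\ref{Gamma_smooth}), the implicit function theorem produces, for $\epsilon_{\text{sup}},\epsilon_{\text{inf}}$ small enough, a smooth curve $\Gamma_{\text{prec}}\subset S'$ that is $\mathcal{C}^0$-close to $\Gamma_{\text{smooth}}$ and along which $R_{\text{prec}}$ is tangent to $S'$.

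\emph{Orientation of the characteristic foliation along $\Gamma_{\text{prec}}$.} The remaining content is a sign calculation for the component of $R_{\text{prec}}$ normal to $S'$ near $\Gamma_{\text{prec}}$. The lower bound $\eta$ on the widths $\eta(k_{\text{sup}}),\eta(k_{\text{inf}})$ guarantees that, on a neighbourhood of each smooth piece of $\Gamma_A\setminus\Gamma_D$ (resp. of $\Gamma_D$) of uniform size, the factor $k_{\text{sup}}$ (resp. $k_{\text{inf}}$) depends only on the coordinate transverse to the curve. This reduces the analysis to several local model computations in the regions $y\ge y_{\text{std}}/2$ and $y\le y_{\text{std}}/2$. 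In the upper region only $k_{\text{sup}}$ is active, and the prescribed signs $\partial_z k_{\text{sup}}>0$ near $\Gamma_A^+$ and $\partial_z k_{\text{sup}}<0$ near $\Gamma_A^-$, after pull-back through $\sigma$ so as to feed a $\partial_y B$ term as well, enter the $\partial_x$-component $-(\partial_y B-\partial_z A)/(A\partial_x B-B\partial_x A)$ of $R_{\text{prec}}$ with precisely the sign that pushes the Reeb flow toward $S_+$ as one crosses $\Gamma_{\text{prec}}$ outward. In the lower region only $k_{\text{inf}}$ is active; the ansatz $k_{\text{inf}}(x,y,z)=1-f_{\text{inf}}(y)\rho z$ near $\Gamma_D^+\cap\{x\in[\pi/4,3\pi/4]\}$ gives $\partial_z k_{\text{inf}}=-f_{\text{inf}}(y)\rho<0$, which flips the sign of the corresponding normal component relative to the upper region and forces $R_{\text{prec}}$ to point toward $S_-$. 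The $\sigma$- and $\tau$-equivariance of $k_{\text{sup}},k_{\text{inf}}$, together with the ``$\pi$-periodicity'', then propagates each local sign computation to the full component of $\Gamma_{\text{prec}}$ that it controls.

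\emph{Main obstacle and conclusion.} The delicate step is the interface $y\approx y_{\text{std}}/2$, where $V_+$ and $V_-$ may overlap, and the transition regions $[0,y_\rho^-]$ and $[y_\rho^+,y_{\text{std}}/2]$ on which $f_{\text{inf}}'\neq 0$ produces an extra $\partial_y k_{\text{inf}}$ contribution that could a priori spoil the signs. Here I would use the hypothesis $y_\rho^+<2y_{\text{std}}/5<y_{\text{std}}/2$, which ensures $f_{\text{inf}}\equiv 1$ in a fixed neighbourhood of the interface, together with a compactness argument on $\Gamma_{\text{smooth}}$ to produce uniform lower bounds on the leading-order normal component; one then fixes $\epsilon_{\text{sup}}$ and $\epsilon_{\text{inf}}$ small enough that the contribution from the secondary terms (derivatives of $f_{\text{inf}}$ and of the smoothing profiles $l_{\text{sup}},l_{\text{inf}},h,k$) stay strictly below the leading ones, and that $\alpha_{\text{prec}}$ remains a contact form. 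With these estimates Lemma~\ref{lemme_convexite_Reeb} applies: $S'$ is convex, $\Gamma_{\text{prec}}$ is a dividing set, and the sign computation of Step~2 yields the two stated orientations.
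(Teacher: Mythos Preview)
Your overall strategy—invoke Lemma~\ref{lemme_convexite_Reeb}, use the implicit function theorem to get a nearby smooth tangency curve $\Gamma_{\text{prec}}$, then check signs—matches the paper's. Where you diverge is in locating the difficulty.

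The genuine obstacle is not the interface $y\approx y_{\text{std}}/2$ but the points $x=k\pi/2$ on $\Gamma_{\text{smooth}}$. There the unperturbed Reeb field $R_\alpha$ is \emph{tangent} to $\Gamma_{\text{smooth}}$: the tangent to $\Gamma_{\text{smooth}}$ has nonzero $x$-component only when $x\neq k\pi/2$, while $R_\alpha$ has $R_x\equiv 0$. Hence transversality of $R$ to $\Gamma$ inside $S'$ is \emph{not} an open condition at those points and cannot be recovered by a perturbation/compactness argument. Your ``main obstacle'' paragraph is aimed at the wrong region: the supports $V_+$ and $V_-$ sit on opposite sides of $y=y_{\text{std}}/2$, so for $y<y_{\text{std}}/2$ only $k_{\text{inf}}$ is active (and only $k_{\text{sup}}$ for $y>y_{\text{std}}/2$); the transition of $f_{\text{inf}}$ never competes with $k_{\text{sup}}$.

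The paper's split is instead in $x$. For $|x-k\pi/2|\ge\eta$ one uses openness: $R_\alpha$ was already transverse to $\Gamma_{\text{smooth}}$, so $R_{\text{prec}}$ stays transverse to $\Gamma_{\text{prec}}$ for $\epsilon_{\text{sup}},\epsilon_{\text{inf}}$ small. For $|x-k\pi/2|<\eta$ the hypothesis $\eta(k_\bullet)\ge\eta$ you noted becomes essential: there $k=k_{\text{inf}}$ (resp.\ $k_{\text{sup}}$) depends only on $(y,z)$, so
\[
R=\frac{1}{k}\Bigl(-\tfrac{\partial k}{\partial z}\sin x,\ \sin x,\ k\cos x\Bigr),
\qquad
\Gamma_{\text{prec}}:\ y\mapsto\bigl(x(y),y,L(y)\bigr),\quad \cot x(y)=\tfrac{L'(y)}{k},
\]
with $L=l_{\text{inf}}^{-1}$ or $l_{\text{sup}}^{-1}$. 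Comparing $R$ with the tangent $(x'(y),1,L'(y))$ one finds $R$ is tangent to $\Gamma_{\text{prec}}$ iff $x'(y)+\partial_z k=0$. Now $x'(y)$ carries the sign of $\mp L''$ and $\partial_z k$ has a prescribed sign from the definition of a pre-convex perturbation, so the two terms never cancel; their common sign then gives the direction (toward $S_-$ below, $S_+$ above). This is the computation your ``local model'' paragraph alludes to but does not perform, and it is where the actual content of the proposition lies.
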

Note that $\epsilon_\t{inf}>0$ and $\epsilon_\t{sup}>0$ depend on the smoothing of $\mathcal A\times I_\t{prod}$.

\begin{proof}
The tangency condition is open. By Proposition \ref{Gamma_smooth} and Lemma \ref{lemme_convexite_Reeb} the first condition is satisfied for small enough perturbations of $\alpha$.

The Reeb vector field $R_\alpha$ is transverse to $\Gamma_\t{smooth}$ for $x\neq \frac{k\pi}{2}$ as the tangents to $\Gamma_\t{smooth}$ have a non-vanishing $x$-component. Thus, for small enough perturbations, the transversality still holds for $\left\vert x-\frac{k\pi}{2}\right\vert\geq \eta$.

We study the case $y<\frac{y_\t{std}}{2}$. We prove transversality for $\left\vert x-\frac{\pi}{2}\right\vert< \eta$ and extend the result by symmetry. The Reeb vector field is 
\begin{equation}\label{eq_R_inf}
R_\alpha=\frac{1}{k_\t{inf}(y,z)}\left(\begin{array}{c}
 -\frac{\partial k_\t{inf}}{\partial z}(y,z)\sin(x) \\
\sin(x) \\
k_\t{inf}(y,z)\cos(x)
\end{array}\right)
\end{equation}
and the tangency condition between $R_{\alpha_\text{prec}}$ and $S'$ is \[k_\t{inf}(y,z)\cos(x)-L'_\t{inf}(y)\sin(x)=0\] where $L_\t{inf}=l^{-1}_\t{inf}$ (see Section \ref{subsubsection_contruction_explicite}). Thus a parametrisation of $\Gamma_\text{prec}$ is given by
\[y\mapsto\left(\left(\frac{\cos}{\sin}\right)^{-1}\left(\frac{L'_\t{inf}(y)}{k_\t{inf}(y,L_\t{inf}(y))}\right),y,L_\t{inf}(y)\right)=(x(y),y,z(y)).\]
The tangency condition between $R_{\alpha_\text{prec}}$ and $\Gamma_\text{prec}$ implies \[x'(y)+\frac{\partial k_\t{inf}}{\partial z}\big(y,z(y)\big)=0.\] Yet $\frac{\partial k_\t{inf}}{\partial z}\big(y,z(y)\big)<0$ and $x'(y)$ has the sign of $-L''_\t{inf}(y)$. Thus $x'(y)<0$ for $k_\t{inf}$ close to $1$ and $R_{\alpha_\text{prec}}$ is not tangent to $\Gamma_\text{prec}$.

We now study the case $y>\frac{y_\t{std}}{2}$. The tangency condition between $R_{\alpha_\text{prec}}$ and $S'$ is \[k_\t{sup}(y,z)\cos(x)-L'_\t{sup}(y)\sin(x)=0.\]
The tangency condition between $R_{\alpha_\text{prec}}$ and $\Gamma_\text{prec}$ implies \[x'(y)+\frac{\partial k_\t{sup}}{\partial z}\big(y,z(y)\big)=0.\] Yet $\frac{\partial k_\t{sup}}{\partial z}\big(y,z(y)\big)>0$ and $x'(y)$ has the sign of $L''_\t{sup}(y)$.
\end{proof}

The convexification process takes place near $\Gamma_\t{prec}$. We carefully choose $(\mathcal B,\alpha)$ and a pre-convex perturbation to control the image of the dividing set by the Reeb flow. For $s\in[0,\frac{\pi}{2}]$, let
\[\begin{array}{l}
\gamma_0(s)=(s,0,-z_\text{prod})\\
\gamma_1(s)=\left(\pi-s,0,z_\text{prod}\right)\\
\gamma^0(s)=\left(s+\pi,0,-z_\text{prod}\right)\\
\gamma^1(s)=\left(2\pi-s,0,z_\text{prod}\right).
\end{array} \]
We denote by $p_V$ and $p_R$ the images on $S_Z$ and $S_R$ by the Reeb flow.

A quadruple $(\mathcal B,\alpha,k_\t{sup},k_\t{inf})$ is a \emph{pre-convex bypass} if $(\mathcal B,\alpha)$ is a bypass as defined in Section \ref{subsubsection_contruction_explicite}, $(k_\t{sup},k_\t{inf})$ is a pre-convex perturbation and there exist positive real numbers $\epsilon_R$, $A_R$ and $\epsilon_Z$ ($\epsilon_Z$ is arbitrarily small), four $z$-graphs in $S_R$ denoted by $\delta_0$, $\delta_1$, $\delta^0$ and $\delta^1$ and product neighbourhoods 
\begin{equation}\label{eq_V_A}
V_{\Gamma_A}=V_x\times \left[0,\frac{3}{4}y_\t{std}\right]\times V_z
\end{equation}
of the restriction of $\left(\Gamma_A^\pm\right)$ to $y\in\left[0,\frac{3y_\t{std}}{4}\right]$ with $\vert V_x\vert=\lambda_B<\lambda$ such that
\begin{enumerate}
  \item\label{conv_V_Gamma_A} there is no Reeb chord between $V_{\Gamma_A}$ and $S_{y_\text{std}}$; 
  \item\label{cond_Gamma_S_Z} if $\Gamma_B$ is the restriction of $\Gamma_\t{prec}$ for $x\in\left[\frac{\pi}{2}+\frac{\lambda_B}{2},\pi-\frac{\lambda_B}{2}\right]$ then  $\left\vert\int_{\Gamma_{B}}\alpha\right\vert <\tau$ and $d_{\mathcal C^1}(p_Z(\Gamma_B),\gamma_1)<\frac{\epsilon_Z}{2}$;
  \item\label{cond_delta_i} $\delta_i$ is increasing and $\delta^i$ decreasing, the graphs intersect the segment $r=\frac{3\pi}{2}$;
  \item\label{cond_delta}\label{cond_verticale} $\mathcal C(\delta_i,\epsilon_R)\cap\mathcal C(H,\epsilon_R)=\{0\}$ and $\mathcal C(\delta^i,\epsilon_R)\subset\mathcal C(\delta_j^\perp,A_R)$ and $\delta^i$ satisfies symmetric conditions; 
  \item\label{cond_delta'} if $d_{\mathcal C^1}(\delta,\delta_i)<\epsilon_R$ and $d_{\mathcal C^1}(\delta',\delta^j)<\epsilon_R$ then $\delta$ and $\delta'$ intersect transversely in one point ;
  \item\label{cond_S_Z_S_R} if $\gamma$ is a curve in $S_Z$ such that $d_{\mathcal C^1}(\gamma,\gamma_i)<\epsilon_Z$ or $d_{\mathcal C^1}(\gamma,\gamma^i)<\epsilon_Z$ then either $d_{\mathcal C^1}(p_R(\gamma),\delta_i)<\epsilon_R$ or $d_{\mathcal C^1}(p_R(\gamma),\delta^i)<\epsilon_R$;
  \item\label{cond_tau} the return time between $S_Z$ and $S_R$ is bounded by $\tau$.
\end{enumerate}

\begin{proposition}\label{proposition_existence_pre_convexe}
Let $(M,\xi=\ker(\alpha))$ be a contact manifold with convex boundary $(S,\Gamma)$ and $\gamma_0$ be an attaching arc satisfying condition (C1), (C2) and (C3). Let $(\mathcal A,\beta)$ be a bypass foliation (see Section \ref{subsubsection_contruction_explicite}).
For $z_\t{max}$ small enough, there exists a pre-convex bypass $(\mathcal B,\alpha,k_\t{sup},k_\t{inf})$ with $k_\t{sup}$ and $k_\t{inf}$ arbitrarily close to $1$.
\end{proposition}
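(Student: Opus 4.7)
The plan is to start from the explicit bypass construction of Section~\ref{subsubsection_contruction_explicite} and apply a sufficiently small pre-convex perturbation. First I would fix a bypass foliation $(\mathcal{A}, \beta)$ using Remark~\ref{remark_tau} so that $\left|\int_{\Gamma_D} \beta\right| < \tau/2$, take $z_\text{max}$ small as required by the preliminary lemmas of Section~\ref{subsection_preliminary_lemmas}, and choose a pre-convex perturbation $(k_\text{sup}, k_\text{inf})$ with product neighborhoods $V_\pm$ of $\Gamma_A^\pm$ of radius less than some $\lambda_B < \lambda$ and with $\|k_\text{sup} - 1\|, \|k_\text{inf} - 1\|$ sufficiently small. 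Proposition~\ref{decoupage_preconvexe} then provides convexity of $S'$, identifies $\Gamma_\text{prec}$ as the locus of tangency between $R_\text{prec}$ and $S'$, and yields the required pointing direction of $R_\text{prec}$ on $S_\pm$.

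Condition~(\ref{conv_V_Gamma_A}) (no Reeb chord from $V_{\Gamma_A}$ to $S_{y_\text{std}}$) would follow from Remark~\ref{pas_de_retour_sur_S_y_std}: since $V_{\Gamma_A}$ is contained in $y \leq 3y_\text{std}/4$ and $\alpha_\text{prec}$ coincides with the unperturbed $\alpha$ in $\left(S_{\pi/4,0} \cup S_{\pi/4,2} \cup S_{\pi/4,4}\right) \times [3y_\text{std}/4, y_\text{std}]$ for $\lambda_B$ small enough, no Reeb chord of $S_{y_\text{std}}$ can reach $V_{\Gamma_A}$. Condition~(\ref{cond_Gamma_S_Z}) would follow from the explicit parametrization $y \mapsto (x(y), y, L_\text{inf}(y))$ of $\Gamma_\text{prec}$ derived in the proof of Proposition~\ref{decoupage_preconvexe}: $\int_{\Gamma_B} \alpha_\text{prec}$ is close to $\int_{\Gamma_D} \beta$ by continuity in $(k_\text{sup}, k_\text{inf})$, while $p_Z(\Gamma_B)$ is close to $\gamma_1$ because the Reeb flow from $\Gamma_B$ down to $S_Z$ traverses only the standard region where $\alpha = \sin(x)\d y + \cos(x)\d z$.

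For the curves $\delta_i, \delta^i$ I would set $\delta_i = p_R(\gamma_i)$ and $\delta^i = p_R(\gamma^i)$, where $p_R$ is the Reeb projection from $S_Z$ to $S_R$. Since the flow passes through the upper unperturbed region $\{y \geq y_\text{std}\}$ where $R_\alpha$ is tangent to $x = \text{const}$ planes with slope $\tan(x)$, direct computation shows that $\delta_0, \delta^1$ are increasing $z$-graphs while $\delta_1, \delta^0$ are decreasing $z$-graphs, all intersecting $r = 3\pi/2$ transversely, with tangent directions cone-separated from each other and from the horizontal. This establishes conditions~(\ref{cond_delta_i}) and~(\ref{cond_delta}) for suitably small $\epsilon_R, A_R$. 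The transversality condition~(\ref{cond_delta'}) is then a $\mathcal{C}^1$-stability statement, and (\ref{cond_S_Z_S_R}) is $\mathcal{C}^1$-continuity of $p_R$ on a bounded region.

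The main obstacle will be simultaneously enforcing the return-time bound~(\ref{cond_tau}) and controlling the Reeb flow inside $V_+$, where $R_\text{prec}$ is nearly tangent to the dividing set. This is precisely where the prescribed form $k_\text{inf}(x,y,z) = 1 - f_\text{inf}(y)\rho z$ near $\Gamma_D^+ \cap \{x \in [\pi/4, 3\pi/4]\}$ plays its role: it makes $R_\text{prec}$ explicitly computable, as in equation~(\ref{eq_R_inf}), so that the time spent by Reeb trajectories through $V_+$ is a small perturbation of $\int_{\Gamma_D} \beta < \tau/2$. Combined with Lemma~\ref{epsilon_stab_retour_S_S_y}, which bounds the time spent in the rest of $\Omega$ by $2(y_\text{std} + z_\text{max})$, the total return time stays below $\tau$ for $y_\text{std}$ and $z_\text{max}$ small enough, closing the argument.
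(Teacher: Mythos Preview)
Your argument for condition~(\ref{cond_Gamma_S_Z}) contains a genuine gap: the claim that $\int_{\Gamma_B}\alpha_\text{prec}$ is close to $\int_{\Gamma_D}\beta$ ``by continuity in $(k_\text{sup},k_\text{inf})$'' confuses two unrelated curves. The curve $\Gamma_D$ is the component of $\Gamma_A$ in the bypass half-disc $\mathcal A$ (it lives in the region $y\geq y_\beta$), whereas $\Gamma_B$ is a piece of the dividing set $\Gamma_\text{prec}$ in the \emph{concave smoothing region} near $z=z_\text{prod}$ and small $y$. When $k_\text{sup}=k_\text{inf}=1$, $\Gamma_B$ does not degenerate to $\Gamma_D$; it degenerates to the corresponding piece of $\Gamma_\text{smooth}$, whose shape is governed entirely by the smoothing function $l_\text{inf}$. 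In particular $\int_{\Gamma_B}\alpha$ has nothing to do with $\int_{\Gamma_D}\beta$.

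The paper obtains condition~(\ref{cond_Gamma_S_Z}) by an explicit computation in terms of $l_\text{inf}$: one finds
\[
\int_{\Gamma_B}\alpha=\int_{\frac{\pi}{2}+\frac{\lambda_B}{2}}^{\pi-\frac{\lambda_B}{2}} \frac{1}{\sin^3(x)}\cdot\frac{1}{l''_\text{inf}\big((l'_\text{inf})^{-1}(\tan x)\big)}\,\d x,
\]
and the $z$-component of $p_Z(\Gamma_B)'$ is $-\sin^{-2}(x)\,l_\text{inf}\big((l'_\text{inf})^{-1}(\tan x)\big)$. Thus one must \emph{choose} $l_\text{inf}$ with $\|l_\text{inf}\|_{\mathcal C^0}$ small and $l''_\text{inf}\circ(l'_\text{inf})^{-1}$ large on the relevant interval; only afterwards is $k_\text{inf}$ chosen small. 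Since you treat the bypass (and hence $l_\text{inf}$) as fixed from the outset and never identify $l_\text{inf}$ as the parameter controlling~(\ref{cond_Gamma_S_Z}), your scheme cannot force $\left|\int_{\Gamma_B}\alpha\right|<\tau$ or $d_{\mathcal C^1}(p_Z(\Gamma_B),\gamma_1)<\epsilon_Z/2$. (A smaller point: condition~(\ref{cond_delta_i}) requires \emph{both} $\delta_0,\delta_1$ increasing and \emph{both} $\delta^0,\delta^1$ decreasing, not the mixed pattern you state; and the region governed by equation~(\ref{eq_R_inf}) is $V_-$, not $V_+$.)
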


The rest of this section is devoted to the proof of Proposition \ref{proposition_existence_pre_convexe}. Let $\eta<\frac{\lambda}{4}$. Choose $l_\t{sup}$ and $k_\t{sup}$ such that $\eta(k_\t{sup})\geq\eta$ and $\Vert k_\t{sup}-1\Vert<\epsilon_\t{sup}$.
\begin{lemma}
If $k_\t{sup}$ is close to $1$ and $z_\t{max}$ small enough, then $p_R(\gamma_i)$ and $p_R(\gamma^i)$ satisfy condition (\ref{cond_delta_i}).
\end{lemma}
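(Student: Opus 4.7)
The strategy is to integrate the Reeb field $R_{\alpha_\t{prec}}$ from each arc $\gamma_i$, $\gamma^i$ up to its first hit of $S_R$, exploiting the fact that $R_{\alpha_\t{prec}}$ preserves the $x$-coordinate throughout the slab $\Omega = I_b\times[0,y_\t{std}]\times I_\t{max}$ where the arcs live, and then invoke continuous dependence of ODE solutions on $k_\t{sup}$. Inside $\Omega$, the contact form $\alpha_\t{prec}$ equals $\sin(x)\d y+\cos(x)\d z$ outside the small neighbourhoods $V_+$ (where $k_\t{sup}\neq 1$) and $V_-$ (where $k_\t{inf}\neq 1$). By the prescription of the pre-convex perturbation, $k_\t{inf}$ depends only on $(y,z)$ on the slice $x\in[\pi/4,3\pi/4]$, and by $\pi$-periodicity the same holds on the other relevant $x$-slices; a direct computation analogous to (\ref{eq_R_inf}) then yields $R_x\equiv 0$ on these slices, so the Reeb orbit of $\gamma_1(s)=(\pi-s,0,z_\t{prod})$ stays at constant $x=\pi-s$ all the way to $S_R$, the $(y,z)$-motion being an explicit planar trajectory. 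The three remaining arcs are handled in the same way using the $\sigma$- and $\tau$-symmetries (\ref{eq_sigma})--(\ref{eq_tau}) and the $\pi$-periodicity of $k_\t{sup}$ and $k_\t{inf}$.

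Next I verify the monotonicity and the crossing of $r=3\pi/2$. In the reference situation $k_\t{sup}\equiv 1$, the map $s\mapsto p_R(\gamma_i(s))$ is a smooth graph of $z$ over the $r$-coordinate of $S_R$, and differentiating the explicit $(y,z)$-trajectory gives the sign of the slope prescribed by condition (\ref{cond_delta_i}): increasing for the $\delta_i$ family, decreasing for the $\delta^i$ family. Moreover, each arc $\gamma_i$ (resp.~$\gamma^i$) contains a point whose $R_{\alpha_\t{prec}}$-orbit meets the trace of $\Gamma_D^\pm$ in $S_R$, because the bypass foliation $\beta$ is $\sin(r)\d\theta$ on $\mathcal U$ and $\Gamma_D$ sits at $r=3\pi/2$ by construction; hence the graph crosses the line $r=3\pi/2$ transversely. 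Both ``being a $C^1$-monotone graph'' and ``crossing $r=3\pi/2$ transversely'' are open conditions in the $C^1$-topology. Therefore, once $\|k_\t{sup}-1\|_{C^1}$ is small enough, $R_{\alpha_\t{prec}}$ is $C^1$-close to its $k_\t{sup}\equiv 1$ counterpart on the compact region of $\Omega$ swept by the orbits of interest, and continuous dependence of ODE solutions on the vector field gives $p_R(\gamma_i)$ that is $C^1$-close to the explicit one, so condition (\ref{cond_delta_i}) persists.

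The main obstacle is to bound the flight time to $S_R$ uniformly along each arc. Near the endpoints of $\gamma_1$ and $\gamma^0$ where $\sin(x)\to 0$, the unperturbed orbit either fails to reach $\{y=y_\t{std}\}$ at all or reaches it with $z\notin I_\t{prod}$, and a naive perturbation estimate blows up. This is precisely where the smallness of $z_\t{max}$ is used: since the starting value of $z$ is $\pm z_\t{prod}$ and $\dot z=\cos(x)$ along the orbit (up to the small $k_\t{inf}$ correction), only a compact sub-arc of each $\gamma_i$ produces a first crossing of $S_R$ with $z\in I_\t{prod}$. On this sub-arc, $\sin(x)$ is bounded below by a positive constant, so by Lemma \ref{lemme_lambda} the flight time is uniformly bounded, and the perturbative argument of the previous paragraph goes through uniformly, completing the proof.
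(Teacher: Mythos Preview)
Your central claim in the first paragraph is incorrect: you write that ``a direct computation analogous to (\ref{eq_R_inf}) then yields $R_x\equiv 0$,'' but equation (\ref{eq_R_inf}) itself displays the $x$-component of the Reeb field as $-\frac{1}{k_\t{inf}}\frac{\partial k_\t{inf}}{\partial z}\sin(x)$, which is nonzero throughout $V_-$. That $k_\t{inf}$ is independent of $x$ makes the contact form invariant under $x$-translation; it does not kill the $x$-component of the Reeb field. So the orbits of $\alpha_\t{prec}$ do not stay at constant $x$, and the explicit description of $p_R(\gamma_1)$ you rely on is not available.

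There is a deeper geometric issue. The paper carries out the computation in the tubular coordinates $(r,\theta,z)$ around $\Gamma_D$, where the unperturbed form is $\sin(r)\d\theta+\cos(r)\d z$ and (crucially) $k_\t{sup}$ is independent of $r$; there $R_r=0$ genuinely holds, and the image on $S_\theta$ can be written down explicitly as $s\mapsto(2\pi-s,\theta,z_\t{prod}-\theta\cot s)$, from which the monotonicity is immediate. The smallness of $z_\t{max}$ forces the contributing chords to lie entirely in this tube. Your argument in the slab $\Omega$ with $(x,y,z)$-coordinates cannot substitute for this: for $\gamma^0$ and $\gamma^1$ the relevant Reeb chords wrap all the way around $\Gamma_D$, leaving $\Omega$ and changing their $x$-coordinate by roughly $\pi$, so neither the constant-$x$ picture nor the appeal to the symmetries $\sigma,\tau$ handles them. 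Finally, the paper uses the prescribed sign of $\frac{\partial k_\t{sup}}{\partial z}$ near $\Gamma_D^+$ to argue the crossing of $r=\frac{3\pi}{2}$ directly, rather than treating it as an open $C^1$ condition; since in the unperturbed case the crossing sits exactly on the boundary $z=z_\t{prod}$ of $I_\t{prod}$, a pure stability argument would not suffice there either.
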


\begin{proof}
For $z_\t{max}$ small, the Reeb chords of $\alpha$ that contribute to the map between $S_Z$ and $S_R$ are contained in the neighbourhood where $\alpha=\sin(r)\d\theta+\cos(r)\d z$ and $k_\t{sup}$ does not depend on $r$. In coordinates $(r,\theta,z)$, the image of $\gamma^1$ on $S_\theta$ is 
\[s\longmapsto\left(2\pi-s,\theta,-\frac{\cos(s)}{\sin(s)}\theta+z_\t{prod}\right).\]
For $z_\t{max}$ small enough, this curve in a decreasing graph in $z$ and contains $(\frac{3\pi}{2},\theta,z_\t{prod})$. 
Consider the perturbation $\alpha_\t{sup}$ of $\alpha$ associated to $k_\t{sup}$. As
\[R_\alpha=\frac{1}{k_\t{sup}(y,z)}\left(\begin{array}{c}
 -\frac{\partial k_\t{sup}}{\partial z}(y,z)\sin(x) \\
\sin(x) \\
k_\t{sup}(y,z)\cos(x)
\end{array}\right),\] 
we have $\frac{\partial k_\t{sup}}{\partial z} =0$ for $y\leq \frac{y_\t{std}}{2}$ and $\frac{\partial k_\t{sup}}{\partial z}>0$ for $z>\frac{y_\t{std}}{2}$ near $\Gamma_D^+$. Therefore, the curve $p_R(\gamma^1)$ intersects the segment $r=\frac{3\pi}{2}$. The proof is similar in the other cases.
\end{proof}
Let $p_R(\gamma_i)=\delta_i$ and $p_R(\gamma^i)=\delta^i$. The curves $\delta_i$ and $\delta^j$ intersect transversely in exactly one point and there exist $\epsilon_R$, $A_R$ and $\epsilon_Z$ satisfying conditions (\ref{cond_delta}) and~(\ref{cond_delta'}) for any small perturbation of $\alpha_\t{sup}$. Additionally, all $R_{\alpha_\t{sup}}$-orbits intersecting $\left\{\frac{\pi}{2}\right\}\times\left[0,\frac{3}{4}y_\t{std}\right]\times\{z_\t{prod}\}$ go out of the bypass. Then, there exists $V_{\Gamma_A}$ such that the Reeb vector field of all small perturbation of $\alpha_\t{sup}$ satisfies condition (\ref{conv_V_Gamma_A}). 
We now carefully choose $l_\t{inf}$. A parametrisation of $\Gamma_B$ is 
\[\begin{array}{ccc}
\left[\frac{\pi}{2}+\frac{\lambda_B}{2},\pi-\frac{\lambda_B}{2}\right]&\longrightarrow & \mathbb R^3\\
x &\longmapsto & \left(x,l_\t{inf}\left(\big(l'_\t{inf}\big)^{-1}\big(\tan(x)\big)\right),\big(l'_\t{inf}\big)^{-1}\big(\tan(x)\big)\right)
\end{array}\]
thus $p_Z(\Gamma_B)$ is parametrised by
\[x\longmapsto \left(x,0,\big(l'_\t{inf}\big)^{-1}\big(\tan(x)\big)-\cotan(x)\times l_\t{inf}\left(\big(l'_\t{inf}\big)^{-1}\big(\tan(x)\big)\right)\right) \]
and its derivative is
\[x\longmapsto \left(1,0,-\frac{1}{\sin^2(x)}\times l_\t{inf}\left(\big(l'_\t{inf}\big)^{-1}\big(\tan(x)\big)\right)\right).\]
Thus, the second half of condition (\ref{cond_Gamma_S_Z}) is satisfied for $l_\t{inf}$ small.
In addition, we have
\[\int_{\Gamma}\alpha=\int_{\frac{\pi}{2}+\frac{\lambda_B}{2}}^{\pi-\frac{\lambda_B}{2}} \frac{1}{\sin^3(x)}\times\frac{1}{l''_\t{inf}\big((l'_\t{inf})^{-1}(\tan(x))\big)}\d x.\]
Fix $C>0$ and $c>0$ we chose $l_\t{inf}$ such that $l_\t{inf}<c$ and $l''_\t{inf}\big((l'_\t{inf})^{-1}(y)\big)>C$ for all
\[y\in\left[\tan\left(\frac{\pi}{2}+\frac{\lambda_B}{2}\right),\tan\left(\pi-\frac{\lambda_B}{2}\right)\right].\]
For $C$ big enough and $c$ small enough, condition (\ref{cond_Gamma_S_Z}) is satisfied.
Additionally, such a function exists: it is the anti-derivative of a function $f :(0,1]\longrightarrow(-\infty,0]$ such that 
\begin{itemize}
 \item $f$ is increasing and $\left\vert\int_0^1 f\right\vert=c$;
 \item for all $k\in\bb N$, $f^{(k)}(1)=0$ and $\lim_{x\to 0}\vert f^{(k)}(x)\vert=\infty$;
 \item $f'\big((f)^{-1}(y)\big)>C$ for all $y\in\left[\tan\left(\frac{\pi}{2}+\frac{\lambda_B}{2}\right),\tan\left(\pi-\frac{\lambda_B}{2}\right)\right]$.
\end{itemize}
Finally, we choose $k_\t{inf}$ small enough. Condition \ref{cond_tau} derives from Lemmas \ref{epsilon_stab_retour_S_S_y} and \ref{lemme_B4} and Remark \ref{remark_tau}. This concludes the proof of Proposition \ref{proposition_existence_pre_convexe}.

\subsection{Convexification coordinates}\label{subsection_convexification_coordinates}
In the two following sections, we describe the actual construction of an hyperbolic bypass.
\begin{construction}
There exists a pre-convex bypass $(\mathcal B,\alpha, k_\t{sup}, k_\t{inf})$ adapted to $\tau$ and $\lambda$ (Proposition \ref{proposition_existence_pre_convexe}). For technical reasons, we also consider a second pre-convex bypass $(\mathcal B',\alpha,k_\t{sup},k_\t{inf})$ extending $\mathcal B$ and such that, $d_{\mathcal C^0}(\partial \mathcal B',\mathcal B)>0 $ for all $\vert z\vert \leq z_\text{prod}+y_\text{std}$ and $0\leq y \leq y_\text{std}$.  We denote by $p'_Z$ and $p'_R$ the projections in $\mathcal B'$. Without loss of generality $\epsilon_{Z}<\nu$ and
$p_Z(\dom({p_R}_{\vert\Gamma_B})\subset\dom({p'_R}_{\vert S_Z}) $.
\end{construction}
We are now in position to apply the convexification process described in \cite{CGHH10}. 
Recall that $S'$  is the boundary  of $M'=M\cup\mathcal B$, $k_\t{inf}=1-\rho z$ on $[y^-_\rho, y^+_\rho]$,
\[y_\t{smooth}<y^-_\rho< y^+_\rho<\frac{2}{5}y_\t{std},\]
 and the upper boundary of $V_{\Gamma_A}$ is contained in $y=\frac{3}{4}y_\t{std}$. Choose 
\[ y^-_\rho<y^-< y^+< y^+_\rho.\]
Our first step is to obtain nice coordinates near $\Gamma_\t{prec}\cap \mathcal B^{\leq y_\t{std}}$. We construct these coordinates near the connected component $\Gamma_0$ contained in $\left[\frac{\pi}{2},\pi\right]\times [0,y_\text{std}]\times [0,z_\text{std}]$ and extend them using the symmetries of $\mathcal B$.

\begin{fact}\label{epsilon'_stab}
There exists $\epsilon_\t{prec}$ such that for all $\epsilon_\t{prec}$-perturbation of $S'$, the contact form $\alpha_\t{prec}$ is adapted to the new boundary for $y\geq \frac{2}{3}y_\t{std}$.
\end{fact}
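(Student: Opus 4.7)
The plan is to deduce this as a straightforward openness statement from Proposition~\ref{decoupage_preconvexe}. That proposition already asserts that on the unperturbed pre-convex boundary $S'$, the form $\alpha_{\t{prec}}$ is adapted in the whole region $y>\frac{1}{2}y_{\t{std}}$: the tangency locus of $R_{\t{prec}}$ with $S'$ is the smooth dividing curve $\Gamma_{\t{prec}}$, and $R_{\t{prec}}$ points toward $S_+$ along it. Since $\frac{2}{3}y_{\t{std}}>\frac{1}{2}y_{\t{std}}$, the target region $\{y\geq\frac{2}{3}y_{\t{std}}\}\cap S'$ sits compactly inside the region where the adapted condition already holds, leaving a buffer strip of width $\frac{1}{6}y_{\t{std}}$ in which to absorb the perturbation.

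The argument then has two ingredients. The first is persistence of the tangency locus: the tangency set of $R_{\t{prec}}$ with an embedded surface is cut out by the scalar equation $F(p)=\langle R_{\t{prec}}(p),n(p)\rangle=0$. In the proof of Proposition~\ref{decoupage_preconvexe}, $\Gamma_{\t{prec}}$ is exhibited as the graph $y\mapsto(x(y),y,z(y))$ determined by $k_{\t{sup}}(y,z)\cos(x)-L'_{\t{sup}}(y)\sin(x)=0$, and the $x$-derivative of this expression is nonzero along $\Gamma_{\t{prec}}$ in the region of interest. Hence by the implicit function theorem with parameters, every sufficiently $C^1$-small perturbation $\tilde S'$ of $S'$ produces a smooth tangency curve $\tilde\Gamma\subset\tilde S'$ close to $\Gamma_{\t{prec}}$, and the transversality estimate $x'(y)+\partial k_{\t{sup}}/\partial z\neq 0$ from Proposition~\ref{decoupage_preconvexe} guarantees that $R_{\t{prec}}$ remains transverse to $\tilde\Gamma$ within $\tilde S'$.

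The second ingredient is persistence of the orientation: the condition that $R_{\t{prec}}$ points toward $S_+$ along $\Gamma_{\t{prec}}$ amounts to the sign of the Lie derivative $R_{\t{prec}}\cdot F$ restricted to $S'$ being strictly positive on the $S_+$ side of $\Gamma_{\t{prec}}$. This is open in the $C^1$-topology on embeddings, and the sign is bounded away from zero on the compact subset $\{y\geq\frac{2}{3}y_{\t{std}}\}\cap S'$, so it is preserved under a small perturbation. Applying Lemma~\ref{lemme_convexite_Reeb} to $\tilde S'$ then identifies $\tilde\Gamma$ as a dividing set of the perturbed characteristic foliation and proves that $\alpha_{\t{prec}}$ is adapted to $\tilde S'$ in the target region. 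Taking $\epsilon_{\t{prec}}$ to be the smaller of the thresholds coming from these two openness steps completes the argument.

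The only point requiring care is the uniformity of the openness estimates over the compact region $\{y\geq\frac{2}{3}y_{\t{std}}\}\cap S'$, which is immediate from the explicit formula for $R_{\t{prec}}$ in~(\ref{eq_R_inf}) and the analogous formula in the upper region. There is no serious technical obstacle: the whole statement is essentially ``the adapted condition is $C^1$-open'', backed up by the nondegenerate transversality already established in Proposition~\ref{decoupage_preconvexe}.
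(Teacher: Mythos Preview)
The paper states this as a Fact without proof, evidently regarding it as an immediate consequence of the openness of the adapted condition. Your argument correctly supplies the details the paper omits: adaptedness is $C^1$-open in the surface once the tangency locus is cut out transversally, and Proposition~\ref{decoupage_preconvexe} already establishes that transversality (via the computation $x'(y)+\partial k_{\t{sup}}/\partial z\neq 0$) on the whole region $y>\frac{1}{2}y_{\t{std}}$, which compactly contains $y\geq\frac{2}{3}y_{\t{std}}$.
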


\begin{proposition}\label{coordonnees_t_u_tau}
There exists a surface $\Sigma$ with coordinates $(u,v)\in[u_\t{min},u_\t{max}]\times[-v_\t{max},v_\t{max}]$ such that
\begin{enumerate}
  \item\label{c_1} $\Sigma$ is transverse to $R_{\alpha_\t{prec}}$, $\Sigma\cap S'=\Gamma_0$ and the intersection is transverse;
  \item\label{c_2} $\sigma(i_\Sigma(u,v))=i_\Sigma(u_\t{max}+u_\t{min}-u,v)$  where $i_\Sigma : \Sigma\to\mathbb R^3$ is the inclusion and  $\sigma$ is defined by equation (\ref{eq_sigma});
  \item\label{c_3} $\frac{\partial }{\partial u}=\frac{\partial }{\partial y}$ and $\frac{\partial }{\partial v}=\frac{\partial }{\partial z}$ for $y$ close to $[y^-,y^+]$;
  \item\label{c_4} $\alpha_\t{prec}=\d t+(1-\rho z_\t{prod}-\rho v)\d u$ in the flow-box coordinates $(t,u,v)$ associated to $\Sigma$. 
\end{enumerate} 
\end{proposition}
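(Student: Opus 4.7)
The plan is to construct $\Sigma$ first in the middle strip $y\in[y^-,y^+]$, where the pre-convex form simplifies, then extend it by $\sigma$-symmetry, and finally adjust the transverse coordinate by a Darboux-type procedure to obtain the normal form.

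\emph{Middle region.} In the strip $y\in[y^-,y^+]$ one has $k_\text{sup}\equiv 1$ (since $y<y_\text{std}/2$) and $f_\text{inf}(y)=1$, hence near $\Gamma_D^+$,
\[\alpha_\text{prec}=(1-\rho z)\sin x\,dy+k'_\text{inf}(x,y,z)\cos x\,dz.\]
On the vertical face $\{z=z_\text{prod}\}$ of $S'$ (the relevant face here, since $y>y_\text{smooth}$), the tangency condition $R_z=0$ forces $\cos x=0$, so $\Gamma_0\cap\{y\in[y^-,y^+]\}=\{(\pi/2,y,z_\text{prod}):y\in[y^-,y^+]\}$. I take $\Sigma$ in a neighbourhood of this segment to be the piece of the plane $\{x=\pi/2\}$ parametrised by $u=y$ and $v=z-z_\text{prod}$. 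A direct computation at $x=\pi/2$ using $\iota_R d\alpha_\text{prec}=0$ and $\alpha_\text{prec}(R)=1$ yields
\[R_{\alpha_\text{prec}}=\left(\frac{\rho}{(1-\rho z)\,k'_\text{inf}},\ \frac{1}{1-\rho z},\ 0\right),\]
whose nonzero $\partial_x$-component makes $\Sigma$ transverse to $R_{\alpha_\text{prec}}$, and $i_\Sigma^*\alpha_\text{prec}=(1-\rho(z_\text{prod}+v))\,du=(1-\rho z_\text{prod}-\rho v)\,du$. Thus conditions (\ref{c_1}), (\ref{c_3}), (\ref{c_4}) hold locally.

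\emph{Extension.} I extend $\Gamma_0$ beyond the middle strip by following the actual dividing set of $S'$ (towards $y=0$ along the smoothed concave corner via the parametrisation derived in the proof of Proposition \ref{decoupage_preconvexe}, and towards the other end by applying $\sigma$). Since $R_{\alpha_\text{prec}}$ is tangent to $S'$ along $\Gamma_0$ but transverse to $\Gamma_0$ within $S'$ (Proposition \ref{decoupage_preconvexe}), any surface containing $T\Gamma_0$ together with a direction transverse to $S'$ is automatically transverse to $R_{\alpha_\text{prec}}$ near $\Gamma_0$. I extend $\Sigma$ as such a surface, performing the construction on the half with $u\leq(u_\text{min}+u_\text{max})/2$ and copying by $\sigma$; this gives (\ref{c_2}).

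\emph{Normal form.} Let $\beta:=i_\Sigma^*\alpha_\text{prec}$. This is a 1-form on the 2-surface $\Sigma$ with $d\beta$ nowhere zero (as $\alpha_\text{prec}$ is contact and $\Sigma$ is transverse to $R_{\alpha_\text{prec}}$), and which already equals $(1-\rho z_\text{prod}-\rho v)\,du$ on the middle strip. I integrate the line field $\ker\beta\subset T\Sigma$ starting from $\Gamma_0$ to obtain a foliation whose leaves become the $v$-curves of the new coordinates; in the middle strip these are already the horizontal lines $u=\text{const}$, so the middle-strip coordinates are untouched. Along any leaf $\beta$ has no $dv$-component, hence $\beta=g(u,v)\,du$. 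Reparametrising $u$ on $\Gamma_0$ so that $g(u,0)=1-\rho z_\text{prod}$, the diffeomorphism $\tilde v(u,v)=(1-\rho z_\text{prod}-g(u,v))/\rho$, which is the identity on $\Gamma_0$ and on the middle strip, brings $\beta$ to $(1-\rho z_\text{prod}-\rho\tilde v)\,du$. Carrying everything out $\sigma$-equivariantly preserves (\ref{c_2}). In the associated flow-box coordinates $(t,u,v)$, the identity $\iota_R d\alpha_\text{prec}=0$ forces $\alpha_\text{prec}=dt+i_\Sigma^*\alpha_\text{prec}=dt+(1-\rho z_\text{prod}-\rho v)\,du$, yielding (\ref{c_4}).

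\emph{Main obstacle.} The technical heart is the global coherence of the extension: ensuring $\Sigma$ stays transverse to $R_{\alpha_\text{prec}}$ throughout (including across the smoothed corner and the fixed locus of $\sigma$, where the Reeb vector field lacks the simple formula of the middle strip), that $\ker\beta$ is a smooth nowhere-singular line field on all of $\Sigma$ (equivalently, $\Sigma$ is nowhere tangent to $\xi_{\alpha_\text{prec}}$), and that each reparametrisation is compatible with the $\sigma$-symmetry.
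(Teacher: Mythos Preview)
Your proof is correct and follows essentially the same strategy as the paper: build $\Sigma$ through $\Gamma_0$ transverse to $R_{\alpha_\text{prec}}$ with the required $\sigma$-symmetry and the explicit middle-strip model, then normalise the pulled-back form $i_\Sigma^*\alpha_\text{prec}$. The only difference is in this last step: the paper first arranges $i_\Sigma^*\alpha_\text{prec}=g(u,v)\,du$ with $g(u,0)=1-\rho z_\text{prod}$ and then invokes Moser's trick (using that the two candidate contact forms $dt+g\,du$ and $dt+(1-\rho z_\text{prod}-\rho v)\,du$ share the same Reeb field $\partial_t$) to obtain the normalising diffeomorphism, whereas you achieve the same normalisation by the explicit change of transverse coordinate $\tilde v=(1-\rho z_\text{prod}-g(u,v))/\rho$. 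Your version is more elementary and makes the fixed locus (on $\Gamma_0$ and on the middle strip) visible by direct computation; the paper's Moser argument is shorter and makes the $\sigma$-equivariance slightly easier to package, but both are doing the same 2-dimensional Darboux normalisation on $\Sigma$.
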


We denote by $y_\Sigma $ the coordinate such that  $\left(\frac{\pi}{2},y_\Sigma\right)=i_\Sigma(u_\t{max},0)$.
\begin{proof}
Choose a surface $\Sigma$ satisfying conditions (\ref{c_1}), (\ref{c_2}) and (\ref{c_3}) and such that $i_\Sigma^ *\alpha_\t{prec}=g(u,v)\d v$ and $g(u,0)=1-\rho z_\t{prod}$. Then $g(u,v)=1-\rho z_\t{prod}-\rho v$ for $y$ close to $[y^-_\rho,y^+_\rho]$ and $\alpha_\t{prec}=g(u,v)\d u+\d t$. Moser's trick provides us with a diffeomorphism $\phi_1$ such that $\phi_1=\Id$ along $\Gamma_\t{prec}$ and for $y$ close to $[y^-,y^+]$, $\phi_1$ preserves $\Sigma$ and $\phi_1^ *\alpha_\t{prec}=\d t+(1-\rho z_\t{prod}-\rho v)\d u$ as the two Reeb vector fields coincide. In addition $\phi_1\circ\sigma=\sigma\circ\phi_1$. Thus ${\phi_1}_{\vert\Sigma}$  has the desired properties.
\end{proof}

Let $u_\rho^\pm$ and $u^\pm$ be the $u$-coordinates associated to the intersection points between $\Gamma_0$ and $S_{y_\rho^\pm}$ or $S_{y^\pm}$.
Let $\psi=(\psi_x,\psi_y,\psi_z)$ be the diffeomorphism associated to the change of coordinates. Let $S_\mathcal{B}=\psi^{-1}(S')$. Without loss of generality $\psi:\mathcal{U}\to\mathcal V$\label{mathcal_V} and\footnote{See Lemma \ref{lemme_cordes_Reeb} for the definition of $ V_\t{smooth}$.}
\begin{gather} 
 \mathcal U=I_t\times I_u\times I_v=[-t_\t{max},t_\t{max}]\times[u_\t{min},u_\t{max}]\times[-v_\t{max},v_\t{max}],\\\label{eq_V}
\mathcal V\subset\Big(\left[\frac{\pi}{2}-\frac{\lambda}{8},\pi+\frac{\lambda}{8}\right]\times\left[0,\frac{3}{4}y_\t{std}\right]\times I_\t{max}\Big)\cap V_\t{smooth}\cap\mathcal B',\\
  \alpha=(1-\rho z)\sin(x)\d y+\cos(x)\d z \t{ on } \psi\left(I_t\times[u^-,u^+]\times I_v\right).
\end{gather}  

\begin{fact}\label{lemme_phi_u} 
For all $(t,u,v)\in I_t\times [u^-,u^+]\times I_v$, it holds that
\begin{align*}
\sigma\big(\psi(t,u,v)\big)&=\psi\big(-t,u_\t{max}+u_\t{min}-u,v\big),\\
 \psi(0,u,v)&=\left(\frac{\pi}{2},y_\Sigma-u_\t{max}+u,z_\t{prod}+v\right),\\
 \psi(t,u,v)&=\psi(t,0,v)+(0,y_\Sigma-u_\t{max}+u,0),\\
 \psi(-t,u,v)&=\big(\pi-\psi_x(t,u,v),2\psi_y(0,u,v)-\psi_y(t,u,v), \psi_z(t,u,v)\big).
\end{align*} 
\end{fact}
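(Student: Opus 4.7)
The plan is to read off each identity from the flow-box characterisation of $\psi$ (condition~(\ref{c_4})) together with the explicit form $\alpha=(1-\rho z)\sin(x)\d y+\cos(x)\d z$ on the strip $\psi(I_t\times[u^-,u^+]\times I_v)$ and the symmetries built into Proposition~\ref{coordonnees_t_u_tau}. The four identities split naturally into two blocks: (2) and~(3) depend only on the $y$-translation invariance of $\alpha$ on the strip, while (1) and~(4) each express a Reeb-reversing reflection symmetry, the first involving the rotation $\sigma$ and the second the elementary reflection $(x,y,z)\mapsto(\pi-x,-y,z)$ followed by a translation in~$y$.

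For identity~(2), the flow-box definition yields $\psi(0,u,v)=i_\Sigma(u,v)$, and condition~(\ref{c_3}) says $\partial_u i_\Sigma=\partial_y$, $\partial_v i_\Sigma=\partial_z$ near the strip. Integrating from the anchor point $i_\Sigma(u_\t{max},0)=(\pi/2,y_\Sigma,z_\t{prod})$ (the $z$-coordinate equals $z_\t{prod}$ because $\Gamma_A^+\subset\{z=z_\t{prod}\}$ on the positive side of the bypass) gives the claimed formula. For identity~(3), the form $\alpha$ is manifestly $y$-translation invariant on the strip, so $R_{\alpha_\t{prec}}$ commutes with $\partial_y$; combining this with identity~(2), which shows that $u\mapsto\psi(0,u,v)$ is itself a $y$-translate of $\psi(0,0,v)$, the flow $\psi(t,u,\cdot)$ is the corresponding translate of $\psi(t,0,\cdot)$.

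For identity~(1), the crucial observation is $\sigma^*\alpha_\t{prec}=-\alpha_\t{prec}$, which follows from the computations $\sigma^*\sin(x)=-\cos(x)$, $\sigma^*\cos(x)=-\sin(x)$, $\sigma^*\d y=\d z$, $\sigma^*\d z=\d y$, combined with the definitions $k'_\t{sup}=k_\t{sup}\circ\sigma$ and $k'_\t{inf}=k_\t{inf}\circ\sigma$ which swap the two prefactors of $\alpha_\t{prec}$ under $\sigma^*$. Hence $d\sigma\cdot R_{\alpha_\t{prec}}=-R_{\alpha_\t{prec}}$, and condition~(\ref{c_2}) then sends the Reeb orbit through $i_\Sigma(u,v)$ at time $t$ to the Reeb orbit through $i_\Sigma(u_\t{max}+u_\t{min}-u,v)$ at time $-t$, which is identity~(1). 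For identity~(4), I would introduce the reflection $\mathcal R_{u,v}\colon(x,y,z)\mapsto(\pi-x,\,2\psi_y(0,u,v)-y,\,z)$; this map fixes $\psi(0,u,v)=(\pi/2,\psi_y(0,u,v),z_\t{prod}+v)$ (the $x$-coordinate being $\pi/2$ by identity~(2)) and satisfies $\mathcal R_{u,v}^*\alpha=-\alpha$ on the strip by direct computation using the $y$-independence of $\alpha$. Hence $\mathcal R_{u,v}$ sends the Reeb orbit through $\psi(0,u,v)$ to itself with reversed time, giving $\mathcal R_{u,v}(\psi(t,u,v))=\psi(-t,u,v)$; reading off the three coordinates yields identity~(4).

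There is no serious conceptual obstacle here; the care needed is purely bookkeeping, tracking the additive constants $y_\Sigma$, $u_\t{max}$, $u_\t{min}$, $z_\t{prod}$, which are pinned down by the anchor condition at $(u,v)=(u_\t{max},0)$ and by the $\sigma$-equivariance of $\Sigma$. Consistency at $t=0$, where both sides of identities~(1), (3) and~(4) reduce to identity~(2), provides a useful sanity check throughout.
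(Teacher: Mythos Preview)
The paper states Fact~\ref{lemme_phi_u} without proof, so there is no argument to compare against; your proposal correctly supplies the verification the paper omits. The four identities are indeed immediate consequences of the flow-box construction in Proposition~\ref{coordonnees_t_u_tau} together with the two Reeb-reversing symmetries you identify: the global involution $\sigma$ (with $\sigma^*\alpha_\t{prec}=-\alpha_\t{prec}$, which you check correctly from $k'_\t{sup}=k_\t{sup}\circ\sigma$, $k'_\t{inf}=k_\t{inf}\circ\sigma$ and $\sigma^2=\Id$) and, for identity~(4), the reflection $\mathcal R_{u,v}$ acting on the strip where $\alpha=(1-\rho z)\sin(x)\d y+\cos(x)\d z$.

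One small remark: in identity~(3) the reference value $u=0$ need not lie in $[u^-,u^+]$, so $\psi(t,0,v)$ should be read as a formal extrapolation; the content of the identity is that $u\mapsto\psi(t,u,v)$ is a $y$-translation on $[u^-,u^+]$, which is exactly what your $y$-invariance argument proves. Consistency at $t=0$ with identity~(2) forces the additive constant (and in fact shows the paper is implicitly using $y_\Sigma=u_\t{max}$, a harmless normalisation of the $u$-coordinate).
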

We rewrite our objects and conditions in coordinates $(t,u,v)$.
We subdivide $[u_\t{min},u_\t{max}]$ into $u_\t{min}<u_1<u_2<u_3<u_4<u_5<u_\t{max}$ (see Figure \ref{figure_coordonnees_t_u_tau}) such that 
\begin{gather}
 \label{eq1}\psi(I_t\times[u_5,u_\t{max}]\times I_v)\subset V_{\Gamma_A}\\
 \label{eq2}\left[\frac{\pi}{2},\frac{\pi}{2}+\frac{\lambda}{2}\right]\subset \psi_x(I_t\times[u_4,u_\t{max}]\times I_v),\\
 \label{eq3}\left[\frac{\pi}{2}+\frac{3\lambda}{4},\pi-\frac{3\lambda}{4}\right]\subset\psi_x(I_t\times[u_2,u_3]\times I_v),\\
 \left[\pi-\frac{\lambda}{2},\pi\right]\subset \psi_x(I_t\times[u_\t{min},u_1]\times I_v).
\end{gather}
\begin{figure}[here]
\begin{center}
 \includegraphics{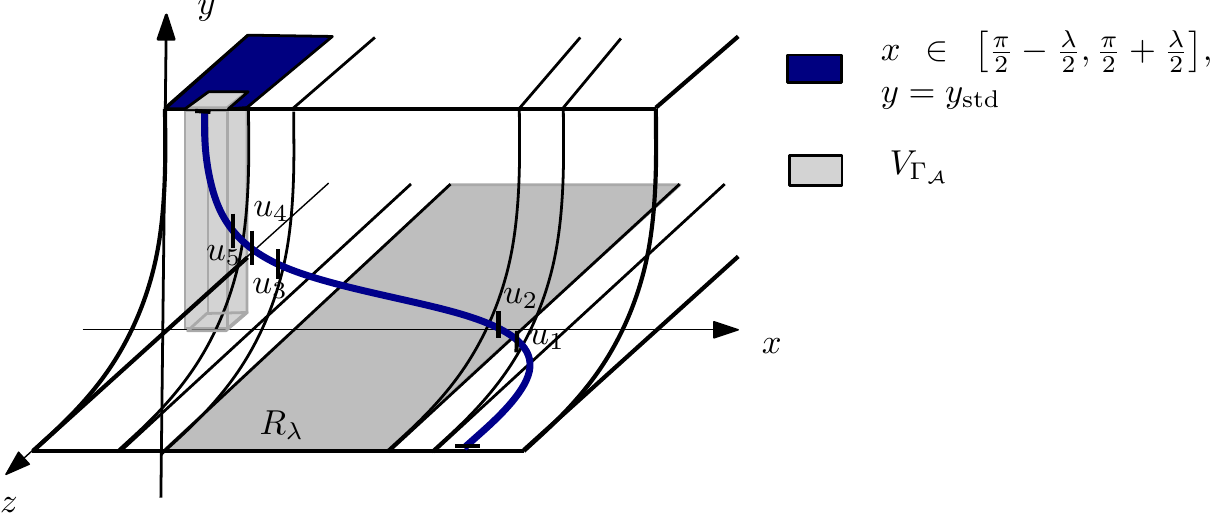}
\end{center}
 \caption{The subdivision $u_i$}\label{figure_coordonnees_t_u_tau}
\end{figure}

\begin{lemma}\label{lemme_S_B}
Without loss of generality we may assume that (see Figure \ref{figure_S_h})
\begin{enumerate}
  \item $S_\mathcal B$ is a smooth surface contained in $\mathring{I_t}\times I_u\times [0,v_\t{max}]$ and
  \begin{enumerate}
    \item its restriction to the plane $u=\t{cst}$ is a smooth curve composed of two graphs containing $(0,0)$, one positive and increasing and the other negative and decreasing on $(0,v_\t{max}]$;
    \item $S_\mathcal B$ is $u$-invariant and invariant by the mirror symmetry along the plane $t=0$ for $u\in[u^-,u^+]$;
  \end{enumerate}
  \item\label{c_Reeb_chord} there is no Reeb chord of $\partial \mathcal V$ outside $\mathcal V$;
  \item $C_\psi=\Vert \psi(t,u,v)-\psi(0,u,0) \Vert_{\infty} +\Vert \ d\psi(t,u,v)-\ d\psi(0,u,0) \Vert_{\infty}\ll 1$:
 \end{enumerate}
\end{lemma}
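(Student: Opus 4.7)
The plan is to establish the three assertions by successively refining the flow-box coordinates of Proposition \ref{coordonnees_t_u_tau}: first arranging the geometric shape of $S_\mathcal{B}$ near the dividing curve, then controlling Reeb chords of $\partial\mathcal{V}$ by shrinking the transverse extent of $\mathcal{V}$, and finally obtaining the $\mathcal{C}^1$-smallness (3) by shrinking $t_\text{max}$ and $v_\text{max}$. Throughout, the extension $\mathcal{B}'\supsetneq\mathcal{B}$ ensures that $\mathcal{V}$ sits well inside a larger pre-convex region, so these shrinkings never push $\mathcal{V}$ out of the working domain.

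For (1), the inclusion $S_\mathcal{B}\subset\mathring{I_t}\times I_u\times[0,v_\text{max}]$ is obtained by orienting $v$ so that the pre-convex boundary $S'$ lies on one side of $\Gamma_0$ in $\mathcal{V}$, consistent with $R_{\alpha_\text{prec}}$ pointing toward $S_-$ for $y<y_\text{std}/2$ (Proposition \ref{decoupage_preconvexe}). The tangent plane of $S_\mathcal{B}$ along $\{t=0,\,v=0\}$ contains $\partial_t$ because $S'$ is Reeb-tangent along $\Gamma_0$ and Reeb equals $\partial_t$ in flow-box coordinates. By choosing the smoothing profiles $l_\text{sup},l_\text{inf}$ so that $S'$ has strictly positive normal curvature in the $v$-direction along $\Gamma_0$ — a property preserved by the pre-convex perturbation for small enough $\epsilon_\text{sup},\epsilon_\text{inf}$ — a one-variable analysis on each $u$-slice yields $S_\mathcal{B}\cap\{u=\text{cst}\}$ as the required ``$\vee$''-shape: two graphs $t=g_\pm(v,u)$ with $g_\pm(0,u)=0$, $g_+$ positive and increasing, $g_-$ negative and decreasing in $v$. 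The $u$-invariance and mirror symmetry in (1)(b) then follow directly from the third and fourth identities of Fact \ref{lemme_phi_u}, which express $\psi$ as a $u$-translation and as $\sigma$-equivariant on $I_t\times[u^-,u^+]\times I_v$, so both properties are inherited by $S_\mathcal{B}=\psi^{-1}(S')$.

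For (2), the flow-box structure makes $R_{\alpha_\text{prec}}=\partial_t$ inside $\mathcal{U}$, so any Reeb orbit in $\mathcal{U}$ is a straight horizontal segment and can leave $\mathcal{U}$ only through the faces $t=\pm t_\text{max}$. A Reeb chord of $\partial\mathcal{V}$ whose interior escapes $\mathcal{V}$ must therefore exit and re-enter through these faces. Since (\ref{eq_V}) forces $\mathcal{V}\subset V_\text{smooth}$, we may further shrink the transverse radius of $V_\text{smooth}$ so that Lemma \ref{lemme_cordes_Reeb} applies: Reeb chords joining two distinct components of $V_\text{smooth}$ are confined to $S_{\lambda/4,2k}\times[0,y_\text{std}]$ with endpoints in different components. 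Since $\mathcal{V}$ is a single component, no Reeb chord of $\partial\mathcal{V}$ can exit and return to $\partial\mathcal{V}$, giving (2). For (3), Fact \ref{lemme_phi_u} shows $\psi|_{t=0}$ is affine, hence $\d\psi(0,u,v)=\d\psi(0,u,0)$ for every $(u,v)$; combined with smoothness of $\psi$ and compactness of $I_u$, both $\|\psi(t,u,v)-\psi(0,u,0)\|$ and $\|\d\psi(t,u,v)-\d\psi(0,u,0)\|$ tend uniformly to $0$ as $t_\text{max},v_\text{max}\to 0$, so we choose these parameters small enough. The main obstacle is coordinating the shrinkings so that properties (1) and (2) survive the final $\mathcal{C}^1$-reduction: shrinking $t_\text{max}$ narrows the Reeb range of $\mathcal{V}$, but $\mathcal{V}$ must still contain an honest tubular neighborhood of $\Gamma_0$ in $S'$, which is why we work inside $\mathcal{B}'$ and leave the transverse subdivision $u_1,\dots,u_5$ untouched.
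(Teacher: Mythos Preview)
Your treatment of parts (1) and (3) is fine and matches the paper's reasoning; the geometric shape of $S_{\mathcal B}$ follows from the tangency of $S'$ to $R_{\alpha_{\t{prec}}}=\partial_t$ along $\Gamma_0$ and the sign of the transversality elsewhere, and the $\mathcal C^1$-smallness is obtained by shrinking the box.

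The gap is in part (2). You invoke Lemma~\ref{lemme_cordes_Reeb}, but that lemma only constrains Reeb chords joining two \emph{distinct} connected components of $V_{\t{smooth}}$. A Reeb chord of $\partial\mathcal V$ that leaves $\mathcal V$ and returns has both endpoints on the \emph{same} component, so Lemma~\ref{lemme_cordes_Reeb} says nothing about it. Your conclusion ``since $\mathcal V$ is a single component, no Reeb chord of $\partial\mathcal V$ can exit and return'' is therefore a non sequitur.

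The paper's argument is genuinely different and supplies the missing idea. First one shows directly that there is no Reeb chord of the core curve $\Gamma_{\t{prec}}$ itself in $\mathcal B^{<\frac{3}{4}y_{\t{std}}}$: where $\alpha_{\t{prec}}=\sin(x)\d y+\cos(x)\d z$ one has $R_x=0$ and $\Gamma_{\t{prec}}$ meets each plane $x=\t{cst}$ in a single point, while near $x=\frac{\pi}{2}$ one checks $R_x>0$, $R_y>0$ against the monotonicity of the projection of $\Gamma_{\t{prec}}$ onto the $(y,x)$-plane. Then one runs a limiting argument: if every shrinking $\mathcal V_n$ of radius $<\frac{1}{n}$ admitted an exterior Reeb chord $\gamma_n$ of $\partial\mathcal V_n$, the endpoints would converge to $\Gamma_{\t{prec}}$, the periods are bounded above (Lemma~\ref{epsilon_stab_retour_S_S_y}) and bounded below by $t_{\t{max}}$, so a subsequence of $\gamma_n$ converges to a Reeb chord of $\Gamma_{\t{prec}}$, contradicting the first step. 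This compactness argument is what you need in place of the appeal to Lemma~\ref{lemme_cordes_Reeb}.
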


\begin{proof}
$S_\mathcal B$ is a smooth surface containing $\{0\}\times [u_\t{min},u_\t{max}]\times\{0\}$, tangent to $\Vect\left(\frac{\partial}{\partial u},\frac{\partial}{\partial t}\right)$ along this curve and transverse to $\frac{\partial}{\partial t}=R_{\alpha_\t{prec}}$ elsewhere. In addition, $\frac{\partial}{\partial t}$  is positively transverse to $S_\mathcal B$ for $t<0$ and negatively transverse for $t>0$ (see Figure \ref{graphe_negatif_positif}).
\begin{figure}[here]
\begin{center}
 \includegraphics{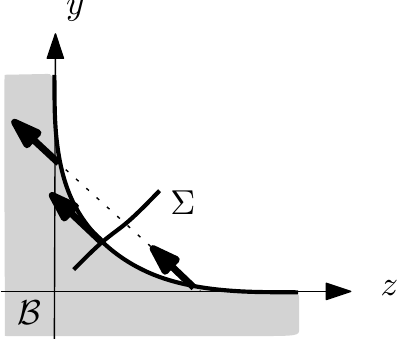}
\end{center}
 \caption{The Reeb vector field and $S_\mathcal{B}$}\label{graphe_negatif_positif}
\end{figure}

To prove condition (\ref{c_Reeb_chord}), we first note that there is no Reeb chord of $\Gamma_\t{prec}$ in $\mathcal B^{<\frac{3}{4}y_\t{std}}$. Indeed, in the set where $\alpha_\t{prec}=\sin(x)\d y+\cos(x)\d z$, we have $R_x=0$ and $\Gamma_\t{prec}$ intersects the planes $x=\t{cst}$ in one point. By symmetry, it remains to prove the result for $x$ close to $\frac{\pi}{2}$ and $z>0$. In this set, we have $R_x>0$, $R_y>0$. The projection of $\Gamma_\t{prec}$ on the plane $(y,x)$ is decreasing and there is no Reeb chord of $\Gamma_\t{prec}$ in $\mathcal B^{<y_\t{prec}}$.

If there exists a sequence $\big(\gamma_n\big)_{n\in\mathbb N^*}$ of Reeb chords of $\partial \mathcal V_n$ where the radius of $\mathcal V_n$ is smaller that $\frac{1}{n}$, then the endpoints of $\gamma_n$ converge to $\Gamma_\t{prec}$. In addition, the period of these chords is bounded (Lemma \ref{epsilon_stab_retour_S_S_y}) and bounded below by $t_\t{max}$ (associated to the maximal $t$-coordinate in $\mathcal V_1$). Thus $\gamma_n$ converges to a Reeb chord of $\Gamma_\t{prec}$. This leads to a contradiction and condition \ref{c_Reeb_chord} is proved.
\end{proof}

\begin{lemma}\label{lemme_S_B_1}
\label{c_Sigma} Without loss of generality we may assume that there exist real positive numbers $\epsilon_B$ and $B$ such that for all $t_\Sigma\in I_t$, the maps $\phi_-$ and $\phi_+$ induced by the Reeb flow between $\Sigma=\{(t,u,v),t=t_\Sigma\}$ and $S_Z$ and between $\Sigma$ and $S_R$ satisfy:
  \begin{enumerate}
    \item $\mathcal C(H,\epsilon_B)\cap \mathcal C(V,B)=\{0\}$;
    \item\label{phi_-}\label{phi_-_-1} $(\phi_-)_*\left(\mathcal C_p(H,\epsilon_B)\right)\subset\mathcal C_{\phi_-(p)}(H,\nu)$ and $(\phi_-^{-1})_*\left(\mathcal C_{\phi_-(p)}(V,A)\right)\subset\mathcal C_p(V,B)$ for all $p\in[u_2,u_3]\times I_v$;
    \item\label{phi_+}\label{phi_+_-1} $(\phi_+)_*\left(\mathcal C_p(H,\epsilon_B)\right)\subset \mathcal C_{\phi_+(p)}(\delta_1,\epsilon_R)$ and $(\phi_+^{-1})_*\left(\mathcal C_{\phi_+(p)}(\delta_1^\perp,A_R)\right)\subset\mathcal C_p(V,B)$ for all $p\in[u_4,u_5]\times I_v$;
    \item\label{phi_+'} $\phi_+([u_4,u_5]\times I_v)\subset \{(x,z),\vert x-\delta_1(z)\vert<\epsilon_R\}$;
    \item\label{retour_phi_+_-} the return time between $S_Z$ and $\Sigma$ is bounded by $\tau$ and  by $2\tau$ between $S_R$ and $\Sigma$.
  \end{enumerate}

\end{lemma}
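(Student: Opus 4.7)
The plan is to take $\Sigma$ to be a level set $\{t=t_\Sigma\}$ of the convexification coordinate produced in Proposition \ref{coordonnees_t_u_tau}, with $t_\Sigma\in\mathring{I_t}$. Since $\alpha_\t{prec}=\d t+(1-\rho z_\t{prod}-\rho v)\d u$ on $\mathcal U$, the Reeb vector field there is $\partial_t$, so $\Sigma$ is automatically transverse to $R_{\alpha_\t{prec}}$, and the Reeb flow inside $\mathcal V$ is pure $t$-translation, which is trivially cone-preserving when one identifies different $t$-slices through their $(u,v)$-coordinates. I then define $\phi_-$ (resp.\ $\phi_+$) by following the Reeb flow backward (resp.\ forward) in $t$ from a point of $\Sigma$, through $\partial\mathcal V$, and on through $\mathcal B'$ until it meets $S_Z$ (resp.\ $S_R$).

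For $\phi_-$ restricted to $[u_2,u_3]\times I_v$, equation~(\ref{eq3}) ensures that the exit point lands on $S_Z$ in $x\in\left[\tfrac{\pi}{2}+\tfrac{3\lambda}{4},\pi-\tfrac{3\lambda}{4}\right]$, a range containing $p_Z(\Gamma_B)$ by condition~(\ref{cond_Gamma_S_Z}) of the pre-convex bypass. In this region the orbits travel in the standard pre-convex model where $R_x$ vanishes, so horizontal cones of small aperture are preserved. Combining this with the trivial action in $\mathcal V$ and the fact that $C_\psi\ll 1$ (so that $\d\psi$ distorts cones by an arbitrarily small amount), I would fix $\epsilon_B$ first so small that $\d\psi$ sends $\mathcal C(H,\epsilon_B)$ on $\Sigma$ into $\mathcal C(H,\nu)$ on $S_Z$, and then enlarge $B$ so that $\d\psi^{-1}$ sends $\mathcal C(V,A)$ on $S_Z$ into $\mathcal C(V,B)$ on $\Sigma$. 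The transversality $\mathcal C(H,\epsilon_B)\cap\mathcal C(V,B)=\{0\}$ is obtained by choosing $\epsilon_B$ much smaller than $1/B$.

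For $\phi_+$ restricted to $[u_4,u_5]\times I_v$, equation~(\ref{eq1}) shows the forward orbits cross $V_{\Gamma_A}$, then travel along the dividing set in the upper bypass $\mathcal B^{\geq y_\t{std}}$, and finally reach $S_R$. The landing curve is precisely the image under $p_R$ of a curve $\mathcal C^1$-close to $\gamma_1$, so condition~(\ref{cond_S_Z_S_R}) of the pre-convex bypass places $\phi_+\bigl([u_4,u_5]\times I_v\bigr)$ in an $\epsilon_R$-neighborhood of $\delta_1$ on $S_R$, giving~(\ref{phi_+'}). The cone inclusions in~(\ref{phi_+}) then follow from the pre-convex cone data (the aperture $\epsilon_R$ of $\mathcal C(\delta_1,\epsilon_R)$ and $A_R$ of $\mathcal C(\delta_1^\perp,A_R)$), factored through $\Sigma$ exactly as for $\phi_-$, with the same small $\epsilon_B$ and the chosen $B$.

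The time bounds in~(\ref{retour_phi_+_-}) are obtained by summing the negligible traversal time $\leq 2t_\t{max}$ inside $\mathcal U$ (which we may shrink freely) with the external Reeb travel: the latter is bounded by $\tau$ between $S_Z$ and $\Gamma_B$ through condition~(\ref{cond_tau}), yielding the $\tau$ bound for $\phi_-$, and by $\tau+\theta_\t{max}$ between $\Gamma_B$ and $S_R$ via the upper bypass (using Lemma~\ref{lemme_B4} together with Remark~\ref{remark_tau}), which we can absorb into $2\tau$ by the same remark. The principal obstacle is the simultaneous calibration of $\epsilon_B$, $B$, and $t_\Sigma$ so that every cone inclusion on both sides of $\Sigma$ holds while the transversality in item~(1) is preserved; this reduces to routine bookkeeping once each individual estimate on each of the two external flow regions has been made via the pre-convex bypass properties.
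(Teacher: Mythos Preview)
Your overall strategy—shrink the flow-box chart $\mathcal U$, use the pre-convex bypass properties, and read off the cone inclusions—is the paper's strategy. But the description of $\phi_+$ contains a genuine error that would make the argument fail as written.

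You say that from $[u_4,u_5]\times I_v$ the forward orbits ``cross $V_{\Gamma_A}$, then travel along the dividing set in the upper bypass $\mathcal B^{\geq y_\t{std}}$, and finally reach $S_R$''. This is the wrong picture on two counts. First, equation~(\ref{eq1}) concerns $[u_5,u_\t{max}]$, not $[u_4,u_5]$; and pre-convex condition~(\ref{conv_V_Gamma_A}) says precisely that orbits through $V_{\Gamma_A}$ do \emph{not} reach $S_{y_\t{std}}$. Second, the ``travel along the dividing set'' behaviour only appears \emph{after} the convexification of Section~\ref{subsection_convexification}; at the present pre-convex stage the Reeb flow near $x=\tfrac{\pi}{2}$ is essentially $\partial_y$, so orbits from $\Sigma$ at $u\in[u_4,u_5]$ go directly upward and hit $S_R\subset S_{y_\t{std}}$ without entering the upper bypass. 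Once you have the correct route, the tool that gives~(\ref{phi_+'}) and the first half of~(\ref{phi_+}) is this: the forward flow $\phi_+$ from $\Sigma$ to $S_R$ lies on the same orbit as the backward flow to $S_Z$ followed by the forward flow $p'_R$ to $S_R$, i.e.\ $\phi_+=p'_R\circ p_Z$ on the relevant domain (this is why the paper introduces the enlarged bypass $\mathcal B'$ and records $p_Z(\dom(p_R|_{\Gamma_B}))\subset\dom(p'_R|_{S_Z})$). Horizontal curves in $\Sigma$ are $\mathcal C^1$-close to a piece of $\Gamma_\t{prec}$ once $\mathcal U$ is small, so their $p_Z$-images are $\epsilon_Z$-close to $\gamma_1$ by condition~(\ref{cond_Gamma_S_Z}); condition~(\ref{cond_S_Z_S_R}) then lands them $\epsilon_R$-close to $\delta_1$.

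Your handling of $\phi_-$ is also looser than it needs to be. Writing ``$R_x$ vanishes'' is not literally true in the pre-convex model (the $k_\t{inf},k_\t{sup}$ perturbations contribute), and ``$\d\psi$ sends $\mathcal C(H,\epsilon_B)$ on $\Sigma$ into $\mathcal C(H,\nu)$ on $S_Z$'' conflates the coordinate change with the flow: $\psi$ does not map $\Sigma$ to $S_Z$, and $\partial_u$ corresponds to $\partial_y$, not $\partial_x$. The clean statement is the paper's: there is $\epsilon_B>0$ with $d_{\mathcal C^1}(\gamma,\Gamma_u)<\epsilon_B\Rightarrow d_{\mathcal C^1}(p_Z(\gamma),p_Z(\Gamma_u))<\tfrac{\epsilon_Z}{2}$; combined with condition~(\ref{cond_Gamma_S_Z}) and $\epsilon_Z<\nu$ this yields $(\phi_-)_*\mathcal C(H,\epsilon_B)\subset\mathcal C(H,\nu)$ for $\mathcal U$ small. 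The existence of $B$ then follows simply from $(\phi_-^{-1})_*\mathcal C(V,A)\cap\mathcal C(H,\epsilon_B)=\{0\}$, not from any estimate on $\d\psi^{-1}$.
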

Let $L$ be a bound of $\Vert\d\phi_\pm\Vert$ and $\Vert\d{\phi_\pm}^{-1}\Vert$ where $\Vert \cdot\Vert$ is defined in the coordinates $(x,y,z)$ in $S_R$ and $S_Z$ and in the coordinates $(t,u,v)$ in $\Sigma$.

\begin{proof}
Let $\Gamma_u=\Gamma_\t{prec}\cap\psi\left(I_t\times[u_2,u_3]\times I_v\right)$. Then $\Gamma_u\subset\Gamma_B$ and all the Reeb chords between $\Gamma_B$ and $S_Z$ have an endpoint in $\Gamma_u$. There exists $\epsilon_B>0$ such that if $d_{\mathcal C^1}(\gamma,\Gamma_u)<\epsilon_B$ then  
$d_{\mathcal C^1}(p_Z(\gamma),p_Z(\Gamma_u))<\frac{\epsilon_Z}{2}$. Thus the fist half of condition~(\ref{phi_-}) derive from conditions (\ref{cond_Gamma_S_Z}) and (\ref{cond_S_Z_S_R}) in the definition of pre-convex bypasses for $\mathcal U$ small enough. As $p_Z(\dom({p_R}_{\vert\Gamma_B})\subset\dom({p'_R}_{\vert S_Z}) $, condition (\ref{phi_+'}) and the first part of condition (\ref{phi_+}) derive from conditions (\ref{cond_Gamma_S_Z}) and (\ref{cond_S_Z_S_R}) in the definition of pre-convex bypasses. As 
$(\phi_-^{-1})_*\left(\mathcal C(V,A)\right)\cap\mathcal C(H,\epsilon_B)=\emptyset$ there exists $B$ satisfying the second part of condition (\ref{phi_-}). The proof of the second part of condition (\ref{phi_+}) is similar.
\end{proof}
We now perturb $S'$ to obtain a $u$-invariant surface.
\begin{construction}
We perturb $S'$ (see Figure \ref{figure_perturbation_S_B}) so that there exists $v_0\in(0,v_\t{max})$ satisfying
\begin{itemize}
  \item ${S_\mathcal B}_{\vert u}={S_\mathcal B}_{\vert u_\t{max}}$ on $[0,v_0]$;
  \item $S_\mathcal B$ contains $\{0\}\times [u_\t{min},u_\t{max}]\times\{0\}$ and is tangent to $\Vect\left(\frac{\partial}{\partial u},\frac{\partial}{\partial t}\right)$ along this curve;
  \item the Reeb vector field is positively transverse to $S_\mathcal B$ for $t<0$, negatively transverse for $t>0$.
\end{itemize}
\begin{figure}[here]
\begin{center}
 \includegraphics{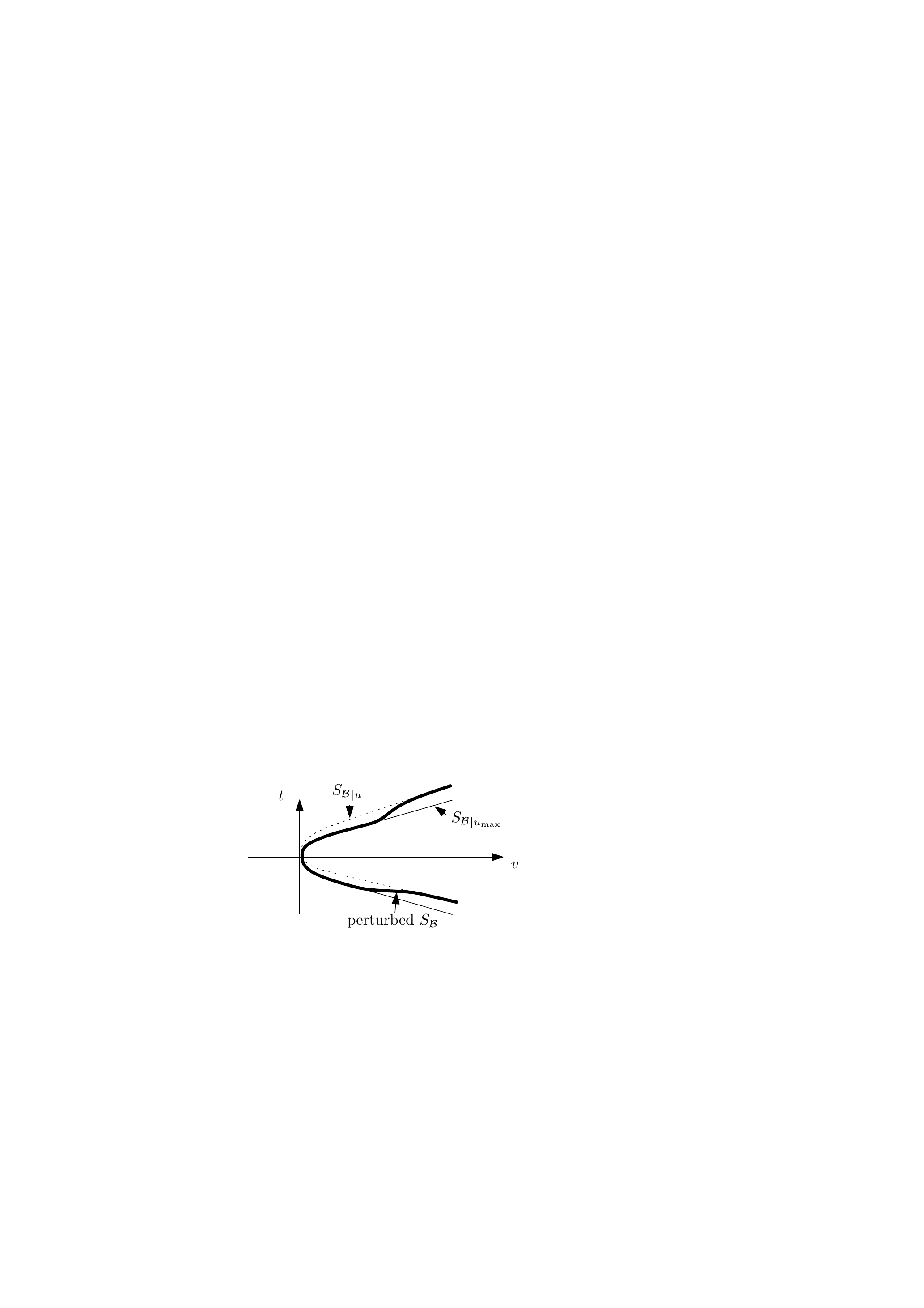}
\end{center}
 \caption{The perturbation of $S'$}\label{figure_perturbation_S_B}
\end{figure}
These condition are automatically satisfied for $u\in[u_\t{min},u_1]$ and $u\in[u_5,u_\t{max}]$. Let $\mathcal B$ denote the new bypass. For $\mathcal U$ small enough, this perturbation is $\epsilon_\t{prec}$-small. In what follows, let $I_v=[-v_0,v_0]$ and $v_\t{max}=v_0$.  
\end{construction}

\subsection{Convexification}\label{subsection_convexification}
The convexification process consists in adding a ``bump'' with prescribed contact structure in a neighbourhood of $\Gamma_\t{prec}$. We first describe the new boundary in Section \ref{subsubsection_boundary}. In Section \ref{subsubsection_model}, we present the contact structure in the convexification and in Section \ref{subsubsection_perturbation} we modify this model to obtain the desired cone-preserving properties. Recall that we construct the convexification near the connected component $\Gamma_0$ contained in $\left[\frac{\pi}{2},\pi\right]\times [0,y_\text{std}]\times [0,z_\text{std}]$.

\subsubsection{Perturbed boundary}\label{subsubsection_boundary}
We perturb the boundary for $y\leq\frac{3}{4}y_\t{std}$. 
For $[y^-,\frac{3}{4}y_\t{std}]$, the new boundary is the graph of a function. Let $\mathcal{H}$ be the set of smooth functions 
\[h: \left[\frac{\pi}{2}-\frac{\lambda}{3},\frac{\pi}{2}+\frac{\lambda}{3}\right]\times\left[y^-,\frac{3}{4}y_\t{std}\right]\to\mathbb R\] 
such that
\begin{enumerate}
 \item $\Vert h\Vert_{\mathcal C^\infty}\ll 1$ (in particular $\Vert h\Vert_{\mathcal C^\infty}<\epsilon_\t{prec}$);
 \item $h=0$ near $x=\frac{\pi}{2}\pm \frac{\lambda}{3}$ and $y=\frac{3}{4}y_\t{std}$;
 \item $h(-x,y)=h(x,y)$;
 \item for $y\in \left[y^-,\frac{2}{3}y_\t{std}\right]$, the map $h$ does not depend on $y$ and there exists $x^h_\t{flat}$ such that $h$ is increasing for $x<-x^h_\t{flat}$, constant on  $\left[\frac{\pi}{2}-x^h_\t{flat},\frac{\pi}{2}+x^h_\t{flat}\right]$ and decreasing for $x>x^h_\t{flat}$;
 \item $\tilde{S}_h=\left\{(x,y,z_\text{prod}+h(x)),x\in\left[\frac{\pi}{2}-\frac{\lambda}{3},\frac{\pi}{2}+\frac{\lambda}{3}\right],y\in[y^-,y^+] \right\}$ is contained in $\mathcal V$.
\end{enumerate}

Let $S_h=\psi^{-1}\left(\tilde{S}_h \right)$. We denote by $v_h$ the maximum of h and $z_\text{prod}+v_h$ by $z_h$. The minimum of $S_h$ on the $v$-axis corresponds to $t=0$ and $v=v_h$ (see Figure \ref{figure_S_h}). In addition $\mathcal H\neq\emptyset$. Let $v_{\mathcal H} =\sup\{\max(h),h\in\mathcal H\}$ then $v_{\mathcal H}>0$ and for all $0<v<v_{\mathcal H}$ there exists $h\in\mathcal H$ such that $v_h=v$. Given $v_h$  there exists $h\in\mathcal H$ with $x^h_\t{flat}$ arbitrarily small.

\begin{figure}[here]
\begin{center}
 \includegraphics{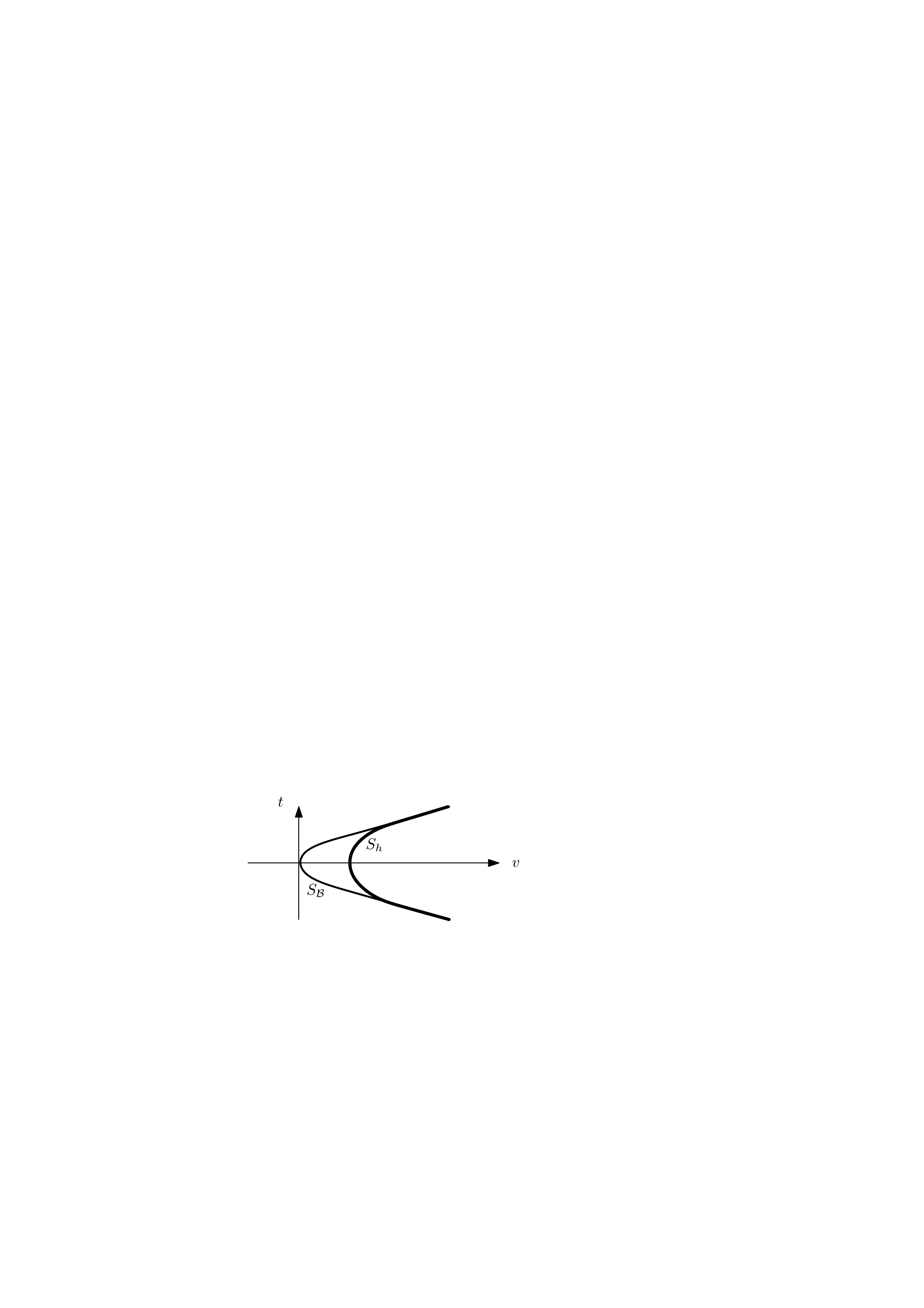}
\end{center}
 \caption{The surfaces $S_{\mathcal B}$ and $S_h$}\label{figure_S_h}
\end{figure}

\begin{lemma}\label{S_h}
The surface $S_h$ is $u$-invariant and its restriction to the plan $u=\t{cst}$ is a smooth curve composed of two symmetric graphs containing $(v_h,0)$, one positive and increasing and the other negative and decreasing on $(v_h,v_\t{max}]$;
\end{lemma}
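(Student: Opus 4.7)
The plan is to analyze the equation cutting out $S_h$ in $(t,u,v)$-coordinates via $\psi^{-1}$, and then parameterize the constant-$u$ slice smoothly by $x$.

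First I would establish $u$-invariance. Since $h$ is independent of $y$ on $[y^-, y^+]$ by condition~(4) of $\mathcal H$, a point $\psi(t,u,v) = (x,y,z)$ belongs to $\tilde S_h$ precisely when $\psi_z(t,u,v) = z_\t{prod} + h(\psi_x(t,u,v))$ together with the range constraints. By Fact~\ref{lemme_phi_u}, both $\psi_x$ and $\psi_z$ are independent of $u$, while $\psi_y$ depends on $u$ only through the translation $\psi_y(t,u,v) = \psi_y(t,0,v) + u - u_\t{max} + y_\Sigma$. Hence the defining equation does not involve $u$, and for each fixed $u_0$ the slice $S_h \cap \{u = u_0\}$ is cut out by the same $(t,v)$-equation.

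Second, I would parameterize this slice by $x \in [\pi/2 - \lambda/3, \pi/2 + \lambda/3]$. The Jacobian of $(t,v) \mapsto (\psi_x(t,0,v), \psi_z(t,0,v))$ at $(0, v_h)$ equals $\rho/(1 - \rho(z_\t{prod}+v_h)) > 0$, computed from the explicit Reeb vector field $R_\alpha = \tfrac{\rho\sin x}{1-\rho z}\partial_x + \tfrac{\sin x}{1-\rho z}\partial_y + \cos(x)\partial_z$ of $\alpha_\t{prec}$ (which equals $\partial_t$ in flow-box coordinates). By the condition $C_\psi \ll 1$ of Lemma~\ref{lemme_S_B}, this Jacobian stays positive throughout $\mathcal V$, so the implicit function theorem gives a smooth parameterization $x \mapsto (T(x), V(x))$ of $S_h \cap \{u = u_0\}$ with $T(\pi/2) = 0$, $V(\pi/2) = v_h$. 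The symmetries $\psi_x(-t,u,v) = \pi - \psi_x(t,u,v)$, $\psi_z(-t,u,v) = \psi_z(t,u,v)$ from Fact~\ref{lemme_phi_u}, combined with the symmetry $h(\pi - x) = h(x)$ encoded by condition~(3) of $\mathcal H$ (in local coordinates about $\pi/2$), yield $T(\pi - x) = -T(x)$ and $V(\pi - x) = V(x)$.

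Third, I would pin down the shape by differentiating the defining equations:
\[
T'(x) = \frac{\partial_v\psi_z - \partial_v\psi_x \cdot h'(x)}{\det}, \qquad V'(x) = \frac{\partial_t\psi_x \cdot h'(x) - \partial_t\psi_z}{\det}.
\]
At $x = \pi/2$, $h'(\pi/2) = 0$ (flat region) and $\partial_t\psi_z = \cos(\pi/2) = 0$, so $V'(\pi/2) = 0$; a Taylor expansion of $\psi_z(T(x),0,V(x)) = z_\t{prod} + h(x)$ gives $V''(\pi/2) = (1 - \rho(z_\t{prod}+v_h))/\rho > 0$, so $V$ has a strict local minimum $v_h$ at $\pi/2$. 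On the flat region, $h' \equiv 0$ gives $V'(x) = -\cos(\psi_x)/\det$, which has the sign of $x - \pi/2$, and $T'(x) = \partial_v\psi_z/\det > 0$. Inverting $V$ on the branch $x \in [\pi/2, \pi/2 + \lambda/3]$ then produces the desired positive increasing graph of $t$ over $v \in (v_h, v_\t{max}]$, and the symmetry $T(\pi - x) = -T(x)$ gives the negative decreasing branch; the full curve is smooth because $x \mapsto (T(x), V(x))$ is.

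The main obstacle is the global monotonicity of $V$ outside the flat region, where the estimate $-\cos(\psi_x) > \partial_t\psi_x \cdot |h'(x)|$ can degenerate as $\psi_x \to \pi/2$. This is resolved precisely because $h'$ vanishes identically on the entire flat region: outside it, $|\cos(\psi_x)|$ is bounded below by a positive constant depending on $x^h_\t{flat}$, and the $C^\infty$-smallness of $h$ from condition~(1) of $\mathcal H$ — taken small relative to that constant — closes the estimate and gives $V' > 0$ on $(\pi/2, \pi/2 + \lambda/3]$.
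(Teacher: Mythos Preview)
Your proof is correct and is essentially an explicit, computational unpacking of the paper's geometric argument. The paper observes that the Reeb vector field $R_{\alpha_\t{prec}}=\partial_t$ is tangent to $\tilde S_h$ precisely along $\{\tfrac{\pi}{2}\}\times[y^-,y^+]\times\{z_h\}$ (for $h$ small), that the transversality sign changes across this locus, and then invokes the proof of Lemma~\ref{lemme_S_B}; symmetry is pulled from Fact~\ref{lemme_phi_u}. Your monotonicity analysis of $V$ is exactly this transversality statement in coordinates: $V'(x)=0$ if and only if $R$ is tangent to $\tilde S_h$ at $\psi_x=x$, and your sign computation $V'(x)=-\cos x/\det$ on the flat part is the sign-change of transversality. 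The ``main obstacle'' you isolate is precisely the paper's ``for $h$ small enough'' hypothesis.

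Two minor points worth tightening. First, the Jacobian you compute is of order $\rho$, so ``stays positive by $C_\psi\ll 1$'' really requires $C_\psi$ small relative to $\rho$; this is how the paper uses $C_\psi$ as well (cf.\ Proposition~\ref{existence_f_g}), so it is not a gap but should be stated. Second, you check $T'>0$ only on the flat region; outside it you need $\partial_v\psi_z-\partial_v\psi_x\cdot h'(x)>0$ to get the graph increasing, and this follows from the same smallness of $C_\psi$ and $\|h\|_{\mathcal C^\infty}$ that you already invoke for $V'$.
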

\begin{proof}
For $h$ small enough, the tangency points between the Reeb vector field and $\tilde{S}_h$ are the segment $\left\{\frac{\pi}{2}\right\}\times[y^-,y^+]\times\{z_h\}$. In addition, $\frac{\partial}{\partial t}$ is positively transverse to $S_h$ for $t<0$ and negatively transverse for $t>0$. The proof is similar to the proof of Lemma \ref{lemme_S_B}. The symmetry derives from Fact \ref{lemme_phi_u}.
\end{proof}

We extend the surface $S_h$ by translation in the coordinates $(t,u,v)$ and still denote by $S_h$ the extension. The surface $S_h$ also extends $\sigma\left(\tilde{S}_h\right)$ as $S_h$ is parametrised by $(\pm l (v),u,v)$ and
$\sigma(\psi(\pm l (v),u,v)=\psi(\mp l(v),u_\t{max}+u_\t{min}-u,v)$.
The following lemma is a powerful tool to study the Reeb chords in $\mathcal B^{\leq y_\t{std}}$ (conditions (B6), (B7) and (B8)). We will use its corollary (Corollary \ref{p_e_p_s}) in Section \ref{subsection_conditions}.

\begin{lemma}\label{lemme_restriction_psi}
There exist positive numbers $t'_\t{max}$, $v'_\t{max}$, $u_\lambda$, $\Delta$ and $v_\Delta$ such that
\begin{itemize}
 \item $t'_\t{max}<t_\t{max}$, $v_\Delta<v'_\t{max}<v_\t{max}$ and  $v_\Delta<v_{\mathcal H}$;
 \item\label{cond_lambda} $\psi_x\left(I'_t\times[u_\t{min},u_\lambda]\times I'_v\right)\subset\left[\frac{\pi}{2}+\frac{\lambda}{2},\pi+\lambda\right]$ where $I'_t=[-t'_\t{max},t'_\t{max}]$ and $I'_v=[-v'_\t{max},v'_\t{max}]$;
 \item\label{cond_pe_ps} $\psi_y(p)<\psi_y(p')$ and $\psi_x(p')-\psi_x(p)\leq\frac{\lambda}{12}$ for all $p=(t,u,v)$ and $p'=(t',u',v')$ in $I'_t\times[u_\lambda,u_\t{max}]\times I'_v$ such that $t'-t>\Delta$ and $u'\geq u$;
 \item\label{cond_v_Delta} the planes $t=\pm\Delta$ intersect $S_{\mathcal B}$ for $v\leq v_{\Delta}$.
\end{itemize}
\end{lemma}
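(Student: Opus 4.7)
The lemma is a quantitative refinement of qualitative properties already implicit in the flow-box construction. I will exploit three ingredients: the explicit formulas of Fact \ref{lemme_phi_u} for $\psi$ at $t = v = 0$ and on $[u^-,u^+]$; the $\mathcal{C}^1$-smallness $C_\psi \ll 1$ from Lemma \ref{lemme_S_B}, which lets me transport estimates at $(0,u,0)$ to the entire box $\mathcal U$; and the Reeb field $R_{\alpha_\t{prec}} = \tfrac{1}{1-\rho z}\bigl(\rho\sin x,\,\sin x,\,(1-\rho z)\cos x\bigr)$ near $\Gamma_0$, for which $R_y \geq c_1 > 0$ and $|R_x| \leq 2\rho$ on $x \in [\pi/2 - \lambda/8,\,\pi + \lambda/8]$, the parameter $\rho$ being freely tunable via the pre-convex perturbation $k_\t{inf}$.

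The first step is to pick $u_\lambda$ by an intermediate value argument. Since $\psi_x(0, u_\t{max}, 0) = \pi/2$ by Fact \ref{lemme_phi_u} while $\psi_x(0, \cdot, 0) \geq \pi - \lambda/2$ on $[u_\t{min}, u_1]$ by the subdivision, continuity provides $u_\lambda$ with $\psi_x(0, u, 0) \geq \pi/2 + \lambda/2 + 2\eta$ for all $u \in [u_\t{min}, u_\lambda]$ and a safety margin $\eta > 0$. Then I shrink $t'_\t{max} < t_\t{max}$ and $v'_\t{max} < v_\t{max}$ so that $|\psi_x(t,u,v) - \psi_x(0,u,0)| < \eta$ on $I'_t \times I_u \times I'_v$, which establishes \eqref{cond_lambda} (the upper bound following from \eqref{eq_V}).

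For \eqref{cond_pe_ps} I will decompose $\psi_\bullet(p') - \psi_\bullet(p)$ into three integrals, varying $t$, then $u$, then $v$. For $\psi_y$: the $t$-piece is at least $c_1\Delta/2$; the $u$-piece is nonnegative because $\partial_u \psi_y(0,u,0) > 0$ along $\Gamma_0$ (the $y$-coordinate parametrizes $\Gamma_0$ monotonically in the pre-convex model) and $C_\psi \ll 1$ propagates this throughout $\mathcal U$; the $v$-piece is $O(v'_\t{max})$ because $\partial_v \psi = \partial/\partial z$ on $[u^-, u^+]$ by Proposition \ref{coordonnees_t_u_tau}. For $\psi_x$: the $t$-piece is at most $4\rho t'_\t{max}$; the $u$-piece is nonpositive since the $x$-coordinate decreases as $u$ increases along $\Gamma_0$; the $v$-piece is $O(v'_\t{max})$. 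Thus for $\rho$, $v'_\t{max}$, $t'_\t{max}$ small, $\psi_x(p') - \psi_x(p) \leq \lambda/12$; and for $\Delta > 4 v'_\t{max} M/c_1$ (with $M$ a bound on $\|\partial_v \psi_y\|_\infty$), $\psi_y(p) < \psi_y(p')$.

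Finally, for \eqref{cond_v_Delta}, Lemma \ref{lemme_S_B} implies that $S_\mathcal{B}$ passes through $\{v = 0, t = 0\}$ smoothly, so $S_\mathcal{B} \cap \{t = \pm\Delta\}$ lies in $\{|v| \leq C\Delta\}$ for some constant $C$; I set $v_\Delta = C\Delta$. The main obstacle is the compatibility of all parameters: $\Delta$ must be large enough to dominate the $v$-contribution in the $\psi_y$-estimate and small enough that $v_\Delta < v_\mathcal{H}$. I resolve this by fixing the parameters in the order $\rho \to v'_\t{max} \to \Delta \to t'_\t{max} \to v_\Delta$, taking $v'_\t{max}$ small enough that $4v'_\t{max} M/c_1 < v_\mathcal{H}/C$ so that the admissible interval for $\Delta$ is nonempty, then $t'_\t{max}$ slightly larger than $\Delta$ but still satisfying the bound $4\rho t'_\t{max} + 2v'_\t{max}M \leq \lambda/12$ required by the $\psi_x$ estimate.
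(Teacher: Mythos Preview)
Your proof is correct and follows essentially the same approach as the paper's: both arguments bound $\psi_y$ and $\psi_x$ by integrating the Reeb components $R_y \geq c_1$ and $|R_x| \lesssim \rho$ in the $t$-direction, controlling the $v$-contribution via $\|\partial_v\psi\|_\infty$, and using the monotonicity of $\psi_y$ and $\psi_x$ in $u$ along $\Gamma_0$ (which the paper invokes implicitly when it drops the $u$-difference from its final inequalities). The only organizational difference is that you make the three-piece $(t,u,v)$ decomposition explicit and fix parameters in a slightly different order (the paper sets $t'_\t{max}=4\Delta$ first and then shrinks $v'_\t{max}$), but the two chains of constraints are equivalent.
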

\begin{proof}
Without loss of generality, there exists $\eta$ such that $\frac{1}{\eta}<k_\t{inf}<\eta<2$ and $\eta^2-\sin\left(\frac{\pi}{2}+\lambda\right)<\frac{\eta}{32}$. We start with $t'_\t{max}=t_\t{max}$ and $v'_\t{max}=v_\t{max}$ and progressively reduce them. There exists $u_\lambda$ such that for $t'_\t{max}$ and $v'_\t{max}$ small enough
\begin{gather*}
\psi_x\left(I'_t\times[u_\lambda,u_\t{max}]\times I'_v\right)\subset\left[\frac{\pi}{2}-\lambda,\frac{\pi}{2}+\lambda\right],\\
\psi_x\left(I'_t\times[u_\t{min},u_\lambda]\times I'_v\right)\subset\left[\frac{\pi}{2}+\frac{\lambda}{2},\pi+\lambda\right].
\end{gather*}
Let $M=\left\Vert\frac{\partial \psi_y}{\partial v}\right\Vert_\infty+\left\Vert\frac{\partial \psi_x}{\partial v}\right\Vert_\infty+\left\Vert\frac{\partial k_\t{inf}}{\partial z}\right\Vert_\infty$. Choose $\Delta$ such that, upon reducing $t'_\t{max}$ and $v'_\t{max}$
\begin{itemize}
 \item $t'_\t{max}=4\Delta<\frac{\lambda}{96 M}$;
 \item $v'_\t{max}<\min\left(\frac{\Delta}{8M},\frac{\lambda}{48 M},v_{\mathcal H}\right)$; 
 \item the planes $t=\pm\Delta$ intersect $S_\mathcal{B}$ for $v\leq v'_\t{max}$. 
\end{itemize}
For all $(t,u,v)\in [0,t'_\t{max}]\times[u_\lambda,u_\t{max}]\times I'_v$ we have\footnote{See equation(\ref{eq_R_inf}) for the explicit form of the Reeb vector field.}
\begin{align*}
\psi_y(0,u,v)+\frac{t}{\eta}\sin\left(\frac{\pi}{2}+\lambda\right)&\leq\psi_y(t,u,v)\leq\psi_y(0,u,v)+\eta t,\\
\psi_y(0,u,0)-v'_\t{max}\left\Vert\frac{\partial \psi_y}{\partial v}\right\Vert_\infty &\leq\psi_y(0,u,v)\leq\psi_y(0,u,0)+ v'_\t{max}\left\Vert\frac{\partial \psi_y}{\partial v}\right\Vert_\infty.
\end{align*}
Thus $\psi_y(p')-\psi_y(p)\geq\frac{\Delta}{4}>0$. Similarly, it holds that
\begin{gather*}
\psi_x(0,u,0)-v'_\t{max}\left\Vert\frac{\partial \psi_x}{\partial v}\right\Vert_\infty-2t'_\t{max}\left\Vert\frac{\partial k_\t{inf}}{\partial z}\right\Vert_\infty\leq\psi_x(t,u,v),\\
\psi_x(t,u,v)\leq\psi_x(0,u,0)+v'_\t{max}\left\Vert\frac{\partial \psi_x}{\partial v}\right\Vert_\infty+2t'_\t{max}\left\Vert\frac{\partial k_\t{inf}}{\partial z}\right\Vert_\infty,
\end{gather*}
and $\psi_x(p')-\psi_x(p)\leq2v'_\t{max}\left\Vert\frac{\partial \psi_x}{\partial v}\right\Vert_\infty+4t'_\t{max}\left\Vert\frac{\partial k_\t{inf}}{\partial z}\right\Vert_\infty$.
\end{proof}

\begin{construction}\label{construction_h}
We apply Lemma \ref{lemme_restriction_psi} and choose a map $h\in\mathcal H$ such that
$v_\Delta<\max(h)<v_\t{max}$ and such that the planes $v=v'_\t{max}$ intersect $S_{h}$ outside the flat part of $\tilde{S}_{h}$.
\end{construction}

\begin{fact}\label{epsilon_H}
There exists $\epsilon^h_\t{pert}$ such that for any $\epsilon^h_\t{pert}$-perturbation of $\alpha_\t{prec}$ in $\mathcal C^1$-norm, the Reeb vector field is transverse to $\tilde{S}_h$ for all $x$ satisfying $x^h_\t{flat}\leq\vert x-\frac{\pi}{2}\vert\leq\frac{\lambda}{3}$.
\end{fact}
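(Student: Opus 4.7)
The approach is to compute the inner product $\langle R_{\alpha_\t{prec}},n_h\rangle$, where $n_h$ is a normal to $\tilde S_h$, establish a uniform lower bound on its absolute value over the compact set $K=\{(x,y,z_\t{prod}+h(x))\in\tilde S_h: x^h_\t{flat}\leq |x-\pi/2|\leq\lambda/3\}$, and then invoke $\mathcal C^0$-continuity of the Reeb vector field in the $\mathcal C^1$-topology of the contact form. To set things up, I would first reduce $\alpha_\t{prec}$ to an explicit form on $\tilde S_h$: since $\tilde S_h\subset\mathcal V$ has $y\in[y^-,y^+]\subset[y_\rho^-,y_\rho^+]$ with $y^+<\frac{2y_\t{std}}{5}<\frac{y_\t{std}}{2}$ and $x\in[\pi/2-\lambda/3,\pi/2+\lambda/3]\subset[\pi/4,3\pi/4]$, the definition of the pre-convex perturbation (taking $V_+$ and $V_-$ small enough that they meet neither $\tilde S_h$ nor its $\sigma$-image) gives $k_\t{sup}=k'_\t{sup}=k'_\t{inf}=1$ and $k_\t{inf}=1-\rho z$ on $\tilde S_h$. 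Thus $\alpha_\t{prec}=(1-\rho z)\sin(x)\,\d y+\cos(x)\,\d z$, and equation~(\ref{eq_R_inf}) yields $R_{\alpha_\t{prec}}=\tfrac{1}{1-\rho z}\bigl(\rho\sin(x),\sin(x),(1-\rho z)\cos(x)\bigr)$.

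A normal to the graph $\tilde S_h=\{z=z_\t{prod}+h(x)\}$ is $n_h=(-h'(x),0,1)$, so a direct computation gives
\[\langle R_{\alpha_\t{prec}},n_h\rangle =\frac{-\rho\sin(x)h'(x)+(1-\rho z)\cos(x)}{1-\rho z}.\]
On $K$ one has $|\cos(x)|\geq \sin(x^h_\t{flat})>0$. By Proposition~\ref{proposition_existence_pre_convexe}, the pre-convex bypass can be chosen with $k_\t{inf}$ arbitrarily close to $1$, hence $\rho$ arbitrarily small; together with the condition $\|h\|_{\mathcal C^\infty}\ll 1$ bounding $|h'(x)|$, the cross term $\rho\sin(x)h'(x)$ is negligible next to $(1-\rho z)\cos(x)$. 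This gives the uniform estimate $|\langle R_{\alpha_\t{prec}},n_h\rangle|\geq \tfrac{1}{2}\sin(x^h_\t{flat})$ on $K$, so $R_{\alpha_\t{prec}}$ is strictly transverse to $\tilde S_h$ along $K$.

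Finally, the Reeb vector field is determined algebraically from $\alpha$ and $\d\alpha$, so $R_\alpha$ depends $\mathcal C^0$-continuously on $\alpha$ in the $\mathcal C^1$-topology. Combined with the compactness of $K$, any sufficiently small $\mathcal C^1$-perturbation $\alpha'$ of $\alpha_\t{prec}$ will satisfy $\|R_{\alpha'}-R_{\alpha_\t{prec}}\|_{\mathcal C^0(K)}<\tfrac14\sin(x^h_\t{flat})$, preserving transversality along $K$; one sets $\epsilon^h_\t{pert}$ equal to the associated $\mathcal C^1$-threshold. The only delicate point in this plan is the verification that $\tilde S_h$ and its $\sigma$-image both lie in the region where all four factors $k_\t{sup},k_\t{inf},k'_\t{sup},k'_\t{inf}$ reduce to their stated explicit forms; once that is checked, the remaining estimates are routine.
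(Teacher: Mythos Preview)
The paper states this as a \emph{Fact} without proof (as with the other Facts in this section), because it is an immediate consequence of two things already in hand: the proof of Lemma~\ref{S_h} just above identifies the tangency locus between $R_{\alpha_\t{prec}}$ and $\tilde S_h$ as exactly the segment $\{\frac{\pi}{2}\}\times[y^-,y^+]\times\{z_h\}$, so $R_{\alpha_\t{prec}}$ is transverse to $\tilde S_h$ everywhere on the compact set $K=\{x^h_\t{flat}\leq|x-\frac{\pi}{2}|\leq\frac{\lambda}{3}\}$; and transversality on a compact set is open under $\mathcal C^1$-perturbations of $\alpha$, since $R_\alpha$ depends $\mathcal C^0$-continuously on $\alpha$ in the $\mathcal C^1$-topology. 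Your final paragraph is exactly this argument, and it is all that is needed.

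Your explicit recomputation of $R_{\alpha_\t{prec}}$ via equation~(\ref{eq_R_inf}) is therefore redundant, and the reduction you attempt is not quite clean. You write that one may take $V_-$ small enough to miss the $\sigma$-image of $\tilde S_h$, but $V_-$ is part of the pre-convex bypass data fixed \emph{before} $h$ is chosen (Construction~\ref{construction_h}), so you cannot shrink it at this stage. Concretely, the $\sigma$-image of a point of $\tilde S_h$ has $z$-coordinate $y+z_\t{prod}$ with $y\in[y^-,y^+]$, and there is no hypothesis guaranteeing $y^-$ exceeds the radius of $V_-$; so $k'_\t{inf}=1$ on $\tilde S_h$ is not immediate. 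This does not damage the result --- simply drop the explicit computation and cite Lemma~\ref{S_h} for the base transversality, then keep your compactness-plus-openness step.
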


\subsubsection{Convexification model}\label{subsubsection_model}
We now construct a convexification model for $y\in\left[y_-,\frac{2}{5}y_\t{std}\right]$ and interpolate with the adapted model for $y\in\left[\frac{2}{5}y_\t{std},\frac{2}{3}y_\t{std}\right]$. The new boundary is smoothed for $y\in\left[\frac{2}{3}y_\t{std},\frac{3}{4}y_\t{std}\right]$. Let $z>0$ and 
\[\left\{\begin{array}{llll}
 \Phi_{z_0}(z)&=&\exp(-\frac{1}{z-z_0}), &\text{ if } z>z_0,\\
\Phi_{z_0}(z)&=&0, &\text{ otherwise.}
\end{array}\right.\]
For $y\in[y^-_\rho,y^+_\rho]$ and near $\Gamma_0$, \[\alpha_\t{prec}=k_\t{inf}(z)\sin(x)\d y+\cos(x)\d z\] and $k_\t{inf}(z)=1-\rho z$. Let $k_\t{conv}(z)=k_\t{inf}(z)+a\Phi_{z_0}(z)$ and
\begin{equation}\label{equation_alpha_conv}
\alpha_\t{conv}=k_\t{conv}(z)\sin(x)\d y+\cos(x)\d z
\end{equation}
where $a>0$. 
\begin{fact}\label{fact_adapted}
The contact form $\alpha_\t{conv}$ is adapted to $\tilde{S}_h$ for $y\in\left[y_-,\frac{2}{3}y_\t{std}\right]$.
\end{fact}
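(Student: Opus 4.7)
The plan is to reduce to a two-dimensional problem and perform a direct sign analysis. Both $\alpha_\t{conv}$ and $\tilde{S}_h$ (extended $y$-invariantly within the declared $y$-range) depend only on $(x,z)$ in the strip $x\in[\pi/2-\lambda/3,\pi/2+\lambda/3]$, so the analysis reduces to the $(x,z)$-plane. A direct computation from $\alpha_\t{conv}=k_\t{conv}(z)\sin(x)\d y+\cos(x)\d z$ yields
\[
R_\t{conv}=-\frac{k'_\t{conv}(z)\sin(x)}{k_\t{conv}(z)}\frac{\partial}{\partial x}+\frac{\sin(x)}{k_\t{conv}(z)}\frac{\partial}{\partial y}+\cos(x)\frac{\partial}{\partial z},
\]
defined wherever $k_\t{conv}>0$. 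An outward conormal to $\tilde{S}_h$ at $(x,y,z_\t{prod}+h(x))$ is $(-h'(x),0,1)$, so the tangency locus with $R_\t{conv}$ is cut out by
\[
T(x):=\cos(x)\,k_\t{conv}(z_h(x))+k'_\t{conv}(z_h(x))\sin(x)h'(x)=0,\qquad z_h(x):=z_\t{prod}+h(x).
\]

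I will choose $z_0\in(z_\t{prod},z_h(\pi/2))$ and $a>0$ so that $k'_\t{conv}(z)=-\rho+a\Phi'_{z_0}(z)$ is strictly positive on the compact $z$-range $\{z_h(x):x^h_\t{flat}\le|x-\pi/2|\le\lambda/3\}$; this is possible because $\Phi'_{z_0}$ is continuous and strictly positive on $(z_0,+\infty)$. On the flat top $|x-\pi/2|\le x^h_\t{flat}$ we have $h'\equiv 0$ and $T(x)=\cos(x)k_\t{conv}(z_h)$, whose only zero is $x=\pi/2$. On the left slope $\cos(x)>0$ and $h'(x)>0$, so both summands of $T(x)$ are positive; on the right slope $\cos(x)<0$ and $h'(x)<0$, so both summands are negative. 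In either case $T(x)\neq 0$. Hence the tangency locus of $R_\t{conv}$ with $\tilde{S}_h$ is the smooth curve $\Gamma=\{x=\pi/2,\ z=z_h\}$ times the $y$-range.

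To conclude that $\alpha_\t{conv}$ is adapted, note first that the sign analysis of $T$ above shows that $R_\t{conv}$ is positively transverse to $\tilde{S}_h$ on the left slope and negatively transverse on the right, so $S_+=\{x<\pi/2\}$. Along $\Gamma$, $\cos(\pi/2)=0$ and $R_\t{conv}=-\frac{k'_\t{conv}(z_h)}{k_\t{conv}(z_h)}\partial_x+\frac{1}{k_\t{conv}(z_h)}\partial_y$, whose $x$-component is strictly negative by the arranged inequality $k'_\t{conv}(z_h)>0$. Thus $R_\t{conv}$ points into $S_+$ along $\Gamma$. Lemma~\ref{lemme_convexite_Reeb} then gives convexity with dividing set $\Gamma$, and the adapted condition follows. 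The main delicacy is the joint choice of $a$ and $z_0$: strict positivity of $k'_\t{conv}$ on the slope $z$-range must be compatible with positivity of $k_\t{conv}$ (needed for $\alpha_\t{conv}$ to be contact and for $R_\t{conv}$ to be defined) and with the $\mathcal{C}^1$-smallness of the convexification perturbation demanded by subsequent steps; both requirements are met by placing $z_0$ just above $z_\t{prod}$ and tuning $a$ slightly above $\rho/\min\Phi'_{z_0}$ on the relevant slope range, while $k_\t{conv}$ remains positive since a nonnegative bump is added to a positive function.
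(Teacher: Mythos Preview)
Your reduction to the $(x,z)$-plane, the Reeb field computation, and the tangency function $T(x)$ are all correct, and the flat-top analysis is fine. The gap is in the slope argument. You claim one can take $z_0\in(z_\t{prod},z_h)$ and $a>0$ so that $k'_\t{conv}=-\rho+a\Phi'_{z_0}$ is strictly positive on the slope $z$-range $\{z_h(x):x^h_\t{flat}\le|x-\tfrac{\pi}{2}|\le\tfrac{\lambda}{3}\}$. But that range is exactly $[z_\t{prod},z_h]$: as $x$ moves across the slope, $h$ runs monotonically from $0$ to $v_h$. Since $\Phi'_{z_0}(z)=0$ for every $z\le z_0$ and you have placed $z_0>z_\t{prod}$, you get $k'_\t{conv}(z)=-\rho<0$ on all of $[z_\t{prod},z_0]$, and these values are attained at points with $h'\neq 0$. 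So the ``both summands have the same sign'' step fails. Pushing $z_0$ below $z_\t{prod}$ would rescue the sign, but then $a$ must be large and the perturbation is no longer $\mathcal C^1$-small, which breaks the later construction; it also contradicts Proposition~\ref{existence_f_g}, where $z_0$ is taken close to $z_h$.

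The fix is to stop trying to force a sign on $k'_\t{conv}$ along the slope and instead use smallness. On the slope $|x-\tfrac{\pi}{2}|\ge x^h_\t{flat}$, one has $|\cos(x)|\ge\sin(x^h_\t{flat})>0$, while $|k'_\t{conv}(z_h(x))\sin(x)h'(x)|$ is small because $\|h\|_{\mathcal C^\infty}\ll 1$ and, with the paper's choices, $\|k'_\t{conv}\|_\infty\le\rho+\rho_0$ on $[z_\t{prod},z_h]$. Hence $T(x)\neq 0$ on the slope; this is exactly the mechanism behind Fact~\ref{epsilon_H}. Your argument along $\Gamma=\{x=\tfrac{\pi}{2}\}$ only needs $k'_\t{conv}(z_h)>0$, which is precisely the condition $k'_\t{conv}(z_h)=\rho_0>0$ imposed in Proposition~\ref{existence_f_g}, so that part survives unchanged.
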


In the coordinates $(t,u,v)$, the associated contact form is not a convexification model in the sense of \cite{CGHH10}. We use a weakened version of convexification. Let $J_u$ be such that $[u_1,u_5]\subset J_u\subset I_u$. Two functions $f$ and $g$ from $I_t\times J_u\times I_v$ to $\mathbb R_+^*$ form a \emph{convexification pair} if  
\begin{enumerate}
 \item $f=1$ and $g(t,u,v)=1-\rho z_\t{prod}-\rho v$ near $S'$ and for $v\geq v'_\t{max}$;
 \item $f$ and $g$ do not depend on $u$ for $u\in[u_1,u_5]$;
 \item $\frac{\partial f}{\partial v}\geq 0$ and $\frac{\partial f}{\partial v}> 0$ near $(0,v_h)$;
 \item in a neighbourhood of $[u_1,u_5]$, in the planes $u=\t{cst}$, the vector field $X_g=\left(-\frac{\partial g}{\partial v},\frac{\partial g}{\partial t}\right)$ is negatively transverse to $S_h$ for $t>0$, positively transverse for $t<0$ and points toward the half-space $t<0$ for $t=0$ (see Figure \ref{figure_Reeb_S_h}). 
\end{enumerate}
\begin{figure}[here]
\begin{center}
 \includegraphics{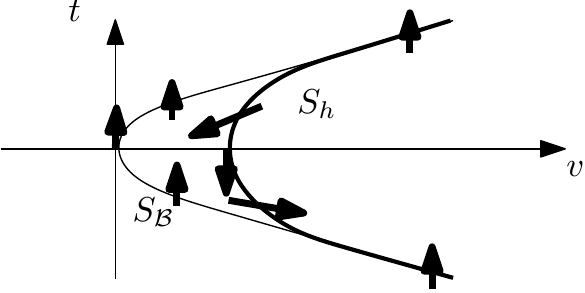}
\end{center}
 \caption{The surface $S_h$ and the Reeb vector field}\label{figure_Reeb_S_h}
\end{figure}

\begin{proposition}\label{existence_f_g} For $\rho$ and $C_\psi$ small enough, there exists $\epsilon_{z_0}>0$ such that for any small $\rho_0$ and any $z_h-\epsilon_{z_0}<z_0<z_h$, there exists a contact form $\alpha$ and a pair of convexification $(f,g)$ with $J_u=[u_\t{max}+u_\t{min}-u_+,u_+]$ satisfying
\begin{enumerate} 
 \item\label{cond_transv} $R_u\geq 0$, $R$  is positively transverse to $S_h$ for $t<0$, negatively transverse for $t>0$ and points toward the half-space $t<0$ for $t=0$; 
 \item\label{cond_sym_sigma} $\sigma^*\alpha=-\alpha$;
 \item\label{cond_Delta} $\alpha=\alpha_\t{prec}$ for $v\geq v'_\t{max}$ and $v\leq v_\Delta$;
 \item\label{cond_alpha} $\alpha=f(t,u,v)\d t+g(t,u,v)\d u $ on $I_t\times J_u\times I_v$;
 \item ${\psi^{-1}}^*\alpha=\alpha_\t{conv}$ and $k'_\t{conv}(z_h)=\rho_0$ in a neighbourhood of $\{\frac{\pi}{2}\}\times\left[y^+,\frac{2}{5}y_\t{std}\right]\times\{z_\text{prod}\}$.
\end{enumerate}
\end{proposition}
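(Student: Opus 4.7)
The plan is to work in the flow-box coordinates $(t,u,v)$ on $\mathcal U$, prescribe the pair $(f,g)$ directly so that $\alpha = f\,\d t + g\,\d u$ on $I_t\times J_u\times I_v$, and extend by $\alpha_\t{prec}$ elsewhere in $\mathcal B$. The parameter choice for condition (5) is straightforward: since $k'_\t{conv}(z_h) = -\rho + a\,\Phi_{z_0}(z_h)/(z_h-z_0)^2$ and the factor $\Phi_{z_0}(z_h)/(z_h-z_0)^2$ depends continuously on $z_0\in(z_h-\epsilon_{z_0},z_h)$ and sweeps $(0,\infty)$, for each small $\rho_0>0$ one solves for $a>0$ yielding $k'_\t{conv}(z_h)=\rho_0$.

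Next, I would transfer $\alpha_\t{conv}$ into the flow-box chart on the slice $t = 0$. Using Fact \ref{lemme_phi_u}, $\psi(0,u,v) = (\pi/2,\ y_\Sigma - u_\t{max} + u,\ z_\t{prod}+v)$, so $\psi^*\alpha_\t{conv}|_{t=0} = k_\t{conv}(z_\t{prod}+v)\,\d u$: the $\d v$-component vanishes since $\cos(\pi/2)=0$ and $\partial_v\psi_y(0,u,v) = 0$. I would then define $(f,g)$ on $I_t \times J_u \times I_v$ by setting $g(0,u,v) = k_\t{conv}(z_\t{prod}+v)$, choosing $f(0,u,v)$ to match the $\d t$-coefficient of the pullback, and extending smoothly in $t$ so that $(\psi^{-1})^*(f\,\d t + g\,\d u) = \alpha_\t{conv}$ on a neighborhood of $\{\pi/2\}\times[y^+, \tfrac{2}{5}y_\t{std}]\times\{z_\t{prod}\}$ and interpolates to $(1,\ 1-\rho z_\t{prod}-\rho v)$ as $|t|\to t'_\t{max}$. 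The $u$-independence on $[u_1,u_5]$ follows from the $y$-invariance of $\alpha_\t{conv}$ combined with the $u$-translation property of $\psi$ (Fact \ref{lemme_phi_u}), and condition (\ref{cond_sym_sigma}) is imposed by extending $(f,g)$ from $[0, t'_\t{max}]$ to $[-t'_\t{max}, 0]$ via $\sigma : (t,u,v) \mapsto (-t,\ u_\t{max}+u_\t{min}-u,\ v)$, forcing $f\circ\sigma = f$, $g\circ\sigma = g$ and hence $\sigma^*(f\,\d t+g\,\d u) = -(f\,\d t+g\,\d u)$.

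The properties of a convexification pair and the transversality in (\ref{cond_transv}) then follow from a direct Reeb computation: $R_\alpha$ has $R_u = -\partial_v f/(f\partial_v g - g\partial_v f)$, which is non-negative since $\partial_v f \geq 0$ (from the shape of $k_\t{conv}$) and the denominator equals $-\rho$ at the unperturbed baseline and stays negative for small perturbations. The $(t,v)$-projection of $R_\alpha$ is parallel to $X_g = (-\partial_v g,\ \partial_t g)$, so the transversality of $X_g$ to $S_h$ in property (4) of the pair --- obtained by pulling back via $\psi$ the transversality of $R_{\alpha_\t{conv}}$ to $\tilde S_h$ provided by Fact \ref{fact_adapted} --- gives the transversality of $R_\alpha$ to $S_h$ required in (\ref{cond_transv}). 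Conditions (\ref{cond_Delta}) and (\ref{cond_alpha}) hold by construction.

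The main obstacle will be the simultaneous control of the contact condition $f\partial_v g - g\partial_v f \neq 0$, the matching to $\alpha_\t{conv}$, and property (4) of the convexification pair. The smallness of $C_\psi$ is needed so that the pullback of $\alpha_\t{conv}$ near $t = 0$ is close to the affine baseline in $\mathcal C^1$; fixing $\rho$ first and taking $\rho_0$ small compared to $\rho$ then ensures that the perturbation of the sign-controlled quantity $f\partial_v g - g\partial_v f$ remains bounded away from zero throughout $I_t\times J_u\times I_v$. The interpolation in $t$ away from the central slice must also preserve property (4) at $S_h$, which follows from Construction \ref{construction_h} since $S_h$ has the $V$-shape given by Lemma \ref{S_h} and the interpolation takes place outside the flat top of $\tilde S_h$.
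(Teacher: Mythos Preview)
Your proposal has a genuine gap at the heart of the construction: you try to write $\alpha = f\,\d t + g\,\d u$ with no $\d v$ component on all of $I_t\times J_u\times I_v$ \emph{and} simultaneously satisfy condition~(5). These two requirements are incompatible. By Fact~\ref{lemme_psi_alpha_2}, the pullback $\psi^*\alpha_\t{conv}$ has the form $f_1\,\d t+g_1\,\d u+h_1\,\d v$ with
\[
h_1(t,v)=\big(k_\t{conv}(\psi_z)-k_\t{inf}(\psi_z)\big)\sin(\psi_x)\,\frac{\partial\psi_y}{\partial v},
\]
and this is nonzero whenever $\psi_z>z_0$ and $t\neq 0$ (indeed $\partial_v\psi_y$ vanishes only at $t=0$, as a quick computation of the linearised Reeb flow shows). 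So no form $f\,\d t+g\,\d u$ can equal $\psi^*\alpha_\t{conv}$ on an open set that reaches the bump region $z>z_0$; your claim ``$(\psi^{-1})^*(f\,\d t+g\,\d u)=\alpha_\t{conv}$ on a neighbourhood'' is therefore only achievable on the trivial region $\psi_z\le z_0$, which is not what condition~(5) is for (it must glue smoothly to the extension in Construction~\ref{construction_prolongement}). Relatedly, your global prescription ``$\alpha=f\,\d t+g\,\d u$ on $J_u$, $\alpha=\alpha_\t{prec}$ elsewhere'' is not even smooth at $u=u^+$: at $t=0$ and $v>z_0-z_\t{prod}$ you have $g=k_\t{conv}(z_\t{prod}+v)\neq k_\t{inf}(z_\t{prod}+v)$, whereas $\alpha_\t{prec}$ gives $g=1-\rho z_\t{prod}-\rho v$.

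The paper's resolution is the step you are missing: one keeps the full pullback $f_1\,\d t+g_1\,\d u+h_1\,\d v$ near $u=u^+$ and interpolates in the $u$-direction via $\alpha=f_1\,\d t+g_1\,\d u+p(u)\,h_1\,\d v$ with $p$ going from $1$ to $0$. Where $p=1$ one recovers $\psi^*\alpha_\t{conv}$ exactly (condition~(5)); where $p=0$ one has the pure $f\,\d t+g\,\d u$ form required by condition~(4). Your ``interpolation in $t$'' is in the wrong direction and cannot kill the $\d v$ term. The substantive work is then to check that the interpolated form remains contact and that its Reeb field is transverse to $S_h$ with the correct signs --- this is the content of Lemmas~\ref{lemme_alpha_contact} and~\ref{lemme_alpha_transverse}, which require the careful $O(\rho)$, $O(C_\psi)$, $O((\psi_z-z_0)^2)$ estimates that you summarise only as ``from the shape of $k_\t{conv}$'' and ``stays negative for small perturbations''. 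In particular, the transversality to the \emph{flat top} of $S_h$ does not follow from Fact~\ref{fact_adapted} alone once you have modified the $\d v$ component; the paper handles this by a separate argument involving a parameter $s_0=\rho_0/\rho$ and a comparison with the already-established $p=1$ case.
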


By definition $\Vert k_\t{conv} \Vert_{\mathcal C^1}\leq \rho_0$. The end of Section \ref{subsubsection_model} is devoted to the proof of Proposition \ref{existence_f_g}. Before extending $\alpha_\t{conv}$ for all $u$, we study some properties of $\alpha_\t{conv}$ in the $(t,u,v)$-coordinates.

\begin{fact}\label{lemme_d_psi}
For all $(t,u,v)\in I_t\times [u^-,u^+]\times I_v$, we have
\begin{align*}
\d\psi(t,u,v)&=\left(\begin{array}{ccc}
R_x(\psi(t,u,v))&0& \frac{\partial \psi_x}{\partial v}(t,u,v)\\
R_y(\psi(t,u,v))&1&\frac{\partial \psi_y}{\partial v}(t,u,v)\\
R_z(\psi(t,u,v))&0&\frac{\partial \psi_z}{\partial v}(t,u,v)
\end{array}\right),\\
\d\psi(0,u,v)&=\left(\begin{array}{ccc}
R_x(\frac{\pi}{2},u,v)&0& 0\\
R_y(\frac{\pi}{2},u,v)&1&0\\
R_z(\frac{\pi}{2},u,v)&0&1
\end{array}\right).
\end{align*}
\end{fact}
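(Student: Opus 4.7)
The plan is to read off each column of the Jacobian from data that has already been established: the explicit form of $\alpha_{\mathrm{prec}}$ in flow-box coordinates (Proposition \ref{coordonnees_t_u_tau}(\ref{c_4})) gives the first column, and the two explicit identities for $\psi$ in Fact \ref{lemme_phi_u} give the second and third columns.

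For the first column: Proposition \ref{coordonnees_t_u_tau}(\ref{c_4}) asserts $\alpha_{\mathrm{prec}}=\d t+(1-\rho z_\t{prod}-\rho v)\,\d u$ in the $(t,u,v)$-chart. The Reeb vector field of this $1$-form is precisely $\partial_t$, so $\partial_t$ is $\psi$-related to $R_{\alpha_{\mathrm{prec}}}$. Hence
\[
\frac{\partial \psi}{\partial t}(t,u,v)=R\bigl(\psi(t,u,v)\bigr),
\]
which is the first column of the general matrix. Specialising at $t=0$ and using the notational convention $R(\tfrac{\pi}{2},u,v):=R(\psi(0,u,v))$ (consistent with the second identity of Fact \ref{lemme_phi_u}) yields the first column of the second matrix.

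For the second column: the third identity of Fact \ref{lemme_phi_u} states, for $u\in[u^-,u^+]$,
\[
\psi(t,u,v)=\psi(t,0,v)+\bigl(0,\,y_\Sigma-u_\t{max}+u,\,0\bigr).
\]
Differentiating in $u$ gives $\partial_u\psi=(0,1,0)^{T}$, which is the middle column of both matrices; the range of $u$ assumed in the fact is exactly what is needed for this identity to be applicable. For the third column at general $(t,u,v)$ there is nothing to do: the entries are just the partial derivatives written in the statement. At $t=0$ we invoke the second identity of Fact \ref{lemme_phi_u}, namely $\psi(0,u,v)=(\tfrac{\pi}{2},y_\Sigma-u_\t{max}+u,z_\t{prod}+v)$; differentiating in $v$ gives $\partial_v\psi(0,u,v)=(0,0,1)^{T}$.

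There is no real obstacle here: the fact is a repackaging of previously proved information, and the only point requiring a line of justification is that the hypothesis $u\in[u^-,u^+]$ is precisely the range in which the relevant parts of Fact \ref{lemme_phi_u} (and hence the $u$-translation invariance of $\psi$ in the $y$-direction) hold, so both columns two and three are valid on the stated domain.
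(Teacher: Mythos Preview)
Your proof is correct and follows exactly the approach the paper intends; the statement is labeled a ``Fact'' and left unproved in the paper, so your argument simply fills in the routine details from Proposition~\ref{coordonnees_t_u_tau}(\ref{c_4}) and Fact~\ref{lemme_phi_u}. One small remark: in the third identity of Fact~\ref{lemme_phi_u} the term $\psi(t,0,v)$ is a mild abuse (since $0$ need not lie in $[u^-,u^+]$), but as you use it only to conclude that $\partial_u\psi=(0,1,0)^T$ on $[u^-,u^+]$ this causes no issue.
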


\begin{fact}\label{lemme_psi_alpha}
For all $(t,u,v)\in I_t\times [u^-,u^+]\times I_v$, we have
\[\psi^*\alpha_\t{prec}(t,u,v)=\d t+k_0\d u \t{ and } 
\left\{\begin{array}{l}
k_\t{inf}(\psi_z)\sin(\psi_x)=k_0 \\
k_\t{inf}(\psi_z)\sin(\psi_x)\frac{\partial \psi_y}{\partial v}+\cos(\psi_x)\frac{\partial \psi_z}{\partial v}=0
\end{array}\right.\]
where $k_0(v)=k_\t{inf}(v+z_\t{prod})$.
\end{fact}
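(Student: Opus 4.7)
The plan is to compute $\psi^*\alpha_\t{prec}$ in two different ways and match the resulting $1$-form coefficients.

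First, condition (\ref{c_4}) of Proposition \ref{coordonnees_t_u_tau} asserts that, in the flow-box coordinates $(t,u,v)$ induced by $\psi$, the contact form satisfies $\alpha_\t{prec}=\d t+(1-\rho z_\t{prod}-\rho v)\d u$. Since $k_\t{inf}(z)=1-\rho z$ on the relevant slab, we have $1-\rho z_\t{prod}-\rho v=k_\t{inf}(v+z_\t{prod})=k_0(v)$, which gives the desired identity $\psi^*\alpha_\t{prec}=\d t+k_0\,\d u$. Note that $u\in[u^-,u^+]$ is exactly the range over which the coordinates have been put in this normal form.

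Second, on $\psi(I_t\times[u^-,u^+]\times I_v)$ we have the explicit expression $\alpha_\t{prec}=k_\t{inf}(z)\sin(x)\d y+\cos(x)\d z$ (from equation~(\ref{eq_V}) in the setup of $\mathcal V$). I would pull this back directly: $\psi^*\alpha_\t{prec}=k_\t{inf}(\psi_z)\sin(\psi_x)\,\d\psi_y+\cos(\psi_x)\,\d\psi_z$. Using Fact~\ref{lemme_d_psi} to expand $\d\psi_y$ and $\d\psi_z$ in the basis $(\d t,\d u,\d v)$, the coefficient of $\d t$ is $k_\t{inf}(\psi_z)\sin(\psi_x)R_y+\cos(\psi_x)R_z=\alpha_\t{prec}(R_{\alpha_\t{prec}})\circ\psi=1$ (automatic, since $R_{\alpha_\t{prec}}=\partial_t$ in these coordinates), the coefficient of $\d u$ is $k_\t{inf}(\psi_z)\sin(\psi_x)$, and the coefficient of $\d v$ is $k_\t{inf}(\psi_z)\sin(\psi_x)\frac{\partial\psi_y}{\partial v}+\cos(\psi_x)\frac{\partial\psi_z}{\partial v}$.

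Finally, equating the two expressions for $\psi^*\alpha_\t{prec}$ and matching coefficients of $\d u$ and $\d v$ yields the two stated identities: $k_\t{inf}(\psi_z)\sin(\psi_x)=k_0$ and $k_\t{inf}(\psi_z)\sin(\psi_x)\frac{\partial\psi_y}{\partial v}+\cos(\psi_x)\frac{\partial\psi_z}{\partial v}=0$. There is no genuine obstacle here — the statement is really just a bookkeeping consequence of Proposition~\ref{coordonnees_t_u_tau} combined with the pullback formula — so the only care required is to verify that Fact~\ref{lemme_d_psi} applies on the full range $[u^-,u^+]$ and that the simplified expression of $\alpha_\t{prec}$ holds throughout $\psi(I_t\times[u^-,u^+]\times I_v)$, both of which were arranged in Section~\ref{subsection_convexification_coordinates}.
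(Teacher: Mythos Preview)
Your argument is correct and is exactly the intended verification: the paper states this as a Fact without proof, and your computation---pulling back the explicit form $(1-\rho z)\sin(x)\,\d y+\cos(x)\,\d z$ via Fact~\ref{lemme_d_psi} and matching against the flow-box expression $\d t+k_0\,\d u$ from Proposition~\ref{coordonnees_t_u_tau}(\ref{c_4})---is the natural way to unpack it. One minor point: the displayed equation you cite as~(\ref{eq_V}) is actually the containment $\mathcal V\subset\cdots$; the explicit form of $\alpha$ you use is the (unlabelled) line immediately following it in the same \texttt{gather} block.
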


\begin{fact}\label{lemme_psi_alpha_2}
For all $(t,u,v)\in I_t\times [u^-,u^+]\times I_v$, we have
\begin{gather*}
  \begin{split}
  \psi^*\alpha_\t{conv}=\Bigg(\frac{k_\t{conv}(\psi_z)}{k_\t{inf}(\psi_z)}\sin^2(\psi_x)+\cos^2(\psi_x) \Bigg)\d t+k_\t{conv}(\psi_z)\sin(\psi_x)\d u+\\ \Bigg(k_\t{conv}(\psi_z)\sin(\psi_x)\frac{\partial \psi_y}{\partial v}+\cos(\psi_x)\frac{\partial \psi_z}{\partial v} \Bigg)\d\tau,
  \end{split}\\
   \begin{split}
  \psi^*\alpha_\t{conv}=\Bigg(\bigg(\frac{k_\t{conv}(\psi_z)}{k_\t{inf}(\psi_z)}-1\bigg)\sin^2(\psi_x)+1 \Bigg)\d t+\frac{k_\t{conv}(\psi_z)}{k_\t{inf}(\psi_z)}k_0\d u+\\ \Bigg(\big(k_\t{conv}(\psi_z)-k_\t{inf}(\psi_z)\big)\sin(\psi_x)\frac{\partial \psi_y}{\partial v}\Bigg)\d\tau.
 \end{split}
\end{gather*}
\end{fact}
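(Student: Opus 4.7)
The plan is to apply $\psi^*$ to the explicit expression $\alpha_\t{conv}=k_\t{conv}(z)\sin(x)\,\d y+\cos(x)\,\d z$ using the Jacobian of $\psi$ given by Fact \ref{lemme_d_psi}, and then rearrange the resulting coefficients using the two identities of Fact \ref{lemme_psi_alpha}. Recall that on the slab $I_t\times[u^-,u^+]\times I_v$, the contact form $\alpha_\t{prec}$ reduces to $k_\t{inf}(z)\sin(x)\,\d y+\cos(x)\,\d z$ (by the formula after (\ref{eq_V})), so $\alpha_\t{conv}$ is the same $1$-form with $k_\t{inf}$ replaced by $k_\t{conv}$.

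First I read off from Fact \ref{lemme_d_psi} the pullbacks
\[\psi^*\d y=R_y\,\d t+\d u+\tfrac{\partial\psi_y}{\partial v}\,\d v,\qquad \psi^*\d z=R_z\,\d t+\tfrac{\partial\psi_z}{\partial v}\,\d v,\]
with $R_y, R_z$ evaluated at $\psi(t,u,v)$, and plug them into $\alpha_\t{conv}$. The $\d u$ coefficient is immediately $k_\t{conv}(\psi_z)\sin(\psi_x)$ and the $\d v$ coefficient is $k_\t{conv}(\psi_z)\sin(\psi_x)\tfrac{\partial\psi_y}{\partial v}+\cos(\psi_x)\tfrac{\partial\psi_z}{\partial v}$. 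For the $\d t$ coefficient I substitute the explicit Reeb field of $\alpha_\t{prec}$ given by (\ref{eq_R_inf}), namely $R_y=\sin(\psi_x)/k_\t{inf}(\psi_z)$ and $R_z=\cos(\psi_x)$, to obtain $\tfrac{k_\t{conv}(\psi_z)}{k_\t{inf}(\psi_z)}\sin^2(\psi_x)+\cos^2(\psi_x)$. This is the first displayed expression.

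To deduce the second, I rewrite the $\d t$ coefficient as $\bigl(\tfrac{k_\t{conv}(\psi_z)}{k_\t{inf}(\psi_z)}-1\bigr)\sin^2(\psi_x)+1$ by adding and subtracting $\sin^2(\psi_x)$. In the $\d u$ coefficient I use the first identity of Fact \ref{lemme_psi_alpha}, $k_\t{inf}(\psi_z)\sin(\psi_x)=k_0$, to rewrite $k_\t{conv}(\psi_z)\sin(\psi_x)=\tfrac{k_\t{conv}(\psi_z)}{k_\t{inf}(\psi_z)}\,k_0$. In the $\d v$ coefficient the second identity of Fact \ref{lemme_psi_alpha} gives $\cos(\psi_x)\tfrac{\partial\psi_z}{\partial v}=-k_\t{inf}(\psi_z)\sin(\psi_x)\tfrac{\partial\psi_y}{\partial v}$, so the two terms collapse to $\bigl(k_\t{conv}(\psi_z)-k_\t{inf}(\psi_z)\bigr)\sin(\psi_x)\tfrac{\partial\psi_y}{\partial v}$. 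The computation is entirely routine; the only subtlety is that both identities of Fact \ref{lemme_psi_alpha} and the simplified Reeb formula (\ref{eq_R_inf}) are valid precisely on the slab $[u^-,u^+]$ on which the statement is made, so no genuine obstacle arises.
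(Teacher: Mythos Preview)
Your proof is correct and is precisely the intended computation: the paper states Fact~\ref{lemme_psi_alpha_2} without proof, as it is the routine pullback computation that follows from Facts~\ref{lemme_d_psi} and~\ref{lemme_psi_alpha} together with the explicit Reeb field~(\ref{eq_R_inf}). Your argument supplies exactly those details.
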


In coordinates $(t,u,v)$,
$\alpha_\t{conv}=f_1(t,v)\d t+g_1(t,v)\d u+h_1(t,v)\d v$
where $f_1$ $g_1$ and $h_1$ do not depend on $u$. Note that $f_1(-t,v)=f_1(t,v)$, $g_1(-t,v)=g_1(t,v)$ and $h_1(-t,v)=-h_1(t,v)$ (Fact \ref{lemme_phi_u}).
Fix $u^-<u'_0<u'_1<u^+$ and $p : [u'_0,u'_1]\to\mathbb R$ such that $p=0$ near $u'_0$ and $p=1$ near $u'_1$. Let \[\alpha=f_1(t,v)\d t+g_1(t,v)\d u+p(u)h_1(t,v)\d v.\]

\begin{lemma}\label{lemme_alpha_contact}
For $\rho$ and $C_\psi$ small and for $z_0$ close to $z_h$, $\alpha$ is a contact form, $\frac{\partial f_1}{\partial v}\geq 0$ and $\frac{\partial f_1}{\partial v}> 0$ near $(0,v_h)$;
\end{lemma}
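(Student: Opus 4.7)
The strategy is to expand $\alpha \wedge d\alpha$ in the basis $dt \wedge du \wedge dv$ and compare with the contact condition for $\psi^{*}\alpha_{\t{conv}}$. Since $f_1, g_1, h_1$ do not depend on $u$,
\[d\alpha = \frac{\partial g_1}{\partial t}\, dt \wedge du + \Bigl(p(u) \frac{\partial h_1}{\partial t} - \frac{\partial f_1}{\partial v}\Bigr) dt \wedge dv + \Bigl(p'(u) h_1 - \frac{\partial g_1}{\partial v}\Bigr) du \wedge dv,\]
which gives $\alpha \wedge d\alpha = [\,p'(u) f_1 h_1 + A(p(u))\,]\, dt \wedge du \wedge dv$, where $A(p) = (1-p) C_0 + p\, C_1$ is affine in $p$, with $C_0 = g_1 \partial_v f_1 - f_1 \partial_v g_1$ and $C_1 = C_0 + h_1\, \partial_t g_1 - g_1\, \partial_t h_1$. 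The crucial observation is that $C_1$ coincides with the coefficient of $dt\wedge du\wedge dv$ in $\psi^{*}\alpha_{\t{conv}} \wedge d(\psi^{*}\alpha_{\t{conv}})$, so $C_1 > 0$ because $\alpha_{\t{conv}}$ is contact.

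To analyse $C_0$, I would set $\phi(z) = a\Phi_{z_0}(z)/k_{\t{inf}}(z) \geq 0$, so that Fact \ref{lemme_psi_alpha_2} yields $f_1 = 1 + \phi(\psi_z)\sin^2(\psi_x)$ and $g_1 = (1+\phi(\psi_z))\, k_0(v)$ with $k_0'(v) = -\rho$. A direct expansion in which the two $\phi'$-terms partially cancel leads to
\[C_0 = \rho(1+\phi)\bigl(1+\phi\sin^2(\psi_x)\bigr) - k_0\,\phi'(\psi_z)\,\frac{\partial \psi_z}{\partial v}\cos^2(\psi_x) + 2(1+\phi) k_0 \phi\, \sin(\psi_x)\cos(\psi_x)\, \frac{\partial \psi_x}{\partial v}.\]
At $t=0$, Fact \ref{lemme_phi_u} gives $\psi_x = \pi/2$, so both correction terms vanish and $C_0(0,v) = \rho(1+\phi)^2 \geq \rho > 0$. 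For small $C_\psi$, the quantities $\cos(\psi_x)$ and $\partial_v\psi_x$ are uniformly small on $I_t \times J_u \times I_v$, so the two correction terms remain dominated by $\rho(1+\phi)^2$. Hence $C_0 > 0$ uniformly, and convex combination gives $A(p) > 0$. To handle the extra term $p'(u) f_1 h_1$, I use $h_1 = a\Phi_{z_0}(\psi_z)\sin(\psi_x)\partial_v\psi_y$ together with the constraint $a\Phi_{z_0}(z_h) = (\rho+\rho_0)(z_h-z_0)^2$: as $z_0 \to z_h^{-}$ this makes $a\Phi_{z_0}(\psi_z)$ small on the relevant range of $\psi_z$ (which stays near $z_\t{prod}+v$ by smallness of $C_\psi$), so $p'(u)f_1 h_1$ is dominated by the uniform lower bound on $A(p)$, establishing $\alpha\wedge d\alpha>0$.

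For the sign of $\partial_v f_1$, I differentiate the formula $f_1 = 1 + \phi(\psi_z)\sin^2(\psi_x)$:
\[\frac{\partial f_1}{\partial v} = \phi'(\psi_z)\, \frac{\partial \psi_z}{\partial v}\, \sin^2(\psi_x) + 2\phi(\psi_z)\sin(\psi_x)\cos(\psi_x)\, \frac{\partial \psi_x}{\partial v}.\]
Since $\Phi_{z_0}$ is non-decreasing and $k_{\t{inf}}$ is positive and decreasing, $\phi'\geq 0$, and Fact \ref{lemme_d_psi} together with smallness of $C_\psi$ gives $\partial_v\psi_z \approx 1 > 0$, so the first term is non-negative. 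The second term is of order $C_\psi$ through $\partial_v\psi_x$, hence negligible, and so $\partial_v f_1 \geq 0$. At $(t,v) = (0,v_h)$, Fact \ref{lemme_phi_u} yields $\psi_x = \pi/2$, $\psi_z = z_h > z_0$, so $\cos(\psi_x) = 0$, $\partial_v\psi_z = 1$ and $\phi'(z_h) > 0$, giving $\partial_v f_1(0,v_h) > 0$.

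The main obstacle is the control of $p'(u) f_1 h_1$: $p'$ is fixed and potentially large, while $a$ in $h_1$ actually grows as $z_0 \to z_h^{-}$ (the constraint forces $a = (\rho+\rho_0)(z_h-z_0)^2/\Phi_{z_0}(z_h)$). The argument succeeds only because $h_1$ involves $\Phi_{z_0}(\psi_z)$ at values of $\psi_z$ whose range is limited by smallness of $C_\psi$ and the restriction $v \leq v'_\t{max}$, and because the ratio $\Phi_{z_0}(\psi_z)/\Phi'_{z_0}(z_h)$ can be made small by quantifying the proximity of $z_0$ to $z_h$ in conjunction with the width of the interval $[v_\Delta,v'_\t{max}]$ — this careful tripartite balance between $\rho$, $C_\psi$ and $z_h-z_0$ is the delicate point of the proof.
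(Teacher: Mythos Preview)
Your convex-combination decomposition $p'(u)f_1h_1 + (1-p)C_0 + pC_1$ is a genuinely different organisation from the paper's direct $O$-expansion, and the observation that $C_1>0$ comes for free (it is the contact coefficient of $\psi^*\alpha_{\t{conv}}$) is a clean shortcut the paper does not exploit. The paper instead expands every term of the contact coefficient uniformly in the form $\rho + A\Phi'_{z_0}(\psi_z)\bigl(O(\rho)+O(C_\psi)+(\psi_z-z_0)^2O(1)\bigr)$ and then invokes the single bound $A\Phi'_{z_0}(z)<2\rho$ (coming from $k'_{\t{conv}}(z_h)=\rho_0<\rho$) to conclude.

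There is, however, a real gap in your argument for $C_0>0$. You claim that for small $C_\psi$ the two correction terms are dominated by $\rho(1+\phi)^2$, but the second correction $-k_0\,\phi'(\psi_z)\,\partial_v\psi_z\,\cos^2(\psi_x)$ contains $\phi'$, and $\phi'$ involves $a\Phi'_{z_0}$. Since $a$ is determined by the constraint and blows up as $z_0\to z_h^-$, you cannot bound $\phi'$ simply by making $C_\psi$ small: the coordinates $(t,u,v)$ (hence $C_\psi$) are fixed \emph{before} $z_0$ is chosen, so you may not let $C_\psi\to 0$ after fixing $\phi'$. What actually saves the estimate is precisely the paper's key observation: the constraint $a\Phi'_{z_0}(z_h)=\rho+\rho_0<2\rho$, together with the monotonicity of $\Phi'_{z_0}$ on $(z_0,z_0+\tfrac12)$, gives $a\Phi'_{z_0}(\psi_z)<2\rho$ on the relevant range, hence $\phi'=O(\rho)$ \emph{uniformly} in $z_0$. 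You invoke this constraint later when estimating $h_1$, but it is equally essential for $C_0$; without it your domination claim fails. The same remark applies, in milder form, to your proof of $\partial_v f_1\ge 0$: saying the second term is ``of order $C_\psi$ hence negligible'' is not enough, since the first term $\phi'\partial_v\psi_z\sin^2(\psi_x)$ can itself be arbitrarily small --- what you need is that $\phi/\phi'\approx(\psi_z-z_0)^2$ is small, so that the second term is dominated by the first wherever the first is nonzero.

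Once you insert the bound $\phi'=O(\rho)$ into your $C_0$ estimate, your route and the paper's converge: both reduce to $\rho+O(\rho)\cdot o(1)>0$. Your decomposition is arguably more conceptual (half the inequality is structural), while the paper's uniform expansion is more mechanical but makes the single controlling quantity $A\Phi'_{z_0}$ visible from the outset.
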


\begin{proof}
The differential of $\alpha$ is
\[\d \alpha=\frac{\partial f_1}{\partial v}\d v\wedge\d t+\frac{\partial g_1}{\partial v}\d v\wedge\d u+\frac{\partial g_1}{\partial t}\d t\wedge\d u+p'h_1\d u\wedge\d v+p \frac{\partial h_1}{\partial t}\d t\wedge\d v.\]
The contact condition is
\[\frac{\partial f_1}{\partial v}g_1-\frac{\partial g_1}{\partial v}f_1+ph_1\frac{\partial g_1}{\partial t}\d t+p'h_1f_1-p \frac{\partial h_1}{\partial t}g_1>0.\]
Without loss of generality we may assume that the ranges of $f_1$, $g_1$ and $h_1$ are in $\left[\frac{1}{2},2\right]$. In what follows, the bounds associated to the notation $O$ are uniform for all convexification models. By definition, it holds that
\begin{align*}
k_\t{conv}(z)-k_\t{inf}(z)&=A\Phi_{z_0}(z)=(z-z_0)^2A\Phi'_{z_0}(z),\\
\frac{k_\t{conv}}{k_\t{inf}}(z)-1&=(z-z_0)^2A\Phi'_{z_0}(z)O(1),\\
\left(\frac{k_\t{conv}}{k_\t{inf}}\right)'(z)&=A\Phi'_{z_0}(z)\left(\frac{1}{k_\t{inf}(z)}+\frac{\rho(z-z_0)^2}{\big( k_\t{inf}(z)\big)^2}\right).
\end{align*}
Thus, we obtain
\begin{align*}
f_1(t,v)&=1+A\Phi'_{z_0}(\psi_z)(\psi_z-z_0)^2 O(1),\\
\frac{\partial f_1}{\partial v}&=A\Phi'_{z_0}(\psi_z)\left(1+O(\rho)+O(C_\psi)+(\psi_z-z_0)^2O(1)\right).
\end{align*}
Therefore for $\rho$, $C_\psi$, $z_h-z_0$ small enough, we have $\frac{\partial f_1}{\partial v}\geq 0$ and $\frac{\partial f_1}{\partial v}(0,v_h)>0$.
Similarly, we obtain
\begin{align*}
g_1(t,v)&=1+O(\rho)+(z-z_0)^2 O(1)+O(C_\psi),\\
\frac{\partial g_1}{\partial v}&=-\rho+A\Phi'_{z_0}(\psi_z)\left(1+(\psi_z-z_0)^2 O(1)+O(\rho)+O(C_\psi)\right),\\
\frac{\partial g_1}{\partial t}(t,v)&=A\Phi'_{z_0}(\psi_z)O(C_\psi),\\
h_1(t,v)&=A\Phi'_{z_0}(\psi_z)(\psi_z-z_0)^2 O(1),\\
\frac{\partial h_1}{\partial t}(t,v)&=A\Phi'_{z_0}(\psi_z)\left((\psi_z-z_0)^2 O(1)+O(C_\psi)\right),
\end{align*}
and the contact condition is
\begin{equation}
\rho+A\Phi'_{z_0}(\psi_z)\left(O(\rho)+(\psi_z-z_0)^2 O(1)+O(C_\psi)\right)>0.
\end{equation}
Yet $A\Phi'_{z_0}(z)<2\rho$ as $k'_\t{conv}(z_h)=\rho_0<\rho$. Thus the contact condition is satisfied for $\rho$, $C_\psi$ and $z_h-z_0$ small enough.
\end{proof}

\begin{lemma}\label{lemme_alpha_transverse}
For $\rho$, $\rho_0$, $C_\psi$, $z_h-z_0$ small, $R_u\geq 0$ and $R$ is positively transverse to $S_h$ for $t<0$, negatively for $t>0$ and points toward the half-plane $t<0$ for $t=0$.
\end{lemma}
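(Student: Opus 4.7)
The plan is to solve the Reeb equations explicitly in the coordinates $(t,u,v)$. Writing $R=a\partial_t+b\partial_u+c\partial_v$ and applying $\iota_R\d\alpha=0$ to the three basic $2$-forms yields, with $D=\partial_v f_1-p\,\partial_t h_1$, $E=p'h_1-\partial_v g_1$, $F=\partial_t g_1$, the linear system $Dc=Fb$, $Fa=Ec$, $Da=Eb$. Combined with the normalisation $\alpha(R)=1$ this gives
\[
 R=\frac{1}{\Delta}\bigl(E\,\partial_t+D\,\partial_u+F\,\partial_v\bigr),
\]
where $\Delta=g_1\partial_v f_1-f_1\partial_v g_1+f_1p'h_1+p(h_1\partial_t g_1-g_1\partial_t h_1)$ is exactly the left-hand side of the contact condition of Lemma~\ref{lemme_alpha_contact}, hence positive. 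The claim $R_u\geq 0$ then reduces to $D\geq 0$, which is immediate from the estimates of $\partial_v f_1$ and $\partial_t h_1$ obtained in the proof of Lemma~\ref{lemme_alpha_contact}: both are $A\Phi'_{z_0}(\psi_z)$ times a bounded factor, the factor in $\partial_v f_1$ being $1+O(\rho)+O(C_\psi)+(\psi_z-z_0)^2 O(1)$ and the one in $\partial_t h_1$ being $O(C_\psi)+(\psi_z-z_0)^2 O(1)$.

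For the transversality conditions, I would exploit the fact that on the sub-region where $p\equiv 1$ the form $\alpha$ equals $\psi^*\alpha_\t{conv}$, so $R$ is the $\psi$-pull-back of $R_{\alpha_\t{conv}}$. By Fact~\ref{fact_adapted}, $\alpha_\t{conv}$ is adapted to $\tilde S_h$: the explicit formula $R_{\alpha_\t{conv},z}=\cos(x)$ shows that the tangency locus is the line $\{x=\pi/2,z=z_h\}$, and combined with the direction of the outward normal of the graph $z=z_\t{prod}+h(x,y)$ it identifies the halves $x<\pi/2$ and $x>\pi/2$ of $\tilde S_h$ as positively and negatively transverse, respectively. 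The symmetry $\psi_x(-t,\cdot,v)=\pi-\psi_x(t,\cdot,v)$ from Fact~\ref{lemme_phi_u}, together with $\partial_t\psi_x(0,u,v)>0$, maps these halves to $\{t<0\}$ and $\{t>0\}$ of $S_h$ and the tangency line to $\{t=0,v=v_h\}$. This gives the transversality statements inside $\{p\equiv 1\}$. Outside it, the difference $\alpha-\psi^*\alpha_\t{conv}=-(1-p)h_1\,\d v$ is $C^1$-small because $h_1=A\Phi'_{z_0}(\psi_z)((\psi_z-z_0)^2 O(1)+O(C_\psi))$, and since $R$ depends continuously on $\alpha$ and the transversality is strict off the tangency locus, the conclusion extends to all of $J_u\times I_v$.

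For the remaining claim, the symmetries of Fact~\ref{lemme_phi_u} make $\psi_z$ even in $t$ and $\partial_v\psi_y$ odd in $t$ (using also $\partial_v\psi_y(0,u,v)=0$), so $h_1$ is odd in $t$ and in particular $h_1(0,v)=0$; therefore $R_t(0,v)=-\partial_v g_1(0,v)/\Delta(0,v)$. But $\psi_z(0,u,v)=z_\t{prod}+v$ and $\sin(\psi_x(0,u,v))=1$ give $g_1(0,v)=k_\t{conv}(z_\t{prod}+v)$, so $\partial_v g_1(0,v_h)=k_\t{conv}'(z_h)=\rho_0>0$, and $R_t(0,v_h)<0$ as required. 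The main technical obstacle lies in the second paragraph: one must check quantitatively that the perturbation $(1-p)h_1\,\d v$ stays small enough, uniformly across the transition region of $p$, for the strict transversality on $\{p\equiv 1\}$ to survive; this is where the smallness of $C_\psi$, $\rho_0$, and $z_h-z_0$ is delicately used.
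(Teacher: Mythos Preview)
Your computation of $R$ and the argument for $R_u\geq 0$ are correct and match the paper. Likewise, your treatment of the claim at $t=0$ --- using the oddness of $h_1$ in $t$ to get $R_t(0,v)=-\partial_v g_1(0,v)/\Delta$ and then $\partial_v g_1(0,v_h)=k'_\t{conv}(z_h)=\rho_0>0$ --- is exactly how the paper proceeds (the paper's line ``The tangency condition for $t=0$ is automatically satisfied as $h_1(0,v)=0$'' encodes precisely this).

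The gap you flag is real, and your perturbation argument as stated does not close it. The point is that on the flat part of $\tilde S_h$ the transversality of $R_{\alpha_\t{conv}}$ degenerates linearly in $t$, and the perturbation $p'h_1$ --- though $\mathcal C^1$-small in absolute terms --- is \emph{also} of order $\rho$ after one tracks the constants (since $A\Phi'_{z_0}(z_h)=(1+s_0)\rho$ with $s_0=\rho_0/\rho$). So one cannot simply say ``strict transversality survives a small perturbation'': both the margin and the error scale like $\rho$, and one must compare their coefficients.

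The paper resolves this by a different decomposition. Rather than splitting according to where $p\equiv 1$, it splits $\tilde S_h$ into its non-flat and flat parts. On the non-flat part, Fact~\ref{epsilon_H} gives a fixed transversality margin $\epsilon^h_\t{pert}$ for $\alpha_\t{prec}$, and the whole of $\alpha$ is within $\rho_0$ of $\alpha_\t{prec}$ in $\mathcal C^1$. On the flat part the paper writes the transversality condition (for the $(t,v)$-projection $Y=(-\partial_v g_1+p'h_1,\partial_t g_1)$) explicitly as
\[
-\partial_v g_1+p'h_1-l'_\pm\,\partial_t g_1>0,
\]
sets $\rho_0=s_0\rho$, and observes that for $p'=0$ this reads $-s_0\rho+\rho(1+s_0)a(t,u,v)>0$ for a quantity $a$ independent of $s_0$. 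Since Fact~\ref{fact_adapted} guarantees this for every admissible $s_0$, one extracts a uniform lower bound $a>\tfrac{s_\t{max}}{1+s_\t{max}}$. Then for $s_0\leq s_\t{max}/2$ the margin stays at least $\tfrac{s_\t{max}}{2(1+s_\t{max})}\rho$, while the extra term $p'h_1=p'(u)(\psi_z-z_0)^2\sin(\psi_x)\partial_v\psi_y\cdot A\Phi'_{z_0}(\psi_z)$ is bounded by a constant times $(z_h-z_0)^2\rho$, hence can be made smaller than the margin by shrinking $z_h-z_0$.

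In short: your observation that the $(t,v)$-projection of $R$ differs from the $p'=0$ case only by $p'h_1$ is the right starting point and is what the paper uses too; what your write-up lacks is the quantitative comparison on the flat part, which requires introducing the ratio $s_0=\rho_0/\rho$ and using Fact~\ref{fact_adapted} at a \emph{fixed} auxiliary value $s_\t{max}$ to get an $s_0$-independent lower bound on the margin.
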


\begin{proof}
The component $R_u$ is positively collinear to 
\[\frac{\partial f_1}{\partial v}-p\frac{\partial h_1}{\partial t}=A\Phi'_{z_0}(\psi_z)\left(1+O(\rho)+O(C_\psi)+(\psi_z-z_0)^2O(1)\right).\]
Thus $R_u\geq 0$.
By $u$-invariance, we study the transversality properties in the planes $u=\t{cst}$. The Reeb vector field is positively collinear to
\[Y=\left(\begin{array}{c}
-\frac{\partial g_1}{\partial v}+p'(u)h_1\\
\frac{\partial g_1}{\partial t}
\end{array}\right) \] 
in the coordinates $(t,v)$.
The tangency condition for $t=0$ is automatically satisfied as $h_1(0,v)=0$. On the non flat part of $h$, the transversality conditions are satisfied for $\rho_0<\epsilon_h^\t{pert}$ as $\Vert k_\t{conv}-k_\t{inf}\Vert_{\mathcal C^1}\leq\rho_0$ (Fact \ref{epsilon_H}). We now prove the result in the flat part of $h$. The transversality condition is
\begin{equation}
-\frac{\partial g_1}{\partial v}+p'(u)h_1-l'_\pm\frac{\partial g_1}{\partial t}>0
\end{equation}
for $t\neq 0$ where $l_+$ and $l_-$ parametrise $S_h$.
Let $\rho_0=s_0\rho$. Then $A\Phi_{z_0}'(\psi_z)=(1+s_0)\rho$. 
For $p=1$, $Y$ satisfies the desired transversality conditions (Fact \ref{fact_adapted}). For $p'=0$ the transversality condition is
\begin{equation}\label{condition_transversalite}
-s_0\rho+\rho(1+s_0)a(t,u,v)>0
\end{equation}
where $a$ does not depend on $s_0$. There exists $s_\t{max}$ such that  (\ref{condition_transversalite}) is satisfied for $s_0\in(0,s_\t{max}]$. Thus $a> \frac{s_\t{max}}{1+s_\t{max}}$. For $z_0$ close to $z_h$, 
\begin{equation*}
 \left\vert p'(u)(\psi_z-z_0)^2\sin(\psi_x)\frac{\partial \psi_y}{\partial v}\right\vert<\frac{s_\t{max}}{2(1+s_\t{max})}.
\end{equation*}
The general transversality condition is
\begin{equation}\label{condition_transversalite'}
a(t,u,v)+p'(u)(\psi_z-z_0)^2\sin(\psi_x)\frac{\partial \psi_y}{\partial v}+s(a(t,u,v)-1)>0.
\end{equation}
For $s\leq\frac{s_\t{max}}{2}$, we obtain $a(t,u,v)+s(a(t,u,v)-1)>\frac{s_\t{max}}{2(1+s_\t{max})}$ and (\ref{condition_transversalite'}) is satisfied.
\end{proof}

\begin{proof}[Proof of Proposition \ref{existence_f_g}]
We choose $\rho$, $\rho_0$, $C_\psi$, $z_h-z_0$ small enough to apply Lemma \ref{lemme_alpha_contact} and Lemma \ref{lemme_alpha_transverse}. We extend $\alpha$ to $\mathcal U$ by $\alpha=f_1\d t+g_1\d u$ for $u\in J_u$ and $-\sigma^*\alpha$ for $u\in [u_\t{min},u_\t{min}+u_\t{max}-u^+]$.
It remains to prove that $\alpha=\alpha_\t{prec}$ for $v\geq v'_\t{max}$ and $v\leq v_\Delta$. The set where \[(f_1(t,v),g_1(t,v))\neq(1,1-\rho z_\t{prod}-\rho v)\] is contained between the surface
$S_{z_0}$ associated to the equation $z=z_0$ and $S_h$. The surface $S_{z_0}$ has properties similar to $S_h$. In particular, its $v$-coordinates are greater than $z_0-z_\t{prod}$. As $z_h-z_\t{prod}>v_\Delta$ (Construction \ref{construction_h}) $z_0-z_\t{prod}>v_\Delta$ for $z_h-z_0$ small enough.
Additionally, $S_{z_0}$ intersects $S_h$ in its non-flat part. Yet for $z_h-z_0$ small enough the intersection points are arbitrarily close to the endpoints of the flat part and the $v$-coordinates of the intersection points are smaller than $v'_\t{max}$ (Construction \ref{construction_h}).
\end{proof}

\subsubsection{Perturbed convexification}\label{subsubsection_perturbation}
The contact form $\alpha$ described in Proposition \ref{existence_f_g} is adapted to the boundary but does not give us the desired control on the Reeb flow. Let $\Sigma_\pm=\{(\pm t_\t{max}, u, v), u\in I_u, v \in I_v\}$ and $\Phi$ be the map induced by the Reeb flow of $\alpha_\t{conv}$ between $\Sigma_-$ and $\Sigma_+$.
\begin{proposition}\label{lemme_perturbation_2}
Let $(f,g)$ be a convexification pair given in Proposition \ref{existence_f_g}. Upon perturbing $\alpha$ near $I_t\times[u_1,u_5]\times I_v$ and we may also assume that
\begin{enumerate}
 \item\label{cond_Phi_1}\label{cond_Phi_2}  $\Phi_*\left(\mathcal C_p (V,B)\right)\subset \mathcal C_{\Phi(p)} (H,\epsilon_B)$ and $\Phi^{-1}_*\left(\mathcal C_{\Phi(p)} (V,B)\right)\subset \mathcal C_{p} (H,\epsilon_B)$  for all $p\in[u_2,u_3]\times I_v\cap\Phi^{-1}([u_4,u_5]\times I_v)$;
 \item $\Vert \d\Phi(p,v)\Vert>\frac{L^2}{\sqrt\eta}\Vert v\Vert$ and $\Vert \d\Phi^{-1}(\Phi(p),w)\Vert>\frac{L^2}{\sqrt\eta}\Vert w\Vert$ for all 
 $p\in[u_2,u_3]\times I_v\cap\Phi^{-1}([u_4,u_5]\times I_v)$ and $v\in\mathcal C_p(V,B)$;
 \item the return time in $[u_2,u_3]\times I_v$ is bounded by $\tau$.
\end{enumerate}
\end{proposition}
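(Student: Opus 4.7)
My plan is to analyse the unperturbed Reeb flow of $\alpha$ on the core slab $I_t\times[u_1,u_5]\times I_v$, extract the hyperbolic behaviour from the shearing near $S_h$, and introduce a small $u$-dependent perturbation only for robustness. On that slab, Proposition~\ref{existence_f_g} gives $\alpha=f(t,v)\,\d t+g(t,v)\,\d u$, so $\partial/\partial u$ is an infinitesimal contact symmetry and the Reeb flow commutes with translation in $u$. Consequently the flow preserves the fibres of the projection $(t,u,v)\mapsto(t,v)$, and descends to a planar flow along the kernel of $\d\alpha|_{\{u=\t{cst}\}}$. By condition~(\ref{cond_transv}) of Proposition~\ref{existence_f_g}, this planar flow enters $S_h$ transversely for $t<0$, exits for $t>0$, and is tangent at the apex $(0,v_h)$; every orbit starting in $\Sigma_-\cap([u_1,u_5]\times I_v)$ therefore crosses to $\Sigma_+$, and its transit time $T(v)$ tends to $+\infty$ as $v\to v_h$.

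Second, I would translate this into hyperbolic estimates for $\Phi$. By $u$-invariance
\[
\Phi(u,v)=(u+U(v),V(v)),\qquad \d\Phi=\begin{pmatrix}1 & U'(v)\\ 0 & V'(v)\end{pmatrix},
\]
where $U(v)=\int_{-t_\t{max}}^{+t_\t{max}} R_u\,\d s$. The amplitude $a$ of $\Phi_{z_0}$ in $k_\t{conv}$, the flat-part length $x^h_\t{flat}$, and the gap $z_h-z_0$ all govern how sharply $U(\cdot)$ climbs through a bounded range: by making the shoulder of $T(\cdot)$ steep one can simultaneously arrange that $U(v)$ takes values in $[u_4-u_3,u_5-u_2]$ on a sub-interval of $v$ and that $|U'(v)|$ exceeds any preassigned constant there, while $V'(v)$ stays bounded. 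Elementary linear algebra applied to the Jacobian above then yields $\Phi_*\mathcal C(V,B)\subset\mathcal C(H,\epsilon_B)$ and $\|\d\Phi\cdot w\|\geq(L^2/\sqrt\eta)\|w\|$ on $w\in\mathcal C(V,B)$, throughout the rectangle $[u_2,u_3]\times I_v\cap\Phi^{-1}([u_4,u_5]\times I_v)$; the symmetry $\sigma^*\alpha=-\alpha$ (condition (\ref{cond_sym_sigma}) of Proposition~\ref{existence_f_g}) conjugates $\Phi^{-1}$ to $\Phi$ and supplies the reverse inclusion and expansion. Since $U$ has been pinned to a bounded interval on the domain just described, the corresponding transit times are bounded; a further reduction of $a$ brings them below $\tau$.

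Third, the perturbation destroys the accidental $u$-invariance while keeping every estimate open. I would add a $\mathcal{C}^\infty$-small $u$-dependent term to $(f,g)$, compactly supported in the interior of $I_t\times[u_1,u_5]\times I_v$, chosen so that the contact condition persists (a computation analogous to Lemma~\ref{lemme_alpha_contact} with an extra small summand), the transversality to $S_h$ persists (Fact~\ref{epsilon_H}), the $\sigma$-symmetry is preserved, and $\alpha$ still coincides with $\alpha_\t{prec}$ outside $[u_1,u_5]$. Cone preservation, expansion, and the return-time inequality being open conditions, they all pass to the perturbed flow and give the three conclusions of the proposition.

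The main difficulty lies in step two: one must separate cleanly the singular planar dynamics near $(0,v_h)$ from its regular part in order to realise the prescribed values of $U'$, $V'$, and the transit time simultaneously on an entire rectangle rather than pointwise near $v_h$. This also requires checking that the parameter choices for $a$, $x^h_\t{flat}$ and $z_h-z_0$ are compatible with the smallness constraints already imposed by Propositions~\ref{decoupage_preconvexe} and~\ref{existence_f_g}.
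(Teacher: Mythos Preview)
Your high-level picture is correct: the $u$-invariance on $[u_1,u_5]$ gives a triangular Jacobian $\d\Phi=\begin{pmatrix}1 & U'(v)\\0 & V'(v)\end{pmatrix}$, and the whole game is to make $|U'(v)|$ large on the relevant $v$-window while bounding the transit time. The paper uses exactly this structure (see equations (\ref{Delta_u}) and (\ref{equation_delta'})). But the mechanism you propose for achieving large $|U'|$ is not the one the paper uses, and your argument has two concrete gaps.

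First, the paper does \emph{not} obtain large $|U'|$ by re-tuning the model parameters $a$, $x^h_{\t{flat}}$, $z_h-z_0$ of $\alpha_\t{conv}$. Those parameters are already constrained by Propositions~\ref{decoupage_preconvexe} and~\ref{existence_f_g}, and you yourself flag the compatibility issue in your last paragraph without resolving it. Instead, the paper directly modifies $(f,g)$ in the $(t,v)$-coordinates via Remark~\ref{remarque_f_g}: it installs a saddle of $g$ at $(0,v_c)$, stratifies the plane into regions $\Gamma_{[v_1,v_0]}\subset\Gamma_{[0,v_c]}$, and introduces a free parameter $C$ controlling $\partial^2 f/\partial v^2$ on $\Gamma_{[v_1,v_0]}$. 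Lemma~\ref{lemme_projection} then traps all orbits contributing to $\Phi$ inside $\Gamma_{[v_1,v_0]}$, and the explicit estimate $\tilde u(t_\t{max})-u(t_\t{max})\geq \tilde u-u+\tfrac{CM_g t_1}{2\rho^2}(\tilde v-v)$ gives $U'(v)\geq -B+\tfrac{CM_g}{2\rho^2}$, which is made large by taking $C$ large. Your step two asserts the analogous control from the singular behaviour of the transit time near $v_h$, but never verifies that the resulting $v$-window can be made to coincide with $\Phi^{-1}([u_4,u_5])$, nor that $V'$ stays bounded there; this is exactly the separation of singular and regular dynamics you identify as the main difficulty, and it is not carried out.

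Second, your return-time argument is wrong. You write that a ``further reduction of $a$'' brings the transit time below $\tau$, but reducing $a$ undoes the convexification and with it the hyperbolicity you just arranged. The paper's bound is geometric and independent of $a$: for $|t|\leq t_1$ one has $|R_u|\geq\tfrac{8}{11}$, so the time spent there is at most $\tfrac{11}{8}(u_5-u_1)$, and $u_5-u_1\leq\left|\int_{\Gamma_B}\alpha\right|<\tau$ by the pre-convex bypass condition~(\ref{cond_Gamma_S_Z}); for $|t|\geq t_1$ one has $|R_t|\leq 1$ and the time is at most $2t_\t{max}$. Finally, your third step (breaking the $u$-invariance) is unnecessary: the paper keeps the $u$-invariance on $[u_1,u_5]$ throughout, and the proposition does not ask for genericity.
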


\begin{construction}\label{construction_f_g}
Apply Proposition \ref{existence_f_g} with $\rho_0\leq\epsilon^h_\t{pert}$ and Proposition \ref{lemme_perturbation_2}. 
\end{construction}

The end of Section \ref{subsubsection_perturbation} is devoted to the proof of Proposition \ref{lemme_perturbation_2}.  The contact form from Proposition \ref{existence_f_g} is $\alpha=f(t,u,v)\d t+g(t,u,v)\d u$. Thus
\begin{equation}
R_\alpha=\frac{1}{\frac{\partial f}{\partial v}g-\frac{\partial g}{\partial v}f}\left(\begin{array}{c}
-\frac{\partial g}{\partial v}\\
\frac{\partial f}{\partial v}\\
\frac{\partial g}{\partial t}
\end{array}\right).
\end{equation}
We progressively modify $f$ and $g$ so that the difference between the $u$-coordinates of two Reeb orbits which contribute to $\Phi$ widens when the Reeb orbits cross the convexification area. 

\begin{remark}\label{remarque_f_g}
If $(f_1,g_1)$ and $(f_1,g_2)$ are two convexification pairs satisfying conditions (\ref{cond_transv}), (\ref{cond_Delta}) and (\ref{cond_alpha}) of Proposition~\ref{existence_f_g} on $I_t\times J_u\times I_v$ then there exists a convexification pair $(f_1,g)$ satisfying the same conditions such that $g=g_1$ outside a neighbourhood of $I_t\times[u_1,u_5]\times I_v$ and $g=g_2$ in a neighbourhood of $I_t\times[u_1,u_5]\times I_v$. There exists a analogous statement to interpolate between $f$-coordinates in a convexification pair if $f_1=f_2$ near $S_h$.
\end{remark}

If $(f,g)$ is a convexification pair given in Proposition \ref{existence_f_g}, without loss of generality, we have $1\leq f\leq\frac{3}{2}$. Let $\Gamma_v$ denote the $g$-level intersecting  $(0,v)$ in a plane $u=\t{cst}$ for $u\in[u_1,u_5]$ and $\Gamma_{[v,v']}$ be the set between $\Gamma_{v}$ and $\Gamma_{v'}$ if $v<v'$.  
Let $m_f>1024\frac{u_5-u_1}{u_4-u_3}$ and $M_g=\frac{m_f t_1}{8(u_5-u_1)}$ for $t_1>0$. Without loss of generality $\rho<\frac{1}{4}$.

 \begin{figure}[here]
\begin{center}
 \includegraphics{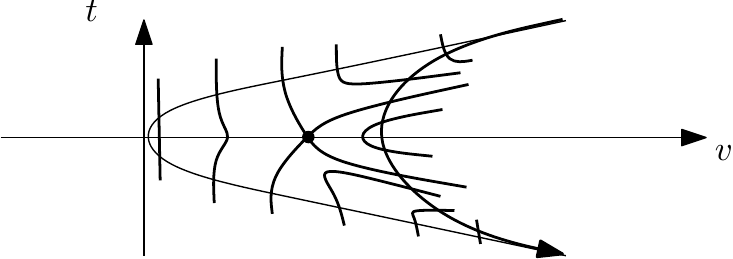}
\end{center}
 \caption{The $g$-levels}\label{niveaux_g}
\end{figure}

\begin{figure}[here]
\includegraphics{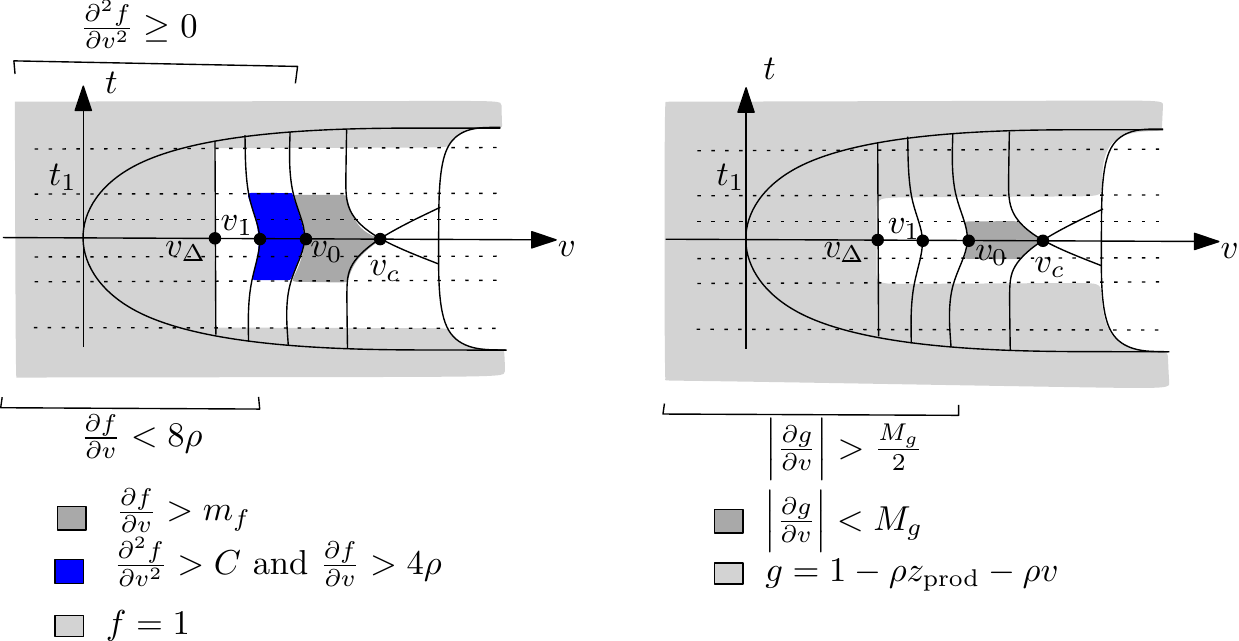}
\caption{Conditions on $f$ and $g$}\label{conditions_f_g}
\end{figure}

\begin{lemma}
Let $C>0$.
We may assume that there exist $t_1>0$, $v_c$, $v_0$ and $v_1$ such that $v_\Delta<v_1<v_0<v_c<v_h$ and for all $u\in[u_1,u_5]$ (see Figure \ref{conditions_f_g})
\begin{itemize}
  \item $\frac{1}{2}\leq g\leq 1$ and $\left\vert\frac{\partial g}{\partial v}\right\vert\leq\rho $; 
  \item $g(-t,v)=g(t,v)$ and $f(-t,v)=f(t,v)$;
  \item \label{cond_point_selle} $(0,v_c)$ is a saddle for $g$ and the $g$-level intersecting $\{0\}\times[v_c,v_h]$ do not intersect $\Sigma_+$ or $\Sigma_-$ (see Figure \ref{niveaux_g});
  \item $g(t,u,v)=1-\rho z_\t{prod}-\rho v$ for all $(t,u,v)$ such that $\vert t\vert\geq t_1$ or $(t,v)$ on $\Gamma_{[0,v_\Delta]}$;
  \item $f(t,u,v)=1$ for all $(t,u,v)$ such that $\vert t\vert \geq 2t_1$ or $(t,v)$ on $\Gamma_{[0,v_\Delta]}$;
  \item $\frac{\partial^2 g}{\partial v^2}\geq 0$ and $\frac{\partial g}{\partial v}< 0$ on $\Gamma_{[0,v_c]}$;
  \item  $\left\vert\frac{\partial g}{\partial v}(t,v)\right\vert>\frac{M_g}{2}$ and $\frac{\partial^2 f}{\partial v^2}(t,u,v)\geq 0$ on $\Gamma_{[0,v_0]}$;
  \item $\frac{\partial f}{\partial v}(t,u,v)< 8\rho$ on $\Gamma_{[0,v_1]}$;
  \item $\frac{\partial f}{\partial v}(t,u,v)>m_f$ for all $(t,u,v)$ such that $\vert t\vert\leq t_1$ and $(t,v)\in\Gamma_{[v_0,v_c]}$; 
  \item $\left\vert\frac{\partial g}{\partial v}(t,v)\right\vert<M_g$ for all $(t,u,v)$ such that $\vert t\vert\leq \frac{t_1}{2}$ and $(t,v)\in\Gamma_{[v_0,v_c]}$; 
  \item $\frac{\partial^2 f}{\partial v^2}(t,u,v)>C$ and $\frac{\partial f}{\partial v}(t,u,v)>4\rho$ for all $(t,u,v)$ such that $\vert t\vert\leq t_1$ and $(t,v)\in\Gamma_{[v_1,v_0]}$;  
\end{itemize} 
\end{lemma}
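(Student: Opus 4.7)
The plan is to construct $(f,g)$ by prescribing their $v$-profiles on each $t$-slice over a small annular region surrounding the saddle level of $g$, and then to glue the resulting pair to the one provided by Proposition~\ref{existence_f_g} using Remark~\ref{remarque_f_g}. All the requested conditions are $u$-independent in the interior, so by $u$-invariance it suffices to describe the profiles in a single plane $u = \text{cst}$, then to extend them trivially in $u$ across $[u_1,u_5]$.

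I would first fix the saddle. Using Proposition~\ref{existence_f_g}, the $g$ of the model convexification pair already satisfies $g = 1-\rho z_\t{prod}-\rho v$ outside the perturbation zone and has a level structure that pinches near $(0,v_h)$ (because $f$ has a strict maximum of $\partial f/\partial v$ there and the Reeb vector field points toward $t<0$ at $t=0$). Pick $v_c<v_h$ close enough to $v_h$ that the $g$-level through $(0,v_c)$ is confined to a small neighbourhood of $(0,v_c)$ and, in particular, does not meet $\Sigma_\pm$; by continuity of the levels this determines an $(0,v_c)$ that is a saddle of $g$ after a small $\mathcal{C}^\infty$-perturbation localised inside $\Gamma_{[v_c-\eta,v_c+\eta]}\cap\{|t|\le t_1\}$. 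Then choose $v_\Delta < v_1 < v_0 < v_c$ so that the vertical range decomposes into: an unperturbed strip $v \le v_\Delta$, a transition strip $[v_\Delta,v_1]$, a ``build-up'' strip $[v_1,v_0]$, and a ``steep'' strip $[v_0,v_c]$.

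Next I would design the $v$-profile of $g$ in each $t$-slice. Outside $[-t_1,t_1]$ or below $\Gamma_{v_\Delta}$ set $g(t,u,v) = 1-\rho z_\t{prod}-\rho v$ (so $|\partial g/\partial v|=\rho$). Inside, interpolate to a profile that is convex in $v$ with $\partial g/\partial v<0$ on $\Gamma_{[0,v_c]}$ and whose slope $|\partial g/\partial v|$ crosses the band $(M_g/2, M_g)$ on $\Gamma_{[v_0,v_c]}$; this is possible because $M_g = m_f t_1/(8(u_5-u_1))$ is fixed and we can shrink $t_1$ and modify the shape of $g$ while keeping $1/2\le g\le 1$ and $|\partial g/\partial v|\le \rho$ on the outer region. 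The symmetry $g(-t,v)=g(t,v)$ is enforced by construction. For $f$, set $f=1$ outside $[-2t_1,2t_1]$ or below $\Gamma_{v_\Delta}$, then build a $v$-profile that is flat with slope $<8\rho$ on $\Gamma_{[0,v_1]}$, rises with $\partial^2 f/\partial v^2 > C$ (arbitrary large second derivative) on $\Gamma_{[v_1,v_0]}$ so that $\partial f/\partial v$ jumps from $O(\rho)$ to at least $m_f$, and maintains $\partial f/\partial v > m_f$ on $\Gamma_{[v_0,v_c]}$ for $|t|\le t_1$; then smoothly bring $\partial f/\partial v$ back down above $v_c$. The symmetry and monotonicity conditions are automatic, and the new $f$ is glued to the unperturbed one using Remark~\ref{remarque_f_g} applied in the $f$-direction (available since the old and new $f$ both equal $1$ near $S_h$ by construction).

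Finally I would verify the contact condition. With $\alpha=f\,dt+g\,du$ (at $p(u)=1$, in the regime where the $h_1$-term is absent), the contact condition reduces to $\partial f/\partial v\cdot g - \partial g/\partial v\cdot f > 0$: both $\partial f/\partial v\ge 0$ and $-\partial g/\partial v\ge 0$, with strict inequality on a dense set inside the perturbation zone, so this is automatic; on the interpolation collar the argument of Lemma~\ref{lemme_alpha_contact} applies, provided the interpolation bounds are kept within the tolerances of that lemma, which is again a matter of taking all perturbations $\mathcal{C}^1$-small relative to the unperturbed $(f,g)$. The main obstacle is the compatibility of the many sharp bounds, in particular the sudden transition of $\partial f/\partial v$ from $<8\rho$ to $>m_f$ on a vertical band of width $v_0-v_1$ while keeping $\partial^2 f/\partial v^2 \ge C$: this forces $v_0-v_1$ to be at least $m_f/C$, so $C$ (large) and $m_f$ (large) must be compensated by placing $v_1,v_0$ in the narrow interval $(v_\Delta,v_c)$, which is why the lemma first fixes $C$ and only then selects the other parameters. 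Once $v_\Delta$ is taken small enough and $t_1$ chosen accordingly, all inequalities can be enforced simultaneously, and the boundary adaption is preserved because the perturbation is supported strictly inside $\{v<v_h\}$.
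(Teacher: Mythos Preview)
Your approach is broadly the same as the paper's---start from the convexification pair of Proposition~\ref{existence_f_g} and modify $(f,g)$ via Remark~\ref{remarque_f_g}---but the order of the construction and the handling of the contact condition differ in ways that matter.

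The paper proceeds in the order $f\to(t_1,v_0)\to g\to f$. It first steepens $f$ near $(0,v_h)$ so that $\partial f/\partial v>2m_f$ in a neighbourhood of some $\{0\}\times[u_1,u_5]\times\{v_c\}$, and only \emph{then} declares $v_c$; the contact condition survives because one can enforce $|f_1-f|<\epsilon$ while making $\partial f_1/\partial v$ arbitrarily large (never decreasing it by more than $\epsilon$), so $\frac{\partial f_1}{\partial v}g-\frac{\partial g}{\partial v}f_1>0$ holds automatically. After choosing $t_1,v_0$ so that $\rho>M_g/2$ and the steepness persists on $\Gamma_{[v_0,v_c]}\cap\{|t|\le t_1\}$, the paper modifies $g$, taking explicit care that wherever $\partial g/\partial v>0$ one already has $\partial f/\partial v>0$ and $\max(\partial g/\partial v)\ll 1$; finally it adjusts $f$ once more on $\Gamma_{[v_1,v_0]}$.

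Your outline reverses this: you fix the saddle of $g$ first and then build $f$. That is workable in principle, but your verification of the contact condition has a genuine gap. You write that ``both $\partial f/\partial v\ge 0$ and $-\partial g/\partial v\ge 0$\dots so this is automatic''. However, the very fact that $(0,v_c)$ is a saddle of $g$ forces $\partial g/\partial v>0$ on $\{t=0\}$ for $v\in(v_c,v_h)$ (this is also what makes $X_g$ point toward $t<0$ at $t=0$, as required by the definition of a convexification pair). In that region $-\partial g/\partial v<0$, and the contact inequality $\frac{\partial f}{\partial v}g-\frac{\partial g}{\partial v}f>0$ is \emph{not} automatic: you must arrange, as the paper does, that $\partial g/\partial v>0$ only occurs where $\partial f/\partial v>0$ is already large and that $\partial g/\partial v$ stays very small there. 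Your appeal to Lemma~\ref{lemme_alpha_contact} does not cover this, since that lemma concerns the specific model form $\psi^*\alpha_\t{conv}$, not the freely modified pair. A second minor point: your justification that the $g$-levels ``pinch near $(0,v_h)$'' invokes properties of $f$; the pinching is a feature of $g$ itself (via the sign change of $\partial g/\partial v$ along $t=0$), not of $f$.
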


\begin{proof}
We extensively use Remark \ref{remarque_f_g} to modify $f$ and $g$.
We first modify $f$ and choose $v_c$ so that $\frac{\partial f}{\partial v}>2m_f$ in a neighbourhood of $\{0\}\times[u_1,u_5]\times \{v_c\}$. To achieve this condition, we modify $f$ near $(0,v_h)$ in a neighbourhood that does not intersect~$S_h$ and such that $\frac{\partial f}{\partial v}>0$. The only non-trivial condition on the perturbed $f$ is the contact condition $\frac{\partial f}{\partial v}g-\frac{\partial g}{\partial v}f>0$. There exists $\epsilon>0$ such that for any $f_1$ with $\vert f-f_1\vert<\epsilon$ and $\frac{\partial f_1}{\partial v}>\frac{\partial f}{\partial v}-\epsilon$, then $\frac{\partial f_1}{\partial v}g-\frac{\partial g}{\partial v}f_1>0$. We choose $f_1$ such that $\frac{\partial f_1}{\partial v}>2m_f$ near $(0,v_c)$, $\vert f-f_1\vert<\epsilon$ and $\frac{\partial f_1}{\partial v}>\frac{\partial f}{\partial v}-\epsilon$.

We now choose $t_1$ and $v_0$ such that $\rho>\frac{M_g}{2}$ and $\left\vert\frac{\partial f}{\partial v}(t,v)\right\vert>2m_f$ for all $(t,u,v)$ such that $\vert t\vert \leq t_1$ and $(t,v)\in\Gamma_{[v_0,v_c]}$. We modify $g$ (and change $v_0$ if necessary) so that $g$ satisfies the desired conditions. To obtain the contact condition we choose $g$ so that  $\frac{\partial g}{\partial v}>0 $ implies $\frac{\partial f}{\partial v}>0$ and $\max\left(\frac{\partial g}{\partial v}\right)\ll 1$. Finally we modify~$f$.
\end{proof}

\begin{lemma}\label{lemme_projection}
The projection in the $(t,v)$-plane of any Reeb orbit which contributes to $\Phi$ is contained in $\Gamma_{[v_1,v_0]}$.
\end{lemma}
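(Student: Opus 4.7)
The starting point is that in the slab $u\in[u_1,u_5]$, where $g$ is $u$-independent, the $(t,v)$-projection of $R_\alpha$ is proportional to the Hamiltonian vector field $(-\partial g/\partial v,\partial g/\partial t)$ of $g$; projected orbits therefore follow level curves of $g$. Because $R_u=(\partial f/\partial v)/D\geq 0$, the $u$-coordinate is monotone along any orbit, and any Reeb orbit contributing to $\Phi$ (in the relevant sense of Proposition~\ref{lemme_perturbation_2}, i.e.\ starting in $\{-t_{\max}\}\times[u_2,u_3]\times I_v$ and ending in $\{t_{\max}\}\times[u_4,u_5]\times I_v$) must stay in $[u_2,u_5]\subset[u_1,u_5]$ throughout. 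I will label such an orbit's level curve by $v^*$, the common value of $v$ on the curve for $|t|\geq t_1$, where $g=1-\rho z_{\text{prod}}-\rho v$. The plan is to exclude every $v^*$ outside $[v_1,v_0]$.

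The two extreme regimes are immediate. For $v^*\geq v_c$ the level curve passes through $\{0\}\times[v_c,v_h]$ and hence, by the saddle-point hypothesis (item~\ref{cond_point_selle}), never meets $\Sigma_\pm$, so no such orbit contributes. For $v^*\leq v_\Delta$, condition~(\ref{cond_Delta}) of Proposition~\ref{existence_f_g} forces $\alpha=\alpha_{\text{prec}}$, hence $R_\alpha=\partial_t$, so $u(t)\equiv u_0\in[u_2,u_3]$, which is disjoint from $[u_4,u_5]$.

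For the two intermediate regimes I plan to estimate
\[
\Delta u=u_{\text{end}}-u_0=\int_{-t_1}^{t_1}\frac{\partial f/\partial v}{-\partial g/\partial v}\,dt,
\]
using that $\partial f/\partial v$ vanishes for $|t|\geq t_1$. If $v^*\in(v_\Delta,v_1]$, then the orbit lies in $\Gamma_{[0,v_1]}\subset\Gamma_{[0,v_0]}$, so $\partial f/\partial v<8\rho$ and $|\partial g/\partial v|>M_g/2$; combining with the defining relation $M_g=m_ft_1/(8(u_5-u_1))$, $m_f>1024(u_5-u_1)/(u_4-u_3)$ and $\rho<1/4$ should give $\Delta u<u_4-u_3$, hence $u_{\text{end}}<u_4$. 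If $v^*\in[v_0,v_c)$, the portion of the orbit with $|t|\leq t_1/2$ lies in $\Gamma_{[v_0,v_c]}$, where $\partial f/\partial v>m_f$ and $|\partial g/\partial v|<M_g$; bounding the integral over $[-t_1/2,t_1/2]$ alone from below yields $\Delta u\geq 8(u_5-u_1)>u_5-u_2$, which forces $u$ to cross $u_5$ strictly before $t=t_{\max}$ and rules out $u_{\text{end}}\in[u_4,u_5]$.

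The delicate step I expect will need the most care is that the pointwise bounds on $\partial f/\partial v$ are prescribed only for $u\in[u_1,u_5]$, so in the high-$v^*$ case one must control what happens if the orbit exits this slab before $|t|$ reaches $t_1/2$. Monotonicity $R_u\geq 0$ should resolve this cleanly: once $u$ leaves $[u_1,u_5]$ through the top it cannot return, so $u_{\text{end}}\geq u_5$ automatically, while the borderline $u_{\text{end}}=u_5$ is already absorbed by the closed interval in the conclusion $\Gamma_{[v_1,v_0]}$.
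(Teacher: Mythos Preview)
Your overall strategy matches the paper's: the $(t,v)$-projection of a contributing orbit lies on a $g$-level curve, and you rule out each range of levels by showing the net $u$-increment is either too large (the orbit overshoots $[u_4,u_5]$) or too small (it never reaches $u_4$). The paper does the same case split, phrasing the estimates via bounds on $R_t$ and $R_u$ separately rather than your integral of $du/dt$; the two are equivalent since $R_t>0$ on $\Gamma_{[0,v_c]}$.

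There are, however, two technical slips. First, you label level curves by $v^*$, the $v$-value for $|t|\geq t_1$, while every hypothesis and the conclusion are stated in terms of $\Gamma_{[a,b]}$, which indexes levels by their intersection with $t=0$. These two labels do \emph{not} coincide once $g$ has been modified on $|t|<t_1$: the level $\Gamma_{v_1}$ need not meet $|t|=t_1$ at $v=v_1$. So the implication ``$v^*\in(v_\Delta,v_1]\Rightarrow$ the orbit lies in $\Gamma_{[0,v_1]}$'' (and the analogous one for $[v_0,v_c)$) is unjustified. The paper avoids this by splitting directly on the $t=0$ value: it considers orbits meeting $\{0\}\times[u_1,u_5]\times[v_0,v_c]$, etc. You should do the same.

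Second, you assert that $\partial f/\partial v$ vanishes for $|t|\geq t_1$, but the hypothesis only gives $f=1$ for $|t|\geq 2t_1$ (it is $g$ that becomes standard at $|t|=t_1$). Your upper bound on $\Delta u$ therefore misses the contribution from $t_1<|t|<2t_1$. The paper handles this by working on the strip $|t|\leq 2t_1$; the numerics still close (the extra piece is $O(t_1)$, and $t_1$ is forced small by $\rho>M_g/2$), but the bookkeeping needs to reflect the correct support of $\partial f/\partial v$.
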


\begin{proof}
The projection in the $(t,v)$-plane of a Reeb orbit is contained in a $g$-level. Thus no Reeb orbit contributes to $\Phi$ and intersects $\{0\}\times[u_1,u_5]\times[v_c,v_h]$.

If a Reeb orbit intersects $\{0\}\times[u_1,u_5]\times[v_0,v_c]$ then this orbit crosses the strip $\vert t\vert\leq t_1$ and is contained in $\Gamma_{[v_0,v_c]}$. In this strip 
\[\frac{m_f}{2}\leq \frac{\partial f}{\partial v}g\leq\frac{\partial f}{\partial v}g-\frac{\partial g}{\partial v}f\leq \frac{5}{2}\left\vert\frac{\partial f}{\partial v} \right\vert,\]
$\vert R_t\vert\leq\frac{2M_g}{m_f}$ and $\vert R_u\vert\geq \frac{2}{5}$. Therefore, the time spent in the strip is bounded below by \[\frac{2t_1}{\max(R_t)}\geq\frac{t_1m_f}{M_g}=8(u_5-u_1)\] and the $u$-interval swept out by the orbit is bounded below by
$\min\vert R_u\vert\times 8(u_5-u_1)>u_5-u_1$. The orbit does not contribute to $\Phi$.

We now consider a Reeb orbit which intersects $\{0\}\times[u_1,u_5]\times[-v_\t{max},v_1]$ and crosses the strip $\vert t\vert\leq 2t_1$. In this strip $\frac{1}{2}\left\vert\frac{\partial f}{\partial v} \right\vert\leq\frac{\partial f}{\partial v}g-\frac{\partial g}{\partial v}f\leq 10\rho\leq 4$, $\vert R_t\vert\geq\frac{M_g}{8}$ and $\vert R_u\vert\leq 2$. The return time between $-2t_1$ and $2t_1$ is bounded by $\frac{4t_1}{\min(R_t)}\leq\frac{32t_1}{M_g}$ and the $u$-interval swept out by the orbit is bounded by $\frac{64t_1}{M_g}<u_4-u_3$. The orbit does not contribute to $\Phi$ as $R_u=0$ for $\vert t\vert\geq 2 t_1$.
\end{proof}

\begin{proof}[Proof of Proposition \ref{lemme_perturbation_2}]
We prove that Proposition \ref{lemme_perturbation_2} is satisfied for $C$ big enough. 
We first study the difference between the $u$-coordinates of two Reeb orbits which contribute to $\Phi$.
Let $(-t_\t{max},u,v)$ and $(-t_\t{max},\tilde{u},\tilde{v})$ be the endpoints of two Reeb chord which contribute to $\Phi$. Without loss of generality $\tilde{v}>v$. Their projections on the $(t,v)$-plane are contained in  $\Gamma_{[v_1,v_0]}$ (Lemma \ref{lemme_projection}). Let 
\[Y=\left(
1,-\dfrac{\frac{\partial f}{\partial v}}{\frac{\partial g}{\partial v}},
-\dfrac{\frac{\partial g}{\partial t}}{\frac{\partial g}{\partial v}}
\right)\]
be a renormalisation of the Reeb vector field and $t\mapsto\big(-t_\t{max}+t,u(t),v(t)\big)$ and $t\mapsto\big(-t_\t{max}+t,\tilde{u}(t),\tilde{v}(t)\big)$ be the $Y$-orbits with endpoints $(-t_\t{max},u,v)$ and $(-t_\t{max},\tilde{u},\tilde{v})$. 
Then
\[\frac{\partial Y_u}{\partial v}=-\dfrac{\frac{\partial^2 f}{\partial v^2}\frac{\partial g}{\partial v}-\frac{\partial^2 g}{\partial v^2}\frac{\partial f}{\partial v}}{\left(\frac{\partial g}{\partial v}\right)^2}\geq-\frac{\frac{\partial^2 f}{\partial v^2}}{\frac{\partial g}{\partial v}}\geq 0\]
and $\tilde{u}-u$ is non-decreasing. 
In addition, $\frac{\partial Y_u}{\partial v}\geq\frac{C}{\rho} $ for $\vert t\vert \leq \frac{t_1}{2}$.
Thus, we have
\[\tilde{u}(t_\t{max})-u(t_\t{max})\geq\tilde{u}-u+\frac{C}{\rho}\min_{\vert t\vert \leq \frac{t_1}{2}}(\tilde{v}(t)-v(t)).\]
Our orbits are contained in $g$-levels, therefore it holds that
\[(\tilde{v}(t)-v(t))\min\left(\left\vert\frac{\partial g}{\partial v}\right\vert\right)\leq g(-t_\t{max},v)-g(-t_\t{max},\tilde{v})\leq(\tilde{v}(t)-v(t))\max\left(\left\vert\frac{\partial g}{\partial v}\right\vert\right)\]
and $\tilde{v}(t)-v(t)\geq\frac{M_g(\tilde{v}-v)}{2\rho}$ as $\frac{M_g}{2}\leq \left\vert\frac{\partial g}{\partial v}\right\vert\leq \rho$.
Thus we obtain
\begin{equation}\label{Delta_u}
\tilde{u}(t_\t{max})-u(t_\t{max})\geq\tilde{u}-u+\frac{CM_gt_1(\tilde{v}-v)}{2\rho^2}. 
\end{equation}

Let $\gamma$ be a curve in $\{-t_\t{max}\}\times[u_2,u_3]\times I_v $ such that $\vert\gamma'(v)\vert\leq B$ (the $v$-coordinate is the vertical coordinate). 
Let $\delta$ be its image on $\{t_\t{max}\}\times[u_4,u_5]\times I_v $. By symmetry, if it is well-defined, the image of $(-t_\t{max},\gamma(v),v)$ is $(t_\t{max},\delta(v),v)$. Using equation (\ref{Delta_u}), we obtain 
\begin{equation}\label{equation_delta'}
\delta'(v)\geq\gamma'(v)+\frac{CM_g}{2\rho^2}\geq -B+\frac{CM_g}{2\rho^2}=D.
\end{equation}
Therefore, we have $D\geq\frac{1}{\epsilon_B}$ for $C$ big enough. A similar proof shows the symmetric result. Additionally, if $w=(1,\gamma'(v))$ then $\Vert w\Vert\leq \sqrt{1+B^2}$ and $\Vert\d\Phi(p, w)\Vert\geq \sqrt{1+D^2}$. Thus the dilatation condition is satisfied for $C$ big .

Finally the return time between $\{-t_\t{max}\}\times[u_2,u_3]\times I_v $ and $\{t_\t{max}\}\times[u_4,u_5]\times I_v $ is bounded by $2(u_5-u_1)+2t_\t{max}$. Indeed, for $\vert t\vert \geq t_1$, we have $\vert R_t\vert\leq 1$ and the return time is bounded by $2t_\t{max}$. Additionally, for $\vert t\vert \leq t_1$,  we have
$\vert R_u\vert=\geq \frac{8}{11}$
and the return time is bounded by $\frac{11}{8}(u_5-u_1)$ as the $u$-interval is bounded by $u_5-u_1$.
As $u_5-u_1\leq\left\vert\int{\Gamma_{\lambda_D}} \alpha\right\vert<\tau$ by definition of pre-convex bypass, we obtain the desired condition on the return time.
\end{proof}

\subsubsection{Convexification smoothing}
In this section we interpolate between $\alpha_\t{conv}$ and $\alpha_\t{prec}$ for $y\geq \frac{2}{5} y_\t{std}$.

\begin{construction}\label{construction_prolongement}
For $y\in\left[\frac{2}{5} y_\t{std},y_\text{std}\right]$, let $\alpha=\alpha_\t{prec}+al(y)\Phi_{z_0}(z)\sin(x)\d y$ where $l$ is non increasing, $l=1$ in $\left[\frac{2}{5} y_\t{std},\frac{1}{2} y_\t{std}\right] $ and $l=0$ for $y\geq \frac{2}{3} y_\t{std}$. We extend this construction to the other non-convex areas by symmetry. 
\end{construction} 

The $1$-form $\alpha$ is a contact form as $al(y)\Phi_{z_0}(z)\sin(x)$ is $\mathcal C^1$-close to $0$.
Let $\mathcal B_\t{conv}$ be the convexified bypass and $\alpha_\t{conv}$ the associated contact structure. We call $\mathcal C=\mathcal B_\t{conv}\setminus\mathring{\mathcal B}$ the \emph{convexification area} and we denote by $\mathcal P$ the set where $\alpha_\t{conv}\neq\alpha_\t{prec}$.
In coordinates $(x,y,z)$, the connected component of $\mathcal P$ containing $\Gamma_0$ is the set $z\geq z_0$. In coordinates $(t,u,v)$, it is contained between $S_{\mathcal B}$ and $S_h$ and its $v$-coordinates are in $(v_\Delta,v'_\t{max})$ (Proposition \ref{existence_f_g}).

\begin{corollary}[Corollary of Lemma \ref{lemme_restriction_psi}]\label{p_e_p_s}
Let $\gamma$ be a Reeb orbit intersecting $\mathcal P$. If $\gamma$ enters $\mathcal C$ in $p_\t{in}=(x_\t{in},y_\t{in},z_\t{in})$ such that $x_\t{in}\in\left[\frac{\pi}{2}-\frac{\lambda}{4},\frac{\pi}{2}+\frac{\lambda}{4}\right]$ and $z_\t{in}>0$ then the exiting point $p_\t{out}=(x_\t{out},y_\t{out},z_\t{out})$ satisfies $x_\t{out}\in\left[\frac{\pi}{2}-\frac{\lambda}{3},\frac{\pi}{2}+\frac{\lambda}{3}\right]$ and $y_\t{out}>y_\t{in}$.
\end{corollary}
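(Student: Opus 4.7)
The plan is to push everything into the flow-box coordinates $(t,u,v)$ on the connected component of $\mathcal P$ sitting over $\Gamma_0 \subset [\pi/2,\pi]\times[0,y_\t{std}]\times[0,z_\t{max}]$. The hypothesis $z_\t{in}>0$ singles out this component, and the three other cases follow by the $\sigma$- and $\tau$-symmetries built into the construction. I would set $(t_\t{in},u_\t{in},v_\t{in})=\psi^{-1}(p_\t{in})$ and analogously for $p_\t{out}$, and then let everything be driven by Lemma~\ref{lemme_restriction_psi}.

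First I would translate the hypothesis on $x_\t{in}$: since $x_\t{in}\leq \pi/2+\lambda/4<\pi/2+\lambda/2$, bullet~(\ref{cond_lambda}) of Lemma~\ref{lemme_restriction_psi} forces $u_\t{in}\geq u_\lambda$. Because $R_u\geq 0$ inside the convexification area (Proposition~\ref{existence_f_g}, condition~(\ref{cond_transv})) and $R=\partial/\partial t$ (so $R_u=0$) wherever $\alpha_\t{conv}=\alpha_\t{prec}$, the coordinate $u$ is non-decreasing along $\gamma\cap\mathcal C$, so $u_\t{out}\geq u_\t{in}\geq u_\lambda$ as well.

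The heart of the argument is producing $t_\t{out}-t_\t{in}>\Delta$. For this I would use that on $[u_1,u_5]$ the function $g$ is independent of $u$, so the explicit formula $R=D^{-1}(-g_v,f_v,g_t)$ gives $R\cdot dg=f_v g_u=0$; hence the $(t,v)$-projection of $\gamma\cap\mathcal C$ lies on a single $g$-level. On the lower levels $\Gamma_{[0,v_\Delta]}$ we have $g=1-\rho z_\t{prod}-\rho v$, which are horizontal lines $v=\text{const}\leq v_\Delta$; such levels never meet $\mathcal P$, so $\gamma$ meeting $\mathcal P$ forces $v_\t{in},v_\t{out}>v_\Delta$. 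Combined with Lemma~\ref{lemme_S_B}, which presents $S_\mathcal B$ in each $u$-plane as two graphs $t=\pm l(v)$ with $l$ increasing, and the identity $l(v_\Delta)=\Delta$ coming from condition~(\ref{cond_v_Delta}), this gives $|t_\t{in}|,|t_\t{out}|>\Delta$. The Reeb transversality to $S_\mathcal B$ (entering at $t<0$, exiting at $t>0$) then yields $t_\t{out}-t_\t{in}>2\Delta>\Delta$.

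At that point I would invoke condition~(\ref{cond_pe_ps}) of Lemma~\ref{lemme_restriction_psi} on the pair $(p,p')=\big((t_\t{in},u_\t{in},v_\t{in}),(t_\t{out},u_\t{out},v_\t{out})\big)$, both lying in $I'_t\times[u_\lambda,u_\t{max}]\times I'_v$ since $v_\t{in},v_\t{out}<v'_\t{max}$ (as $\mathcal P$ is contained in $v<v'_\t{max}$). This immediately yields $y_\t{out}>y_\t{in}$; and since the estimate inside the proof of that lemma is in fact symmetric, it gives the two-sided bound $|x_\t{out}-x_\t{in}|\leq \lambda/12$, whence $x_\t{out}\in[\pi/2-\lambda/4-\lambda/12,\,\pi/2+\lambda/4+\lambda/12]=[\pi/2-\lambda/3,\,\pi/2+\lambda/3]$. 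The main obstacle is the step forcing $v_\t{in},v_\t{out}>v_\Delta$: it really relies on the conservation $R\cdot dg=0$ and the precise flat structure of the low $g$-levels from Section~\ref{subsubsection_model}. Without this, one could only bound $|t_\t{in}-t_\t{out}|$ by the diameter of $I_t$, which is too crude to apply the quantitative content of Lemma~\ref{lemme_restriction_psi}.
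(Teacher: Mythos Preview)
Your approach matches the paper's for the portion of $\gamma$ that stays in the region governed by the convexification pair $(f,g)$, but you have missed a case split that the paper makes and that your argument does not cover.

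The paper distinguishes two cases: (1) $\gamma$ meets $\{y\geq\tfrac{2}{5}y_\t{std}\}$, and (2) $\gamma$ stays in $\{y\leq\tfrac{2}{5}y_\t{std}\}$. Your flow-box analysis is only valid in case (2). In case (1) the contact form is the interpolated one from Construction~\ref{construction_prolongement}, not $f\,\d t+g\,\d u$, so your conservation law $R\cdot\d g=0$ breaks down there; likewise Proposition~\ref{existence_f_g}, which you invoke for $R_u\geq 0$, only applies on $I_t\times J_u\times I_v$ and says nothing about the interpolation region. The paper handles case (1) by a direct observation: for $y\geq\tfrac{2}{5}y_\t{std}$ one has $R_y>0$, and the convexification bump there lies over the graph of $h$, whose $x$-domain is precisely $[\tfrac{\pi}{2}-\tfrac{\lambda}{3},\tfrac{\pi}{2}+\tfrac{\lambda}{3}]$, so both conclusions are immediate.

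Even restricted to case (2), your $g$-level argument for $v_\t{in},v_\t{out}>v_\Delta$ is more machinery than needed. The paper just uses that $\mathcal P\subset\{v_\Delta<v<v'_\t{max}\}$ (recorded right after Construction~\ref{construction_prolongement}, coming from Proposition~\ref{existence_f_g}(\ref{cond_Delta}) and Construction~\ref{construction_h}): since $R=\partial/\partial t$ wherever $\alpha_\t{conv}=\alpha_\t{prec}$, the $v$-coordinate is constant between $p_\t{in}$ (resp.\ $p_\t{out}$) and the nearest point of $\gamma\cap\mathcal P$, so $v_\t{in},v_\t{out}\in(v_\Delta,v'_\t{max})$ and hence $p_\t{in},p_\t{out}\in I'_t\times I_u\times I'_v$ without any appeal to $g$-levels. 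From there your use of Lemma~\ref{lemme_restriction_psi} is the same as the paper's.
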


\begin{proof}
If $\gamma$ intersects the set $y\geq \frac{2}{5} y_\t{std}$, we obtain the desired result as $R_y>0$ and $h$  is defined for $x\in\left[\frac{\pi}{2}-\frac{\lambda}{3},\frac{\pi}{2}+\frac{\lambda}{3}\right]$. We now assume that $\gamma$ is contained in the set $y\leq \frac{2}{5} y_\t{std}$. As $\gamma$ intersects $\mathcal P$, we have $t_\t{out}-t_\t{in}\geq\Delta$, and $p_\t{in},p_\t{out}\in I'_t\times I_u\times I'_v$ (Lemma \ref{lemme_restriction_psi} and Construction \ref{construction_h}). In addition $u_\t{out}-u_\t{in}\geq0$ as $R_u\geq 0$. Lemma \ref{lemme_restriction_psi} gives the desired result.
\end{proof}
By symmetry, there exist analogous statements near any endpoint of $\Gamma_\t{prec}^{\leq y_\t{std}}$.

\subsection{Conditions (B1) to (B8)}\label{subsection_conditions}
We now prove that our construction satisfies conditions (B1) to (B8) and is adapted to the boundary.

\emph{The contact form is adapted to the boundary.} By the definition of pre-convex bypass, the contact form is adapted to the boundary outside $\mathcal C$. The contact form is adapted for $y\geq\frac{2}{3}y_\t{std}$ as $\alpha_\t{conv}=\alpha_\t{prec}$ and $\Vert h \Vert_{\mathcal C^\infty}<\epsilon'_\t{stab}$ (Lemma \ref{epsilon'_stab}). 
For $y\leq \frac{2}{5}y_\t{std}$ and $z\leq \frac{2}{5}y_\t{std}+z_\t{prod}$, Lemma \ref{existence_f_g} gives the desired result. 
For $y\in\left[\frac{2}{5}y_\t{std},\frac{2}{3}y_\t{std}\right]$ and $x^h_\t{flat}\leq\vert x-\frac{\pi}{2}\vert$, we apply Lemma \ref{epsilon_H}. 
Finally, for $y\in\left[\frac{2}{5}y_\t{std},\frac{2}{3}y_\t{std}\right]$ and $\vert x-\frac{\pi}{2}\vert< x^h_\t{flat}$, we have $h=h(0)$ and $R_z$ is positively collinear to
\[\big(k_\t{inf}(z)k_\t{sup}(z)+al(y)\Phi_{z_0}(z)\big)\cos(x).\]
Thus 
\begin{itemize}
  \item $R_z=0$ for $x=\frac{\pi}{2}$;
  \item $R_z>0$ for $x<\frac{\pi}{2}$;
  \item $R_z<0$ for $x>\frac{\pi}{2}$. 
\end{itemize}
The tangency points between $\tilde{S}_h$ and $R_{\alpha_\t{conv}}$ are the segment $x=\frac{\pi}{2}$. Along this segment and for $y\leq \frac{1}{2}y_\t{std}$, $R_x$ is positively collinear to $f_\t{inf}(y)\rho -a\Phi_{z_0}'(z_h) $ and

\[f_\t{inf}(y)\rho -a\Phi_{z_0}'(z_h)\leq  \rho-a\Phi_{z_0}'(z_h)<0.\]  For $y> \frac{1}{2}y_\t{std}$, $R_x$ is positively collinear to \[-k'_\t{sup}(z_h)-cg(y)\Phi_{z_0}'(z_h)<0.\] By symmetry we obtain the desired result in the other convexified areas.

\emph{Condition (B6).} Let $\gamma$ be a Reeb chord of $S_Z$ in $\mathcal B_\t{conv}^{\leq y_\t{std}}$. If $\gamma$ does not meet $\mathcal P$, condition (B6) is given by Lemma \ref{epsilon_stab_retour_S_S_y}. We now assume that $\gamma$ intersects $\mathcal P$. Let $p_\t{in}^S$ and $p_\t{out}^S$ denote the endpoints of $\gamma$ and $p_\t{in}$ and $p_\t{out}$ the first entering and exiting point of $\mathcal C$. We assume that $p_\t{in}$ is in the connected component of $\mathcal C$ containing $\Gamma_0$. The proof of the other cases is similar.

If $\gamma$ does not intersect $\mathcal P$ after $p_\t{out}$, then $\gamma$ is contained in $\left[\pi-\frac{\lambda}{4},\pi+\frac{\lambda}{4}\right]\times[0,y_\text{std}]\times I_\text{max}$ after $p_\t{out}$ (Lemma \ref{lemme_cordes_Reeb} and Equation (\ref{eq_V})). Thus we have $x^S_\t{out}\in \left[\pi-\frac{\lambda}{2},\pi+\frac{\lambda}{2}\right]$ (Lemma \ref{lemme_lambda}). Then $x_\t{in}\in\left[\pi-\frac{\lambda}{3},\pi+\frac{\lambda}{3}\right]$ and $z_\t{in}>z_\t{out}$ (Corollary \ref{p_e_p_s}). Therefore $z_\t{in}^S>z_\t{in}>z_\t{out}>z_\t{out}^S $ as $R_z<0$. We obtain $x^S_\t{in}\in \left[\pi-\frac{\lambda}{2},\pi+\frac{\lambda}{2}\right]$ (Lemma \ref{lemme_lambda}).

If $\gamma$ intersects $\mathcal P$ after $p_\t{out}$, then $\gamma$ meets the connected component associated to 
$\left[\pi,\frac{3\pi}{2}\right]\times[0,y_\text{std}]\times[-z_\text{std},0]$ (Lemma \ref{lemme_cordes_Reeb}). Let $p'_\t{in}$ and $p'_\t{out}$ denote the second entering and exiting point. Between $p_\t{out}$ and $p'_\t{in}$, $\gamma$ is contained $\left[\pi-\frac{\lambda}{4},\pi+\frac{\lambda}{4}\right]\times[0,y_\text{std}]\times I_\text{max}$ (Lemma \ref{lemme_cordes_Reeb}). Thus we have $x_\t{in}\in\left[\pi-\frac{\lambda}{3},\pi+\frac{\lambda}{3}\right]$ and $z_\t{in}>z_\t{out}$ (Corollary \ref{p_e_p_s}). Therefore $x^S_\t{in}\in \left[\pi-\frac{\lambda}{2},\pi+\frac{\lambda}{2}\right]$ (Lemma \ref{lemme_lambda}). As $\gamma$ does not intersect $\mathcal P$ before $p_\t{in}$ and $R_z<0$, we obtain $z_\t{in}^S>z_\t{in}>z_\t{out}>z'_\t{out}$. In addition $x'_\t{out}\in\left[\pi-\frac{\lambda}{3},\pi+\frac{\lambda}{3}\right]$ and $z'_\t{in}>z'_\t{out}$ (Corollary \ref{p_e_p_s}). As 
$\gamma$ does not meet $\mathcal P$ after $p'_\t{out}$ (Lemma \ref{lemme_cordes_Reeb}) and $R_z<0$, we obtain $z_\t{in}^S>z_\t{in}>z_\t{out}>z'_\t{in}>z'_\t{out}>z_\t{out}^S$ and $x^S_\t{out}\in \left[\pi-\frac{\lambda}{2},\pi+\frac{\lambda}{2}\right]$ (Lemma \ref{lemme_lambda}).  

\emph{Condition (B7).}
Let $\gamma$ be a Reeb orbit in $\mathcal B_\t{conv}^{\leq y_\t{std}}$ with endpoints  $p_\t{in}^S$ and $p_\t{out}^S$ in $S_Z$ and $S_{y_\t{std}}$. If $\gamma$ does not meet $\mathcal P$ we obtain the desired result by Lemma~\ref{epsilon_stab_retour_S_S_y}. We now assume that $\gamma$ meets $\mathcal P$. The image of $\mathcal P$ on $S_Z$ is contained in $X\cup X+2\pi$. Thus $p_\t{in}^S\in X+2k\pi$ for $k\in\{0,1\}$.
In addition, there exists $k'\in\{0,1\}$ such that $p_\t{out}^S\in\left[\frac{\pi}{2}-\frac{\lambda}{2}+2k'\pi,\frac{\pi}{2}+\frac{\lambda}{2}+2k'\pi\right]\times I_\t{max}$ (Lemma \ref{epsilon_stab_retour_S_S_y}). It remains to prove that $k=k'$.
If $\gamma$ meets $\mathcal P$ once, then the $x$-coordinate of the exiting point is in  $\left[2k\pi-\frac{\lambda}{8},(2k+1)\pi+\frac{\lambda}{8}\right]$ as $\mathcal P\subset\mathcal V$ and thus $k=k'$ (Lemma \ref{lemme_lambda}). If $\gamma$ meets $\mathcal P$ twice, then the first exiting point has a $x$-coordinate in $\left[2k\pi-\frac{\lambda}{8},(2k+1)\pi+\frac{\lambda}{8}\right]$. Thus the $x$-coordinate of the second entering point is in  $\left[2k\pi-\frac{\lambda}{4},(2k+1)\pi+\frac{\lambda}{4}\right]$ (Lemma \ref{lemme_lambda}) and  the $x$-coordinate of the second exiting point is contained in $\left[2k\pi-\frac{\lambda}{3} ,(2k+1)\pi+\frac{\lambda}{3}\right]$ (Corollary \ref{p_e_p_s}). Thus $k=k'$. The proof of \emph{condition (B8)} is similar.

\emph{Condition (B5).} By Remark \ref{pas_de_retour_sur_S_y_std}, if $\gamma$ is a Reeb chord of $S_{y_\t{std}}$ in $\mathcal B_\t{conv}^{\leq y_\t{std}}$ then $\gamma$ intersects $S_{\frac{2}3{}y_\t{std}}$. Let $p_\t{in}^S$ and $p_\t{out}^S$ denote the endpoints of $\gamma$. By Lemma \ref{epsilon_stab_retour_S_S_y},  $x_\t{in}^S\in\left[\frac{3\pi}{2}-\frac{\lambda}{2}+2k\pi,\frac{3\pi}{2}+\frac{\lambda}{2}+2k\pi\right]$ and $x_\t{out}^S\in\left[\frac{\pi}{2}-\frac{\lambda}{2}+2k'\pi,\frac{\pi}{2}+\frac{\lambda}{2}+2k'\pi\right]$. In addition $\gamma$ intersects $\mathcal P$ (Lemma \ref{lemme_lambda}). Yet $\gamma$ intersects only one connected component of $\mathcal P$ (Lemma \ref{lemme_cordes_Reeb}). This leads to a contradiction.

\emph{Condition (B4).} This condition is a consequence from Lemma \ref{lemme_B4}.

\emph{Conditions (B1), (B2) and (B3).} These conditions derive from Lemma \ref{lemme_S_B} and Lemma \ref{lemme_perturbation_2}. Indeed, by Lemma \ref{epsilon_stab_retour_S_S_y}, all the Reeb chords which contribute to the map between $R_\lambda$ and $S_R$ intersect $\mathcal P$ and thus $\Sigma_+$ and $\Sigma_-$. In addition, the intersection points with $\Sigma_-$ are in $[u_2,u_3]\times I_v$ (Equation (\ref{eq3})) and the intersection points with $\Sigma_+$ in $[u_4,u_5]\times I_v$ (Equations (\ref{eq1}) and (\ref{eq2}) and Lemma \ref{epsilon_stab_retour_S_S_y}). Let $\delta$ be a curve in $\left[\frac{\pi}{2}+\lambda,\pi-\lambda\right]\times I_\t{max}$ with tangents in $\mathcal C(V,A)$. Then the tangents of the image of $\delta$ in $\Sigma_-$ are in $\mathcal C(V,B)$ (Lemma \ref{lemme_S_B}) and the tangents of the image of $\delta$ in $\Sigma_+$ are in $\mathcal C(H,\epsilon_B)$ (Lemma \ref{lemme_perturbation_2}). Thus the image of $\delta$ on $S_R$ is $\epsilon_R$ close to $\delta_1$. Similarly, the 
tangent of the image of an horizontal segment in $S_R$ are in $\mathcal C(H,\nu)$ (Lemma \ref{lemme_S_B}). Condition (B3) is a consequence of the definition of pre-convex bypasses. We obtain the rectangle structures on $\dom(\phi_i)$ and  $\im(\phi_i)$ by considering the images of vertical curves in $S_Z$ and the inverse images of horizontal curves in $S_R$. These curves are transverse (definition of pre-convex bypasses).

\section{Conley-Zehnder index}\label{section_Maslov}
In this section we prove Theorem \ref{theoreme_Maslov}: we compute the Conley-Zehnder index $\mu$ of the periodic orbit $\gamma_\mathbf{a}$ described in Theorem \ref{theoreme_principal}.
\subsection{Two technical lemmas}

\begin{lemma}\label{lemme_CZ_e_1}
Let $(R_t)_{t\in[0,1]}$ be a path of symplectic matrices in $\mathbb R^2$ such that $R_0=\Id$ and $R_1\in Sp^*$. Let $R_te_1=r(t)e^{i\alpha(t)}$. If $\alpha(1)\in\left[2k\pi+\frac{\pi}{2},2k\pi+\frac{3\pi}{2}\right]$ and $\mu(R)$ is odd, then
then $\mu\left(R\right)= 2k+1$. Similarly if $\mu(R)$ is even and $\alpha(1)\in \left[2k\pi -\frac{\pi}{2},2k\pi+\frac{\pi}{2}\right]$,
then $\mu\left(R_t\right)= 2k$.
\end{lemma}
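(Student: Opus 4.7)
The plan is to relate the angle $\alpha(t)$ of $R_t e_1$ to the rotation angle $\theta_t$ coming from the polar decomposition $R_t=S_t O_t$, and then pin down $\theta_2$ from the fact that a continuous lift of $\theta$ cannot jump between certain ``sheets'' within a fixed component of $\Sp^*$.

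First I would establish a simple geometric fact: $|\alpha(t)-\theta_t|<\pi/2$ throughout. Indeed, $R_t e_1=S_t(O_t e_1)$, where $O_t e_1$ is a unit vector at angle $\theta_t$, and
\[\langle R_t e_1,\,O_t e_1\rangle=\langle S_t(O_t e_1),\,O_t e_1\rangle>0\]
because $S_t$ is positive-definite symmetric. So the angle between $R_t e_1$ and $O_t e_1$ lies strictly in $(-\pi/2,\pi/2)\pmod{2\pi}$. Since $\alpha(0)=\theta(0)=0$, the continuous lift of $\alpha-\theta$ remains in $(-\pi/2,\pi/2)$, and in particular $\theta_1\in(\alpha(1)-\pi/2,\alpha(1)+\pi/2)$.

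Next I would describe the two components of $\Sp^*(2)$ in terms of $\theta$. A direct computation gives $\det(A-I)=2-\tr(A)$, and writing $A=SO$ with eigenvalues $\lambda,1/\lambda$ for $S$ and rotation angle $\phi$ for $O$ yields $\tr(A)=(\lambda+1/\lambda)\cos\phi$. On the odd component $\{\tr<2\}$, containing $W_+=-I$, this forces $\cos\phi\neq 1$, i.e.\ $\phi\not\equiv 0\pmod{2\pi}$; on the even component $\{\tr>2\}$, containing $W_-$, it forces $\cos\phi>0$, i.e.\ $\phi\in(-\pi/2,\pi/2)\pmod{2\pi}$. Consequently, a continuous lift of $\theta$ along a path in the odd component is confined to one of the disjoint sheets $(2j\pi,(2j+2)\pi)$, and along a path in the even component it is confined to one of the sheets $(2j\pi-\pi/2,2j\pi+\pi/2)$.

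With these two ingredients, both cases become one-line deductions. In the odd case, $R_2=W_+$ gives $\theta_2\equiv\pi\pmod{2\pi}$, and the hypothesis $\alpha(1)\in[2k\pi+\pi/2,2k\pi+3\pi/2]$ together with $|\alpha(1)-\theta_1|<\pi/2$ places $\theta_1$ in the sheet $(2k\pi,(2k+2)\pi)$, forcing $\theta_2=(2k+1)\pi$ and $\mu(R)=2k+1$. In the even case, $R_2=W_-$ gives $\theta_2\equiv 0\pmod{2\pi}$; the hypothesis $\alpha(1)\in[2k\pi-\pi/2,2k\pi+\pi/2]$ places $\theta_1\in(2k\pi-\pi,2k\pi+\pi)$, whose only intersection with an even-component sheet is $(2k\pi-\pi/2,2k\pi+\pi/2)$, so $\theta_2=2k\pi$ and $\mu(R)=2k$. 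The main obstacle is the sheet-confinement step: one has to argue carefully that the continuous lift of the polar-decomposition angle along a path in one component of $\Sp^*$ cannot cross the excluded angular values. This follows from the explicit trace characterisation above (or equivalently from the contractibility of each component), and once it is in place the rest of the argument is a routine continuity estimate.
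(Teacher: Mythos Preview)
Your proof is correct and follows essentially the same route as the paper: both use the positive-definiteness of $S_t$ to get $|\alpha_t-\theta_t|<\pi/2$, and then argue that the continuous lift of $\theta$ cannot cross the excluded angular values ($\theta\equiv 0\pmod{2\pi}$ in the odd component, $\theta\equiv\pi\pmod{2\pi}$ in the even component) along the extension in $\Sp^*$. Your version is slightly more explicit---you compute $\tr(SO)=(\lambda+\lambda^{-1})\cos\phi$ and use the sharper constraint $\cos\phi>0$ in the even case---whereas the paper only records the weaker exclusions $\theta_t\not\equiv 0$ (odd) and $\theta_t\not\equiv\pi$ (even) and closes with the estimate $|\theta_2-\alpha_1|<3\pi/2$; but the underlying mechanism is identical.
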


\begin{proof}
We extend $R_t$, $\alpha_t$ and $r_t$ to $t\in[1,2]$ (see Section \ref{subsubsection_CZ}). Let $\theta_t$ denote the rotation angle associated to the polar decomposition $R_t=S_tO_t$. Without loss of generality $\theta_0=0$. As $S_t$ is positive-definite, $\theta_t-\frac{\pi}{2}< \alpha_t<\theta_t+\frac{\pi}{2}$. Additionally, if there exists $t\in[1,2]$ such that $\theta_t=0 [2\pi]$ then $R_t\in \Sp^-(2)$. Similarly if $\theta_t=\pi [2\pi]$ then we have $R_t\in \Sp^+(2)$.
Therefore if $\mu(R)$ is odd, $\theta_t\neq 0[2\pi]$ for all $t\in[1,2]$. Thus $\theta_1-\pi<\theta_2<\theta_1+\pi$ and
$\alpha_1-\frac{3\pi}{2}<\theta_2<\alpha_1+\frac{3\pi}{2}$. Therefore $\theta_2\in((2k-1)\pi,(2k+3)\pi)$.
The proof of the other case is similar.
\end{proof}

\begin{lemma}\label{lemme_matrice_hyperbolique}
Let $\theta_0>0$. There exists $\nu(\theta_0)>0$ such that if $R\in\Sp(2)$ and
\begin{itemize}
  \item $Re_1\in \mathcal C(e_1,\tan(\nu(\theta_0)))$;
  \item $\Vert Re_1\Vert\geq 3$;
  \item there exists $f\in C(e_2,\tan(\theta_0))$ such that $Rf\in C(e_2,\tan(\theta_0))$;
\end{itemize}
then $R$ is $\mathbb R$-diagonalizable and its eigenvalues are of the sign of $\langle e_1,Re_1\rangle$.
\end{lemma}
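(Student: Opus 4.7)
The plan is to reduce the statement to a short matrix computation in the basis $(e_1,e_2)$. Writing $R=\left(\begin{smallmatrix} a & c\\ b & d\end{smallmatrix}\right)$ with $ad-bc=1$, the hypotheses on $Re_1$ translate into $|b|\leq \tan(\nu(\theta_0))\,|a|$ and $a^2+b^2\geq 9$, which combine into $|a|\geq 3\cos(\nu(\theta_0))$. Choosing $\nu(\theta_0)$ small enough ensures $|a|>2$, and replacing $R$ by $-R$ if necessary I may assume $a>0$ (this preserves the hypotheses and flips the sign of $\langle e_1,Re_1\rangle$ consistently with the sign of the eigenvalues). The goal is then to prove $\operatorname{tr}(R)=a+d>2$, since any $R\in\operatorname{Sp}(2)$ with $|\operatorname{tr}(R)|>2$ has two distinct real eigenvalues of product~$1$, hence of the same sign as the trace.

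To extract information on $d$, I will use the third hypothesis. Normalise $f=\epsilon e_1+e_2$ with $|\epsilon|<\tan\theta_0$ and write $Rf=s\,e_1+t\,e_2$, so that $s=a\epsilon+c$, $t=b\epsilon+d$, and $|s|\leq\tan(\theta_0)\,|t|$. Using the symplectic relation $ad-bc=1$ to eliminate $c$ yields the clean identity $at=1+bs$. Combined with $|bs|\leq\tan(\nu(\theta_0))\tan(\theta_0)\,|a||t|$, this forces $t$ to have the sign of $a$ (so $t>0$) and $|at|$ to stay close to~$1$, uniformly in $|a|$.

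The key step is then a short case distinction on the sign of $d$. If $d\geq 0$, one has $\operatorname{tr}(R)\geq a>2$ immediately. If $d<0$, the equation $d=t-b\epsilon$ with $t>0$ forces $|d|<b\epsilon\leq\tan(\nu(\theta_0))\tan(\theta_0)\,|a|$, and therefore $\operatorname{tr}(R)\geq a\bigl(1-\tan(\nu(\theta_0))\tan\theta_0\bigr)$. Choosing $\nu(\theta_0)$ small enough that $3\cos(\nu(\theta_0))\bigl(1-\tan(\nu(\theta_0))\tan\theta_0\bigr)>2$ wraps up both subcases at once.

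The main subtlety I anticipate is keeping the estimates uniform when $|a|$ is very large: then $1/a$ and $t$ are tiny while $b\epsilon$ could in principle be comparable to $|d|$. This is precisely what the identity $at=1+bs$ resolves, which is why I will pass through it rather than trying to estimate $c$ and $d$ separately through the determinant relation.
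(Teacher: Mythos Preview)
Your proof is correct and follows the same overall strategy as the paper: reduce to showing $|\tr(R)|>2$ after possibly replacing $R$ by $-R$. The execution differs in the details. The paper passes to the basis $(e_1,f)$, parametrises $Re_1$ and $Rf$ by magnitudes $\mu_1,\mu_2$ and angles $\theta_1,\theta_2$, uses $\det R=1$ to get $\mu_1\mu_2>0$, and reads off the trace as $\frac{1}{\sin\theta}\bigl(\mu_1\sin(\theta-\theta_1)+\mu_2\sin\theta_2\bigr)$, a sum of two terms of the same sign with the first one of size roughly $|\mu_1|\geq 3$. Your argument stays in the standard basis and instead exploits the identity $at=1+bs$ (a direct rewriting of $ad-bc=1$) to pin down the sign of $t$ and then bound $|d|$ via a case split; this is arguably tidier since it avoids trigonometry and makes the required smallness of $\nu(\theta_0)$ completely explicit. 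Both routes are short and use nothing beyond the determinant relation.
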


\begin{proof}
We prove that $\vert \tr(R)\vert>2$. Without loss of generality, $(e_1,f)$ is a direct basis. The matrix associated to the change of basis from $(e_1,e_2)$ to $(e_1,f)$ is
\[P=\left(\begin{array}{cc}
 1 &  \cos(\theta) \\
0 & \sin(\theta)
\end{array}\right) \]
where $\theta\in \left[\frac{\pi}{2}-\theta_0,\frac{\pi}{2}+\theta_0\right]$. 
In the basis $(e_1,e_2)$, the matrix of $R$ is
\[P^{-1}RP
=\frac{1}{\sin(\theta)}\left(\begin{array}{cc} 
 \mu_1\sin(\theta-\theta_1) & \mu_2\sin(\theta-\theta_2)  \\ 
 \mu_1\sin(\theta_1)& \mu_2\sin(\theta_2) 
\end{array}\right)\] 
where $\theta_1\in[-\nu(\theta_0),\nu(\theta_0)]$, $\theta_2\in\left[\frac{\pi}{2}-\theta_0,\frac{\pi}{2}+\theta_0\right]$ and $\vert\mu_1\vert\geq 3$. As $\mu_1\mu_2>0$, we obtain $\tr(R)> 2$ for $\nu(\theta_0)$ small enough.
\end{proof}

\subsection{Computation of the Conley-Zehnder index}
Let ${\mathbf{a}}=a_{i_1}\dots a_{i_k}$ be a word such that $l(\mathbf{a})<K$. Let $p_{\mathbf{a}}$ be an intersection point between $\gamma_{\mathbf{a}}$ and $S^-_Z$. We denote by $T({\mathbf{a}})$ the period of $\gamma_{\mathbf{a}}$. Let $R_t$ be the path of symplectic matrices along $\gamma_{\mathbf{a}}$ associated to the trivialisation described in Section \ref{section_two_improvements}. Let $\chi_{\mathbf{a}}$ be the map induced by the Reeb flow between $S_Z$ and $S_{\mathbf{a}}$, a surface tangent to $\xi(p_{\mathbf{a}})$ at $p_{\mathbf{a}}$. Let $G_{\mathbf{a}}=\phi_B\circ \psi_{i_k}\dots\circ \phi_B\circ \psi_{i_1}$. By definition of $\phi_B$ and $\psi$,
\begin{itemize}
  \item $\dom(G_{\mathbf{a}})$ and $\im(G_{\mathbf{a}})$ are rectangles with respectively horizontal and vertical fibres and $G_{\mathbf{a}} $ preserves the fibres;
  \item $\d G_{\mathbf{a}}\left(p_{\mathbf{a}},\frac{\partial}{\partial x}\right)\in\mathcal C(H,\nu)$;
  \item $\left\Vert\d G_{\mathbf{a}}\left(p_{\mathbf{a}},\frac{\partial}{\partial x}\right)\right\Vert\geq \frac{1}{(\eta M)}$.
\end{itemize}

\begin{lemma}\label{lemme_mu_nu_maslov}
There exists $\theta_0$ such that for $\mu$, $\nu$ and $\eta$ small enough, $R_{T({\mathbf{a}})}$ satisfies the hypothesis of Lemma \ref{lemme_matrice_hyperbolique}.
\end{lemma}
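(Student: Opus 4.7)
The plan is to compare $R_{T(\mathbf{a})}$ with $\d G_{\mathbf{a}}$ via the projection $\pi \colon \xi_{p_{\mathbf{a}}} \to T_{p_{\mathbf{a}}} S_Z$ along the Reeb direction, and then translate the hyperbolic estimates already in hand for $\d G_{\mathbf{a}}$ into the symplectic estimates required by Lemma \ref{lemme_matrice_hyperbolique}. Since the Poincaré return on $S_Z$ is conjugate to the linearised Reeb return on $\xi$, one has $\d G_{\mathbf{a}} = \pi \circ R_{T(\mathbf{a})} \circ \pi^{-1}$, which turns estimates on one map into estimates on the other modulo the change of basis between $(\pi(e_1),\pi(e_2))$ and $(\partial/\partial x, \partial/\partial z)$.

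First I analyse the trivialisation $(e_1,e_2)$ at $p_{\mathbf{a}}$. Since $p_{\mathbf{a}}$ lies on the Legendrian arc $\gamma_0 \subset S_Z$ and $S_{\mathbf{a}}$ contains a collar of $a_{i_1}$ together with the segment $[a_{i_1}^+,a_{i_1}^-] \subset \gamma_0$, the line $T_{p_{\mathbf{a}}} S_{\mathbf{a}} \cap \xi$ is spanned by $\partial/\partial x$, so $e_1 = c_1\, \partial/\partial x$ for some $c_1>0$. A direct computation in the model $\alpha=\sin(x)\d y+\cos(x)\d z$ using $e_2 \in \xi$ and $d\alpha(e_1,e_2)=1$ shows $\pi(e_2) = c_2\, \partial/\partial z$ for some $c_2 \neq 0$, with $c_1$ and $|c_2|$ uniformly bounded away from $0$ and $\infty$ since $|\sin(x_0)| \geq \sin(\lambda) > 0$ on $R_\lambda$ and there are finitely many chord configurations. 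Writing $R_{T(\mathbf{a})} e_1 = s\, e_1 + t\, e_2$ and applying $\pi$, the containment $\d G_{\mathbf{a}}(\partial/\partial x) \in \mathcal{C}(H,\nu)$ translates into $|t/s| \leq \nu\, c_1^2\, |\sin x_0|$, while $\|\d G_{\mathbf{a}}(\partial/\partial x)\| \geq 1/(\eta M)$ gives $|s| \geq 1/(\eta M\sqrt{1+\nu^2})$. With $\theta_0$ fixed in advance, choosing $\nu$ small enough that $\nu\, c_1^2\, |\sin x_0| \leq \tan(\nu(\theta_0))$ and $\eta$ small enough that $|s| \geq 3$ delivers the first two hypotheses of Lemma \ref{lemme_matrice_hyperbolique}.

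For the third hypothesis I construct $f$ explicitly using the inverse cone preservation of $G_{\mathbf{a}}$. Combining $(\phi_B^{-1})_* \mathcal{C}(V,A) \subset \mathcal{C}(H,\nu)$ from the hyperbolic bypass domination with $(\psi_j^{-1})_* \mathcal{C}(H,\nu) \subset \mathcal{C}(D_{1,j},\mu) \subset \mathcal{C}(V,A)$ from the $K$-hyperbolic $(\mu,\nu,\tau)$-domination (the last inclusion being the choice of $A$ made before invoking Theorem \ref{recollement_K_hyperbolique}), iteration along $\mathbf{a}$ gives $(G_{\mathbf{a}}^{-1})_* \mathcal{C}(V,A) \subset \mathcal{C}(V,A)$. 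Setting $w := \d G_{\mathbf{a}}^{-1}(\partial/\partial z)$ and $f := \pi^{-1}(w)$, both $\pi(f)=w$ and $\pi(R_{T(\mathbf{a})} f) = \d G_{\mathbf{a}}(w) = \partial/\partial z$ lie in $\mathcal{C}(V,A)$, so $f$ and $R_{T(\mathbf{a})} f$ each make angle at most $\arctan\bigl(A/(c_1^2|\sin x_0|)\bigr)$ with $e_2$. I fix $\theta_0$ at the outset so that $\tan(\theta_0)$ uniformly exceeds this bound, which is possible because $A$, $c_1$, and $\sin(x_0)$ are controlled above and below by fixed quantities.

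The main technical point is the identification $\d G_{\mathbf{a}} = \pi \circ R_{T(\mathbf{a})} \circ \pi^{-1}$, a standard feature of Poincaré return maps for contact flows provided $S_Z$ is transverse to the Reeb field, which holds at $p_{\mathbf{a}} \in R_\lambda$ since $\sin(x_0) \neq 0$ there. The rest is careful bookkeeping of the constants $c_1$, $c_2$, $\sin(x_0)$ across the finitely many chord configurations, together with the observation that the preselection of $A$ built into the hypotheses of Theorem \ref{theoreme_principal} lets one fix $\theta_0$ before tuning $\mu$, $\nu$, and $\eta$.
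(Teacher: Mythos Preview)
Your proof is correct and follows the same strategy as the paper: conjugate the linearised Reeb return $R_{T(\mathbf{a})}$ on $\xi_{p_{\mathbf{a}}}$ with the return map $\d G_{\mathbf{a}}$ on $T_{p_{\mathbf{a}}}S_Z$ via projection along the Reeb direction (the paper packages this as the flow map $\chi_{\mathbf{a}}$ from $S_Z$ to a surface tangent to $\xi$ at $p_{\mathbf{a}}$, whose differential at $p_{\mathbf{a}}$ is exactly your $\pi^{-1}$), then read off the three hypotheses of Lemma~\ref{lemme_matrice_hyperbolique} from the cone and expansion estimates on $G_{\mathbf{a}}$. Your treatment of the third hypothesis---explicitly producing $f=\pi^{-1}\bigl(\d G_{\mathbf{a}}^{-1}(\partial/\partial z)\bigr)$ via the inverse cone preservation $(G_{\mathbf{a}}^{-1})_*\mathcal C(V,A)\subset\mathcal C(V,A)$---is more detailed than the paper, which only records the choice of $\theta_0$ with ${\chi_{\mathbf{a}}}_*\mathcal C(V,A)\subset\mathcal C(e_2,\tan\theta_0)$ and leaves the existence of $f$ implicit; your version makes clear why that choice suffices.

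Two small cosmetic points. First, $p_{\mathbf{a}}$ is only close to $\gamma_0$, not on it, so strictly speaking $e_1$ is only approximately a multiple of $\partial/\partial x$; since $\partial/\partial x\in\xi$ everywhere, one may simply normalise the trivialisation so that $e_1=\pm\partial/\partial x$ at $p_{\mathbf{a}}$, which is what the paper does when writing $\d\chi_{\mathbf{a}}(\partial/\partial x)=\pm e_1$. Second, $\pi(e_2)=c_2\,\partial/\partial z$ need not hold for an arbitrary symplectic complement $e_2$ of $e_1$, but it can always be arranged by adding a multiple of $e_1$ to $e_2$, and in any case only bounded norms on $\pi,\pi^{-1}$ are needed, which holds since $|\sin x_0|\ge\sin\lambda$ on $R_\lambda$.
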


\begin{proof}
Note that $\d\chi_{\mathbf{a}}(p_{\mathbf{a}})\frac{\partial}{\partial x}=\pm e_1$. We choose $\theta_0$ such that ${\chi_{\mathbf{a}}}_*\left(\mathcal C(V,A)\right)\subset \mathcal C(e_2,\tan(\theta_0))$. For $\mu$ small enough, we have ${\chi_{\mathbf{a}}}_*\left(\mathcal C(H,\mu)\right)\subset \mathcal C(e_1,\tan(\nu(\theta_0))$. Let $l$ be such that $\Vert \d\chi_{p_{\mathbf{a}}}(p_{\mathbf{a}})\Vert<l$ and $\Vert \d\chi_{p_{\mathbf{a}}}(p_{\mathbf{a}})^{-1}\Vert<l$.
Then
\[\Vert R_{{\mathbf{a}},T({\mathbf{a}})}e_1 \Vert \geq \bigg\Vert d\chi_{p_{\mathbf{a}}}(p_{\mathbf{a}})^{-1}\bigg\Vert^{-1} \bigg\Vert\d G_{\mathbf{a}}\frac{\partial}{\partial x}\bigg\Vert\geq \frac{1}{l\eta M}\]
and $\Vert Re_1\Vert\geq 3$ for $\eta$ small enough.
\end{proof}

\begin{lemma}\label{lemme_dF_a}
For all $p\in\dom(G_{\mathbf{a}})$, $\left\langle\d G_{\mathbf{a}}(p)\frac{\partial}{\partial x},\frac{\partial}{\partial x}\right\rangle$ is of the sign of
$\prod_{j=1}^k (-1)^{\tilde{\mu}(a_{i_j})}$. 
\end{lemma}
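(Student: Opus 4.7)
The plan is to compute $\epsilon:=\mathrm{sign}\langle \d G_{\mathbf{a}}(p)\partial/\partial x,\partial/\partial x\rangle$ by decomposing $G_{\mathbf{a}}$ and tracking the sign of the $\partial/\partial x$-coefficient through each of the alternating factors $\psi_{i_j}$ and $\phi_B$.

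First I would handle a single chord-return $\psi_{i_j}$. As the Poincaré return map on $S_Z$ associated to $a_{i_j}$, its differential equals $\d\phi_{T(a_{i_j})}$ followed by the projection along $R$ onto $TS_Z$; by condition~(C1) the projection preserves the $\partial/\partial x$-coefficient, since $R_x=0$ near $S_Z$. In the symplectic trivialisation $(e_1,e_2)$ furnished by the annulus $S_{i_j}$ of Section~\ref{section_two_improvements}, both $e_1|_{a_{i_j}^\pm}$ are tangent to $\gamma_0$, i.e.\ equal to $\pm\partial/\partial x$ with a relative sign $\sigma_{i_j}\in\{\pm1\}$ recording the homotopy class of the trivialisation along $a_{i_j}$. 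Expanding $\d\phi_{T(a_{i_j})}\partial/\partial x = R_{T(a_{i_j})}e_1$ in this basis gives a $\partial/\partial x$-coefficient proportional to $\sigma_{i_j}\cos\theta_{T(a_{i_j})}$. The definition of $\tilde\mu(a_{i_j})$ via $\theta_{T(a_{i_j})}\in(\pi\tilde\mu(a_{i_j}),\pi(\tilde\mu(a_{i_j})+1)]$, together with the way $\sigma_{i_j}$ tracks the half-turns of $e_1$ along $a_{i_j}$ in the chosen $S_{i_j}$, yields the identity $\sigma_{i_j}\,\mathrm{sign}(\cos\theta_{T(a_{i_j})})=(-1)^{\tilde\mu(a_{i_j})}$.

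Next I would show that each bypass factor $\phi_B$ contributes a positive sign. By the explicit construction of Section~\ref{section_hyperbolic_proof}, in particular the inequality $R_u\geq 0$ from Lemma~\ref{lemme_alpha_transverse}, the Reeb flow in the convexified bypass preserves the horizontal orientation: the output vector $\d\psi_{i_j}(p)\partial/\partial x$, close to $D_{2,i_j}$ by the domination property, is sent by $\d\phi_B$ into the horizontal cone $\mathcal{C}(H,\nu)$ with positive $\partial/\partial x$-component. Multiplying the $k$ chord and $k$ bypass contributions then gives $\epsilon=\prod_{j=1}^k(-1)^{\tilde\mu(a_{i_j})}$.

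The main obstacle will be the sign identity $\sigma_{i_j}\,\mathrm{sign}(\cos\theta_{T(a_{i_j})})=(-1)^{\tilde\mu(a_{i_j})}$: both sides depend on the chosen annulus $S_{i_j}$ and must shift compatibly under changes of $S_{i_j}$, so a careful bookkeeping of orientation conventions is required. The positivity of the bypass sign is a more direct consequence of the convexification construction of Sections~\ref{subsection_convexification_coordinates}--\ref{subsection_convexification} and the flow's structure given in Lemma~\ref{lemme_alpha_transverse}.
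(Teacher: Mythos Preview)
Your decomposition into chord factors and bypass factors is the right idea, but the bookkeeping tracks the wrong coordinate and this makes both steps break down.

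After a chord factor $\psi_{i_j}$, the vector $\d\psi_{i_j}(p)\,\partial/\partial x$ lies in the vertical cone $\mathcal C(V,A)$ (close to $D_{2,i_j}$), so its $\partial/\partial x$-component is small and its $\partial/\partial z$-component is the dominant one. When $\phi_B$ then sends this vector into $\mathcal C(H,\nu)$, the sign of the resulting $\partial/\partial x$-component is governed by the sign of the \emph{input} $\partial/\partial z$-component, not by the tiny input $\partial/\partial x$-component. Hence ``$\phi_B$ preserves horizontal orientation'' is not the relevant statement; what one actually needs (and what the paper proves first) is that $\langle \d\phi_B(p)\,\partial/\partial z,\partial/\partial x\rangle>0$, so that $\phi_B$ converts the sign of $\partial/\partial z$ into the sign of $\partial/\partial x$.

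Correspondingly, the sign identity you want at each chord is not about $\cos\theta_{T(a_{i_j})}$ but about $\sin\theta_{T(a_{i_j})}$: in the trivialisation $(e_1,e_2)$ with $e_1\parallel\partial/\partial x$ and $e_2$ pointing toward $\partial/\partial z$, the $\partial/\partial z$-coefficient of $R_{T(a_{i_j})}e_1$ is proportional to $\sin\theta_{T(a_{i_j})}$, and from $\theta_{T(a_{i_j})}\in(\pi\tilde\mu(a_{i_j}),\pi(\tilde\mu(a_{i_j})+1)]$ one reads off $\mathrm{sign}(\sin\theta_{T(a_{i_j})})=(-1)^{\tilde\mu(a_{i_j})}$ directly, with no auxiliary sign $\sigma_{i_j}$ needed. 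By contrast $\cos\theta$ changes sign inside each interval $(\pi m,\pi(m+1)]$, so $\tilde\mu(a_{i_j})$ alone cannot determine $\mathrm{sign}(\cos\theta_{T(a_{i_j})})$, and the identity you wrote cannot hold in general. Once you switch to tracking the $\partial/\partial z$-sign after each $\psi_{i_j}$ and use the $\phi_B$-property above, the induction goes through exactly as in the paper.
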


\begin{proof}
Note that $\left\langle\d\phi_B(p)\frac{\partial}{\partial z},\frac{\partial}{\partial x}\right\rangle >0$ for all $p\in\dom(\phi_B)$. Therefore, we have \[\left\langle\d\phi_B(p)v,\frac{\partial}{\partial x}\right\rangle >0\] for all $v\in\mathcal C_p(V,A)$ such that $\left\langle v,\frac{\partial}{\partial z}\right\rangle>0$.

We prove the desired result by induction on $k$. If $k=1$ then $\d\psi_{a_1}(p)\frac{\partial}{\partial x}\in\mathcal C(V,A)$. If $\tilde{\mu}(a_1)$ is even, $\left\langle-\d\psi_{a_1}(p)\frac{\partial}{\partial x},\frac{\partial}{\partial z}\right\rangle<0$ (see Figure \ref{mu_pair_impair}) and we obtain \[\left\langle\d G_{a_1}(p)\frac{\partial}{\partial x},\frac{\partial}{\partial x}\right\rangle>0.\] Similarly, if $\mu(a_1)$ is odd, we obtain  $\left\langle-\d\psi_{a_1}(p)\frac{\partial}{\partial x},\frac{\partial}{\partial z}\right\rangle>0$ and \[\left\langle\d G_{a_1}(p)\frac{\partial}{\partial x},\frac{\partial}{\partial x}\right\rangle<0.\]  
\begin{figure}[here]
\begin{center}
 \includegraphics{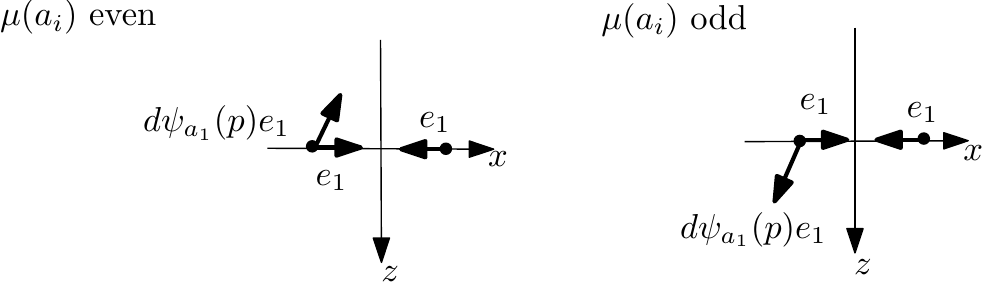}
\end{center}
 \caption{The vector $\d\psi_{a_1}(p)e_1$}\label{mu_pair_impair}
\end{figure}
We now prove the result for ${\mathbf{a}}=a_{i_1}\dots a_{i_{k+1}}$. Let $p\in\dom(G_{\mathbf{a}})$ and $v=\d G_{a_{i_1}\dots a_{i_{k}}}(p)\frac{\partial}{\partial x}$. By induction, $\left\langle v,\frac{\partial}{\partial x}\right\rangle$ is of the sign of $\prod_{j=1}^k (-1)^{\tilde{\mu}(a_{i_j})}$. Then, $\left\langle \d \psi_{a_{i_{k+1}}} v,\frac{\partial}{\partial z}\right\rangle$ is of the sign of $\prod_{j=1}^{k+1} (-1)^{\tilde{\mu}(a_{i_j})}$ and so is $\left\langle \d G_{a_{i_{k+1}}} v,\frac{\partial}{\partial x}\right\rangle$.
\end{proof}

\begin{corollary}\label{lemme_parite_maslov}  
For $\mu$, $\nu$ and $\eta$ small enough, $\sum_{j=1}^{k} \tilde\mu(a_{i_j})=\mu(\gamma_{\mathbf{a}})[2]$. 
\end{corollary}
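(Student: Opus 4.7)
The plan is to assemble the four preceding lemmas and translate the hyperbolicity information they provide into parity information on the Conley--Zehnder index. First I would take $\mu$, $\nu$, $\eta$ small enough so that Lemma~\ref{lemme_mu_nu_maslov} applies, giving that $R_{T(\mathbf a)}$ satisfies the hypotheses of Lemma~\ref{lemme_matrice_hyperbolique}. That lemma then tells us that $R_{T(\mathbf a)}$ is $\mathbb R$-diagonalisable with two real eigenvalues whose common sign is the sign of $\langle e_1, R_{T(\mathbf a)} e_1\rangle$. In particular $\gamma_{\mathbf a}$ is hyperbolic, so it is even (in the sense of Section~\ref{subsubsection_CZ}) when the eigenvalues are positive and odd when they are negative. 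Hence $\mu(\gamma_{\mathbf a})\equiv 0\pmod 2$ in the first case and $\mu(\gamma_{\mathbf a})\equiv 1\pmod 2$ in the second.

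The core computation is therefore to identify the sign of $\langle e_1, R_{T(\mathbf a)} e_1\rangle$ with the sign of $\langle\tfrac{\partial}{\partial x}, \d G_{\mathbf a}(p_{\mathbf a})\tfrac{\partial}{\partial x}\rangle$. I would do this by using the identification $\d\chi_{\mathbf a}(p_{\mathbf a})\tfrac{\partial}{\partial x}=\pm e_1$ recalled in the proof of Lemma~\ref{lemme_mu_nu_maslov}, together with the relation $R_{T(\mathbf a)}=\d\chi_{\mathbf a}(p_{\mathbf a})\circ \d G_{\mathbf a}(p_{\mathbf a})\circ \d\chi_{\mathbf a}(p_{\mathbf a})^{-1}$ which expresses that the Reeb flow on $\xi_{p_{\mathbf a}}$ and the first-return map $G_{\mathbf a}$ on $S_Z$ are conjugate via the transit map $\chi_{\mathbf a}$. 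Write $\d G_{\mathbf a}(p_{\mathbf a})\tfrac{\partial}{\partial x}=a\tfrac{\partial}{\partial x}+b\tfrac{\partial}{\partial z}$; since this vector lies in $\mathcal C(H,\nu)$ (by the rectangle-preserving properties of $\phi_B$ and $\psi_{i_j}$ recalled at the start of the section) one has $|b|\leq \nu|a|$. Pushing forward by $\d\chi_{\mathbf a}(p_{\mathbf a})$ yields $R_{T(\mathbf a)}e_1=a\,e_1+(\pm b)\,\d\chi_{\mathbf a}\tfrac{\partial}{\partial z}$, so that $\langle e_1, R_{T(\mathbf a)} e_1\rangle=a+O(\nu)$ (the constant in $O(\nu)$ depending only on the fixed trivialisation), and hence has the sign of $a=\langle\tfrac{\partial}{\partial x}, \d G_{\mathbf a}(p_{\mathbf a})\tfrac{\partial}{\partial x}\rangle$ provided $\nu$ is small enough.

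Finally, Lemma~\ref{lemme_dF_a} identifies this sign with $(-1)^{\sum_{j=1}^k \tilde\mu(a_{i_j})}$. Combining everything: the eigenvalues of $R_{T(\mathbf a)}$ are positive if and only if $\sum_j \tilde\mu(a_{i_j})$ is even, which by the hyperbolic dichotomy above is equivalent to $\mu(\gamma_{\mathbf a})$ being even. This gives the congruence $\mu(\gamma_{\mathbf a})\equiv \sum_{j=1}^k \tilde\mu(a_{i_j})\pmod 2$.

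The main obstacle is the second paragraph, namely making precise the conjugation between $R_{T(\mathbf a)}$ and $\d G_{\mathbf a}(p_{\mathbf a})$ and controlling the perturbation introduced by the off-diagonal term $b$. Everything else is a direct assembly of the lemmas already available, so the smallness conditions on $\mu$, $\nu$, $\eta$ in the statement are exactly those needed to apply Lemmas~\ref{lemme_mu_nu_maslov} and~\ref{lemme_matrice_hyperbolique} and to guarantee that $\nu$ is small relative to the norm of $\d\chi_{\mathbf a}(p_{\mathbf a})$.
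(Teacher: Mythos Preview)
Your proof is correct and follows the same approach as the paper: apply Lemmas~\ref{lemme_mu_nu_maslov} and~\ref{lemme_matrice_hyperbolique} to see that $R_{T(\mathbf a)}$ is hyperbolic with eigenvalues of the sign of $\langle e_1,R_{T(\mathbf a)}e_1\rangle$, identify that sign with $\langle\frac{\partial}{\partial x},\d G_{\mathbf a}\frac{\partial}{\partial x}\rangle$, and then invoke Lemma~\ref{lemme_dF_a}. The paper asserts the sign identification in one sentence, while you spell out the conjugation $R_{T(\mathbf a)}=\d\chi_{\mathbf a}\circ\d G_{\mathbf a}\circ\d\chi_{\mathbf a}^{-1}$ and the cone estimate $|b|\leq\nu|a|$ explicitly; this extra detail is welcome and your computation $R_{T(\mathbf a)}e_1=a e_1\pm b\,\d\chi_{\mathbf a}\frac{\partial}{\partial z}$ is correct.
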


\begin{proof}
The signs of $\left\langle \d G_{{\mathbf{a}}} \frac{\partial}{\partial x},\frac{\partial}{\partial x}\right\rangle$ and $\left\langle R_{T({\mathbf{a}})} \frac{\partial}{\partial x},\frac{\partial}{\partial x}\right\rangle$ coincide and $R_{T({\mathbf{a}})}$ is hyperbolic (Lemmas \ref{lemme_matrice_hyperbolique} and \ref{lemme_mu_nu_maslov}). Its eigenvalues are positive if $\sum_{j=1}^{k} \tilde{\mu}(a_{i_j})$ is even and negative if $\sum_{j=1}^{k} \tilde{\mu}(a_{i_j})$ is odd (Lemma \ref{lemme_dF_a}).
\end{proof}

\begin{lemma}\label{lemme_trivialisation_gamma_a}
There exists a collar neighbourhood $S_{\gamma_{\mathbf{a}}}$ of $\gamma_{\mathbf{a}}$ in the trivialisation class given in Section \ref{section_two_improvements} such that if $R_t$ is the associated path of symplectic matrices along $\gamma_{\mathbf{a}}$ and $R_t\frac{\partial}{\partial x}=r(t)e^{i\theta(t)}$  then $\theta(t)\neq 0[\pi]$ when $\gamma_{\mathbf{a}}(t)$ is in the bypass and $t>0$.
\end{lemma}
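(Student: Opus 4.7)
The plan is to exploit the freedom in choosing the immersed disc defining the trivialization in the bypass (while staying within its prescribed homotopy class) in order to arrange the framing $e_1(t)$ so that $R_t(\partial/\partial x)\notin\mathbb{R}\cdot e_1(t)$ throughout the bypass portions of $\gamma_{\mathbf{a}}$ for $t>0$. Equivalently, I want the evolving vector $R_t(\partial/\partial x)$ to remain in a cone transverse to $e_1(t)$ while the orbit travels through $\mathcal{B}$.

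I would first track $R_t(\partial/\partial x)$ along a bypass arc from $p^+_{i_j}$ to $p^-_{i_{j+1}}$. At the entry point, the vector lies in a vertical cone: by condition (2) of the domination of a $K$-hyperbolic surface in Section~\ref{subsection_dynamics_M}, the map $\psi_{i_j}$ sends $\mathcal{C}(H,\nu)$ into $\mathcal{C}(D_{2,j},\mu)\subset \mathcal{C}(V,A)$. As the flow crosses $\mathcal{B}^{[0,y_\t{std}]}$ and enters the convexification, the cone preservation of Lemma~\ref{lemme_S_B_1} sends this vertical cone into $\mathcal{C}(V,B)$ at $\Sigma_-$; then Lemma~\ref{lemme_perturbation_2} forces the image to land in the horizontal cone $\mathcal{C}(H,\epsilon_B)$ at $\Sigma_+$; and finally Lemma~\ref{lemme_S_B_1} again pushes the vector into the horizontal cone at $p^-_{i_{j+1}}$. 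Thus, along each bypass arc, the direction of $R_t(\partial/\partial x)$ is constrained to a prescribed vertical cone before the convexification and to a prescribed horizontal cone after it, with a controlled sweep through the convexification area.

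Next, I would construct the immersed disc so that the induced framing $e_1(t)$ is continuous, matches the prescribed chord framings at $p^+_{i_j}$ and $p^-_{i_{j+1}}$, and stays in a cone transverse to the one containing $R_t(\partial/\partial x)$ at each time. Concretely, $e_1(t)$ is chosen in a horizontal cone while $R_t(\partial/\partial x)$ is vertical, and in a vertical cone while $R_t(\partial/\partial x)$ is horizontal, with a monotone rotation during the convexification passage that avoids sweeping through $\mathbb{R}\cdot R_t(\partial/\partial x)$. The sign information from Lemma~\ref{lemme_dF_a} ensures that the parity of this rotation agrees with the parity forced by the chord framings, so the construction lives in the correct homotopy class.

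The main obstacle will be matching the framings at the transitions between the chord collars and the immersed disc without accidentally introducing a crossing $\theta(t)\equiv 0\pmod\pi$ near the endpoints. Away from the endpoints the cone-preservation inequalities provide a uniform angular gap, and the strict separation $\mathcal{C}(H,\epsilon_B)\cap\mathcal{C}(V,B)=\{0\}$ of Lemma~\ref{lemme_S_B_1} together with the monotonicity established in the proof of Lemma~\ref{lemme_perturbation_2} yields the required transversality throughout the bypass arc, concluding the construction.
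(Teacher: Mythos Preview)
Your strategy is coherent in outline but differs from the paper's and has a gap at the crucial step. The cone-preservation statements you invoke (Lemma~\ref{lemme_S_B_1} and Proposition~\ref{lemme_perturbation_2}) only constrain the linearised flow at the discrete cross-sections $S_Z$, $\Sigma_-$, $\Sigma_+$, $S_R$: they tell you where $R_t(\partial/\partial x)$ lands after crossing a region, not where it points at every intermediate time. The ``monotonicity established in the proof of Lemma~\ref{lemme_perturbation_2}'' is the monotonicity of the $u$-separation $\tilde u(t)-u(t)$ of two nearby orbits, which is strictly weaker than monotonicity of the angle of $d\Phi_t(\partial/\partial v)$ (the $v$-component is not controlled). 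Without pointwise-in-$t$ control of the direction of $R_t(\partial/\partial x)$ during the convexification passage, you cannot design an $e_1(t)$ that is simultaneously in the required homotopy class and transverse to $R_t(\partial/\partial x)$ at \emph{every} $t$; the ``monotone rotation'' you propose could a priori be forced to sweep through $\mathbb{R}\cdot R_t(\partial/\partial x)$.

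The paper closes this gap by working the other way around: rather than tracking $R_t(\partial/\partial x)$ and tailoring $e_1$ to avoid it, it builds the strip $S_c$ along each bypass chord $c$ explicitly, declaring it tangent to $\partial/\partial x$ between $S_Z$ and the convexification, to $\partial/\partial u$ inside the convexification, and to $\partial/\partial r$ in the upper part of the bypass, then smoothing. In each region $e_1$ is then the ``slow'' coordinate direction of the flow, and the condition $\theta(t)\neq 0\pmod\pi$ becomes the geometric statement that a Reeb chord starting on the vertical segment through $c_+$, close to $c_+$, never intersects $S_c$. This is checked region by region from the explicit form of the flow (and is what the figure illustrates), bypassing the need for continuous angular control of the linearised flow.
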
 

\begin{proof}
Let $c$ be a Reeb chord in the bypass contained in $\gamma_\mathbf{a}$ with endpoints $c_+$ and $c_-$ on $S_Z$. We construct a strip $S_c$ along $c$ such that no Reeb chord with one endpoint on the vertical segment containing $c_+$ and close to $c_+$ intersects $S_c$. We then glue together the half of $S_c$ and the collar associated to the Reeb chords to obtain $S_{\gamma_\mathbf{a}}$. We choose $S_c$ such that
\begin{enumerate}
 \item between $S_Z$ and the convexification, $S_c$ is tangent to $\frac{\partial}{\partial x}$;
 \item in the convexification area, $S_c$ is tangent to $\frac{\partial}{\partial u}$;
 \item is the upper part of the bypass, $S_c$ is tangent $\frac{\partial}{\partial r}$.
\end{enumerate}
We smooth the resulting surface. Figure \ref{indice_C_Z} shows $c$ and a Reeb chord with one endpoint on the vertical segment contained $c_+$ (dotted curve). In this figure $S_c$ is transverse to the projection.
\begin{figure}[here]
\begin{center}
\includegraphics{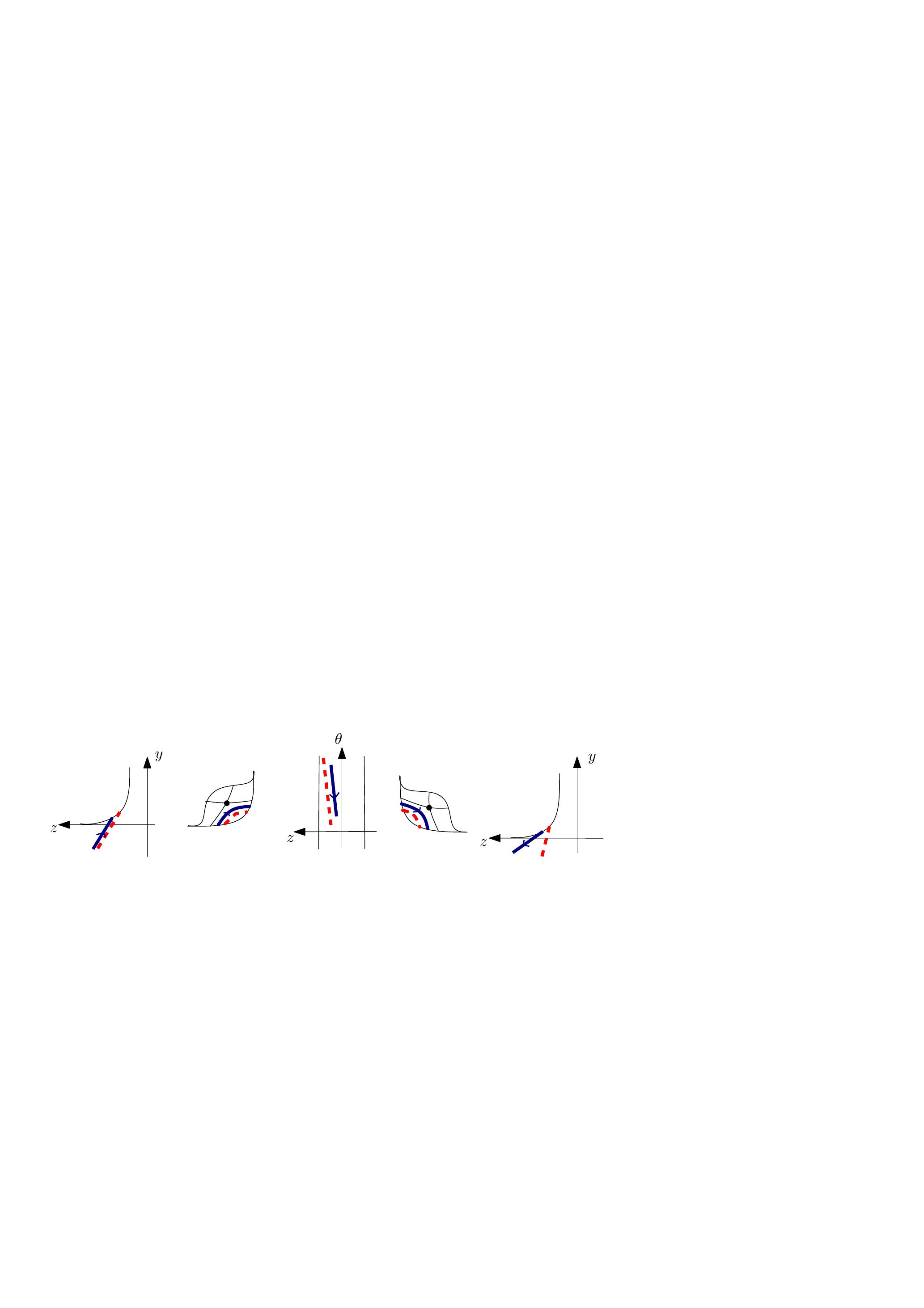}
\end{center}
\caption{The surface $S_c$ and the Reeb chords}\label{indice_C_Z}
\end{figure}

\end{proof}

\begin{proof}[Proof of Theorem \ref{theoreme_Maslov}]
We consider the trivialisation from Lemma \ref{lemme_trivialisation_gamma_a}. Without loss of generality $\theta(0)=0$. Let $0=t_1<t'_1<\dots<t_k<t'_k=T({\mathbf{a}})$ be the times associated to the intersection points between $\gamma_{\mathbf{a}}$ and $S_Z$ ($\gamma_{\mathbf{a}}(0)$ is the fixed point of $G_{\mathbf{a}}$).

We prove by induction that for all $j=1,\dots, k$
\begin{equation}\label{theta_j}
\theta(t_j)\in\left[\left(\sum_{l=1}^{j-1} \mu(a_{i_l})\pi\right)-\nu(\theta_0),\left(\sum_{l=1}^{j-1} \mu(a_{i_l})\pi\right)+\nu(\theta_0)\right].
\end{equation}
As $\theta(0)=0$, the condition (\ref{theta_j}) is satisfied for $j=1$. We now suppose that the equation (\ref{theta_j}) stands for $i\in1\dots j-1$. By definition of $\mu(a_{i_{j}})$, 
\[\theta(t'_j)\in\left[\left(\sum_{l=1}^{j} \mu(a_{i_l})\pi\right),\left(\sum_{l=1}^{j} \mu(a_{i_l})\pi\right)+\pi\right].\]
We obtain (Lemma \ref{lemme_trivialisation_gamma_a})
\begin{align*}
\theta(t_{j+1})&\in\left[\left(\sum_{l=1}^{j} \mu(a_{i_l})\pi\right)-\nu(\theta_0),\left(\sum_{l=1}^{j} \mu(a_{i_l})\pi\right)+\nu(\theta_0)\right]\\
\t{ or }\hskip.5cm \theta(t_{j+1})&\in\left[\left(\sum_{l=1}^{j} \mu(a_{i_l})\pi\right)+\pi-\nu(\theta_0),\left(\sum_{l=1}^{j} \mu(a_{i_l})\pi\right)+\pi+\nu(\theta_0)\right].
\end{align*}
By Lemma \ref{lemme_dF_a}, we obtain the equation (\ref{theta_j}) for $i=j$. Lemma~\ref{lemme_CZ_e_1} provides us with the desired Conley-Zehnder index.
\end{proof}

\noindent{\bf{Acknowledgements.}} I am deeply grateful to my advisor, Vincent Colin, for his guidance and support. I would also like to thank Samuel Tapie, Paolo Ghiggini and François Laudenbach for stimulating discussions. Thanks also go to Chris Wendl for his help with holomorphic curves, Jean-Claude Sikorav for suggesting numerous improvements and corrections and Marc Mezzarobba for proofreading this text.

\end{document}